\theoremstyle{plain}
\newtheorem{mainthm}{Theorem}
\newtheorem{thm}{Theorem}[subsection]
\newtheorem{cor}[thm]{Corollary}
\newtheorem{corspec}[thm]{Corollary$^*$}
\newtheorem{lem}[thm]{Lemma}
\newtheorem{prop}[thm]{Proposition}
\newtheorem{cnj}{Conjecture}
\theoremstyle{definition}
\newtheorem{dfn}[thm]{Definition}
\theoremstyle{remark}
\newtheorem{rem}[thm]{Remark}
\newtheorem*{remnonum}{Remark}
\newtheorem{ex}[thm]{Example}
\theoremstyle{plain}
\newcommand{\Qed}{\hfill \qedsymbol \medskip}
\newcommand{\hooklongrightarrow}{\lhook\joinrel\longrightarrow}
\newcommand{\R}{\mathbb{R}}
\newcommand{\Z}{\mathbb{Z}}
\newcommand{\N}{\mathbb{N}}
\newcommand{\C}{\mathbb{C}}
\newcommand{\La}{\Lambda}
\newcommand{\Crit}{\textnormal{Crit\/}}
\newcommand{\clift}{\mathbb{T}_{\textnormal{clif}}}
\newcommand{\sym}{\textnormal{symb\/}}
\newcommand{\pbaddress}{biran@math.tau.ac.il}
\newcommand{\ocaddress}{cornea@dms.umontreal.ca}
\begin{document}

\title{Rigidity and uniruling for Lagrangian submanifolds} \date{\today}

\thanks{The first author was partially supported by the ISRAEL SCIENCE
  FOUNDATION (grant No. 1227/06 *); the second author was supported by
  an NSERC Discovery grant and a FQRNT Group Research grant}

\author{Paul Biran and Octav Cornea} \address{Paul Biran, School of
  Mathematical Sciences, Tel-Aviv University, Ramat-Aviv, Tel-Aviv
  69978, Israel} \email{\pbaddress} \address{Octav Cornea, Department
  of Mathematics and Statistics University of Montreal C.P. 6128 Succ.
  Centre-Ville Montreal, QC H3C 3J7, Canada} \email{\ocaddress}

\bibliographystyle{alphanum}

%
%

\maketitle

%
%
\tableofcontents

\section{Introduction}\label{sec:intro}

The purpose of this paper is to explore the topology of monotone
Lagrangian submanifolds $L$ inside a symplectic manifold $M$ by
exploiting the relationships between the quantum homology of $M$ and
various quantum structures associated to the Lagrangian $L$. We show
that the class of monotone Lagrangians satisfies a number of
structural rigidity properties which are particularly strong when the
ambient symplectic manifold contains enough genus-zero
pseudo-holomorphic curves. Indeed, we will see that (very often) if
$M$ is ``highly'' uniruled by curves of area $A$, then $(M,L)$ (or
just $L$) is uniruled by curves of area strictly smaller than $A$ (see
\ref{subsubsec:uniruling} for the definition of the appropriate
notions of uniruling).

\subsection{Setting}
All our symplectic manifolds will be implicitly assumed to be
connected and tame (see~\cite{ALP}).  The main examples of such
manifolds are closed symplectic manifolds, manifolds which are
symplectically convex at infinity as well as products of such. All the
Lagrangians submanifolds will be assumed to be connected and closed
(i.e. compact, without boundary).

We start by emphasizing that our results apply to {\em monotone}
Lagrangians.  These are characterized by the fact that the morphisms:
$$\omega: \pi_{2}(M,L)\to \R\ \ ,\ \ \mu: \pi_{2}(M,L)\to \Z, $$ the
first given by integration and the second by the Maslov index, are
proportional with a positive proportionality constant $\omega= \eta
\mu$ with $\eta>0$. Moreover, we will include here in the definition
of the monotonicity the assumption that the minimal Maslov index,
$$N_{L} = \min \{ \mu(\alpha) \mid \alpha \in \pi_2(M,L),
\, \mu(\alpha) > 0 \}$$ of a homotopy class of strictly positive
Maslov index is at least two, $N_{L}\geq 2$. If $L$ is monotone, then
$M$ is also monotone and $N_{L}$ divides $2C_{M}$ where $C_{M}$ is the
minimal Chern number of $M$
$$C_{M}=\min\{c_{1}(\alpha) | \alpha\in \pi_{2}(M), c_{1}(\alpha)>0 \}~.~$$

\subsubsection{Size of Lagrangians}

Fix a Lagrangian submanifold $L \subset M$.

We say that a symplectic embedding of the closed, standard symplectic
ball of radius $r$, $e:(B^{2n}(r),\omega_{std}) \to (M,\omega)$, is
{\em relative} to $L$ if $$e^{-1}(L)=B^{2n}(r)\cap \R^{n}~.~$$ These
types of embedding were first introduced and used in
\cite{Bar-Cor:Serre} and \cite{Bar-Cor:NATO}.

\

Consider now a vector $v_{p,q}=(r_{1},\ldots, r_{p};\rho_{1},\ldots,
\rho_{q})\in (\R^{+})^{p+q}$.  We will not allow for both $p$ and $q$
to vanish. If just one does, say $p=0$, we will use the notation
$v_{0,q}=(\emptyset;\rho_{1},\ldots,\rho_{q})$.

\begin{dfn}
   {\em The mixed symplectic packing number, $w(M,L:v_{p,q})$, of type
     $v_{p,q}=(r_{1},\ldots, r_{p};\rho_{1},\ldots, \rho_{q})$ of
     $(M,L)$} is defined by:
   $$w(M,L:v_{p,q})=\sup_{\tau> 0} \Bigl(\sum_{i=1}^{p}
   \pi (\tau r_{i})^{2}+\frac{1}{2} \sum_{j=1}^{q}\pi
   (\tau\rho_{j})^{2}\Bigr)$$ where the supremum is taken over all
   $\tau$ such that there are mutually disjoint symplectic embeddings
   $$f_{i}:(B^{2n}(\tau r_{i}),\omega_{0})
   \to (M\backslash L) ,\ 1\leq i\leq p,\
   e_{j}:(B^{2n}(\tau\rho_{j}),\omega_{0})\to M,\ 1\leq j\leq q$$ so
   that the $e_{j}$'s are embeddings relative to $L$.
\end{dfn}

The most widespread example of such vectors $v_{p,q}$ have all their
components equal to $1$. We also notice that $w(M):=w(M,\emptyset:
(1;\emptyset))$ is the well-known Gromov width of $M$: the supremum of
$\pi r^{2}$ over all symplectic embeddings of $B^{2n}(r)$ into $M$. A
similar notion has been introduced in \cite{Bar-Cor:NATO}, see also
\cite{Cor-La:Cluster-2}, to ``measure'' Lagrangians: the width of a
Lagrangian, $w(L)$, is the supremum of $\pi r^{2}$ over all symplectic
embeddings of $B^{2n}(r)$ which are relative to $L$.  With our
conventions, $w(L)=2w(M,L:(\emptyset ;1))$. Moreover, $w(M\backslash
L)$, the Gromov width of the complement of $L$, is given by $w(M,L:
(1;\emptyset))$.

\subsubsection{Uniruling}\label{subsubsec:uniruling}
The main technique used to prove width and packing estimates is based
on establishing uniruling results.

\begin{dfn}
   We say that $(M,L)$ {\em is uniruled of type $(p,q)$ and order $k$}
   (or shorter, $(M,L)$ is {\em $(p,q)$ - uniruled of order $k$}) if
   for any $p$ distinct points $P_{i}\in M\backslash L,\ 1\leq i\leq
   p$, and any $q$ distinct points, $Q_{j}\in L, \ 1\leq j\leq q$,
   there exists a Baire second category (generic) family of almost
   complex structures $\mathcal{J}$ with the property that for each
   $J\in\mathcal{J}$ there exists a non-constant $J$-holomorphic disk
   $u:(D^{2},\partial D^{2})\to (M,L)$ so that $P_{i}\in u(Int(
   D^{2})), \forall i$, $Q_{j}\in u(\partial D^{2}), \forall j$, and
   $\mu(u)\leq k$. In case $L$ is void, we take $q=0$, and instead of
   a disk, $u$ is required to be a non-constant $J$-holomorphic sphere
   so that $P_{i}\in u(S^{2}), \forall i$.
\end{dfn}
If $(M,\emptyset)$ is $(p,0)$-uniruled we will say that $M$ {\em is
  uniruled of type $p$}.  Thus the usual notion of uniruling for a
symplectic manifold - $M$ is uniruled if through each point of $M$
passes a $J$-sphere in some fixed homotopy class in $\pi_{2}(M)$ - is
equivalent in our terminology with $M$ being $1$-uniruled. Similarly,
in case $(M,L)$ is $(0,q)$ - uniruled we will say that $L$ is
$q$-uniruled. Additionally, if $q=1$ we say that $L$ is uniruled.

The relation with packing is given by the following fact:

\begin{lem}\label{lem:area_estimate}
   If the pair $(M,L)$ is $(p,q)$ - uniruled of order $k$ , then for
   any vector $v_{p,q}=(r_{1},\ldots r_{p}; \rho_{1},\ldots \rho_{q})$
   the mixed symplectic packing number $w(M,L : v_{p,q})$ verifies:
   $$w(M,L : v_{p,q})\leq \eta k $$ where $\eta$ is the monotonicity
   constant, $\eta=\omega/\mu$.
\end{lem}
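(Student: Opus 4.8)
The plan is to use the uniruling disks as a tool to bound the areas of the symplectically embedded balls from the packing definition. Fix a vector $v_{p,q}=(r_1,\ldots,r_p;\rho_1,\ldots,\rho_q)$ and suppose we have mutually disjoint symplectic embeddings $f_i:(B^{2n}(\tau r_i),\omega_0)\to M\setminus L$ and $e_j:(B^{2n}(\tau\rho_j),\omega_0)\to M$ relative to $L$, for some $\tau>0$. I want to show that $\sum_i \pi(\tau r_i)^2 + \frac12\sum_j \pi(\tau\rho_j)^2 \leq \eta k$; taking the supremum over admissible $\tau$ then gives the lemma. The first step is to choose the marked points cleverly: let $P_i = f_i(0)$ be the image of the center of the $i$-th interior ball, and let $Q_j = e_j(0)$ be the image of the center of the $j$-th relative ball (which lies on $L$ since $e_j$ is relative to $L$). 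These are $p$ distinct points in $M\setminus L$ and $q$ distinct points on $L$, so by the $(p,q)$-uniruling hypothesis, for a generic almost complex structure $J$ there is a non-constant $J$-holomorphic disk $u:(D^2,\partial D^2)\to(M,L)$ passing through all the $P_i$ in its interior and all the $Q_j$ on its boundary, with $\mu(u)\leq k$.

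The second step is to arrange that $J$ is adapted to the embedded balls, i.e. that on the image of each $f_i$ and each $e_j$ the structure $J$ is the push-forward of the standard integrable structure $J_0$ on $B^{2n}$ (after a small rescaling of the radii, replacing $\tau$ by $\tau-\varepsilon$, this can be done while keeping $J$ in the generic set, since the adaptedness condition is closed with nonempty interior and the uniruling family is Baire second category). Then the key step is a standard monotonicity-type argument: inside the ball $f_i(B^{2n}(\tau r_i))$, the preimage $f_i^{-1}(u(D^2))$ is a non-constant $J_0$-holomorphic curve in the standard ball passing through the center $0$, hence by the Lelong/monotonicity inequality for minimal surfaces (or the standard symplectic area estimate for holomorphic curves through the center of a Darboux ball) its symplectic area is at least $\pi(\tau r_i)^2$. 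Thus $\omega(u)\geq \sum_i \pi(\tau r_i)^2$ just from the interior balls. For the relative balls $e_j(B^{2n}(\tau\rho_j))$, the curve $u$ passes through the center $Q_j\in L$, and its boundary lies on $L = e_j(B^{2n}\cap\R^n)$; the part of $u$ inside this ball is a $J_0$-holomorphic curve in $(B^{2n},B^{2n}\cap\R^n)$ through the origin with boundary on the real part, and the analogous monotonicity estimate in the relative/doubling setting gives area at least $\frac12\pi(\tau\rho_j)^2$ (the factor $\frac12$ coming from the fact that only ``half'' of a neighborhood of the origin is being swept, as reflected in the factor $\frac12$ already built into the definition of $w(M,L:v_{p,q})$). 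Since all these balls are mutually disjoint and $\omega\geq 0$ on $J$-holomorphic curves, we add the contributions: $\omega(u)\geq \sum_i\pi(\tau r_i)^2 + \frac12\sum_j\pi(\tau\rho_j)^2$.

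Finally, by monotonicity of $L$ we have $\omega(u) = \eta\,\mu(u) \leq \eta k$, so combining the two inequalities yields $\sum_i\pi(\tau r_i)^2 + \frac12\sum_j\pi(\tau\rho_j)^2 \leq \eta k$. Letting $\varepsilon\to 0$ to recover the original $\tau$, and then taking the supremum over all admissible $\tau$, gives $w(M,L:v_{p,q})\leq \eta k$ as claimed. In the case $L=\emptyset$ the same argument works with $u$ a $J$-holomorphic sphere through the $P_i$ and $c_1(u)$ in place of $\mu(u)$, using that $\omega(u)=\eta c_1(u)$ and $\omega(u)\geq\sum_i\pi(\tau r_i)^2$.

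I expect the main obstacle to be the step establishing the local area lower bounds $\pi(\tau r_i)^2$ and $\frac12\pi(\tau\rho_j)^2$ for the pieces of $u$ inside the balls — specifically, making precise that one may choose the generic $J$ in the uniruling family to be standard on the (slightly shrunk) balls, and that the monotonicity inequality applies cleanly in the relative (boundary-on-$L$) case. The interior estimate is classical (Gromov); the relative one requires either a doubling trick or a direct argument with holomorphic curves with Lagrangian boundary, and care must be taken that the disk $u$ genuinely enters the ball with positive area near the center rather than merely touching $Q_j$ tangentially — but since $u$ is non-constant and $J$-holomorphic through an interior-type point of the half-ball, unique continuation prevents degenerate behavior. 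The combinatorial bookkeeping (disjointness of balls, additivity of area) is routine once these local estimates are in hand.
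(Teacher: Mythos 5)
Your proof is correct and follows essentially the same route as the paper: the paper's (very brief) argument is exactly Gromov's standard one, resting on the interior estimate $\pi r^{2}\leq \int u^{\ast}\omega$ and the relative estimate $\pi r^{2}/2\leq \int u^{\ast}\omega$ for a $J$-curve through the center of a ball on which $J$ is standard, combined with $\omega(u)=\eta\,\mu(u)\leq \eta k$. The only blemish is in your tangential $L=\emptyset$ aside, where the relation should read $\omega(u)=\eta\,\mu(u)=2\eta\, c_{1}(u)$ rather than $\eta\, c_{1}(u)$; this does not affect the lemma as stated.
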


The proof of this is standard and is a small modification of an
argument of Gromov \cite{Gr:phol}. It comes down to the following
simple remark which also explains the $1/2$ factor in the definition
of $w(M,L : v_{p,q})$. If a $J$-curve $u$ with boundary on a
Lagrangian goes through the center of a standard symplectic ball or
radius $r$ embedded in $M$ {\em relative} to $L$ so that $J$ coincides
with the standard almost complex structure inside the ball, then we
have $\pi r^{2}/2\leq \int u^{\ast}\omega$. This is in contrast to the
case when $u$ has no boundary, when the inequality is, as is
well-known, $\pi r^{2}\leq \int u^{\ast}\omega$.

\

The simplest way to detect algebraically that $M$ is $p$-uniruled is
to find some class $\alpha\in \pi_{2}(M)$ and $r\geq 1$ so that, for
distinct points $P_{1},\ldots, P_{p}$, and a generic $J$, the
evaluation at $r$ distinct points on the $J$-spheres of class $\alpha$
which pass through the fixed points $P_{i}, 1\leq i\leq p$, has a
homologically non-trivial image in the product $M^{\times r}$. This
can be translated in terms of Gromov-Witten invariants: if there exist
$\alpha\in \pi_{2}(M)$ and classes $a_{i}\in H_{\ast}(M;\Z_{2}), 1\leq
i\leq r$, so that
\begin{equation}\label{eq:GW_unir}
   GW(pt,\ldots,pt,a_{1},\ldots,a_{r};\alpha)\not=0 \
\end{equation} where the class of the point, $pt\in
H_{0}(M;\Z_{2})$, appears $p$ times, then $M$ is clearly $p$-uniruled
(we recall that the Gromov-Witten invariant $GW(b_{1},\ldots
b_{s};\alpha)$ counts - in this paper with $\Z_{2}$ coefficients - the
number of $J$-spheres in the homotopy class $\alpha\in \pi_{2}(M)$
which each pass through generic cycles representing the homology
classes $b_{i}\in H_{\ast}(M;\Z_{2})$).

\begin{rem}
   In case $p=1$ the condition in (\ref{eq:GW_unir}) gives the notion
   of ``strong uniruled'' which appears in McDuff \cite{McD:Ham_U}
   (with the additional constraint that the degree of the homology
   classes $a_{i}$ are even).
\end{rem}
If we fix $p\geq 2$ and add the requirement that $r=1$, then, by the
splitting property of Gromov-Witten invariants, the uniruling
condition implies $GW(pt,pt,a;\alpha')\not=0$ for some choices of
$a\in H_{\ast}(M;\Z_{2})$ and $\alpha'\in \pi_{2}(M)$. Of course, this
can be re-interpreted in quantum homology as the relation $[pt]\ast
a=[M] e^{\alpha'}+ \cdots$ where $[pt]\in H_{0}(M;\Z_{2})$ represents
the point, $[M]\in H_{2n}(M,\Z_{2})$ is the fundamental class, and the
Novikov ring used is $\Z_{2}[\pi_{2}(M)]$.

A stronger condition will play a key role in the following. Consider
the quantum homology of $M$ with coefficients in
$\Gamma=\Z_{2}[s^{-1},s]$ with $\deg(s)=-2C_{M}$ (where $C_{M}$ is the
minimal Chern number). This is
$QH_{\ast}(M)=H_{\ast}(M;\Z_{2})\otimes\Gamma$.

\begin{dfn} With the notation above we say that $M$ is {\em point
     invertible} if $[pt]$ is invertible in $QH(M)$.  This implies
   that there exists $0 \neq a_{0}\in H_{\ast}(M;\Z_{2})$, $a_{1}\in
   H_{\ast}(M;\Z_{2})\otimes \Z_{2}[s]$, and $k\in \N$ so that, if we
   put $a=a_{0}+a_{1}s$, then in $QH_{\ast}(M)$ we have $$[pt]\ast a =
   [M] s^{k/2C_{M}}~.~$$ The natural number $k$ above is uniquely
   defined and we specify it by saying that $M$ is {\em point
     invertible of order $k$}.
\end{dfn}

Of course, as indicated above, a point-invertible manifold is
$2$-uniruled.  The class of point invertible manifolds includes, for
example, $\C P^{n}$ and the quadric $Q^{2n}\subset \C P^{n+1}$.
Moreover, in view of the product formula for Gromov-Witten invariants,
this class is closed with respect to products.

In general, no such direct algebraic criteria can be found to test the
existence of mixed uniruling of the pair $(M,L)$ or even whether $L$
itself is uniruled because relative Gromov-Witten invariants are not
well-defined in full generality.

\subsection{Main results}
Recall that by the work of Oh \cite{Oh:HF1} if $L \subset M$ is a
monotone Lagrangian - which we will assume from now on, then the Floer
homology $HF(L):=HF(L,L)$ with $\Z_{2}$-coefficients is well-defined
(the construction will be briefly recalled later in the paper). Floer
homology is easily seen to be isomorphic (in general not canonically)
to a quotient of a sub-vector space of $H(L;\Z_{2})\otimes \Lambda$.
Here $H(L;\Z_{2})$ is singular homology and $\Lambda =
\Z_{2}[t^{-1},t]$  where the degree of
$t$ is $|t|=-N_{L}$ (see \S\ref{subsubsec:defin_alg_str} g for the
precise definition). Thus, there are two extremal cases:

\begin{dfn}If $HF(L)=0$ we say that $L$ is {\em narrow}; if there
   exists an isomorphism $HF(L)\cong H(L;\Z_{2})\otimes \Lambda$, then
   we call $L$ {\em wide}. Note that the latter isomorphism is not
   required to be canonical in any sense.
\end{dfn}

Remarkably, all known monotone Lagrangians are either narrow or wide.
We will see that the dichotomy {\em narrow - wide} plays a key role in
structuring the properties of monotone Lagrangians. In particular,
narrow Lagrangians tend to be small in the sense that their width is
bounded and non-narrow ones tend to be {\em barriers} in the sense of
\cite{Bi:Barriers}: the width of their complement tends to be smaller
than that of the ambient manifold. Wide Lagrangians are even more
rigid.

\subsubsection{Geometric rigidity. }
We start with one result concerning narrow Lagrangians which also
shows that the ``narrow - wide'' dichotomy holds in a variety of cases
(related results are due to Buhovsky~\cite{Bu:products}):

\begin{thm}\label{thm:floer=0-or-all}
   Let $L^n \subset M^{2n}$ be a monotone Lagrangian.  Assume that its
   singular homology $H_*(L;\mathbb{Z}_2)$ is generated as a ring
   (with the intersection product) by $H_{\geq n-l}(L;\mathbb{Z}_2)$.
   \begin{enumerate}[i.]
     \item If $N_{L} > l$, then $L$ is either wide or narrow.
      Moreover, if $N_{L}> l+1$, then $L$ is wide.
     \item In case $L$ is narrow, then $L$ is uniruled of order $K$
      with $K=\max\{l+1, n+1-N_{L}\}$ if $N_{L}< l+1$, and $K=l+1$ if
      $N_{L}=l+1$. Moreover, $w(L)\leq 2K\eta$ where $\eta$ is the
      monotonicity constant. In particular, the width of narrow
      monotone Lagrangians $L$ is ``universally'' bounded: $w(L) \leq
      2(n+1) \eta$. In case $L$ is narrow and not a homology sphere
      the bound can be improved to $w(L) \leq 2n \eta$.
      \label{I:dichotomoy-narrow}
   \end{enumerate}
\end{thm}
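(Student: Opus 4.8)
\medskip
\noindent\textbf{Proof plan.}
Everything runs through the pearl (Lagrangian quantum) complex $(\mathcal{C}(f,J),\partial)$ of $L$. Recall that for a regular pair $(f,J)$ --- a Morse function $f$ on $L$ together with a compatible $J$ --- one has $\partial=\partial_0+\partial_1 t+\partial_2 t^2+\cdots$ on $\mathcal{C}(f,J)=\Z_2\langle\Crit f\rangle\otimes\La$, where $\partial_0$ is the Morse differential of $f$ and, for $i\ge 1$, $\partial_i$ counts rigid ``pearly'' configurations of $J$--disks of total Maslov index $iN_L$ linked by $-\nabla f$--trajectories; its homology is $HF(L)$. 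Filtering by powers of $t$ gives a spectral sequence with $E^1=H_*(L;\Z_2)\otimes\La$ converging to $HF(L)$, whose differential $\delta_r$ raises $H_*(L)$--degree by $rN_L-1$ and $t$--power by $r$. I will use three facts: (a) the pearl complex carries the quantum intersection product, compatible with the $t$--filtration, so the spectral sequence is one of algebras and each $\delta_r$ is a derivation of $E^r$; (b) $HF(L)$ is a unital ring with unit $[L]$, so $L$ is narrow $\iff [L]=0$ in $HF(L)\iff HF(L)=0$; (c) the top class $[L]\in H_n(L;\Z_2)$ cannot support any $\delta_r$ and has no representative of positive $t$--power in degree $n$, so $[L]$ dies on some page of the spectral sequence precisely when $L$ is narrow.

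For part (i): since $E^1$ is generated as a $\La$--algebra by $H_{\ge n-l}(L;\Z_2)$ and $\delta_r$ annihilates $t=[L]t$, each $\delta_r$ is determined by its values on classes of degree $\ge n-l$, and it sends such a class into degree $\ge(n-l)+(rN_L-1)$. If $N_L>l+1$ this is $\ge n+1$ for every $r\ge1$, so all $\delta_r$ vanish on generators, hence vanish; the sequence degenerates at $E^1$ and $L$ is wide. If $N_L=l+1$, the same estimate kills $\delta_r$ for $r\ge2$, while $\delta_1$ raises degree by exactly $l$ and so can be nonzero only on generators of degree exactly $n-l$, landing in $H_n(L;\Z_2)\otimes t\cong\Z_2\,[L]t$. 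If $\delta_1\equiv0$ the sequence again degenerates and $L$ is wide; if instead $\delta_1(g)=[L]t$ for a degree--$(n-l)$ generator $g$, then $[L]=\delta_1(gt^{-1})$ is a $\delta_1$--boundary, so $[L]$ dies and $L$ is narrow by (b)--(c). This yields the dichotomy, and wideness when $N_L>l+1$.

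For part (ii), assume $L$ narrow (so $N_L\le l+1$ by part (i)). Fix $Q\in L$ and choose $f$ with a unique maximum $m$ placed at $Q$. Then $m$ is a $\partial$--cycle representing $[L]=0$, so $m=\partial\xi$; degree reasons force $\xi=\sum_{i'\ge1}\xi_{-i'}t^{-i'}$ with $\deg\xi_{-i'}=n+1-i'N_L$, and comparing $t^0$--components gives $\sum_{i'\ge1}\partial_{i'}(\xi_{-i'})=m$. Hence for some $i'$ there is a rigid pearly configuration from a critical point of index $n+1-i'N_L$ to $m$, of total Maslov $i'N_L$; since $m$ is a source of $-\nabla f$ and $W^s(m)=\{Q\}$, the final gradient segment is constant, so the last $J$--disk of this configuration passes through $Q$. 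Running this over the Baire--generic set of $J$ admitting such a regular pair shows $L$ is uniruled. For the sharp order one bounds the page $s_0$ on which $[L]$ first bounds, the source $z$ of $\delta_{s_0}$ lying in degree $n+1-s_0N_L$: if $\deg z\ge N_L$ then $s_0N_L=n+1-\deg z\le n+1-N_L$; if $\deg z<N_L$, one exploits that $z$ is a product of the degree--$\ge n-l$ generators of $E^1$ and that $\delta_{s_0}$ is a derivation to force $s_0N_L\le l+1$ (and when $N_L=l+1$ the part--(i) computation already gives $s_0=1$, disk Maslov $N_L=l+1$). In every case the disk through $Q$, a component of a rigid configuration of total Maslov $s_0N_L$, has Maslov index $\le K$, so $L$ is $(0,1)$--uniruled of order $K$. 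Finally Lemma~\ref{lem:area_estimate} applied to $v_{0,1}=(\emptyset;1)$ gives $w(L)=2\,w(M,L:(\emptyset;1))\le 2K\eta$; since $l\le n$ always yields $K\le n+1$ we get $w(L)\le 2(n+1)\eta$, and if $L$ is not a $\Z_2$--homology sphere then nondegeneracy of the cup--product pairing on $H^*(L;\Z_2)$ shows $H_{>0}(L;\Z_2)$ already generates the ring, so one may take $l\le n-1$, hence $K\le n$ and $w(L)\le 2n\eta$.

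The main obstacle is the sharp value of $K$ in part (ii). Producing \emph{some} disk through $Q$ is soft, but pinning the order down to $\max\{l+1,\,n+1-N_L\}$ (improving to $l+1$ when $N_L=l+1$) requires careful bookkeeping on the spectral sequence: one must locate the page on which $[L]$ (equivalently the relevant minimal class) first becomes a boundary --- genuinely delicate, because earlier unrelated differentials deform the later pages and can obstruct the naive ``derivation plus ring--generators'' estimate, so one cannot simply argue on $E^1$ --- and then convert ``dies on page $s_0$'' into a bound on the Maslov index of a \emph{single} disk meeting $Q$, which means understanding how the total Maslov $s_0N_L$ of a rigid pearly configuration distributes among its disk components and how the source index $n+1-s_0N_L$ enters. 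The two summands of $K$ are precisely the two cases $\deg z\ge N_L$ and $\deg z<N_L$ of this analysis.
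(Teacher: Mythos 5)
Your part~(i) is essentially the paper's argument in spectral-sequence clothing: the degree filtration together with its multiplicative structure is exactly what the paper packages as the minimal pearl complex (Proposition~\ref{prop:min_model} and Remark~\ref{rem:product_min}), and your degree count on ring generators reproduces Lemma~\ref{lem:min_diff}. That part is fine, as is the reduction of the width bounds to uniruling via Lemma~\ref{lem:area_estimate} and the observation that one may take $l=n-1$ when $L$ is not a $\Z_2$-homology sphere.

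The genuine gap is in part~(ii), at exactly the point you flag yourself: the bound $qN_L\leq K$. In your case $\deg z<N_L$ you assert that generation by $H_{\geq n-l}(L;\Z_2)$ plus the derivation property of $\delta_{s_0}$ forces $s_0N_L\leq l+1$; but this cannot be run on the page $E^{s_0}$ (earlier differentials have deformed it, as you admit), and the conclusion is not the right one in this form. When the element whose differential first contains $[L]t^{\bullet}$ is decomposable, what one actually extracts is not a low-order differential killing $[L]$ but a \emph{quantum product} relation $y\circ z=[L]t^{q}+l.o.(t)$ with $0<qN_L\leq n+1-N_L$: this is the second alternative of Lemma~\ref{lem:bdry_order}, proved on the chain level of the minimal model by taking $w$ extremal in degree with $\langle\rho,\delta^{+}w\rangle\neq 0$, writing $w=w_1\cdot w_2=w_1\circ w_2+\sum_i z_it^{i}$ and applying Leibniz, the extremality of $|w|$ killing the correction terms. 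The two summands of $K$ correspond to these two alternatives (differential versus product), not to your dichotomy $\deg z\gtrless N_L$, and working at chain level in the minimal model is precisely what circumvents the ``deformed later pages'' obstruction you name but do not resolve. Moreover, even granting the algebraic dichotomy, you still must produce a disk of Maslov index $\leq K$ through an \emph{arbitrary prescribed} point $Q\in L$: in the product case this requires realizing $y\circ z=[L]t^q+l.o.(t)$ at chain level with a Morse function whose unique maximum sits at $Q$ (plus a second function for the product) and pulling the relation back through the non-canonical structure maps $\phi,\psi$ of the minimal model --- the content of Lemma~\ref{lem:ex_disks2} and the ensuing bookkeeping ($j=q$ versus $j<q$, and the $k+i+j$ analysis) in \S\ref{sb:prf-hf=0-or-all}. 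Your soft argument that $m=\partial\xi$ yields some disk through $Q$ carries no Maslov bound, and none of the above transfer is supplied, so the stated value $K=\max\{l+1,n+1-N_L\}$ (and the improvement $K=l+1$ when $N_L=l+1$) is not established by the proposal.
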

Note that the finiteness of $w(L)$ from
point~\ref{I:dichotomoy-narrow} is not trivial since $M$ is not
assumed to be compact nor of finite volume or width.  Moreover, when
$L$ is not narrow, $w(L)$ might be infinite. For example,
zero-sections in cotangent bundles (which are wide) have infinite
width.  A class of Lagrangians for which
Theorem~\ref{thm:floer=0-or-all} gives non-trivial information is that
of monotone Lagrangian tori. In this case $H_*(L;\mathbb{Z}_2)$ is
generated by $H_{\geq n-1}(L;\mathbb{Z}_2)$ hence we can take $l=1$.
As $N_L \geq 2 > l$ we see that any monotone Lagrangian torus is
either narrow or wide. In case such a Lagrangian is narrow we have
$w(L) \leq 4 \eta$.

To obtain any meaningful uniruling results for Lagrangians which are
not narrow, the same example of zero sections in cotangent bundles
shows that some additional conditions need to be imposed on the
ambient manifold $M$.

\begin{thm}\label{theo:geom_rig}
   Let $L$ be a monotone Lagrangian in a symplectic manifold $M$ which
   is point-invertible of order $k$.
   \begin{enumerate}[i.]
     \item If $L$ is not narrow, then $(M,L)$ is uniruled of type
      $(1,0)$ of order $<k$. In particular,
      $$w(M\backslash L)\leq (k-N_{L})\eta~.~$$

     \item If $L$ is wide, then $L$ is uniruled of order $<k$ and we
      have:
      \begin{equation}\label{eq:mixed}w(L)+2w(M\backslash L)\leq
         2k\eta
      \end{equation} \label{I:geom_rig-non-narrow}
   \end{enumerate}
\end{thm}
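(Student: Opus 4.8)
The plan is to reduce both statements to a uniruling-type argument that exploits the algebraic identity $[pt] \ast a = [M] s^{k/2C_M}$ coming from point-invertibility, transported into the Lagrangian world via a suitable module structure of $QH(L)$ (or the Floer homology of $L$) over $QH(M)$. The first step is to recall, from the earlier sections of the paper, that when $L$ is monotone the Floer homology $HF(L)$ carries a structure of $QH(M)$-module, induced by counting pearly trajectories (or quantum intersections) with one extra marked interior point constrained to cycles in $M$. Concretely, for a class $a \in QH(M)$ one gets an operator $a \circledast (\cdot) \colon HF(L) \to HF(L)$, and these operators compose according to the quantum product in $QH(M)$. Since $[pt]$ is invertible in $QH(M)$ with $[pt] \ast a = [M] s^{k/2C_M}$ and $[M]$ acts as the identity, the operator $[pt] \circledast (\cdot)$ is invertible on $HF(L)$ up to the degree shift by $s^{k/2C_M}$.

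For part (i): if $L$ is not narrow then $HF(L) \neq 0$, so the operator $[pt] \circledast (\cdot)$ is non-zero on $HF(L)$. This is exactly the algebraic shadow of a geometric uniruling statement: the non-vanishing of $[pt] \circledast (\cdot)$ forces, for a generic $J$ and a generic choice of point $P \in M \setminus L$, the existence of a configuration of $J$-holomorphic disks with boundary on $L$ (a pearly trajectory) whose total Maslov index is controlled and one of whose interior points passes through $P$. The key bookkeeping is the degree/area accounting: the module operation $[pt] \circledast (\cdot)$ lowers degree by $\dim M - 0 = 2n$ on the singular part, and the total Maslov class appearing in the pearly configuration contributing to $[pt]\circledast(\cdot) = \big([M] s^{k/2C_M} \circledast (\cdot)\big) \circ (a \circledast (\cdot))^{-1}$ has Maslov index at most $k$; since the configuration is non-constant through a generic point of $M \setminus L$ and the factor $a$ contributes at least $N_L$ to the Maslov index that must be subtracted, one extracts a single non-constant disk through $P$ of Maslov index $\le k - N_L < k$. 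Thus $(M,L)$ is $(1,0)$-uniruled of order $< k$, and $w(M\setminus L) \le (k-N_L)\eta$ follows immediately from Lemma \ref{lem:area_estimate}.

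For part (ii): when $L$ is wide, $HF(L) \cong H(L;\Z_2)\otimes\Lambda$ is as large as possible, and in particular the unit (the fundamental class $[L]$) survives in Floer homology. Now one runs the same $QH(M)$-module argument but starting from $[L] \in HF(L)$ and tracking through $[pt] \circledast [L]$: the invertibility of $[pt]$ together with $[pt] \ast a = [M]s^{k/2C_M}$ produces a pearly configuration through a generic interior point $P$ and, because $[L]$ is the unit, one gains the freedom to additionally constrain the boundary of the disk to pass through a generic point $Q \in L$ (this is where wideness is essential — it is what allows the extra boundary incidence condition to be imposed without killing the class). This yields that $L$ is uniruled (i.e.\ $(0,1)$-uniruled) of order $< k$, and in fact a refined count gives a single configuration realizing both the interior incidence at $P$ and the boundary incidence at $Q$ with total area bounded by $k\eta$; splitting this configuration and applying the area inequalities ($\pi r^2$ for the interior ball, $\pi r^2/2$ for the ball relative to $L$) of Lemma \ref{lem:area_estimate} gives the mixed estimate $w(L) + 2w(M\setminus L) \le 2k\eta$ of \eqref{eq:mixed}.

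The main obstacle is the precise definition and control of the $QH(M)$-module structure on $HF(L)$ together with the compatibility of its composition with the quantum product — in particular, verifying that the degree shifts and Maslov/area bounds propagate correctly through the pearly moduli spaces, and that the relevant moduli spaces are regular for generic $J$ so that the geometric existence statement (not just the algebraic non-vanishing) can be extracted with the claimed bound on the Maslov index of a single disk. The second delicate point is part (ii): ensuring that wideness is exactly the hypothesis needed to impose the extra boundary point constraint, which requires understanding how the module operation interacts with the augmentation $HF(L) \to \Lambda$ and with evaluation at a boundary marked point; the sharp mixed inequality then rests on arranging a single pearly configuration carrying \emph{both} incidence conditions rather than two separate ones, which is the technically demanding geometric input.
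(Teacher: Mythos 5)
Your plan for part (i) — pushing the identity $[pt]\ast a=[M]s^{k/2C_M}$ through the $QH(M)$-module action on the Lagrangian invariant — is indeed how the paper starts (this is exactly equation~\eqref{eq:pt_equation}), but your justification of the strict bound ``order $<k$'' does not hold up. The class $a=a_0+a_1s$ has a nonzero classical part $a_0$, which contributes Maslov index $0$, so the factor $a$ does not automatically ``contribute at least $N_L$ to be subtracted.'' The actual mechanism is a chain-level degree/positivity analysis: writing $m_g\ast [L]=\sum_{i>0}z_it^i$ in a minimal pearl complex over $\Lambda^+$, one shows that the coefficient $z'$ of $t^{k/N_L}$ satisfies $a'\ast z'=0$ (otherwise degree reasons would force $a'=P_g$, i.e.\ $a=[M]$, contradicting point-invertibility), and that some $z_i$ with $0<i<k/N_L$ must be nonzero, since otherwise $[L]$ would be a boundary and $L$ would be narrow (Lemma~\ref{lem:complL_unir}). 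Extracting from this an actual non-constant $J$-disk through a prescribed point of $M\setminus L$ also requires the chain-level bookkeeping with the minimal model and its structure maps (Lemma~\ref{lem:ex_disks2}); homology-level non-vanishing of $[pt]\circledast(-)$ alone does not give the strict Maslov bound.

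The more serious gap is in part (ii). A single pearly configuration through both an interior point $P$ and a boundary point $Q\in L$ would bound the \emph{mixed} packing number $w(M,L:(r;\rho))$, in which the two balls are required to be disjoint; it does not bound $w(L)+2w(M\setminus L)$, where the two widths are suprema over independently chosen, possibly overlapping balls — the remark following the theorem makes precisely this distinction, so your strategy proves a different (and here insufficient) statement. The paper's proof instead produces two \emph{separate} uniruling statements — $(M,L)$ is $(1,0)$-uniruled of order $i_0N_L$ and $L$ is uniruled of order $\tau N_L$ — whose orders are linked by the algebra ($i_0+\tau\le k/N_L$ roughly), so that adding the two width estimates from Lemma~\ref{lem:area_estimate} yields \eqref{eq:mixed}. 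Wideness is used not to ``permit an extra boundary incidence'' but to kill the minimal differential, turning the boundary relation into the exact chain-level identity $a'\ast S_1=[L]t^{k/N_L}$ which can then be analyzed term by term; and one residual case (where $a$ would have to contain $[M]t^r$ with $r>0$) must be ruled out by a Gromov--Witten dimension count, a step your outline does not anticipate.
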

We emphasize that the somewhat surprising  part of the statement is that 
the uniruling involving $L$ is of order strictly lower than $k$ whenever
$M$ is point invertible of order precisely $k$ (in particular, it might happen
 that $M$ itself is uniruled of order precisely $k$).
\begin{rem}
   \begin{enumerate}[a.]
     \item There are a few additional immediate inequalities that are
      worth mentioning: as $M$ is uniruled we have $w(M)\leq k\eta$
      and so $w(L)\leq k\eta$.  Moreover, as $M$ is 2-uniruled, we
      have $w(M,\emptyset; (r_{1},r_{2};\emptyset))\leq k\eta$.
      Obviously, we always have $w(M,\emptyset;
      (r_{1},r_{2};\emptyset))\geq w(M,L; (r_{1};r_{2}))$.
     \item These general inequalities do not imply the inequality
      (\ref{eq:mixed}). Indeed, in contrast to $w(M,L;
      (r_{1};r_{2}))$, the two balls involved in estimating separately
      the width of $L$ and that of its complement are not required to
      be disjoint !
     \item A non-trivial consequence of point~i of the Theorem is that
      if $M$ is point-invertible of order $k$ and $L$ is non-narrow,
      then $N_{L}\leq k/2$.
     \item Assuming the setting of the
      point~\ref{I:geom_rig-non-narrow} of the Theorem we deduce from
      the fact that $L$ is uniruled of order $< k$, that $w(L)\leq
      2(k-N_{L})\eta$. However, this inequality lacks interest because
      $2(k-N_{L})\geq k$ (since $k\geq 2N_{L}$).
   \end{enumerate}
\end{rem}

\subsubsection{Corollaries for Lagrangians in $\C P^{n}$} We endow $\C
P^{n}$ with the standard K\"{a}hler symplectic structure
$\omega_{\textnormal{FS}}$ normalized so that $\int_{\mathbb{C}P^1}
\omega_{\textnormal{FS}} = 1$. With this normalization we have
${\mathbb{C}}P^n \setminus {\mathbb{C}}P^{n-1} \approx
\textnormal{Int\,} B^{2n}(\frac{1}{\sqrt{\pi}})$ hence $w(\C
P^{n})=1$. Note also that for every monotone Lagrangian $L \subset
{\mathbb{C}}P^n$ we have $\eta=1/(2n+2)$ and that $\C P^{n}$ is point
invertible of order $k=2n+2$.

\begin{cor}\label{cor:geom_rig}
   Let $L$ be a monotone Lagrangian in $\C P^{n}$.
   \begin{enumerate}[i.]
     \item At least one of the following inequalities is verified:
      \begin{enumerate}[a.]
        \item $w(L)\leq\frac{n}{n+1}$ \label{I:w-ineq-1}
        \item $w(\C P^{n}\backslash L)\leq \frac{n}{n+1}~.~$ Moreover,
         if $L$ is not narrow then possibility b holds and in fact we
         have $$w({\mathbb{C}}P^n \setminus L) \leq
         \frac{\bigl[\frac{2n}{N_L}\bigr] N_L}{2(n+1)}.$$
         \label{I:w-ineq-2}
      \end{enumerate}
      \label{I:geom_rig-i}
     \item If $L$ is wide, then we have
      $$w(L)+2w(\C P^{n}\backslash L)\leq 2~.~$$ \label{I:wide-pack-cpn}
   \end{enumerate}
\end{cor}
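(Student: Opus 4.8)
The plan is to read off both statements from Theorems~\ref{thm:floer=0-or-all} and~\ref{theo:geom_rig} by specializing all invariants to $M=\C P^{n}$. For every monotone Lagrangian $L\subset\C P^{n}$ one has $\eta=\tfrac{1}{2(n+1)}$, the manifold $\C P^{n}$ is point invertible of order $k=2n+2$, and $N_{L}$ divides $2(n+1)=k$; in particular $N_{L}\geq2$. Beyond this the only elementary input is the identity
\[
  \Bigl[\tfrac{2n}{N_{L}}\Bigr]N_{L}=2n+2-N_{L}\qquad\text{whenever }N_{L}\mid 2n+2 ,
\]
which follows on writing $2n+2=mN_{L}$: then $\tfrac{2n}{N_{L}}=m-\tfrac{2}{N_{L}}$, whose integer part is $m-1$ (with equality when $N_{L}=2$), so the left-hand side equals $(m-1)N_{L}=2n+2-N_{L}$.

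Part~ii is immediate: if $L$ is wide, then part~ii of Theorem~\ref{theo:geom_rig} gives $w(L)+2w(\C P^{n}\setminus L)\leq 2k\eta=\tfrac{2(2n+2)}{2(n+1)}=2$. For part~i I split into the two mutually exclusive cases ``$L$ not narrow'' and ``$L$ narrow''. If $L$ is not narrow — which includes the case $L$ wide — part~i of Theorem~\ref{theo:geom_rig} shows that $(\C P^{n},L)$ is uniruled of type $(1,0)$ of order $<k$ and, more precisely, $w(\C P^{n}\setminus L)\leq(k-N_{L})\eta=\tfrac{2n+2-N_{L}}{2(n+1)}$. By the identity above this equals $\tfrac{[2n/N_{L}]N_{L}}{2(n+1)}$, and since $N_{L}\geq2$ it is $\leq\tfrac{2n}{2(n+1)}=\tfrac{n}{n+1}$; hence possibility~b holds, with the stated refinement.

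It remains to treat the case in which $L$ is narrow, where I would bound $w(L)$ via Theorem~\ref{thm:floer=0-or-all}. If $L$ is connected, closed and not a $\Z_{2}$-homology sphere, Poincar\'e duality over $\Z_{2}$ provides $1\leq i\leq n-1$ with $H_{i}(L;\Z_{2})\neq0$ and classes $a\in H_{i}$, $b\in H_{n-i}$ with $a\cdot b=[\mathrm{pt}]$; together with the unit $[L]\in H_{n}(L;\Z_{2})$ this exhibits $H_{*}(L;\Z_{2})$ as generated, as a ring, by $H_{\geq1}(L;\Z_{2})$, so one may take $l=n-1$ in Theorem~\ref{thm:floer=0-or-all}. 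Narrowness forces $N_{L}\leq l+1=n$ (otherwise part~i of that theorem makes $L$ wide), and then part~ii gives that $L$ is uniruled of order $n$; hence by Lemma~\ref{lem:area_estimate}, $w(L)=2\,w(\C P^{n},L:(\emptyset;1))\leq 2n\eta=\tfrac{n}{n+1}$, which is possibility~a. The one step that is not purely formal — and which I expect to be the main obstacle — is ruling out that a narrow monotone Lagrangian in $\C P^{n}$ is a $\Z_{2}$-homology sphere; I would establish this by analyzing the minimal pearl complex of such an $L$, which has only the two generators $[\mathrm{pt}]$ and $[L]$, observing that for degree reasons its only possibly non-zero differential is the operation $[\mathrm{pt}]\mapsto[L]$ assembled from Maslov $n+1$ disks, and excluding this configuration in $\C P^{n}$, so that $HF(L)\neq0$ and $L$ is in fact not narrow. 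Granting this, the corollary is a direct consequence of the two main theorems.
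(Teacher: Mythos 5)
Your handling of part~ii and of part~i when $L$ is not narrow matches the paper exactly — both are direct applications of Theorem~\ref{theo:geom_rig} — and the little identity $\bigl[\tfrac{2n}{N_L}\bigr]N_L = 2n+2-N_L$ (valid since $N_L \mid 2(n+1)$ and $N_L \geq 2$) is a useful bookkeeping step that the paper leaves implicit. The one place where you diverge is precisely the place you flag: ruling out that a narrow monotone $L \subset \mathbb{C}P^n$ could be a $\mathbb{Z}_2$-homology sphere, which is needed so that one may take $l=n-1$ rather than $l=n$ in Theorem~\ref{thm:floer=0-or-all} and thereby get the bound $w(L)\leq 2n\eta = \tfrac{n}{n+1}$ rather than the weaker $2(n+1)\eta = 1$.

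Here there is a genuine gap. The paper disposes of this point by citing~\cite{Bi-Ci:closed}, which establishes the required topological restriction on Lagrangians in $\mathbb{C}P^n$ by completely different (subcritical Stein / Lefschetz pencil) techniques. You instead sketch an internal argument via the minimal pearl complex: if $L$ were a $\mathbb{Z}_2$-homology sphere, the only possible nontrivial differential would be $\delta([pt]) = [L]t^{q}$ with $qN_L = n+1$, and you would ``exclude this configuration in $\mathbb{C}P^n$.'' The degree analysis up to that point is correct (and already shows that such an $L$ must have $N_L \mid (n+1)$), but the exclusion step is exactly where the work lies, and you give no mechanism for it. Note in particular that the tools you might reach for do not apply here: the $QH(\mathbb{C}P^n)$-module structure and the resulting $2$-periodicity of $QH(L)$ from Lemma~\ref{T:Lag-CPn} are vacuous once $QH(L)=0$, which is precisely the hypothesis you would be trying to contradict; and Lemma~\ref{T:Lag-CPn} itself only yields $H_1(L;\mathbb{Z})\neq 0$, which does not preclude $H_1(L;\mathbb{Z}_2)=0$ (e.g.\ odd torsion). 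So your plan, as stated, does not replace the external reference, and the proof is incomplete at this point.
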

In case $L$ is not narrow, the inequality $w(\C P^{n}\backslash L)\leq
\frac{n}{n+1}$ follows directly from Theorem~\ref{theo:geom_rig}.  If
$L$ is narrow, as $L$ can not be a homology sphere (see
e.g.~\cite{Bi-Ci:closed}) we can take $l=n-1$ in Theorem
\ref{thm:floer=0-or-all} which then implies the inequality at
~\ref{I:geom_rig-i} a above.  Point ii of the Corollary follows from
point ii of Theorem~\ref{theo:geom_rig}.

Corollary~\ref{cor:geom_rig} implies in particular that for any
monotone Lagrangian in $\C P^{n}$ we have
\begin{equation}\label{eq:wi_ineq_gen}
   w(L)+w(\C P^{n}\backslash L)\leq 1+ \frac{n}{n+1} = 2-\frac{1}{n+1}
\end{equation}
or, in other words, {\em any} monotone Lagrangian in $\C P^{n}$ is
either a barrier (in the sense of~\cite{Bi:Barriers}) or its width is
strictly smaller than that of the ambient manifold.  For example, $\R
P^{n} \subset {\mathbb{C}}P^n$ verifies $w(\R P^{n})=1$ and $w(\C
P^{n}\backslash \R P^{n})=1/2$; for the Clifford torus $$\clift^{n} =
\{[z_0: \cdots: z_n] \in {\mathbb{C}}P^n \mid |z_0| = \cdots |z_n|
\}$$ we have $w(\clift^{n})\leq 2/(n+1)$ (an explicit construction due
to Buhovsky~\cite{Bu:packing} shows that we actually have an equality
here) and $w(\C P^{n}\backslash \clift^{n})=n/(n+1)$ so that for $n=2$
both~\ref{I:w-ineq-1} and~\ref{I:w-ineq-2} are sharp. Both $\R P^{n}$
and $\clift^{n}$ show that the inequality at~\ref{I:wide-pack-cpn} is
sharp. We do not know if the inequality~\eqref{eq:wi_ineq_gen} is
sharp.

\

\subsubsection{Spectral rigidity}\label{subsubsec:spec-rig}
To summarize the results above, monotone non-narrow Lagrangians (at
least) in appropriately uniruled symplectic manifolds {\em are
  geometrically rigid}. Of course, by standard Floer intersection
theory, monotone Lagrangians which are not narrow, are also rigid in
the sense that such a Lagrangian can not be disjoined from itself by
Hamiltonian deformation. We now present a different type of rigidity.

Let $\widetilde{Ham}(M)$ be the universal cover of the Hamiltonian
diffeomorphism group of a symplectic manifold $M$. Recall that, by
works of Oh \cite{Oh:spec-inv-1} and Schwarz \cite{Sc:action-spectrum}
we can associate to any $\phi \in \widetilde{Ham}(M)$ and any singular
homology class $\alpha\in H_{\ast}(M;\Z_{2})$ a spectral invariant,
$\sigma(\alpha,\phi)\in\R~.~$ See \S\ref{subsec:proofT3} for the
definition.

Here are two natural notions measuring the variation of an element
$\phi\in\widetilde{Ham}(M)$ on a Lagrangian submanifold $L \subset M$.

\begin{dfn}
   The {\em depth} and, respectively, the {\em height} of $\phi$ on
   $L$ are:
   \begin{align*}
      \mathrm{depth}_{L}(\phi) & =\sup_{[H]=\phi}
      \ \inf_{\gamma\in \Gamma(L)}\int_{S^{1}}H(\gamma(t),t)dt \\
      \mathrm{height}_{L}(\phi)& =\inf_{[H]=\phi} \
      \sup_{\gamma\in\Gamma(L)}\int_{S^{1}}H(\gamma(t),t)dt~,~
   \end{align*}
   where $\Gamma(L)$ stands for the space of smooth loops $\gamma :
   S^{1}\to L$, $H:M\times S^{1}\to \R$ is a normalized Hamiltonian,
   and the equality $[H]=\phi$ means that the path of Hamiltonian
   diffeomorphisms induced by $H$, $\phi^{H}_{t}$, is in the (fixed
   ends) homotopy class $\phi$.
\end{dfn}
\begin{thm}\label{thm:action} Let $L\subset M$ be a monotone
   non-narrow Lagrangian. Then for every $\phi\in\widetilde{Ham}(M)$:
   \begin{enumerate}[i.]
     \item We have $\sigma([M],\phi)\geq \mathrm{depth}_{L}(\phi)$
     \item If $M$ is point invertible of order $k$, then
   $$\ \sigma ([pt],\phi)\geq
   \mathrm{depth}_{L}(\phi)-k\eta ~.~$$
\end{enumerate}
\end{thm}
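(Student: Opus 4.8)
The plan is to relate the spectral invariants of the ambient Hamiltonian diffeomorphism to the Lagrangian Floer homology $HF(L)$ via the closed-open map (or the quantum module structure of $HF(L)$ over $QH_*(M)$), and then to extract the depth bound from a positivity/estimate of the Floer action functional restricted to loops in $L$. First I would recall the standard setup: for a nondegenerate normalized Hamiltonian $H$ with $[H]=\phi$, the spectral invariant $\sigma(\alpha,\phi)$ is the minimal action level at which the class $\alpha\in QH_*(M)$ appears in the filtered Hamiltonian Floer homology $HF^{(\lambda)}_*(H)$. Parallel to this, one has a filtered version of the Lagrangian Floer complex of $L$ perturbed by $H$ — equivalently the Floer complex for the pair $(L,\phi^H_1(L))$ — whose generators are Hamiltonian chords with boundary on $L$, and whose action filtration is controlled from below by $\inf_{\gamma\in\Gamma(L)}\int_{S^1}H(\gamma(t),t)\,dt$. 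The key structural input is the $QH_*(M)$-module action on $HF(L)$, realized on the chain level by counting pearly/half-tube configurations; this is a filtered module map that decreases (or at worst preserves up to a controlled constant) the action filtration.

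The core of the argument for part (i) is then: since $L$ is non-narrow, $HF(L)\neq 0$, and the unit $[M]\in QH_*(M)$ acts as the identity on $HF(L)$. Feeding the Floer cycle representing $\sigma([M],\phi)$ through the module map produces a nonzero class in filtered Lagrangian Floer homology at action level $\le \sigma([M],\phi)$; on the other hand any nonzero class in the Lagrangian Floer homology must have action at least $\inf_{\gamma}\int H(\gamma,\cdot)$, because below that level the filtered Lagrangian complex is acyclic (the minimum over chords is bounded below by the minimum of $H$ over loops in $L$, using that the symplectic-action of a chord differs from $\int H$ by a nonnegative area term — this is exactly the analogue of the standard estimate, and where the monotonicity of $L$ together with $N_L\ge 2$ guarantees no negative bubbling contributions to the filtration). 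Taking the supremum over $H$ with $[H]=\phi$ gives $\sigma([M],\phi)\ge \mathrm{depth}_L(\phi)$. For part (ii), one uses point-invertibility: write $[pt]\ast a=[M]s^{k/2C_M}$ in $QH_*(M)$. The triangle inequality for spectral invariants (subadditivity under quantum product, together with the shift of $s^{k/2C_M}$ contributing $-k\eta$ after identifying the Novikov variables appropriately — recall $\deg s=-2C_M$ and $N_L\mid 2C_M$) yields $\sigma([M],\phi)\le \sigma([pt],\phi)+\sigma(a,\tilde{1})$, and one bounds the valuation contribution of $a$ and the shift by $k\eta$; combining with (i) gives $\sigma([pt],\phi)\ge \sigma([M],\phi)-k\eta\ge \mathrm{depth}_L(\phi)-k\eta$.

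I expect the main obstacle to be making the \emph{filtered} comparison between ambient Hamiltonian Floer homology and Lagrangian Floer homology precise and sharp: one must set up the module structure $QH_*(M)\otimes HF(L)\to HF(L)$ on the chain level with its filtration behavior tracked exactly (the relevant configurations are holomorphic half-tubes with an incoming puncture asymptotic to a Hamiltonian orbit and boundary on $L$), verify that the energy of such a configuration is nonnegative so the map is filtration-nonincreasing, and confirm that the unit of $QH_*(M)$ acts as the identity at the filtered level up to the expected shift. A secondary technical point is dealing with degeneracy: one argues for generic nondegenerate $H$ first and then passes to arbitrary $H$ (and the supremum/infimum over $[H]=\phi$) by the usual continuity and approximation of spectral invariants and of the depth/height functionals. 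Finally, one should double-check the bookkeeping between the two Novikov rings $\Gamma=\mathbb{Z}_2[s^{-1},s]$ and $\Lambda=\mathbb{Z}_2[t^{-1},t]$ under the module structure so that the numerical shift comes out as exactly $k\eta$ rather than some multiple of it.
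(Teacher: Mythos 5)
Your part (i) rests on the claim that the action filtration of the Hamiltonian‑perturbed Lagrangian Floer complex of $L$ (generators: $X_H$-chords with endpoints on $L$, with cappings) is bounded below by $\inf_{\gamma\in\Gamma(L)}\int_{S^{1}}H(\gamma(t),t)\,dt$, i.e.\ that this filtered complex is acyclic below the depth level. This is exactly the point where the argument breaks: a chord is not a loop in $L$ --- its interior can wander into regions of $M$ where $H$ is arbitrarily negative, so the term $\int_{0}^{1}H(x(t),t)\,dt$ in the chord action is not controlled by values of $H$ on loops in $L$; moreover the capping term $\int\hat{x}^{*}\omega$ is not sign-definite (over $\Lambda$ it is unbounded in both directions), so there is no ``nonnegative area term'' making the filtration bounded below by the depth. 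Note that your claimed acyclicity statement does not even use non-narrowness, which is a warning sign: the whole content of the theorem is concentrated precisely in producing a loop \emph{in} $L$ on which $\int H$ can be estimated. If one insists on routing the proof through Lagrangian spectral invariants, the correct substitute is the ``Lagrangian control'' property (reduce to Hamiltonians with $H|_{L}$ constant in space, for which $X_H$ is tangent to $L$ and chords remain in $L$, then use Hofer-type monotonicity), which is a genuinely different argument from the one you sketch.

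For comparison, the paper never filters the Lagrangian side at all. It proves a single Lemma: if $\alpha\ast x=yt^{s}$ with $y$ not divisible by $t$ in $IQ^{+}(L)$, then $\sigma(\alpha,\phi)\geq\mathrm{depth}_{L}(\phi)-sN_{L}\eta$. The mechanism is a module action $\circledast_{F}$ of the action-filtered complex $CF(H,J;\Lambda)$ on the pearl complex $\mathcal{C}(L;f,\rho,J)$ (which carries no $H$-filtration), defined by pearly trajectories in which one disk is replaced by a Floer half-tube $u:(-\infty,0]\times S^{1}\to M$ with $u(\{0\}\times S^{1})\subset L$, together with a chain homotopy comparing $\widetilde{PSS}(x)\circledast_{F}y$ with $x\circledast y$. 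Representing $PSS(\alpha)$ by a cycle of action $\leq\sigma(\alpha,\phi)+\epsilon$, the non-divisibility hypothesis forces one of these moduli spaces to be nonempty, and the energy identity for the half-tube component yields an honest loop $\gamma(t)=u(0,t)$ in $L$ with $\int_{S^{1}}H(\gamma(t),t)\,dt\leq\sigma(\alpha,\phi)+\epsilon+sN_{L}\eta$; positivity enters only through the areas of the $J$-holomorphic disks and the tube energy. Part (i) is the case $\alpha=[M]$, $x=y=[L]$, $s=0$ (non-narrowness gives $[L]\neq 0$ and non-divisibility for degree reasons), and part (ii) is the case $\alpha=[pt]$, $x=a\ast[L]$, $y=[L]$, $s=k/N_{L}$, using $[pt]\ast(a\ast[L])=[L]t^{k/N_{L}}$ --- not the triangle inequality. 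Your triangle-inequality derivation of (ii) from (i), using $[pt]\ast a=[M]s^{k/2C_{M}}$ and $\sigma(a,\mathrm{id})\leq 0$, would be a legitimate alternative once (i) is actually established, but as it stands (i) is the step with the genuine gap.
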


We will actually prove a more general statement than the one contained
in Theorem \ref{thm:action}, however, even this already has a
non-trivial consequence.

\begin{cor}\label{cor:inters}
   Any two non-narrow monotone Lagrangians in $\C P^{n}$ intersect.
\end{cor}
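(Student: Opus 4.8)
The plan is to deduce Corollary~\ref{cor:inters} from Theorem~\ref{thm:action} by a standard displacement-energy-versus-spectral-invariant argument. Suppose, for contradiction, that $L_0$ and $L_1$ are non-narrow monotone Lagrangians in $\C P^n$ with $L_0 \cap L_1 = \emptyset$. Since both are compact and disjoint, there is a Hamiltonian $H$ whose time-one flow $\phi^H_1$ displaces $L_0$ off itself; more precisely one wants to choose $H$ (supported near $L_1$, or constructed from a function separating the two Lagrangians) so that $\phi^H_1(L_0) \cap L_0 = \emptyset$, and one wants good control on the two sides of the oscillation estimate. I would first record the displacement hypothesis in the form that matters: if $\phi = [H] \in \widetilde{Ham}(\C P^n)$ displaces $L_0$, then on one hand $\mathrm{depth}_{L_0}(\phi)$ and $\mathrm{depth}_{L_1}(\phi^{-1})$ together are bounded below by (roughly) the displacement energy of the configuration, while on the other hand $\sigma([M],\phi)$ and $\sigma([pt],\phi)$ are constrained from above by the triangle inequality and the total valuation available in $QH(\C P^n)$.

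Concretely, the key chain of inequalities I would assemble is: by Theorem~\ref{thm:action}(i) applied to $L_0$, $\sigma([M],\phi) \geq \mathrm{depth}_{L_0}(\phi)$; by Theorem~\ref{thm:action}(ii) applied to $L_1$ (which is also non-narrow) with $\C P^n$ point-invertible of order $k = 2n+2$ and $\eta = 1/(2n+2)$, so that $k\eta = 1$, we get $\sigma([pt],\phi^{-1}) \geq \mathrm{depth}_{L_1}(\phi^{-1}) - 1$. Then one uses the general relations between spectral invariants: $\sigma([M],\phi) + \sigma([pt],\phi^{-1}) \geq \sigma([pt],\mathrm{id}) = 0$ via the triangle inequality $\sigma([M]\ast[pt],\phi\phi^{-1}) \le \sigma([M],\phi)+\sigma([pt],\phi^{-1})$ together with $[M]\ast[pt]=[pt]$ and $\sigma([pt],\mathrm{id})=0$ (here I am using the normalization $w(\C P^n)=1$, $[pt]$ has zero valuation at order $0$), and the duality/symmetry $\sigma([pt],\phi^{-1}) = -\sigma([M],\phi)$-type identity in the point-invertible setting. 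Combining, $\mathrm{depth}_{L_0}(\phi) + \mathrm{depth}_{L_1}(\phi^{-1}) \leq \sigma([M],\phi) + \sigma([pt],\phi^{-1}) + 1 \leq \text{(bounded)}$; in fact the right-hand side is at most $1$ (up to getting the constant exactly right). The contradiction comes because, by choosing $H$ to displace $L_0$ from $L_1$ with arbitrarily large oscillation — e.g. replacing $H$ by a $C^0$-large function that is still small near both Lagrangians but whose min over loops in $L_0$ and whose appropriate value over loops in $L_1$ can be pushed apart — the left-hand side can be made larger than any fixed constant, in particular larger than $1$.

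The technical heart, and the step I expect to be the main obstacle, is making the displacement-to-depth estimate precise: one must construct, for a disjoint pair $(L_0, L_1)$, a single Hamiltonian isotopy class $\phi$ such that both $\mathrm{depth}_{L_0}(\phi)$ is large and $\mathrm{depth}_{L_1}(\phi^{-1})$ is large (or is controlled so that their sum is large), while keeping track of normalization of $H$ (the mean-zero condition) and of the homotopy class in $\widetilde{Ham}$. The natural device is to take $H$ of the form $H = c \cdot \chi$ where $\chi$ is a cutoff that equals $1$ on a neighborhood of $L_0$ and $0$ on a neighborhood of $L_1$ (possible since the two compact sets are disjoint), normalized by subtracting its mean; then $\int_{S^1} H(\gamma(t),t)\,dt$ is close to $c - \bar{\chi}$ for loops $\gamma$ in $L_0$ and close to $-\bar{\chi}$ for loops in $L_1$, so $\mathrm{depth}_{L_0}(\phi) \gtrsim c$ and $\mathrm{depth}_{L_1}(\phi^{-1}) \gtrsim 0$ give a sum growing like $c \to \infty$. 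One must check that such $\phi$ genuinely lies in $\widetilde{Ham}(\C P^n)$ (it does, being generated by a compactly-supported-in-a-chart-times-$S^1$ Hamiltonian on a closed manifold) and that the estimates of Theorem~\ref{thm:action} apply verbatim. Once this construction is in place, letting $c \to \infty$ contradicts the upper bound, so $L_0 \cap L_1 \neq \emptyset$, proving the Corollary.
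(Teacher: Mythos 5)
Your argument is correct and is essentially the paper's own proof: both rest on Theorem~\ref{thm:action} i and ii together with the identity $\sigma([pt],\phi^{-1})=-\sigma([\C P^{n}],\phi)$ and $k\eta=1$ for $\C P^{n}$, plus a normalized Hamiltonian that is (locally) constant near each of the two disjoint Lagrangians with the two values pushed more than $1$ apart (the paper works with $\mathrm{height}_{L_0}(\phi)$ instead of $\mathrm{depth}_{L_1}(\phi^{-1})$ and needs only $C_1>C_0+1$ rather than $c\to\infty$, which is cosmetic). The displacement-energy framing and the triangle-inequality step in your first paragraphs are unnecessary detours — the bound you actually use comes from the duality identity, exactly as in the paper.
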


Here is a quick proof of this Corollary. First, the theory of spectral
invariants shows that for any manifold $M$ so that $QH_{2n}(M) =
\mathbb{Z}_2[M]$ and any $\phi\in\widetilde{Ham}(M)$ we have
$\sigma([pt],\phi^{-1})=-\sigma([M],\phi)$. This is the case for $M=\C
P^{n}$ and thus, as for $\C P^{n}$ we have $k=n+2$, $\eta=1/(2n+2)$,
by Theorem \ref{thm:action} ii we deduce for any $\phi$: $\sigma([\C
P^{n}],\phi)=-\sigma([pt],\phi^{-1})\leq
-\mathrm{depth}_{L}(\phi^{-1})+1=\mathrm{height}_{L}(\phi)+1$ .
Therefore, we have the inequalities:
\begin{equation}\label{eq:in_CP}\mathrm{depth}_{L}(\phi)\leq
   \sigma([\C P^{n}],\phi)
   \leq \mathrm{height}_{L}(\phi)+1~.~
\end{equation}

Assume now that $L_{0}$ and $L_{1}$ are two non-narrow Lagrangians in
$\C P^{n}$ and $L_{0}\cap L_{1}=\emptyset$. In this case, for any two
constants $C_{0}, C_{1}\in \R$ we may find a normalized Hamiltonian
$H$ which is constant equal to $C_{0}$ on $L_{0}$ and is constant and
equal to $C_{1}$ on $L_{1}$. We pick $C_{1}> C_{0} +1$.  Applying the
first inequality in (\ref{eq:in_CP}) to $L_{1}$ and the second to
$L_{0}$ we get:
$$C_{1}\leq \mathrm{depth}_{L_{1}}(\phi)\leq \sigma([\C P^{n}],\phi^{H})\leq
\mathrm{height}_{L_{1}}(\phi)+1\leq C_{0}+1$$ which leads to a
contradiction.

A more general intersection result based on a somewhat different
argument is stated later in the paper, in \S\ref{sec:Lagr_int_str}.

\begin{rem}
   \begin{enumerate}[a.]
     \item We have a stronger result~\cite{Bi-Co:jcirci} which asserts
      that, under slightly different assumptions, the $\Z_{2}$-Floer
      homology of the two Lagrangians involved (when defined) is not
      zero.  However, the proof of this result goes beyond the scope
      of this paper and so it will not be further discussed here (see
      also Remark \ref{rem:int_etc}).
     \item The argument for the proof given above to
      Corollary~\ref{cor:inters} has been first used by Albers
      in~\cite{Alb:extrinisic} in order to detect Lagrangian
      intersections and by
      Entov-Polterovich~\cite{En-Po:rigid-subsets}; Entov-Polterovich
      first noticed that this Corollary follows from an early version
      of our theorem in~\cite{Bi-Co:qrel-long} combined with the
      results in~\cite{En-Po:rigid-subsets}.  Using the terminology
      of~\cite{En-Po:rigid-subsets}, Theorem~\ref{thm:action} implies
      that a monotone non-narrow Lagrangian is heavy.  This is because
      $[M]$ is an idempotent which verifies $\sigma([M],\phi)\geq
      \mathrm{depth}_{L}(\phi)$ for all $\phi$.  Assume now,
      additionally, that $M$ is point invertible of order $k$ and
      moreover that for any $\phi\in\widetilde{Ham}(M)$,
      $\sigma([pt],\phi^{-1})=-\sigma([M],\phi)$.  In this case, we
      deduce $\sigma([M],\phi)= -\sigma([pt],\phi^{-1})\leq
      -\mathrm{depth}_{L}(\phi^{-1})+k\eta=\mathrm{height}_{L}(\phi)+k\eta$
      so that $L$ is even super-heavy.
   \end{enumerate}
\end{rem}

\subsubsection{Existence of narrow Lagrangians}\label{subsubsec:narrow}
Clearly, a displaceable Lagrangian is narrow. For general symplectic
manifolds this is the only criterion for the vanishing of Floer
homology that we are aware of. Unfortunately, except in very
particular cases, this is not very efficient as, for a given
Lagrangian it is very hard to test the existence of disjoining
Hamiltonian diffeomorphisms. Because of this, till now there are very
few examples of monotone, narrow Lagrangians inside closed symplectic
manifolds. One very simple example is a contractible circle embedded
in a surface of genus $\geq 1$.  However, even in $\C P^{n}$ it is
non-trivial to detect such examples. Corollary~\ref{cor:inters} yields
as a byproduct many examples of such narrow monotone Lagrangians: if
one monotone Lagrangian which is not narrow is known, it suffices to
produce another monotone Lagrangian which is disjoint from it.
\begin{ex}\label{ex:narrow}
   There are narrow monotone Lagrangians in $\C P^{n}$, $n \geq 2$.
\end{ex}
Such Lagrangians are obtained using the Lagrangian circle bundle
construction from~\cite{Bi:Nonintersections}. Namely, we take any
monotone Lagrangian $L_0 \subset Q^{2n-2}$ in the quadric hypersurface
(e.g. a Lagrangian sphere) and then push it up to the normal circle
bundle of the complex quadric hypersurface $Q^{2n-2} \subset \C P^{n}$
of appropriate radius such as to get a monotone Lagrangian $L \subset
{\mathbb{C}}P^n$ which is an $S^1$-bundle over $L_0$. As we will see,
this produces a Lagrangian that does not intersect $\R P^{n}$, which
in turn is wide. A detailed construction of narrow Lagrangians in
${\mathbb{C}}P^n$ along these lines is given in~\S\ref{sb:narrow}.

\subsubsection{Methods of proof and homological calculations}
\label{subsubsec:methods} All our results are based on exploiting the
following machinery.  It is well-known that counting
pseudo-holomorphic disks with Lagrangian boundary conditions (and
appropriate incidence conditions) does not lead, in general, to
Gromov-Witten type invariants as these counts strongly depend on the
choices of auxiliary data involved (almost complex structures, cycles
etc).  However, the moduli spaces of pseudo-holomorphic disks are
sufficiently well structured so that these counts appropriately
understood can be used to define a chain complex - which we call the
{\em pearl} complex (this construction was initially proposed by
Oh~\cite{Oh:relative} following an idea of Fukaya and is a particular
case of the more recent cluster complex of Cornea-Lalonde
\cite{Cor-La:Cluster-1} called there {\em linear clusters}). The
resulting homology $QH(L)$ is an invariant which we call the {\em
  quantum homology} of $L$.  The key bridge between the properties of
the ambient manifold and those of the Lagrangian is provided by the
fact that $QH(L)$ has the structure of an augmented two-sided algebra
over the quantum homology of the ambient manifold, $QH(M)$, and, with
adequate coefficients, is endowed with duality. At the same time,
again with appropriate coefficients, $QH(L)$ is isomorphic to the
Floer homology $HF(L,L)$ of the Lagrangian $L$ with itself. Moreover,
many of the additional algebraic structures also have natural
correspondents in Floer theory. However, the models based on actual
pseudo-holomorphic disks rather than on Floer trajectories are much
more efficient from the point of view of applications: they provide a
passage from geometry to algebra which is sufficiently explicit so
that, together with sometimes delicate algebraic arguments, they lead
to the structural theorems listed before. Actually, in this paper we
will not make any essential use of the fact that the Lagrangian
quantum homology can be identified with the Floer homology.

The deeper reason why the models based on pseudo-holomorphic disks are
so efficient has to do with the fact that they carry an intrinsic
``positivity'' which is algebraically useful and is inherited from the
positivity of area (and Maslov index, in our monotone case) of
$J$-holomorphic curves.  These methods also allow us to compute
explicitly the various structures involved in several interesting
cases. In particular, for the Clifford torus in
$\mathbb{T}_{\textnormal{clif}} \subset \C P^{n}$, for Lagrangians, $L
\subset \C P^{n}$ with $2H_{1}(L;\Z)=0$, and for simply-connected
Lagrangians in the quadric $Q$.  The results of these calculations
will be stated in three Theorems in \S\ref{Sbs:examples-comput} once
the algebraic structures involved are introduced.  However, these
calculations imply a number of homological rigidity results as well as
some uniruling consequences which can be stated without further
preparation and so we review these just below.

The first such corollary deals with Lagrangian submanifolds $L \subset
{\mathbb{C}}P^n$ for which every $a \in H_1(L;\mathbb{Z})$ satisfies
$2 a = 0$ (in short: ``$2 H_1(L; \mathbb{Z})=0$'').  It extends some
earlier results obtained by other methods in~\cite{Se:graded} and
in~\cite{Bi:Nonintersections}. Before stating the result let us recall
the familiar example of $\mathbb{R}P^n \subset {\mathbb{C}}P^n$, $n
\geq 2$, which satisfies $2H_1(\mathbb{R}P^n;\mathbb{Z})=0$.
\begin{cor} \label{cor:RP} Let $L \subset {\mathbb{C}}P^n$ be a
   Lagrangian submanifold with $2H_{1}(L;\Z)=0$. Then $L$ is monotone
   with $N_L = n+1$ and the following holds:
   \begin{enumerate}[i.]
     \item There exists a map $\phi:L\to \R P^{n}$ which induces an
      isomorphism of rings on $\mathbb{Z}_2$-homology: $\phi_*:
      H_*(L;\mathbb{Z}_2) \stackrel{\cong}{\longrightarrow}
      H_*(\mathbb{R}P^n;\mathbb{Z}_2)$, the ring structures being
      defined by the intersection product. In particular we have
      $H_i(L;\mathbb{Z}_2)=\mathbb{Z}_2$ for every $0\leq i \leq n$,
      and $H_*(L;\mathbb{Z}_2)$ is generated as a ring by
      $H_{n-1}(L;\mathbb{Z}_2)$. \label{I:cor:RP-H_*}
     \item $L$ is wide. Therefore, as $N_L = n+1$ and in view of
      point~\ref{I:cor:RP-H_*} just stated, we have $HF_i(L,L) \cong
      \mathbb{Z}_2$ for every $i \in \mathbb{Z}$.
     \item Denote by $h = [{\mathbb{C}}P^{n-1}] \in
      H_{2n-2}({\mathbb{C}}P^n; \mathbb{Z}_2)$ the generator. Then $h
      \cap_L [L]$ is the generator of $H_{n-2}(L;\mathbb{Z}_2)$. Here
      $\cap_L$ stands for the intersection product between elements of
      $H_*({\mathbb{C}}P^n;\mathbb{Z}_2)$ and $H_*(L;\mathbb{Z}_2)$.
     \item Denote by $\textnormal{inc}_*:H_i(L;\mathbb{Z}_2) \to
      H_i({\mathbb{C}}P^n;\mathbb{Z}_2)$ the homomorphism induced by
      the inclusion $L \subset {\mathbb{C}}P^n$. Then
      $\textnormal{inc}_*$ is an isomorphism for every $0 \leq
      i=$\,even $\leq n$.
     \item $({\mathbb{C}}P^n, L)$ is $(1,0)$-uniruled of order $n+1$.
     \item $L$ is $2$-uniruled of order $n+1$. Moreover, given two
      distinct points $x,y\in L$, for generic $J$ there is an even but
      non-vanishing number of disks of Maslov index $n+1$ each of
      whose boundary passes through $x$ and $y$.
     \item For $n=2$, $({\mathbb{C}}P^2 ,L)$ is $(1,2)$-uniruled of
      order $6$.
   \end{enumerate}
\end{cor}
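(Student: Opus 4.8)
I would obtain Corollary~\ref{cor:RP} from two ingredients: an elementary topological computation of the monotonicity data of $L$, and an explicit description of the pearl complex of $L$ -- i.e.\ of $QH(L)$ together with its module structure over $QH(\mathbb{C}P^n)$, its augmentation and its Poincar\'e duality -- valid for Lagrangians with $2H_1(L;\mathbb{Z})=0$; the latter is one of the three computations announced in~\S\ref{Sbs:examples-comput}, and in my view it carries the real substance. Granting it, the corollary is its geometric translation, which repeatedly uses the positivity built into the pearl model: a non-zero $t^{j}$-coefficient of a pearl operation is a mod-$2$ count of genuine $J$-holomorphic disks of Maslov index $jN_L$ passing through the prescribed incidence cycles. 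For the topological part: since $H_1(\mathbb{C}P^n;\mathbb{Z})=0$, the exact sequence $H_2(\mathbb{C}P^n)\to H_2(\mathbb{C}P^n,L)\to H_1(L)\to 0$ together with $2H_1(L;\mathbb{Z})=0$ gives $2D=mA$ for every $D\in H_2(\mathbb{C}P^n,L;\mathbb{Z})$, where $A=[\mathbb{C}P^1]$; hence $\mu(D)=m(n+1)$, $\omega(D)=m/2$, so $L$ is monotone with $\eta=1/(2n+2)$, and $N_L=n+1$ once one knows that a class with $m=1$ occurs (cf.~\cite{Se:graded},~\cite{Bi:Nonintersections}). Note that here $N_L$ equals the minimal Chern number $C_M$, the extremal situation in which the pearl complex is most rigid.

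\textbf{The algebraic core (items (i)--(iv)).} Because $\mathbb{C}P^n$ is point invertible, $[pt]_M$ -- hence $h$, as $h^{\ast n}=[pt]_M$ -- acts invertibly on $QH(L)$ as soon as $QH(L)\neq 0$, which is itself established by the pearl computation. A degree count using $|t|=-(n+1)$ shows that in degrees $0\leq k\leq n$ there is no room for $t$-corrections, so $QH_k(L)=H_k(L;\mathbb{Z}_2)$ and the $h$-action there is the classical cap product $h\cap_L(\cdot)\colon H_k(L)\to H_{k-2}(L)$; invertibility of $h^{\ast n}$ forces it to be onto in that range. The explicit pearl computation, fed into the module structure and the spectral sequence $H_*(L;\mathbb{Z}_2)\otimes\Lambda\Rightarrow QH(L)$, then forces $L$ to be wide -- this is (ii), and $HF_i(L,L)\cong QH_i(L)\cong\mathbb{Z}_2$ follows -- and pins down $H_*(L;\mathbb{Z}_2)$ to be one-dimensional in every degree $0,\dots,n$, with the ring generated by $H_{n-1}(L)$; this produces the map $\phi\colon L\to\mathbb{R}P^n$ of (i). (Alternatively, once (i) is known, wideness also follows from Theorem~\ref{thm:floer=0-or-all} with $l=1$.) The surjectivity of $h\cap_L(\cdot)$ in top degree is precisely (iii), and the same no-$t$-room argument applied to the classical part $\mathrm{inc}_*$ of the quantum inclusion $QH(L)\to QH(M)$ gives (iv).

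\textbf{Unirulings through few points (items (v), (vi)).} From this structure, the $t^1$-coefficient of the module product $[pt]_M\cdot[L]\in QH(L)$, respectively of the quantum product $[pt]_L\ast[pt]_L\in QH(L)$, is non-zero -- this is exactly what invertibility of $[pt]_M$ and wideness supply -- and by positivity it counts, mod $2$, honest Maslov-$(n+1)$ $J$-disks whose interior passes through one prescribed point, respectively whose boundary passes through two prescribed points and, in addition, an auxiliary codimension-$1$ boundary cycle; forgetting the auxiliary cycle gives the two unirulings, of order $n+1$ since $N_L=n+1$. The ``even but non-vanishing'' clause of (vi) reflects that the pure two-boundary-point count is the algebraic quantity $\langle[pt]_L\ast[pt]_L,[L]\rangle$, which lies in a degree of $\Lambda$ that is empty when $N_L=n+1$, hence is even, whereas the argument just given shows the corresponding moduli space is non-empty.

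\textbf{The mixed uniruling (item (vii)) and the main difficulty.} When $n=2$ we have $N_L=3$, so the target disks have Maslov index $6=2N_L$ and their moduli through a generic $(P,Q_1,Q_2)$ is one-dimensional -- this is a ``second order'' statement, not a single $0$-dimensional count, and I would prove it by degeneration. By (v) take a Maslov-$3$ disk $u_2$ with $P\in\mathrm{Int}(u_2)$ and by (vi) a Maslov-$3$ disk $u_1$ with $Q_1,Q_2\in\partial u_1$; since a disk class of Maslov index $3$ cannot lie in the image of $\pi_2(\mathbb{C}P^2)$, each boundary $[\partial u_i]$ is the non-zero generator $a$ of $H_1(L;\mathbb{Z}_2)=\mathbb{Z}_2$, and by (i) the self-intersection $a\cdot a$ is the generator of $H_0(L;\mathbb{Z}_2)$. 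A $\mathbb{Z}_2$-degree computation then shows that the evaluation map into $\mathbb{C}P^2$ of the $4$-dimensional set of triples $(x_0,u_2,z)$ with $u_2$ a Maslov-$3$ disk, $x_0\in\partial u_2\cap\partial u_1$ and $z\in\overline{u_2}$ has degree $\sum_{u_2\ni P}\bigl([\partial u_2]\cdot[\partial u_1]\bigr)=N_1\,(a\cdot a)$, which is odd ($N_1$, the number of Maslov-$3$ disks through $P$, being odd because $[pt]_M\cdot[L]=a\,t$ and each such disk contributes $a$); hence that map is onto $\mathbb{C}P^2\setminus L$. Thus for every $P$ there is a Maslov-$3$ disk $u_2\ni P$ whose boundary meets $\partial u_1$ at some $x_0$; gluing $u_1$ and $u_2$ at the boundary node $x_0$ produces a Maslov-$6$ nodal disk through $P,Q_1,Q_2$, and incidence-constrained boundary-node gluing a nearby honest Maslov-$6$ $J$-disk with the same incidences, so $(\mathbb{C}P^2,L)$ is $(1,2)$-uniruled of order $6$. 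The restriction to $n=2$ is forced, since the argument uses $\dim L=2$ (so that $\partial u_1,\partial u_2$ intersect in a $0$-cycle governed by the pairing $H_1\times H_1\to H_0$) and $\dim\mathbb{C}P^2=4=1+1+2$ for the degree count. I expect the main obstacle of the whole proof to be the pearl computation itself -- extracting all the module and product structure coefficients from the interaction of $QH(M)$ and $QH(L)$, which is what feeds the algebraic core -- and, among the translation steps, the transversality bookkeeping together with the incidence-constrained gluing in (vii) to be the most delicate point.
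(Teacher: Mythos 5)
Your overall strategy — deriving the corollary from the module structure of $QH(L)$ over $QH(\mathbb{C}P^n)$, from the Lagrangian quantum product, from the invertibility of $h$ and from the positivity built into the pearl model — is exactly right, and your degree-count observation that $|t|=-(n+1)$ leaves no room for $t$-corrections in degrees $0,\ldots,n$ is precisely how the paper handles (iii) and (iv). But there is a real gap in (i), an incorrect identification in (vi), and a genuinely different (and harder) route in (vii).

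For (i), invertibility of $h$ together with wideness pins down $H_*(L;\mathbb{Z}_2)$ as a graded vector space and as an $H_*(\mathbb{C}P^n;\mathbb{Z}_2)$-module, but says nothing about the \emph{intersection product} on $H_*(L;\mathbb{Z}_2)$: you need that the generator $\alpha_{n-1}\in H_{n-1}(L;\mathbb{Z}_2)$ squares (classically) to the generator of $H_{n-2}(L;\mathbb{Z}_2)$, and this is not a consequence of the pearl computation. The paper supplies it by pure topology: since $H_1(L;\mathbb{Z})$ is $2$-torsion, the Bockstein $\beta=Sq^1\colon H^1(L;\mathbb{Z}_2)\to H^2(L;\mathbb{Z}_2)$ is non-zero, and in degree one $Sq^1$ is the cup square, so $\alpha^1\cup\alpha^1\neq 0$ (Lemma~\ref{l:L-RPn}). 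Treating Theorem~\ref{T:qstruct-2H_1=0} as a black box does not save you, since the proof of its point~i (and indeed its very statement) already presupposes this fact. Relatedly, the "even" clause of (vi) is not the vanishing of $\langle[pt]_L\ast[pt]_L,[L]\rangle$: that quantity corresponds to a \emph{three}-marked-point moduli of the wrong dimension, and its degree-forced vanishing is unrelated to the two-point count. The two-boundary-point count of Maslov-$(n+1)$ disks through $x$ and $y$ is the coefficient $\epsilon$ in the pearl differential $dm=\epsilon wt$ for a Morse function with unique minimum at $x$ and unique maximum at $y$, and it vanishes \emph{because $L$ is wide} (the differential of the minimal pearl complex is zero); that is the paper's argument.

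For (vii) the paper's route is substantially simpler and stays entirely within the pearl machinery: the chain-level relation $p\ast m=wt^2$ produces a module-action configuration that is either a single Maslov-$6$ disk through $(p,m,w)$ or a two-pearl chain of Maslov-$3$ disks, and the latter is excluded by a single virtual-dimension count (a Maslov-$3$ disk through a generic point of $\mathbb{C}P^2\setminus L$ \emph{and} a generic point of $L$ has virtual dimension $-1$, so for a generic $J$ no pearl in the chain can carry both the interior and a boundary incidence). Your degeneration argument — produce a nodal curve $u_1\#_{x_0}u_2$ via a $\mathbb{Z}_2$-degree count and then smooth it — is a genuinely different approach, and it rests on boundary-node gluing with incidence constraints, which is not part of the machinery developed here: to make it rigorous you would need to show that the boundary of the compactified $1$-dimensional moduli of Maslov-$6$ disks through $(P,Q_1,Q_2)$ consists only of such nodal configurations and that gluing is locally surjective, before concluding non-emptiness of the interior. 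I would also note that your derivation of $N_L=n+1$ is referenced rather than proved; the paper gets it for free from the $2$-periodicity of $QH(L)$ under the $h$-action, which rules out $N_L=2(n+1)$ on degree grounds.
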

Other than $L = \mathbb{R}P^n$ we are not aware of any other
Lagrangian $L \subset {\mathbb{C}}P^n$ satisfying $2 H_1(L;
\mathbb{Z})=0$. In view of Corollary~\ref{cor:RP} it is tempting to
conjecture that the only Lagrangians $L \subset {\mathbb{C}}P^n$ with
$2 H_1(L; \mathbb{Z})=0$ are homeomorphic (or diffeomorphic) to
$\mathbb{R}P^n$, or more daringly symplectically isotopic to the
standard embedding of $\mathbb{R}P^n \hookrightarrow {\mathbb{C}}P^n$.
Note however that in $\mathbb{C}P^3$ there exists a Lagrangian
submanifold $L$, not diffeomorphic to $\mathbb{R}P^3$, with
$H_i(L;\mathbb{Z}_2)=\mathbb{Z}_2$ for every $i$. This Lagrangian is a
quotient of $\mathbb{R}P^3$ by the dihedral group $D_3$. It has
$H_1(L;\mathbb{Z})\cong \mathbb{Z}_4$. This example is due to
Chiang~\cite{Chiang:RP3}.

Our second corollary is concerned with the Clifford torus,
$$\mathbb{T}_{\textnormal{clif}}^n = \{[z_0: \cdots: z_n] \in
{\mathbb{C}}P^n \mid |z_0| = \cdots |z_n| \} \subset
{\mathbb{C}}P^n~.~$$ This torus is monotone and has minimal Maslov
number $N_{\mathbb{T}_{\textnormal{clif}}^n}=2$.  As before, we endow
${\mathbb{C}}P^n$ with the standard symplectic structure
$\omega_{\textnormal{FS}}$ normalized so that $\int_{\mathbb{C}P^1}
\omega_{\textnormal{FS}} = 1$.
\begin{cor}\label{cor:clifford}
   The Clifford torus $\clift^{n} \subset \C P^{n}$ is wide, $(\C
   P^{n},\clift^{n})$ is $(1,0)$-uniruled of order $2n$ and
   $\clift^{n}$ is uniruled of order $2$. For $n=2$,
   $({\mathbb{C}}P^2, \mathbb{T}_{\textnormal{clif}}^2)$ is
   $(1,1)$-uniruled of order $4$. In particular, $w(\C P^{2},
   \clift^{2} : (r,\rho))\leq 2/3$.
\end{cor}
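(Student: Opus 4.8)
The plan is to combine the general rigidity Theorems~\ref{thm:floer=0-or-all} and~\ref{theo:geom_rig} with the explicit computation of the Lagrangian quantum homology $QH(\clift^n)$, of its ring structure and of its $QH(\C P^n)$-module structure, carried out in \S\ref{Sbs:examples-comput}; a parity remark is used throughout: since $N_{\clift^n}=2$, the image of $\mu\colon\pi_2(\C P^n,\clift^n)\to\Z$ equals $2\Z$, so every non-constant $J$-holomorphic disk with boundary on $\clift^n$ has positive even Maslov index, whence an ``order $\le 2m+1$'' uniruling statement for $\clift^n$ is automatically an ``order $\le 2m$'' one. For wideness, note that the $\Z_2$-homology ring of $T^n$ is generated by $H_{\ge n-1}(T^n;\Z_2)$, so Theorem~\ref{thm:floer=0-or-all} applies with $l=1$; as $N_{\clift^n}=2>l$ the torus is wide or narrow, and it is not narrow (this is part of the computation in \S\ref{Sbs:examples-comput}), hence wide, so $HF_*(\clift^n,\clift^n)\cong H_*(T^n;\Z_2)\otimes\Lambda$. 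Since $\C P^n$ is point-invertible of order $k=2n+2$ and $\clift^n$ is not narrow, Theorem~\ref{theo:geom_rig}(i) shows $(\C P^n,\clift^n)$ is $(1,0)$-uniruled of order $<2n+2$, i.e.\ of order $\le 2n+1$, hence by the parity remark of order $2n$; via Lemma~\ref{lem:area_estimate} this recovers $w(\C P^n\setminus\clift^n)\le 2n\eta=n/(n+1)$.

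The sharp statements — $\clift^n$ uniruled of order $2$, and $(\C P^2,\clift^2)$ being $(1,1)$-uniruled of order $4$ — do not follow from Theorem~\ref{theo:geom_rig}(ii), which only gives order $<2n+2$; they are read off the disk counts via the passage geometry$\to$algebra of \S\ref{subsubsec:methods}. The computation of \S\ref{Sbs:examples-comput} (which in particular identifies the Maslov-$2$ disk classes of $\clift^n$ and their counts) exhibits a class $\beta\in\pi_2(\C P^n,\clift^n)$ with $\mu(\beta)=2$ for which, for generic $J$, the moduli space of $J$-disks in class $\beta$ with one boundary marked point is compact (no bubbling, since $N_{\clift^n}=2$ is minimal) and $n$-dimensional, and whose boundary evaluation map to $\clift^n$ has non-zero $\Z_2$-degree; such a map between closed $n$-manifolds has full-measure, hence closed, hence dense image, so it is surjective, and thus through every point of $\clift^n$ there passes a Maslov-$2$ $J$-disk — i.e.\ $\clift^n$ is uniruled of order $2$. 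For $n=2$ and $k=2n+2=6$, a dimension count shows that for generic $J$, a generic $P\in\C P^2\setminus\clift^2$ and a generic $Q\in\clift^2$, the $J$-disks of Maslov index $4$ through $P$ (interior) and $Q$ (boundary) form a $0$-dimensional space whose mod-$2$ count is a suitable coefficient of $[pt]_{\C P^2}\ast[pt]_{\clift^2}\in QH_*(\clift^2)$ (the product of the point classes under the $QH(\C P^2)$-module structure); invertibility of $[pt]_{\C P^2}$ in $QH(\C P^2)$ together with $[pt]_{\clift^2}\ne 0$ (wideness) shows $[pt]_{\C P^2}\ast[pt]_{\clift^2}\ne 0$, while the computation of \S\ref{Sbs:examples-comput} pins down the relevant graded/Novikov component and shows it survives. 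Hence $(\C P^2,\clift^2)$ is $(1,1)$-uniruled of order $4$, and Lemma~\ref{lem:area_estimate} gives $w(\C P^2,\clift^2:(r,\rho))\le 4\eta=\tfrac{4}{6}=\tfrac{2}{3}$.

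The main obstacle is precisely the explicit computation of $QH(\clift^n)$ with its ring and $QH(\C P^n)$-module structures in \S\ref{Sbs:examples-comput}: the general Theorems~\ref{thm:floer=0-or-all} and~\ref{theo:geom_rig} already deliver wideness and the order-$2n$ bound, but the drop of the uniruling order all the way down to $2$ (respectively to $4$ in the mixed $(1,1)$ case) — exactly the ``drop'' phenomenon highlighted in the introduction — rests on knowing that the relevant counts of Maslov-$2$ (respectively Maslov-$4$) $J$-disks through the prescribed points do not vanish, and this non-vanishing cannot be extracted from the general rigidity theorems and must come from the detailed disk-counting for the Clifford torus.
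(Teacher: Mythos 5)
Your overall plan matches the paper's: wideness from Theorem~\ref{thm:floer=0-or-all} combined with the Clifford-torus disk analysis, the order-$2n$ bound from Theorem~\ref{theo:geom_rig}\,(i) with $k=2n+2$ and $N_L=2$, and the sharp orders $2$ and $4$ from the explicit quantum computations. For the order-$2$ uniruling you use the evaluation-degree argument (the map $ev_{A,J}$ on the Maslov-$2$ moduli space has non-zero $\Z_2$-degree, hence is onto), which is the ``direct proof'' the paper mentions as an alternative to its primary route via the ring relation $t_i\ast t_j+t_j\ast t_i=wt$ of Remark~\ref{rem:higher_cliff_prod}; both are fine.

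There is, however, a genuine gap in your argument for the $(1,1)$-uniruling of order $4$. You assert that the $\Z_2$-count of Maslov-$4$ $J$-disks through $P\in\C P^2\setminus\clift^2$ and $Q\in\clift^2$ ``is a suitable coefficient of $[pt]_{\C P^2}\ast[pt]_{\clift^2}$.'' But the coefficient of the module product in $QH(\clift^2)$ (equivalently, the chain-level identity $p\ast m=mt^2$) counts \emph{pearly configurations} of total Maslov index $4$, not single disks: a contributing configuration may consist of \emph{two} Maslov-$2$ disks joined by a $-\nabla f$ trajectory, one carrying the interior marked point mapping to $P$ and the other carrying a boundary marked point on the flow to $m$. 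Such a configuration does not produce a single $J$-disk through $P$ and $Q$, so the non-vanishing of the algebraic count does not, by itself, yield the desired uniruling statement. The paper closes exactly this gap by a dimension count: for generic $J$ the union of images of all Maslov-$2$ $J$-disks with boundary on $\clift^2$ is only $3$-dimensional in the $4$-dimensional $\C P^2$, hence misses the generic interior point $P$, so the two-disk configurations cannot occur and only the single Maslov-$4$ disk contributes. Without this step your argument does not establish the $(1,1)$-uniruling of order $4$.
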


Finally, we also indicate a result concerning Lagrangians in the
smooth complex quadric hypersurface $Q^{2n} \subset
{\mathbb{C}}P^{n+1}$ endowed with the symplectic structure induced
from ${\mathbb{C}}P^n$. The next corollary is concerned with
Lagrangians $L \subset Q^{2n}$ with $H_1(L;\mathbb{Z})=0$. We recall
the familiar example of a Lagrangian sphere in $Q^{2n}$ which can be
realized for example as a real quadric.
\begin{cor} \label{cor:quadric} Let $L \subset Q^{2n}$, $n \geq 2$, be
   a Lagrangian submanifold with $H_1(L;\mathbb{Z})=0$. Then $L$ is
   wide and $(Q,L)$ is $(1,1)$-uniruled of order $2n$. In particular,
   $w(Q,L:(r,\rho))\leq 1$.  If we assume in addition that $n =
   \dim_{\mathbb{C}}Q$ is even, then we also have:
   \begin{enumerate}
     \item[i.] $H_*(L;\mathbb{Z}_2) \cong H_*(S^n; \mathbb{Z}_2)$.
     \item[ii.] $L$ is $3$-uniruled of order $2n$ (an so $w(Q,L :
      (\emptyset;\rho_{1},\rho_{2},\rho_{3}))\leq 1$).
   \end{enumerate}
\end{cor}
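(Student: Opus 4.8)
The plan is to read Corollary~\ref{cor:quadric} off the computation of the Lagrangian quantum homology $QH(L)$, and of its module structure over $QH(Q)$, carried out in \S\ref{Sbs:examples-comput}; what must be added to that computation is only some elementary bookkeeping together with the general geometry-to-algebra machinery of the paper. Before invoking the computation I would record the basic invariants of such an $L$: since $H_1(L;\Z)=0$, the universal coefficient theorem gives $H^1(L;\Z_2)=0$, hence $w_1(L)=0$ and $L$ is orientable, so $N_L$ is even; a standard argument using $H_1(L;\Z)=0$, the monotonicity of $Q$, and the fact that a Lagrangian carries no class of positive area shows that $L$ is monotone, that the line class of $Q$ survives in $\pi_2(Q,L)$, and hence that the monotonicity constant is $\eta=1/(2C_Q)=1/(2n)$ with $N_L$ an even divisor of $2n$ (for $n=2$ one also uses that $H_1(L;\Z)=0$ forces $L\cong S^2$). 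That in fact $N_L=2n$ is part of the computation of \S\ref{Sbs:examples-comput}. Recall also from the Introduction that $Q^{2n}$ is point invertible of order $k=2n$.

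Granting the computation, wideness of $L$ follows at once from the explicit form of $QH(L)$; note that, once wideness is known, part ii of Theorem~\ref{theo:geom_rig} already gives for free that $L$ is uniruled of order $<2n$ and that $w(L)+2w(Q\setminus L)\leq 2$, but the sharper assertions of the corollary need more. When $n$ is even the isomorphism $H_*(L;\Z_2)\cong H_*(S^n;\Z_2)$ is extracted from the way the hyperplane class of $Q$ acts on $QH(L)$: this action, combined with wideness, Poincar\'e duality on $L$ and the point invertibility of $Q$, pins $\dim_{\Z_2}H_i(L;\Z_2)$ to that of the $n$-sphere in every degree. One could then re-derive wideness for $n$ even from Theorem~\ref{thm:floer=0-or-all}, since $H_*(L;\Z_2)\cong H_*(S^n;\Z_2)$ is generated under the intersection product by $H_{\geq n}(L;\Z_2)$, so $l=0$ is admissible and $N_L=2n>1=l+1$.

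The uniruling statements come from the dictionary, set up earlier in the paper, between non-vanishing of a structure coefficient of $QH(L)$ — of the $QH(Q)$-action on it, or of its internal quantum product — and the existence, for generic $J$, of a non-constant $J$-holomorphic disk with boundary on $L$ through the prescribed incidence points, whose Maslov index is controlled by the attached power of $t$. Since $Q$ is point invertible of order $2n$, the module action of the point class $[pt]_Q$ on the unit $[L]$ of $QH(L)$ has a non-zero component on the point class $[pt]_L$ in the Novikov degree corresponding to disks of Maslov index $2n$; geometrically this says that through a generic $P\in Q\setminus L$ and a generic $x\in L$ there is a $J$-disk of Maslov index $\leq 2n$, i.e. $(Q,L)$ is $(1,1)$-uniruled of order $2n$, and Lemma~\ref{lem:area_estimate} gives $w(Q,L:(r,\rho))\leq 2n\eta=1$. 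When $n$ is even, the computation shows moreover that $[pt]_L$ is invertible in $QH(L)$, say $[pt]_L\ast[pt]_L=[L]\,t$; as $[L]$ is the unit this gives $[pt]_L^{\ast 3}=[pt]_L\,t\neq 0$, so three generic points of $L$ lie on a single $J$-disk of Maslov index $\leq 2n$, i.e. $L$ is $3$-uniruled of order $2n$, and Lemma~\ref{lem:area_estimate} yields $w(Q,L:(\emptyset;\rho_1,\rho_2,\rho_3))\leq 1$.

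The hard part — and the real content — is the computation of \S\ref{Sbs:examples-comput}: identifying $N_L=2n$, running the pearl complex to prove $QH(L)$ wide and to determine its $QH(Q)$-module structure, and, most delicately, deducing for $n$ even both that $H_*(L;\Z_2)\cong H_*(S^n;\Z_2)$ and that the structure coefficients used above (the component on $[pt]_L$ of the module action of $[pt]_Q$ on $[L]$, and the coefficient in $[pt]_L\ast[pt]_L$) are non-zero. This last point is where the intrinsic positivity of the disk moduli spaces, and the parity of $n$, genuinely enter. A subsidiary technical issue is the passage from the algebraic non-vanishing to a genuine single $J$-disk through the incidence points realizing the claimed Maslov bound, as opposed to a broken pearly configuration; this is precisely what the paper's general uniruling results are built to deliver.
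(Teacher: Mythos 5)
Your overall plan (read the corollary off the quantum structure computations and the pearl machinery) is the paper's plan, but the specific mechanisms you propose for the two sharper uniruling statements would fail. For the $(1,1)$-uniruling you invoke $[pt]_Q\circledast [L]$ and claim it has a non-zero component on the point class of $L$; this is impossible for degree reasons, since $[pt]_Q\circledast[L]\in QH_{-n}(L)=\Z_2\,[L]t$ (while $[pt]_L t^q$ sits in degree $-2nq$), and, more seriously, the chain-level configurations computing the action on $[L]$ (represented by the maximum of a Morse function on $L$) impose \emph{no} boundary point constraint, because the unstable manifold of the maximum is all of $L$ — that relation only yields $(1,0)$-uniruling. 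The relation the paper uses is $[pt]_Q\circledast\alpha_0=\alpha_0 t$ (Theorem~\ref{T:quadric-qstruct}~i, Lemma~\ref{l:quad-quant-L}), read at chain level as $p\ast m=mt$ with $p$, $m$ the minima of Morse functions on $Q$ and $L$ placed at the prescribed points; since the total Maslov index is $2n=N_L$ there is a single disk, with boundary through $m$ and interior through $p$ (Lemma~\ref{C:quad-disks}~i), and then Lemma~\ref{lem:area_estimate} gives $w(Q,L:(r,\rho))\leq 1$. Similarly, your step ``$[pt]_L^{\ast3}=[pt]_L t\neq0$, hence three generic points lie on one disk'' is a non sequitur: computing the triple product by iterating the binary one, the contributing configurations consist of one disk of Maslov $2n$ through the two entry minima and the exit maximum, with the third prescribed point attached only by gradient lines. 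The paper instead uses the binary relation $\alpha_0\circ\alpha_0=[L]t$ itself (which you state), together with the key observation that the exit is the unique maximum, whose \emph{stable} manifold is a single point — that is what supplies the third incidence condition on the same disk (Lemma~\ref{C:quad-disks}~ii). Also, $Q$ is point invertible of order $4n$, not $2n$ (one has $[pt]\ast[pt]=[Q]s^{2}$); what is actually needed is only the invertibility of $[pt]$ mod $2$, which together with wideness and degree reasons pins down $[pt]\circledast\alpha_0=\alpha_0 t$. Finally, there is no general uniruling theorem in the paper that converts these algebraic identities into $(1,1)$- or $3$-uniruling for you; that conversion is exactly the bespoke chain-level argument just described.

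There is also a gap in item~i. The isomorphism $H_*(L;\Z_2)\cong H_*(S^n;\Z_2)$ is not part of Theorem~\ref{T:quadric-qstruct}, so it cannot be deferred to ``the computation''; it must be proved, and the mechanism you sketch (action of the hyperplane class, Poincar\'e duality, point invertibility) does not work: mod $2$ the hyperplane class is not invertible ($h^{\ast n}\equiv 0$ by Proposition~\ref{P:qhom-quadric}) and for $n\geq 3$ it even annihilates $\alpha_0$, while point invertibility only reproduces the trivial $t$-periodicity. The paper's proof (Theorem~\ref{T:quad-L}) uses the middle-dimensional class $a\in H_n(Q;\Z_2)$ of a $k$-plane from one of the two rulings ($n=2k$), which is invertible mod $2$ by Proposition~\ref{P:qhom-quadric}; its action gives the $n$-periodicity $QH_i(L)\cong QH_{i-n}(L)$, which combined with wideness kills $H_i(L;\Z_2)$ for $0<i<n$. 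This is precisely where the parity of $n$ enters. A smaller slip in the same vein: $H_*(S^n;\Z_2)$ is \emph{not} generated as a ring by $H_{\geq n}$ (the point class is not a product of higher classes), so in your alternative wideness argument you must take $l=n$ in Theorem~\ref{thm:floer=0-or-all}; the conclusion still holds since $N_L=2n>n+1$, and this is in fact how the paper establishes wideness in Lemma~\ref{l:quad-quant-L}.
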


\subsection{Structure of the paper}  
The main results of the paper are stated in the introduction and in
\S\ref{sec:lagr_str}.  Namely, in the second section, after some
algebraic preliminaries we review in \S \ref{Sb:main-stat} the
structure of Lagrangian quantum homology. This structure is needed to
state in \S\ref{Sbs:examples-comput} three theorems containing
explicit computations.  Each one of the three corollaries already
described in \S \ref{subsubsec:methods} is a consequence of one of
these theorems. Section \ref{sec:lagr_str} concludes - in
\S\ref{sec:Lagr_int_str} - with the statement of a Lagrangian
intersection result which is a strengthening of Corollary
\ref{cor:inters}.

In~\S\ref{sec:main_quant_proof} and~\S\ref{sec:add_tools} we develop
the tools necessary to prove the results stated in the first two
sections. More precisely, \S\ref{sec:main_quant_proof} contains the
justification of the structure of Lagrangian quantum homology.  While
we indicate the basic steps necessary to establish this structure,
certain technical details are omitted. These details are contained in
our preprint \cite{Bi-Co:qrel-long} and we have decided not to include
them here because they are quite tedious and long and relatively
non-surprising for specialists.  The fourth section contains a number
of auxiliary results which provide additional tools which are
necessary to prove the theorems of the paper.

The actual proofs of the results stated in \S\ref{sec:intro} and
\S\ref{sec:lagr_str} are contained in sections \ref{sec:proof_main}
and \ref{sec:exa_comp}.  Namely, the fifth section contains the proofs
of the three main structural Theorems stated in the introduction as
well as that of the Lagrangian intersection result stated in
\S\ref{sec:Lagr_int_str} and the sixth section contains the proofs of
the three ``computational'' theorems stated in
\S\ref{Sbs:examples-comput} and that of their corresponding three
Corollaries from \S \ref{subsubsec:methods}. The construction of the
example mentioned in \S \ref{subsubsec:narrow} is also included here
as well as a few other related examples.

Finally, in the last section we discuss some open problems derived
from our work.

\subsubsection*{Acknowledgments.}
The first author would like to thank Kenji Fukaya, Hiroshi Ohta, and
Kaoru Ono for valuable discussions on the gluing procedure for
holomorphic disks. He would also like to thank Martin Guest and Manabu
Akaho for interesting discussions and great hospitality at the Tokyo
Metropolitan University during the summer of 2006.

Special thanks from both of us to Leonid Polterovich for interesting comments and his
interest in this project from its early stages as well as for having
pointed out a number of imprecisions in earlier versions of the paper.
We also thank Peter Albers and Misha Entov and Joseph Bernstein for
useful discussions.  We also thank the FIM in Zurich and the CRM in
Montreal for providing a stimulating working atmosphere which
allowed us to pursue our collaboration in the academic year 07-08.

While working on this project our two children, Zohar and Robert, were
born and by the time we finally completed this paper they had already
celebrated their first birthdays. We would like to dedicate this work
to them and to their lovely mothers, Michal and Alina.

\section{Lagrangian quantum structures} \label{sec:lagr_str} In this
section we introduce the algebraic structures and invariants essential
for our applications.  We will then indicate the main ideas in the
proof of the related statements as well as a few technical aspects.
Full details appear in \cite{Bi-Co:qrel-long}.

\subsection{Algebraic preliminaries.} We fix here algebraic notation
and conventions which will be used in the paper.

\subsubsection{Graded modules and chain complexes} \label{sb:graded}

Let $\mathcal{R}$ be a commutative graded ring, i.e. $\mathcal{R}$ is
a commutative ring with unity, $\mathcal{R}$ splits as $\mathcal{R}
= \oplus_{i \in \mathbb{Z}} \mathcal{R}_i$, for every $i, j \in
\mathbb{Z}$ we have $\mathcal{R}_i \cdot \mathcal{R}_j \subset
\mathcal{R}_{i+j}$ and $1 \in \mathcal{R}_0$.  By a graded
$\mathcal{R}$-module we mean an $\mathcal{R}$-module $M$ which is
graded $M = \oplus_{i \in \mathbb{Z}} M_i$ with each component $M_i$
being an $\mathcal{R}_0$-module and moreover for every $i, j \in
\mathbb{Z}$ we have $\mathcal{R}_i \cdot M_j \subset M_{i+j}$.

The chain complexes $(\mathcal{C}, d)$ we will deal with will often be
of the following type. Their underlying space $\mathcal{C} = \oplus_{i
  \in \mathbb{Z}} \mathcal{C}_i$ will be a graded
$\mathcal{R}$-module, and moreover the differential $d$, when viewed as
a map of the total space $d: \mathcal{C} \to \mathcal{C}$, is
$\mathcal{R}$-linear. Since it is not justified to call such complexes
$\mathcal{C}$ ``chain complexes over $\mathcal{R}$'' (as each
$\mathcal{C}_i$ is not an $\mathcal{R}$-module) we have chosen to call
them {\rm $\mathcal{R}$-complexes}. Note that $(\mathcal{C}, d)$ is in
particular also a chain complex of $\mathcal{R}_0$-modules in the
usual sense. Note also that the homology $H(\mathcal{C}, d)$ is
obviously a graded $\mathcal{R}$-module.

Most of our chain complexes $(\mathcal{C},d)$ will be free
$\mathcal{R}$-complexes. By this we mean that (the total space of) the
$\mathcal{R}$-complex $\mathcal{C}$ is a finite rank free module over
$\mathcal{R}$. In other words $\mathcal{C} = G \otimes \mathcal{R}$
where $G$ is a graded finite dimensional $\mathbb{Z}_2$-vector space
and the grading on $\mathcal{C}$ is induced from the grading of $G$
and from the grading of $\mathcal{R}$. The differential $d$ on $\mathcal{C}$ of course
does not need to have the form $d = d_G \otimes 1$. In fact we can
split $d$, in a unique way, as a (finite) sum of operators $d =
\sum_{l \in \mathbb{Z}} \delta_l$ where $\delta_l: G_* \to G_{*-1+l}
\otimes \mathcal{R}_{-l}$.  (Here $G_l$ is identified with $G_*
\otimes 1 \subset G_* \otimes \mathcal{R}_0$ and the operators
$\delta_l$ are extended to $\mathcal{C}$ by linearity over
$\mathcal{R}$). Actually, in most of the complexes below the operators
$\delta_l$ will actually be given as $\delta_l = \sum_{j}
\partial_{l,j} \otimes r_{l,j}$ with $\partial_{l,j}: G_* \to
G_{*-1+l}$ and $r_{l,j} \in \mathcal{R}_{-l}$.

Finally, we say that the differential $d$ of a free
$\mathcal{R}$-complex $(\mathcal{C},d)$ is positive if $\delta_l = 0$
for every $l < 0$. In that case we will call the operator $\delta_0$
the classical component of $d$.

\subsubsection{Coefficient Rings} \label{subsec:coeff} Denote by
$H^D_2(M,L) \subset H_2(M,L;\mathbb{Z})$ the image of the Hurewicz
homomorphisms $\pi_2(M,L) \longrightarrow H_2(M,L)$. Let
$H_2^D(M,L)^{+}$ be the monoid of all the elements $u$ so that
$\omega(u)\geq 0$. Put $\La^{+}=\Z_{2}[H_2^D(M,L)^{+}/\sim]$ with
$\sim$ the equivalence relation $u\sim v$ iff $\mu(u)=\mu(v)$ and
similarly $\La=\Z_{2}[H_2^D(M,L)/\sim]$. We grade these rings so that
the degree of $u$ equals $-\mu(u)$. In practice we will use the
following natural identifications: $\Lambda^+ \cong \mathbb{Z}_2[t]$,
$\Lambda \cong \mathbb{Z}_2[t^{-1}, t]$ induced by $H_2^D(M,L) \ni u
\to t^{\mu(u)/N_L}$. The grading here is chosen so that $\deg t =
-N_L$.  

As mentioned in the introduction, the quantum homology of the ambient
manifold is naturally a module over the ring $\Gamma=\Z_{2}[s^{-1},s]$
where the degree of $s$ is $-2C_{M}$.  There is an obvious embedding
of rings $\Gamma \hookrightarrow \La$ which is defined by $s\to
t^{(2C_{M})/N_{L}}$.  The same embedding also identifies the ring
$\Gamma^{+}=\Z_{2}[s]$ with its image in $\La^{+}$. Using this
embedding we regard $\La$ (respectively $\La^{+}$) as a module over
$\Gamma$ (respectively, over $\Gamma^{+}$) and we define the following
obvious extensions of the quantum homology:
$$QH(M;\Lambda) = H_*(M;\mathbb{Z}_2) \otimes
\Lambda= QH_{\ast}(M)\otimes_{\Gamma} \La, \quad QH(M;\Lambda^{+}) =
H_*(M;\mathbb{Z}_2) \otimes \Lambda^{+}.$$ We endow $QH(M;\Lambda)$
and $QH(M; \Lambda^{+})$ with the quantum intersection product $*$
(see~\cite{McD-Sa:Jhol-2} for the definition).  Similarly, we can
consider the analogous extensions of quantum homology over the ring
$\Lambda'$. Notice that we work here with quantum {\em homology} (not
cohomology), hence the quantum product $*: QH_k(M;\La) \otimes
QH_l(M;\La) \to QH_{k+l-2n}(M;\La)$ has degree $-2n$.  The unit is
$[M] \in QH_{2n}(M;\La)$, thus of degree $2n$.

While we will essentially stick with $\La$, $\La^{+}$ in this paper,
for certain applications it can be useful to also use larger rings
which distinguish explicitly the elements in $H_2^D(M,L)$. This is
done as follows. Let $H_2^S(M,L) \subset H_2(M;\mathbb{Z})$ be the
image of the Hurewicz homomorphism $\pi_2(M) \to H_2(M;\mathbb{Z})$,
and $H_2^S(M)^+ \subset H_2^S(M)$ the semi-group consisting of classes
$A$ with $c_1(A) > 0$. Similarly, denote by $H_2^{D}(M,L)^+ \subset
H_2^D(M,L)$ the semi-group of elements $A$ with $\mu(A) > 0$.  Let
$\widetilde{\Gamma}^+ = \mathbb{Z}_2[H_2^S(M)^+] \cup \{ 1 \}$ be the
unitary ring obtained by adjoining a unit to the non-unitary group
ring $\mathbb{Z}_2[H_2^S(M)^+]$. Similarly we put
$\widetilde{\Lambda}^{+} = \mathbb{Z}_2[H_2^D(M,L)^+] \cup \{1\}$. We
write elements $Q \in \widetilde{\Gamma}^{+}$ and $P \in
\widetilde{\Lambda}^{+}$ as ``polynomials'' in the formal variables
$S$ and $T$:
$$Q(S) = a_0 + \sum_{c_1(A)>0} a_{A} S^{A}, \qquad P(T) = b_0 +
\sum_{\mu(B)>0} b_B T^B \qquad a_0, a_A, b_0, b_B \in \mathbb{Z}_2.$$
We endow these rings with the following grading: $$\deg S^A = -2
c_1(A), \quad \deg T^B = -\mu(B).$$ Note that these rings are smaller
than the rings $\hat{\Gamma}^{\geq 0} = \mathbb{Z}_2[\{A | c_1(A) \geq
0\}]$ and $\hat{\Lambda}^{\geq 0} = \mathbb{Z}_2[\{B | \mu(B) \geq
0\}]$. For example, $\hat{\Lambda}^{\geq 0}$ and $\hat{\Gamma}^{\geq
  0}$ might have many non-trivial elements in degree $0$, whereas in
$\widetilde{\Gamma}^+$ and $\widetilde{\Lambda}^{+}$ the only such
element is $1$.

Let $QH(M;\widetilde{\Gamma}^{+}) = H(M;\mathbb{Z}_2) \otimes
\widetilde{\Gamma}^{+}$ be the quantum homology of $M$ with
coefficients in $\Gamma^{+}$ endowed with the quantum product, which
we still denote by $\ast$ (note that now $\ast$ takes into account the
actual classes of holomorphic spheres not only their Chern numbers).
We have a natural map $H_2^{S}(M)^+ \to H_2^{D}(M,L)^+$ which induces
on $\widetilde{\Lambda}^+$ a structure of a
$\widetilde{\Gamma}^{+}$-module. Put $QH(M;\widetilde{\Lambda}^+) =
QH(M;\widetilde{\Gamma}^{+}) \otimes_{\widetilde{\Gamma}^+}
\widetilde{\Lambda}^{+}$ and endow it with the quantum intersection
product, still denoted $*$.  Note that the quantum product is well
defined with this choice of coefficients, since by monotonicity Chern
numbers of pseudo-holomorphic spheres are non-negative and the only
possible pseudo-holomorphic sphere with Chern number $0$ is constant.
We grade this ring with the obvious grading coming from the two
factors.

The most general rings of coefficients relevant for this paper are
rings $\mathcal{R}$ that are graded commutative
$\widetilde{\Lambda}^+$-algebras. We will usually endow a graded
commutative ring $\mathcal{R}$ with the structure of
$\widetilde{\Lambda}^+$-algebra by specifying a graded ring
homomorphism $q: \widetilde{\Lambda}^+ \to \mathcal{R}$.

Here are a few examples of such rings $\mathcal{R}$ which are useful
in applications.
\begin{enumerate}
  \item Take $\mathcal{R} = \Lambda = \mathbb{Z}_2[t^{-1}, t]$, and
   define $q$ by $q(T^A) = t^{\mu(A)/N_L}$.
   \label{i:r=lambda}
  \item Take $\mathcal{R} = \Lambda^+ = \mathbb{Z}_2[t]$, and define
   $q$ as in~\ref{i:r=lambda}.
  \item Take $\mathcal{R} = \mathbb{Z}_2[H_2^D(M,L)]$ with the obvious
   $\widetilde{\Lambda}^{+}$-algebra structure. We denote this ring by
   $\hat{\Lambda}$.
 \end{enumerate}
Given a graded commutative $\widetilde{\Lambda}^+$-algebra
$\mathcal{R}$ we extend the coefficients of the quantum homology of
the ambient manifold by $QH(M; \mathcal{R}) = QH(M;
\widetilde{\Lambda}^+) \otimes_{\widetilde{\Lambda}^+} \mathcal{R}$.

\subsubsection{A useful filtration} There is a natural decreasing
filtration of $\La^{+}$, $\La$ and $\La'$ by the degrees of $t$, i.e.
\begin{equation}\label{eq:filtration}
   \mathcal{F}^{k}\La = \{P\in \mathbb{Z}_2[t,t^{-1}] \mid
   P(t)=a_{k}t^{k}+a_{k+1}t^{k+1}+\ldots \}~.~
\end{equation}
We will call this filtration the \emph{degree filtration}.  In a
similar way we can define the analogous filtrations on any graded
$\widetilde{\Lambda}^+$-algebra $\mathcal{R}$. This filtration induces
an obvious filtration on any free $\mathcal{R}$-module.

\subsection{Structure of Lagrangian quantum homology}
\label{Sb:main-stat}

Let $f:L\to \R$ be a Morse function on $L$ and let $\rho$ be a
Riemannian metric on $L$ so that the pair $(f,\rho)$ is Morse-Smale.
We grade the elements of $\Crit(f)$ by $|x|=ind_{f}(x)$.  Fix also a
generic almost complex structure $J$ compatible with $\omega$. We
recall that as we work in the monotone case (which, with the
conventions of this paper includes $N_{L}\geq 2$), the Floer homology
$HF_{\ast}(L;\mathcal{R})=HF_{\ast}(L,L;\mathcal{R})$ is well defined
and invariant whenever $\mathcal{R}$ is a commutative
$\Z_{2}[H_{2}^{D}(M,L)]$-algebra (see \S\ref{subsubsec:defin_alg_str}
g for a rapid review of the construction).
\begin{mainthm} \label{thm:alg_main} Let $\mathcal{R}$ be a graded
   commutative $\widetilde{\Lambda}^+$-algebra (e.g. $\mathcal{R} =
   \Lambda$, $\Lambda^{+}$, or $\hat{\Lambda}$). For a generic choice
   of the triple $(f,\rho,J)$ there exists a finite rank, free
   $\mathcal{R}$-chain complex
   $$\mathcal{C}(L;\mathcal{R}; f,\rho,J)=(\Z_2 \langle \Crit(f) \rangle
   \otimes \mathcal{R}, d^{\mathcal{R}})$$ with grading induced by
   Morse indices on the left factor and the grading of $\mathcal{R}$
   on the right. The differential $d^{\mathcal{R}}$ of this complex is
   positive (see~\S\ref{sb:graded}) and its classical component
   coincides with the Morse-homology differential
   $d^{\textnormal{\tiny{Morse}}} \otimes 1$ (see~\S\ref{sb:graded}).
   Moreover, this complex has the following properties:
   \begin{enumerate}[i.]
     \item The homology of this chain complex is a graded
      $\mathcal{R}$-module and is independent of the choices of $(f,
      \rho, J)$, upto canonical comparison isomorphisms. It will be
      denoted by $QH_{\ast}(L;\mathcal{R})$.  There exists a canonical
      (degree preserving) augmentation $\epsilon_{L}:
      QH_{\ast}(L;\mathcal{R})\to \mathcal{R}$ which is an
      $\mathcal{R}$-module map. Moreover, for $\mathcal{R} = \Lambda$
      the augmentation $\epsilon_L$ is non-trivial whenever
      $QH(L;\Lambda) \neq 0$.
     \item The homology $QH(L;\mathcal{R})$ has the structure of a
      two-sided algebra with a unity over the quantum homology of $M$,
      $QH(M;\mathcal{R})$. More specifically, for every $i, j, k \in
      \mathbb{Z}$ there exist $\mathcal{R}$-bilinear maps:
      \begin{align*}
         & QH_i(L;\mathcal{R}) \otimes QH_j(L;\mathcal{R}) \to
         QH_{i+j-n}(L;\mathcal{R}),
         \quad \alpha\otimes \beta \mapsto \alpha \circ \beta, \\
         & QH_k (M;\mathcal{R}) \otimes QH_j(L;\mathcal{R}) \to
         QH_{k+j-2n}(L;\mathcal{R}), \quad a\otimes \alpha \mapsto a
         \circledast \alpha,
      \end{align*}
      where $n = \dim L$. The first map endows $QH(L;\mathcal{R})$
      with the structure of a ring with unity. This ring is in general
      not commutative. The second map endows $QH(L;\mathcal{R})$ with
      the structure of a module over the quantum homology ring
      $QH(M;\mathcal{R})$.  Moreover, when viewing these two
      structures together, the ring $QH(L;\mathcal{R})$ becomes a
      two-sided algebra over the ring $QH(M;\mathcal{R})$. (The
      definition of a two-sided algebra is given below, after the
      statement of the theorem.) The unity of $QH(L;\mathcal{R})$ has
      degree $n=\dim L$ and will be denoted by $[L]$.
      \label{I:qmod-qprod}
     \item There exists a map
      $$i_{L}:QH_{\ast}(L;\mathcal{R}) \to
      QH_{\ast}(M;\mathcal{R})$$ which is a
      $QH_{\ast}(M;\mathcal{R})$-module morphism and which is induced
      by a chain map which is a deformation of the singular inclusion
      (viewed as a map between Morse complexes).  Moreover, this map
      is determined by the relation:
      \begin{equation}\label{eq:inclusion_mod}
         \langle PD(h), i_{L}(x) \rangle =\epsilon_{L}(h \circledast x)
      \end{equation} for $x\in QH(L;\mathcal{R})$,
      $h\in H_{\ast}(M)$, with $PD(-)$
      Poincar\'e duality and $\langle - , - \rangle$ the
      $\mathcal{R}$-linear extension of the
      Kronecker pairing ( i.e. $\langle PD(h),\sum_{r}z_{r}T^{r}\rangle=
      \sum_{r} \langle PD(h) , z_{r}\rangle T^{r}$).
      \label{I:q-inclusion}
     \item The differential $d^{\mathcal{R}}$ respects the degree
      filtration and all the structures above are compatible with the
      resulting spectral sequences. \label{I:spectral-seq}
     \item The differential $d^{\mathcal{R}}$ is in fact defined over
      $\widetilde{\Lambda}^{+}$ in the sense that the relation between
      $\mathcal{C}(L; \mathcal{R}; f, \rho, J)$ and $\mathcal{C}(L;
      \Lambda^{+}; f, \rho, J)$ is that $\mathcal{C}(L; \mathcal{R};
      f, \rho, J) \cong \mathcal{C}(L; \Lambda^{+}; f, \rho, J)
      \otimes_{\widetilde{\Lambda}^+} \mathcal{R}$ and
      $d^{\mathcal{R}} \cong d^{\widetilde{\Lambda}^+} \otimes id$.
      Moreover, any graded $\widetilde{\Lambda}^{+}$-algebra
      homomorphism $\mathcal{R} \to \mathcal{R}'$ (e.g. the inclusion
      $\Lambda^+ \to \Lambda$) induces in homology a canonical
      morphism $QH(L;\mathcal{R}) \to QH(L; \mathcal{R}')~.~$
      \label{I:extension}
     \item If $\mathcal{R}$ is a commutative
      $\mathbb{Z}_2[H_2^D(M,L)]$-algebra (e.g. $\mathcal{R} =
      \Lambda$), then there exists an isomorphism $$QH_*(L;
      \mathcal{R}) \to HF_*(L;\mathcal{R})$$ which is canonical up to
      a shift in grading.
      \label{I:comparison}
   \end{enumerate}
\end{mainthm}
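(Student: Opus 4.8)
The plan is to realize $\mathcal{C}(L;\mathcal{R};f,\rho,J)$ as the \emph{pearl complex} of $L$ and then to read each listed property off the geometry of its defining moduli spaces. First I would fix a generic triple $(f,\rho,J)$, with $(f,\rho)$ Morse--Smale on $L$ and $J$ compatible with $\omega$, and for $x,y\in\Crit(f)$ and a class $\lambda\in H_2^D(M,L)$ with $\omega(\lambda)\ge 0$ I would form the space $\mathcal{P}(x,y;\lambda)$ of \emph{pearly trajectories}: finite alternating chains of non-constant $J$-holomorphic disks $(D^2,\partial D^2)\to(M,L)$ and negative gradient flow lines of $(f,\rho)$, running from $x$ into the first disk, joining the boundary of each disk to the next, and out of the last disk to $y$ (no disk at all meaning an honest gradient line from $x$ to $y$), the total class of the disks being $\lambda$. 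A standard index count gives $\dim\mathcal{P}(x,y;\lambda)=|x|-|y|+\mu(\lambda)-1$ once the moduli are regular. I would then set, over $\widetilde{\Lambda}^{+}$,
\[
 d(x)=\sum_{y}\sum_{\lambda}\bigl(\#_{\mathbb{Z}_2}\mathcal{P}(x,y;\lambda)\bigr)\, y\, T^{\lambda},
\]
summed over the (finitely many, by compactness) rigid configurations, and define the general $\mathcal{R}$-complex by base change along $q:\widetilde{\Lambda}^{+}\to\mathcal{R}$, which already yields item~\ref{I:extension}. Since every non-constant $J$-disk has positive area and $L$ is monotone, only classes with $\mu(\lambda)\ge 0$ contribute, so $d$ is positive in the sense of \S\ref{sb:graded}, and its $\lambda=0$ summand counts pure gradient trajectories, hence equals $d^{\textnormal{\tiny{Morse}}}\otimes 1$.

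The identity $d^2=0$ and items~(i)--(iii) would all be established by the same device: a suitable space of configurations is shown to be a compact $1$-manifold and its boundary is identified. For $d^2=0$ one uses the $1$-dimensional $\mathcal{P}(x,y;\lambda)$; Gromov compactness for disks (energy controlled by monotonicity) together with a gluing theorem would show that $\partial\overline{\mathcal{P}}(x,y;\lambda)$ consists exactly of trajectories breaking at an intermediate critical point --- precisely the terms of $d\circ d$. This is where the hypothesis $N_L\ge 2$, part of our monotonicity convention, enters: together with monotonicity of $M$ it confines sphere bubbling and non-positive-Maslov disk bubbling to codimension $\ge 2$ and makes the two Maslov-$2$ disk-bubbling contributions to $d^2$ cancel in pairs, so that no further boundary strata remain. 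Item~(i) is the usual continuation argument: a generic homotopy of $(f,\rho,J)$ yields a chain morphism by counting pearly trajectories for the interpolating family, concatenation yields chain homotopies, and hence $QH_*(L;\mathcal{R})$ and the comparison isomorphisms are well defined and canonical; the augmentation $\epsilon_L$ is the pearl deformation of the classical augmentation $H_*(L;\mathbb{Z}_2)\to\mathbb{Z}_2$, $[pt]\mapsto 1$, and its non-triviality for $\mathcal{R}=\Lambda$ whenever $QH(L;\Lambda)\ne 0$ is a separate check via the degree spectral sequence. For~(ii) I would define $\alpha\circ\beta$ and $a\circledast\alpha$ by the analogous counts of tree-shaped configurations: for $\circ$, a trivalent tree of gradient lines with two inputs and one output, disks inserted along the edges (degree $-n$); for $\circledast$, the same with an extra interior marked point on a disk constrained to the unstable manifold of a critical point of an auxiliary Morse function on $M$ representing $a$ (degree $-2n$). $\mathcal{R}$-bilinearity, the unit axioms $[L]\circ\alpha=\alpha$ and $[M]\circledast\alpha=\alpha$, associativity, and the compatibilities making $QH(L;\mathcal{R})$ a two-sided algebra over $QH(M;\mathcal{R})$ with its quantum product $\ast$ each come from the boundary of a $1$-dimensional moduli space of the corresponding shape, obtained by freeing one modulus (the position of the trivalent vertex, or the length of a distinguished edge). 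For~(iii), $i_L$ is induced by the chain map $\mathcal{C}(L)\to C^{\textnormal{\tiny{Morse}}}(M)$ counting pearly trajectories in $L$ whose last disk carries an interior marked point that flows, inside $M$, to a critical point of a Morse function on $M$; that $i_L$ is a $QH(M;\mathcal{R})$-module map obeying~(\ref{eq:inclusion_mod}) follows from one more $1$-dimensional boundary analysis, and taking $h=[M]$ there identifies $\epsilon_L$ with the fundamental-class coefficient of $i_L$.

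Items~(iv)--(vi) are then comparatively formal. Positivity of $d$ gives $d(\mathcal{F}^k)\subset\mathcal{F}^k$, hence the degree spectral sequence with $E_1\cong H_*(L;\mathbb{Z}_2)\otimes\mathcal{R}$ converging to $QH_*(L;\mathcal{R})$; since each $J$-disk entering any structure map above contributes $T^\lambda$ with $\mu(\lambda)>0$, those maps are likewise positive and descend to maps of spectral sequences, which is~(iv). Item~(v) holds by construction --- every count is of honest $J$-disks weighted by their honest classes in $H_2^D(M,L)^{+}$ --- so the complex is defined over $\widetilde{\Lambda}^{+}$, one has $\mathcal{C}(L;\mathcal{R};f,\rho,J)\cong\mathcal{C}(L;\Lambda^{+};f,\rho,J)\otimes_{\widetilde{\Lambda}^{+}}\mathcal{R}$ with $d^{\mathcal{R}}\cong d^{\widetilde{\Lambda}^{+}}\otimes\mathrm{id}$, and any graded $\widetilde{\Lambda}^{+}$-algebra map $\mathcal{R}\to\mathcal{R}'$ induces a chain map and, by compatibility with continuation maps, a canonical homology morphism. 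For~(vi), when $\mathcal{R}$ is a $\mathbb{Z}_2[H_2^D(M,L)]$-algebra I would build a chain homotopy equivalence between the pearl complex and the Floer complex $CF(L;H,J')$ --- either by a PSS-type morphism counting half-strip/half-disk configurations, or by the adiabatic limit in which the Hamiltonian term is scaled to zero and Floer strips degenerate onto pearly trajectories --- producing the isomorphism $QH_*(L;\mathcal{R})\cong HF_*(L;\mathcal{R})$, canonical up to the choice of a $\mathbb{Z}$-lift of the $\mathbb{Z}/N_L$-grading of Floer homology.

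The main obstacle, shared by all of the above, is the analytic backbone, which I would treat last. One needs, first, \emph{transversality}: for generic $(f,\rho,J)$ and generic auxiliary Morse data on $M$, every pearly moduli space --- including those carrying marked points and those governing the operations --- must be cut out transversally and have the expected dimension, which requires restricting to simple disks and using monotonicity and the index estimates to discard multiply-covered and lower-stratum configurations. Second, one needs \emph{compactness and gluing}: Gromov compactness for $J$-disks must identify the compactification of each $1$-dimensional moduli, and a gluing theorem must show that this compactification is a genuine $1$-manifold whose boundary is exactly as described, the condition $N_L\ge 2$ being precisely what keeps disk and sphere bubbling in codimension $\ge 1$ throughout. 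These points are technically heavy but by now standard for specialists; the full details are carried out in~\cite{Bi-Co:qrel-long}, and here one would only indicate the main steps.
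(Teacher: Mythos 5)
Your proposal follows the same general architecture as the paper's: the pearl complex of $f,\rho,J$ with differential counting alternating chains of $J$-disks and negative gradient flow lines; operations defined by counting tree-shaped pearly configurations (trivalent tree for $\circ$, interior marked point for $\circledast$, mixed exit on $M$ for $i_L$); boundary analysis of $1$-dimensional moduli for all identities; base change over $\widetilde{\Lambda}^{+}$ for item~\ref{I:extension}; filtration by $t$-degree for item~\ref{I:spectral-seq}; and a PSS-type comparison for item~\ref{I:comparison}. That is all correct in outline. However, there are two places where your treatment of the analytic backbone is not just ``deferred'' but actually papers over an obstruction that would stop the argument if carried out naively.

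First, transversality for $J$-disks cannot be handled by the sphere-style mantra ``restrict to simple curves and throw away multiply covered strata by index,'' because there is no clean multiply-covered/somewhere-injective dichotomy for disks with Lagrangian boundary. What makes the reduction work is a structural decomposition theorem for non-simple $J$-disks (Lazzarini, Theorem~\ref{T:decomposition}; alternatively Kwon--Oh), which produces from any non-simple non-constant disk a simple $J$-disk with the same image and boundary image and strictly smaller area/Maslov index (Lemma~\ref{L:non-simple}), and a dichotomy for two simple disks with infinite intersection (Lemma~\ref{L:uv}). The inductive argument in~\S\ref{sec:main_quant_proof} that reduces general pearly configurations to ones with simple, absolutely distinct disks leans entirely on these; without naming this input, the ``discard lower-stratum configurations'' step has no justification. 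You should make explicit that the index drop one gets from replacing a non-simple disk by the underlying simple one is $\geq N_L$ (so negative virtual dimension, hence an empty moduli space), and that this is what Lemma~\ref{L:non-simple} delivers.

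Second, and more seriously, the moduli spaces defining $\circledast$ --- and the $1$-dimensional spaces used to prove it is a chain map, as well as those for the associativity $(a\ast b)\circledast x = a\circledast(b\circledast x)$ --- carry a disk vertex with an \emph{interior} marked point pinned at $0\in D$. When you replace a non-simple disk at such a vertex by the underlying simple one, the interior marked point can only be located somewhere in $\textnormal{Int\,}D$, not at $0$: you trade $\mu\geq N_L$ of index for one extra parameter, so the dimension drop is only $N_L-1$. For the $0$-dimensional spaces this still gives negative dimension, but for the $1$-dimensional ones (precisely the ones you need for $d\circ\circledast$ and for associativity of the module action) it does not, and the naive count fails to be well-defined. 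The paper's fix is to make the curves at those valence-$\geq 3$ vertices satisfy a small \emph{Hamiltonian-perturbed} Cauchy--Riemann equation~\eqref{Eq:delbar-pert}, for which transversality holds without any simplicity condition; the two resulting counting schemes are then compared by a cobordism argument as the perturbation is turned off. This is a genuine idea, not a technical footnote, and it is absent from your proposal; as written, your construction of the module structure would not actually close up.

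On a smaller point, your phrasing of the $d^2=0$ cancellation (``the two Maslov-$2$ disk-bubbling contributions cancel in pairs'') should be recast: the relevant cancellation is between disk-bubbling configurations and configurations where a gradient edge shrinks to length zero, which gluing identifies as the same codimension-one stratum approached from two sides; the hypothesis $N_L\geq 2$ is what rules out ``lateral'' bubbling (a bubble carrying no incidence point), so that every boundary stratum is accounted for by exactly one of those two mechanisms or by a broken flow line.
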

The existence of the morphism $QH(L;\mathcal{R}) \to QH(L;
\mathcal{R}')$ at point~\ref{I:extension} of the Theorem is not a
purely algebraic statement about extension of coefficients. Rather, it
means that the canonical extension of coefficients morphisms
$H_*(\mathcal{C}(L; \mathcal{R}; f, \rho, J)) \to H_*(\mathcal{C}(L;
\mathcal{R}'; f, \rho, J))$ do not depend on $(f, \rho, J)$ in the
sense that they are compatible with the canonical comparison
isomorphisms relating the homologies associated to any two triples
$(f_0, \rho_0, J_0)$ and $(f_1, \rho_1, J_1)$.  In view of
point~\ref{I:extension} we will denote from now on the differential
$d^{\mathcal{R}}$ by $d$ whenever the ring $\mathcal{R}$ is fixed and
there is no risk of confusion.

By a {\em two-sided algebra} $A$ over a ring $R$ we mean that $A$ is a
module over $R$, that $A$ is also a (possibly non-commutative) ring,
and the two structures satisfy the following compatibility conditions:
$$\forall\, r \in R \textnormal{ and } a,b\in A
\textnormal{ we have } r(ab)=(ra)b=a(rb).$$ In other words, the first
identity means that $A$, when considered as a left module over $R$, is
an algebra over $R$, and the second one means that $A$ continues to be
an algebra over $R$ when viewed as a right module over $R$, where the
left and right module operations are the same one.

\medskip Before going on any further we would like to point out that,
the existence of a module structure asserted by
Theorem~\ref{thm:alg_main} has already some non-trivial consequences.
For instance, the fact that $QH_*(L;\Lambda)$ is a module over
$QH_*(M;\Lambda)$ implies that if $a \in QH_k(M;\Lambda)$ is an
invertible element of degree $k$, then the map $a \circledast
(-)$ gives rise to {\em isomorphisms} $QH_i(L;\Lambda) \to
QH_{i+k-2n}(L;\Lambda)$ for every $i \in \mathbb{Z}$, or in other
words, $QH_*(L;\Lambda)$ is $(k-2n)$-periodic. In view of
point~\ref{I:comparison} of the theorem the same periodicity holds for
the Floer homology $HF_*(L)$ too. Note that there is yet another
obvious periodicity for $QH_*(L)$ that always holds (regardless of the
module structure). Namely multiplying by $t \in \Lambda$ always gives
isomorphisms $QH_*(L; \Lambda) \cong QH_{*-N_L}(L;\Lambda)$. This
follows immediately from the fact that $QH(L; \Lambda)$ is a graded
$\Lambda$-module and that $t \in \Lambda_{-N_L}$ is invertible. The
above two periodicities, when applied together, provide a powerful
tool in the computations of our invariants.

\medskip In most of the applications below we will take the ring of
coefficients $\mathcal{R}$ to be either $\Lambda$ or $\Lambda^{+}$.
Therefore we will sometimes drop the ring of coefficients from the
notation and use the following abbreviations:
\begin{align*}
   \mathcal{C}(L;f,\rho,J) & = \mathcal{C}(L;\Lambda;f,\rho,J), \quad
   QH(L) = QH(L; \Lambda) ~,~\\
   \mathcal{C}^{+}(L;f,\rho,J) & =
   \mathcal{C}(L;\Lambda^{+};f,\rho,J), \quad Q^{+}H(L) =
   QH(L;\Lambda^{+}).
\end{align*}
We will call the complex $\mathcal{C}(L;f,\rho,J)$ (respectively
$\mathcal{C}^{+}(L;f,\rho,J)$) the {\em (positive) pearl complex}
associated to $f,\rho,J$ and we will call the resulting homology the
(positive) {\em quantum homology} of $L$. In the perspective
of~\cite{Cor-La:Cluster-1, Cor-La:Cluster-2} the complex
$\mathcal{C}(L;f,\rho,J)$ corresponds to the {\em linear cluster
  complex}.

\begin{rem} \label{r:alg-thm}
   \begin{enumerate}[a.]
     \item The complex $\mathcal{C}(L;f,\rho,J)$ was first suggested
      by Oh~\cite{Oh:relative} (see also
      Fukaya~\cite{Fu:Morse-homotopy}) and, from a more recent
      perspective, it is a particular case of the cluster complex as
      described in Cornea-Lalonde~\cite{Cor-La:Cluster-1}. The module
      structure over $Q^{+}H(M)$ discussed at point~\ref{I:qmod-qprod}
      is probably known by experts - at least in the Floer homology
      setting - but has not been explicitly described yet in the
      literature. The product at~\ref{I:qmod-qprod} is a variant of
      the Donaldson product defined via holomorphic triangles - it
      might not be widely known in this form.  The map $i_{L}$ at
      point~\ref{I:q-inclusion} is the analogue of a map first studied
      by Albers in~\cite{Alb:extrinisic} in the absence of bubbling.
      The spectral sequence appearing at~\ref{I:spectral-seq} is a
      variant of the spectral sequence introduced by Oh
      \cite{Oh:spectral}. The compatibility of this spectral sequence
      with the product at point~\ref{I:qmod-qprod} has been first
      mentioned and used by Buhovsky~\cite{Bu:products} and
      independently by Fukaya-Oh-Ohta-Ono~\cite{FO3}. The comparison
      map at~\ref{I:comparison} is an extension of the
      Piunikin-Salamon-Schwarz construction~\cite{PSS}, it extends
      also the partial map constructed by Albers in~\cite{Alb:PSS} and
      a more general such map was described independently
      in~\cite{Cor-La:Cluster-1} in the ``cluster" context.  We also
      remark that this comparison map (with coefficients in $\La$)
      identifies all the algebraic structures described above with the
      corresponding ones defined in terms of the Floer complex.
     \item The isomorphism $QH(L) \cong HF(L)$ at
      point~\ref{I:comparison} of Theorem~\ref{thm:alg_main} is an
      important structural property of the Lagrangian quantum
      homology. However, we would like to point out that this property
      of $QH(L)$ is in fact not used in any of the applications
      presented in this paper. There is only one minor exception to
      this rule. Namely, our definition of wide and narrow Lagrangians
      $L$ goes via $HF(L)$. However we could have defined these
      notions directly using $QH(L)$, and actually in the rest of the
      paper this will be the more relevant definition. The reason we
      have chosen to define wide and narrow using Floer homology is
      two-fold.  Firstly, Floer homology is already well known in
      symplectic topology, and we wanted to base the notions of wide
      and narrow on a familiar concept. Secondly, it is easier to
      produce examples of narrow Lagrangians this way, simply by using
      the fact that if a Lagrangian $L$ is Hamiltonianly displaceable
      then $HF(L)=0$.
   
      We insist on separating between $HF$ and $QH$ because
        we do not view our Lagrangian quantum homology as a
      Lagrangian intersections invariant. Moreover, the results in
      this paper suggest that  Lagrangian quantum homology has
      applications beyond  Lagrangian intersections and
      thus we believe that this homology should be developed and studied
      in its own right.
   \end{enumerate}
\end{rem}

\subsection{Some computations} \label{Sbs:examples-comput} Here we
present a few explicit computations of the various quantum structures
mentioned in Theorem~\ref{thm:alg_main} performed on three examples:
Lagrangians $L \subset {\mathbb{C}}P^n$ with $2 H_1(L;\mathbb{Z})=0$
(e.g. $L = \mathbb{R}P^n$), the Clifford torus
$\mathbb{T}^2_{\textnormal{clif}} \subset {\mathbb{C}}P^2$ and
Lagrangians $L$ in the quadric with $H_1(L;\mathbb{Z})=0$ (e.g.
spheres).  The proofs of the three results listed here are given in
\S\ref{sec:exa_comp}. More results in this direction can be found
in~\cite{Bi-Co:qrel-long}.

We work here over the ring $\Lambda$.  We start with Lagrangians $L
\subset {\mathbb{C}}P^n$ that satisfy $2 H_1(L;\mathbb{Z})=0$. Recall
from Corollary~\ref{cor:RP} that $QH_i(L) \cong HF_i(L) \cong
\mathbb{Z}_2$ for every $i \in \mathbb{Z}$. Denote by $\alpha_i \in
QH_i(L)$ the generator. Denote by $h = [{\mathbb{C}}P^{n-1}] \in
H_{2n-2}({\mathbb{C}}P^n;\mathbb{Z}_2)$ the class of a hyperplane.
Recall also that in the quantum homology $QH({\mathbb{C}}P^n)$ we
have:
\begin{equation} \label{Eq:qh-cpn} h^{*j} =
   \begin{cases}
      h^{\cap j}, & 0 \leq j \leq n \\
      [{\mathbb{C}}P^n] s, & j=n+1
   \end{cases}
\end{equation}
As we will see (and is stated in Corollary \ref{cor:RP}) $N_L = n+1$,
thus the embedding $\Gamma \hookrightarrow \Lambda$ is given by $s \to
t^2$. It follows that in $QH({\mathbb{C}}P^n; \Lambda)$ the last
relation of~\eqref{Eq:qh-cpn} becomes $h^{* (n+1)} = [{\mathbb{C}}P^n]
t^{2}$.  Finally note that both $h$ and $[pt]$ are invertible elements
in $QH({\mathbb{C}}P^n)$.
\begin{thm} \label{T:qstruct-2H_1=0} Let $L \subset {\mathbb{C}}P^n$
   be a Lagrangian with $2 H_1(L;\mathbb{Z}_2)=0$. Then:
   \begin{enumerate}
     \item[i.] For every $i,j \in \mathbb{Z}$, $\alpha_i \circ
      \alpha_j = \alpha_{i+j-n}$.
     \item[ii.] For every $i \in \mathbb{Z}$, $h \circledast \alpha_i
      = \alpha_{i-2}$.
   \end{enumerate}
   Furthermore, denote by $h_j \in H_j({\mathbb{C}}P^n;\mathbb{Z}_2)$
   the generator (so that $h_{2n-2} = h$, $h_{2k} = h^{\cap (n-k)}, \,
   \forall \, 0\leq k \leq n$, $h_{\textnormal{odd}}=0$ etc.) then:
   \begin{enumerate}
     \item[iii.]For $n=$\,even we have:
      \begin{align*}
         & i_L(\alpha_{2k})=h_{2k}, \quad  \forall\, 0 \leq 2k \leq n, \\
         & i_L(\alpha_{2k+1}) = h_{2k+n+2} t,\quad \forall\, 1 \leq
         2k+1 \leq n-1.
      \end{align*}
     \item[iv.] For $n=$\,odd we have:
      \begin{align*}
         & i_L(\alpha_{2k}) = h_{2k} +
         h_{2k+n+1} t, \quad \forall\, 0 \leq 2k \leq n, \\
         & i_L(\alpha_{2k+1})=0, \quad \forall\, k \in \mathbb{Z}.
      \end{align*}
   \end{enumerate}
\end{thm}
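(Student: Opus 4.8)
The plan is to compute the quantum structures of $L$ by exploiting the two periodicities available on $QH(L;\Lambda)$ (multiplication by $t$, which shifts degree by $-N_L = -(n+1)$, and multiplication by the invertible class $h \in QH_{2n-2}(\mathbb{C}P^n)$, which shifts degree by $-2$ on the module) together with the ring and module axioms from Theorem~\ref{thm:alg_main}, and then pin down the remaining ambiguities via the degree filtration spectral sequence and the compatibility of $i_L$ with the module structure. Since each $QH_i(L) \cong \mathbb{Z}_2$, any $\mathcal{R}$-bilinear operation is determined by a single $\mathbb{Z}_2$-coefficient in each degree, so the whole content is to show that all the relevant coefficients equal $1$ (i.e., nothing vanishes).

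\textbf{Step 1: the ring structure.} The unit $[L] \in QH_n(L)$ must be $\alpha_n$ (up to the unique nonzero scalar). For $\alpha_i \circ \alpha_j \in QH_{i+j-n}(L) = \mathbb{Z}_2\,\alpha_{i+j-n}$, I would first observe $\alpha_i \circ \alpha_n = \alpha_i$ by the unit axiom. For general $i,j$ one uses that the classical component of the pearl differential is the Morse differential and that the degree-$0$ part of $\circ$ is the ordinary intersection product on $H_*(L;\mathbb{Z}_2)$; since $H_*(L;\mathbb{Z}_2) \cong H_*(\mathbb{R}P^n;\mathbb{Z}_2)$ as a ring (Corollary~\ref{cor:RP}), where the product of $H_i$ and $H_j$ is nonzero exactly when $i+j \geq n$, one gets $\alpha_i \circ \alpha_j = \alpha_{i+j-n}$ whenever $0 \le i,j \le n$ and $i+j\ge n$ — and then one propagates to all $i,j \in \mathbb{Z}$ by multiplying by powers of $t$ and using $\mathcal{R}$-bilinearity. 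The cases $i+j < n$ (classically zero) require the quantum correction: here I would use that $h \circledast \alpha_i = \alpha_{i-2}$ (proved next) is an isomorphism commuting with $\circ$ in the appropriate two-sided sense, reducing everything to a range where the classical term already suffices, or alternatively feed in point~iii/iv on $i_L$ and the fact that $[pt]$ is invertible to force the product nonzero.

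\textbf{Step 2: the module structure $h \circledast \alpha_i$.} Since $h$ is invertible in $QH(\mathbb{C}P^n;\Lambda)$, the operator $h \circledast (-): QH_i(L) \to QH_{i-2}(L)$ is an isomorphism of $\mathbb{Z}_2$'s, hence nonzero, hence $h \circledast \alpha_i = \alpha_{i-2}$; this is essentially immediate from point~\ref{I:qmod-qprod} of Theorem~\ref{thm:alg_main} and needs only the invertibility of $h$ already recorded in the excerpt. Combined with $h^{*(n+1)} = [\mathbb{C}P^n] t^2$ this also gives $[\mathbb{C}P^n] \circledast \alpha_i$ and so on, which I will want for the $i_L$ computation.

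\textbf{Step 3: the inclusion map $i_L$.} This is the main obstacle. The defining relation~\eqref{eq:inclusion_mod}, $\langle PD(h_j), i_L(\alpha_i)\rangle = \epsilon_L(h_j \circledast \alpha_i)$, together with Step~2 lets me compute all Kronecker pairings of $i_L(\alpha_i)$ against the $PD(h_j)$: indeed $h_j \circledast \alpha_i = (h^{\cap(n-j/2)}) \circledast \alpha_i = h^{*(n-j/2)} \circledast \alpha_i$ is a known multiple of some $\alpha_m t^{?}$, and $\epsilon_L$ is nonzero on $QH(L;\Lambda)$ by point~i (using that $L$ is wide, hence $QH(L;\Lambda)\ne 0$). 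Because $H_{\mathrm{even}}(\mathbb{C}P^n;\mathbb{Z}_2)$ is one-dimensional in each degree and the $PD(h_j)$ form a $\Lambda$-basis of $H_*(\mathbb{C}P^n)\otimes\Lambda$ in the relevant degrees, knowing all these pairings determines $i_L(\alpha_i)$ uniquely, and one reads off the stated formulas after matching degrees: a class in $H_*(\mathbb{C}P^n)\otimes\Lambda$ of the degree of $\alpha_i$ (namely $i$ or $i - N_L\mathbb{Z}$) is a sum $h_{i} + h_{i+n+1}t + \cdots$, and the parity/periodicity bookkeeping forces exactly the two regimes $n$ even vs. $n$ odd in the statement. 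The delicate points I anticipate are: (a) getting the grading shift in the comparison/normalization conventions exactly right so the powers of $t$ come out as written; (b) justifying that $\epsilon_L$ applied to each $\alpha_m$ (as opposed to some $\Lambda$-combination) is nonzero, which I would argue from the fact that $\epsilon_L$ is a graded $\Lambda$-module map, $\Lambda$ is a field of Laurent polynomials in $t$, and $QH(L;\Lambda)$ is free of rank one over $\Lambda$ in each $N_L$-periodicity class, so a nonzero $\Lambda$-linear functional cannot kill a generator; and (c) handling the odd-$n$ case where $i_L(\alpha_{\mathrm{odd}})=0$ — this should follow because $H_{\mathrm{odd}}(\mathbb{C}P^n;\mathbb{Z}_2)=0$ and degree/parity constraints leave no room, once one checks the pairings against all $PD(h_j)$ vanish (the relevant $h_j \circledast \alpha_{\mathrm{odd}}$ landing in an odd degree of $QH(L;\Lambda)$ where, after the periodicity is accounted for, $\epsilon_L$ evaluates to zero for degree reasons).

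Finally I would assemble the four parts, double-check consistency (e.g., that $i_L(\alpha_n \circ \alpha_n) = i_L(\alpha_n)$ is compatible with $i_L$ being a $QH(M)$-module map, and that applying $h\circledast$ to the $i_L$ formulas is consistent with $i_L$ intertwining $\circledast$ with $*$), and note the parity split $n$ even/odd is exactly what the two periodicities predict. I expect Step~3, and within it the normalization of the grading shift in~\eqref{eq:inclusion_mod} and the non-vanishing of $\epsilon_L$ on individual generators, to be where essentially all the work lies; Steps~1 and~2 are short once the ring structure of $H_*(L;\mathbb{Z}_2)$ and the invertibility of $h$ are in hand.
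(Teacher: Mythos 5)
Your strategy for points~ii, iii, iv matches the paper's: point~ii is immediate from the invertibility of $h$ in $QH(\mathbb{C}P^n;\Lambda)$ together with $QH_i(L)\cong\mathbb{Z}_2$, and points~iii, iv are obtained by evaluating the defining relation~\eqref{eq:inclusion_mod} against the $PD(h_j)$, with all degree bookkeeping as you describe. The one place you diverge from the paper is point~i. You propose to first establish the product in the ``classical range'' $0\le i,j\le n$, $i+j\ge n$ (where the answer is forced by the degree-zero part agreeing with the intersection product in $H_*(L;\mathbb{Z}_2)\cong H_*(\mathbb{R}P^n;\mathbb{Z}_2)$, with no room for $t$-corrections) and then propagate using the two periodicities and the two-sided algebra axiom $h\circledast(\alpha_i\circ\alpha_j)=(h\circledast\alpha_i)\circ\alpha_j$. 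This works, but it is bulkier than what the paper does: the paper simply observes that $\alpha_{n-1}\circ\alpha_{n-1}=\alpha_{n-2}=h\circledast[L]$, and since $h$ and $[L]$ are both invertible (each in its own ring), $\alpha_{n-1}$ is an invertible element of $QH(L;\Lambda)$; hence $\alpha_i=\alpha_{n-1}^{\circ(n-i)}$ for every $i\in\mathbb{Z}$ and the product formula $\alpha_i\circ\alpha_j=\alpha_{i+j-n}$ is immediate. The single nonvanishing $\alpha_{n-1}\circ\alpha_{n-1}\ne 0$ is exactly what the Bockstein/Steenrod-square argument in Lemma~\ref{l:L-RPn} supplies; you are implicitly using the same fact via the ring isomorphism with $H_*(\mathbb{R}P^n;\mathbb{Z}_2)$, so there is no new input, just a less efficient packaging.

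One imprecision worth flagging in your Step~3, item~(b): it is not true that $\epsilon_L$ is nonzero on each generator $\alpha_m$ --- as a graded $\Lambda$-module map into $\Lambda$, which is concentrated in degrees divisible by $N_L=n+1$, $\epsilon_L$ annihilates every $\alpha_m$ with $m\not\equiv 0\pmod{n+1}$. What you actually need (and what the paper uses) is only $\epsilon_L(\alpha_0)=1$, and this does follow from nontriviality of $\epsilon_L$ together with the fact that $\epsilon_L$ is determined by its value on $\alpha_0$ by $\Lambda$-linearity and grading. The pairing computation is then arranged so that, for each $i$, one chooses $y=h_j$ with $h_j\circledast\alpha_i=\alpha_0$ (or $\alpha_0 t$), exactly as the paper does; the choice of $y$ is not arbitrary, and your phrasing made it sound as though any generator would serve.
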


The next result describes our computations for, mainly, the
$2$-dimensional Clifford torus $\mathbb{T}^2_{\textnormal{clif}}
\subset {\mathbb{C}}P^2$.
\begin{thm} \label{T:clif2-qstruct} The Clifford torus $\clift^{n}$ is
   wide for every $n\geq 1$.  Let $w \in
   H_2(\mathbb{T}^2_{\textnormal{clif}};\mathbb{Z}_2) \hookrightarrow
   QH_2(\mathbb{T}_{\textnormal{clif}};\mathbb{Z}_2)$ be the
   fundamental class.  There are generators $a,b\in
   H_{1}(\mathbb{T}^2_{\textnormal{clif}}, \mathbb{Z}_2) \cong
   QH_1(\mathbb{T}^2_{\textnormal{clif}})$, and $m \in
   QH_0(\mathbb{T}^2_{\textnormal{clif}})$ which together with $w$
   generate $QH(\mathbb{T}^2_{\textnormal{clif}})$ as a
   $\Lambda$-module and verify the following relations:
   \begin{enumerate}[i.]
     \item $a \circ b=m + w t$, $b \circ a=m$, $a \circ a=b \circ b= w
      t$, $m \circ m = mt + wt^2$. \label{I:qprod-T}
     \item $h \circledast a=at$, $h \circledast b=bt$, $h \circledast
      w=wt$, $h \circledast m = mt$. Here $h =[\mathbb{C}P^1] \in
      H_2({\mathbb{C}}P^2;\mathbb{Z}_2)$ is the class of a projective
      line.
     \item $i_{L}(m)=[pt]+ht+[\C P^{2}]t^{2}$, \,
      $i_{L}(a)=i_{L}(b)=i_{L}(w)=0$
   \end{enumerate}
\end{thm}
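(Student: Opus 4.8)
\textbf{Proof proposal for Theorem~\ref{T:clif2-qstruct}.}

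The plan is to compute the pearl complex of $\clift^2$ with respect to a well-chosen Morse function and then extract the three sets of relations from explicit enumerations of pseudo-holomorphic disks. Since $\clift^2$ is a monotone torus with $N_L=2$, Theorem~\ref{thm:floer=0-or-all} (with $l=1$, as $H_*(L;\Z_2)$ is generated by $H_{\geq n-1}$) already tells us that $\clift^2$ is either wide or narrow; wideness will follow once we exhibit a single nonzero element of $QH(\clift^2)$ that survives, or more directly by computing $d=0$ on a perfect Morse function. First I would fix a perfect Morse function $f$ on $T^2$ with one minimum $m$, two saddles $a,b$, and one maximum $w$, so that $d^{\textnormal{\tiny Morse}}=0$ and the pearl differential $d$ reduces to its higher terms $\delta_l$, $l\geq 1$. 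The key geometric input is the classification of Maslov-index-$2$ holomorphic disks with boundary on $\clift^n$: by Cho--Oh there are exactly $n+1$ such disks through a generic boundary point (the ``coordinate'' disks), and for $n=2$ these are three disks, all regular. A dimension count shows $\delta_1$ can only involve these index-$2$ disks with one marked point constraint, and the only possibly-nonzero contribution is to the differential of the two saddles landing in the minimum; by the symmetry of the three Clifford disks and a parity argument this count is even, hence $d=0$ and $\clift^2$ is wide, with $QH_*(\clift^2)\cong H_*(T^2;\Z_2)\otimes\Lambda$. This identifies $m,a,b,w$ with the stated generators.

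Next I would compute the quantum product $\circ$ in~\ref{I:qprod-T}. The classical ($t^0$) part of $\alpha\circ\beta$ is the ordinary intersection product on $T^2$, which gives $a\circ b=b\circ a=m$, $a\circ a=b\circ b=0$, $m\circ m=m$, $w$ being the unit is already forced by normalization (note $[L]=w$ has degree $n=2$). The quantum corrections come from configurations of holomorphic disks (with at most two incidence constraints from descending manifolds and the output ascending manifold) whose total Maslov index is a positive multiple of $N_L=2$; by degree reasons only Maslov index $2$ contributes to the $t^1$ term and Maslov index $4$ to the $t^2$ term. The Maslov-$2$ correction to $a\circ b$ versus $b\circ a$ differs by the ordering of the two incidence points along the boundary of a Clifford disk relative to the marked point; a careful bookkeeping of which of the three disks can simultaneously pass through the relevant stable/unstable manifolds yields the asymmetry $a\circ b=m+wt$, $b\circ a=m$, and similarly the diagonal products $a\circ a=b\circ b=wt$ pick up exactly one Clifford disk. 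The relation $m\circ m=mt+wt^2$ then follows either by a direct count or, more cheaply, from associativity and the already-established relations together with the module structure computed below; I would prefer to pin it down by the algebraic constraint $m\circ m=(a\circ b)\circ(b\circ a)$ worked through using~\ref{I:qprod-T}, checking consistency with a Maslov-$4$ count.

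For the module structure~ii over $QH(\C P^2)$, the operation $h\circledast(-)$ has degree $2+2-4=-2$, so it maps each $QH_i$ isomorphically (once we know $h$ is invertible in $QH(\C P^2)$, which it is, by~\eqref{Eq:qh-cpn}) onto $QH_{i-2}$. Since $t$ also gives degree $-2$ isomorphisms and $QH_i(\clift^2)$ is one-dimensional over $\Z_2$ in each degree after tensoring appropriately, $h\circledast(-)$ must agree with multiplication by $t$ up to a scalar, and over $\Z_2$ the scalar is $1$; alternatively one reads this off the spectral sequence of~\ref{I:spectral-seq}, since $h\circledast(-)$ on the classical page is cap product with $[\C P^1]$ on $H_*(T^2)$, which vanishes, forcing the operation to raise the degree filtration, i.e. to be multiplication by $t$ on the nose. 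Finally for~iii, the map $i_L$ is determined by the adjunction formula~\eqref{eq:inclusion_mod}: $\langle PD(h_j), i_L(x)\rangle=\epsilon_L(h_j\circledast x)$. Using~ii and the augmentation (which on $QH(\clift^2)$ is the projection to the $t$-power of the unit $w$, nonzero since $\clift^2$ is wide), one computes $\epsilon_L(h^{\cap k}\circledast m)$ for $k=0,1,2$ and finds these are $t^0,t^1,t^2$ respectively while all pairings with $a,b,w$ vanish for degree/parity reasons; unwinding Poincar\'e duality on $\C P^2$ this says exactly $i_L(m)=[pt]+ht+[\C P^2]t^2$ and $i_L(a)=i_L(b)=i_L(w)=0$. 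The main obstacle throughout is the geometric bookkeeping in the product computation: getting the orientations-free $\Z_2$-count of Clifford disks through prescribed incidence data exactly right so that the asymmetry $a\circ b\neq b\circ a$ comes out with the stated $wt$ term rather than being washed out; everything else is either classical topology, the formal degree/periodicity arguments, or forced by the algebraic structure in Theorem~\ref{thm:alg_main}.
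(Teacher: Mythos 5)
Your overall route (wideness via the parity of Maslov--$2$ boundary classes, the product via counts of the three Cho disks, $i_L$ via formula~\eqref{eq:inclusion_mod} and the augmentation) is the paper's route, but two steps as you describe them would fail. The serious one is point~ii. Your degree argument is based on the claim that $QH_i(\clift^2)$ is one-dimensional in each degree; this is false: since $\clift^2$ is wide and $|t|=-2$, one has $QH_1(\clift^2)\cong H_1(T^2;\Z_2)\cong\Z_2^2$ and $QH_0(\clift^2)\cong\Z_2^2$, so ``$h\circledast(-)$ must agree with multiplication by $t$ up to a scalar'' does not follow. Your spectral-sequence alternative only shows that the classical part of $h\circledast a$ vanishes, i.e.\ that $h\circledast a\in\{0,at,bt,(a+b)t\}$; it cannot distinguish $at$ from $bt$ or $(a+b)t$, and that is precisely the content of ii. The paper needs genuinely more input here: invertibility of $h$ forces exactly one coefficient to be nonzero, the ambient symplectomorphism exchanging the two circle factors (Proposition~\ref{cor:symm}) forces $h\circledast a$ and $h\circledast b$ to be related by $a\leftrightarrow b$, and the relation $h^{*3}=[\C P^2]t^3$ rules out the off-diagonal possibility $h\circledast a=bt$. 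Also $h\circledast m=mt$ is not automatic for degree reasons (it could a priori contain $wt^2$); it is obtained from associativity, $h\circledast(b\circ a)=(h\circledast b)\circ a$.

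In point~i there are two further problems. First, the classical part of $m\circ m$ is $0$, not $m$ (the intersection product of two degree-zero classes on a surface vanishes). Second, the identity $m\circ m=(a\circ b)\circ(b\circ a)$ you propose is false: substituting your own relations gives $(m+wt)\circ m=m\circ m+mt$, which would force $mt=0$. The correct algebraic derivation (as in the paper) writes one factor as $m=a\circ b+wt$ and the other as $m=b\circ a$, so $m\circ m=(a\circ b+wt)\circ(b\circ a)=a\circ(b\circ b)\circ a+mt=mt+wt^2$. Finally, the asymmetry $a\circ b=m+wt$ versus $b\circ a=m$, and the fact that exactly one of the disks $d_2,d_3$ contributes to $a\circ a=wt$, are not ``bookkeeping'' that can be left implicit: they rest on the boundary classes $[\partial d_2]=b$, $[\partial d_3]=-a-b$, which determine opposite cyclic orderings of the incidence points on the two disk boundaries, so that precisely one configuration respects the required clockwise order. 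You correctly identify this as the main obstacle, but without that argument the $wt$ terms in i are not pinned down, and ii and iii (which you derive from i and ii) inherit the gap.
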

We remark that, as the formulas in~\ref{I:qprod-T} indicate, the
quantum product on $QH(L)$ is in general {\em non-commutative} (even
if we work over $\mathbb{Z}_2$).

\begin{rem}
   \begin{enumerate}[a.]
     \item The fact that the Clifford torus is wide and point i of
      Theorem~\ref{T:clif2-qstruct} have been obtained before by Cho~
      in \cite{Cho:Clifford} and \cite{Cho:products} by a different
      approach. From the perspective of \cite{Cho:products} the
      Clifford torus is a special case of a torus which appears as a
      fibre of the moment map defined on a toric variety. See
      also~\cite{Cho-Oh:Floer-toric} for related results in this
      direction.
     \item Given that $\clift^{2}$ is wide we have
      $QH_*(\mathbb{T}^2_{\textnormal{clif}}) \cong
      H_*(\mathbb{T}^2_{\textnormal{clif}};\mathbb{Z}_2) \otimes
      \Lambda$.  Note however that such an isomorphisms cannot be made
      canonical in all degrees (see also \S\ref{subsec:Cliff_calc}).
      Nevertheless there is a canonical embedding
      $H_2(\mathbb{T}^2_{\textnormal{clif}}) \hookrightarrow
      QH_2(\mathbb{T}^{2}_{\textnormal{clif}})$ and the isomorphism
      $QH_1(\mathbb{T}^2_{\textnormal{clif}}) \cong
      H_1(\mathbb{T}^{2}_{\textnormal{clif}};\mathbb{Z}_2)$ is
      canonical. (See \cite{Bi-Co:qrel-long}, \cite{Bi-Co:Yasha-fest}
      for more details on this).
   \end{enumerate}
\end{rem}

We now turn to the third example: Lagrangians in the quadric.  Let $L
\subset Q^{2n}$ be a Lagrangian submanifold of the quadric (where
$dim_{\mathbb{R}} Q = 2n$) that satisfies $H_1(L;\mathbb{Z})=0$.  Such
Lagrangians are monotone and the minimal Maslov number is $N_L = 2n$.
Recall that by Corollary~\ref{cor:quadric} $L$ is wide hence $QH_*(L)
\cong (H(L;\mathbb{Z}_2) \otimes \Lambda)_{\ast}$. As $\deg t = -2n$
we have $QH_0(L) \cong H_0(L;\mathbb{Z}_2)$ and $QH_n(L) \cong
H_n(L;\mathbb{Z}_2)$. Denote by $\alpha_0 \in QH_0(L)$ and $\alpha_n
\in QH_n(L)$ the respective generators. Finally, denote by $[pt] \in
H_0(Q;\mathbb{Z}_2)$ the class of a point.
\begin{thm} \label{T:quadric-qstruct} Let $L \subset Q$ be as above.
   Then:
   \begin{enumerate}
     \item[i.] $[pt]\circledast \alpha_0 = -\alpha_0 t$, $[pt]
      \circledast \alpha_n = -\alpha_n t$.
     \item[ii.] $i_L(\alpha_0) = [pt] - [Q]t$, where $[Q] \in
      H_{2n}(Q;\mathbb{Z}_2)$ is the fundamental class.
     \item[iii.] If $n$ is even then $\alpha_0 \circ \alpha_0 =
      \alpha_n t$.
   \end{enumerate}
\end{thm}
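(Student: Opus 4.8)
The plan is to deduce everything from the formal structure supplied by Theorem~\ref{thm:alg_main} --- the $QH(Q;\Lambda)$-module structure $\circledast$ on $QH(L;\Lambda)$, the ring structure $\circ$, the two-sided-algebra compatibility, the morphism $i_L$ with its characterizing relation~\eqref{eq:inclusion_mod}, and the augmentation $\epsilon_L$ --- together with two facts about the ambient: $Q$ is point-invertible, so $[pt]$ is invertible in $QH(Q;\Lambda)$, and the elementary intersection theory of the even quadric. As a preliminary I would record, using that $L$ is wide (Corollary~\ref{cor:quadric}) and $\deg t=-N_L=-2n$, the degree bookkeeping: $QH_0(L;\Lambda)=\mathbb{Z}_2\langle\alpha_0\rangle$, $QH_{-2n}(L;\Lambda)=\mathbb{Z}_2\langle\alpha_0 t\rangle$, $QH_n(L;\Lambda)=\mathbb{Z}_2\langle\alpha_n\rangle$ and $QH_{-n}(L;\Lambda)=\mathbb{Z}_2\langle\alpha_n t\rangle$ are all one-dimensional over $\mathbb{Z}_2$, while $QH_0(Q;\Lambda)=\mathbb{Z}_2\langle[pt],[Q]t\rangle$ and $QH_n(Q;\Lambda)=H_n(Q;\mathbb{Z}_2)$ (which is $0$ for $n$ odd and of rank $2$ for $n$ even).

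\emph{Part i.} As $[pt]$ is invertible in $QH(Q;\Lambda)$, the module axioms make $[pt]\circledast(-)\colon QH_\ast(L;\Lambda)\to QH_{\ast-2n}(L;\Lambda)$ an isomorphism; so is multiplication by the invertible element $t\in\Lambda_{-2n}$. Hence $[pt]\circledast\alpha_0$ and $\alpha_0 t$ are both non-zero in the one-dimensional space $QH_{-2n}(L;\Lambda)$, so $[pt]\circledast\alpha_0=\alpha_0 t\ (=-\alpha_0 t$ over $\mathbb{Z}_2)$; the same argument in $QH_{-n}(L;\Lambda)$ gives $[pt]\circledast\alpha_n=-\alpha_n t$.

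\emph{Part ii, and the reduction of Part iii.} First, $\epsilon_L(\alpha_0)=1$: being $\Lambda$-linear and degree-preserving, $\epsilon_L$ vanishes on $QH_d(L;\Lambda)$ unless $2n\mid d$, and for $2n\mid d$ the bookkeeping gives $QH_d(L;\Lambda)=\mathbb{Z}_2\langle\alpha_0 t^{-d/2n}\rangle$; as $L$ is wide, $QH(L;\Lambda)\neq0$, so $\epsilon_L\not\equiv0$ (Theorem~\ref{thm:alg_main}(i)), forcing $\epsilon_L(\alpha_0)\neq0$. Writing $i_L(\alpha_0)=a[pt]+b[Q]t$, relation~\eqref{eq:inclusion_mod} with $h=[Q]$ (the unit, $[Q]\circledast\alpha_0=\alpha_0$) yields $a=\langle PD([Q]),i_L(\alpha_0)\rangle=\epsilon_L(\alpha_0)=1$, and with $h=[pt]$, using Part i, $bt=\langle PD([pt]),i_L(\alpha_0)\rangle=\epsilon_L(\alpha_0 t)=t$, so $b=1$ and $i_L(\alpha_0)=[pt]-[Q]t$. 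Now let $n$ be even. The two-sided-algebra axioms imply that $j_L\colon QH(Q;\Lambda)\to QH(L;\Lambda)$, $j_L(a)=a\circledast[L]$, is a unital ring homomorphism $(QH(Q;\Lambda),\ast)\to(QH(L;\Lambda),\circ)$ whose classical part is the restriction $a\mapsto a\cap_L[L]$. Granting that $[L]\neq0$ in $H_n(Q;\mathbb{Z}_2)$ one picks $a_0\in H_n(Q;\mathbb{Z}_2)=QH_n(Q;\Lambda)$ with $a_0\cap_L[L]=\alpha_0$; since $QH_0(L;\Lambda)$ is one-dimensional there is no room for corrections, so $j_L(a_0)=\alpha_0$. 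Then $\alpha_0\circ\alpha_0=j_L(a_0)\circ j_L(a_0)=j_L(a_0\ast a_0)$, and writing $a_0\ast a_0=\lambda[pt]+\mu[Q]t\in QH_0(Q;\Lambda)$ and using $j_L([pt])=[pt]\circledast[L]=\alpha_n t$ (Part i) and $j_L([Q]t)=([Q]\circledast[L])t=\alpha_n t$, one gets $\alpha_0\circ\alpha_0=(\lambda+\mu)\,\alpha_n t$.

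Thus Part iii reduces to the ambient identity $\lambda+\mu=1$ in $\mathbb{Z}_2$, i.e. that the small-quantum self-product of a middle-dimensional class of the even quadric equals exactly one of $[pt]$, $[Q]t$. This is the main obstacle: the classical self-intersection $\lambda=a_0\cdot_Q a_0$ is elementary (it is $1$ if $\tfrac n2$ is even and $0$ if $\tfrac n2$ is odd, from the intersection theory of the two families of maximal linear subspaces), but $\mu=\langle a_0,a_0,[pt]\rangle_{\ell}$ is a genuine genus-zero Gromov--Witten count of lines on $Q$, for which naive dimension arguments are misleading because lines on a quadric meet the rulings non-generically; I would isolate its value (equivalently the needed piece of $QH(Q^{n};\Lambda)$) as a separate lemma. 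The remaining input is the non-triviality of $[L]$ in $H_n(Q;\mathbb{Z}_2)$, which is not implied by the self-intersection (the latter vanishes here, $\chi(L)$ being even) and must be argued separately --- it is exactly the statement $i_L(\alpha_n)=\mathrm{inc}_\ast[L]\neq0$, and is in any event needed for the corresponding uniruling assertion in Corollary~\ref{cor:quadric}. Everything in Parts i and ii and in the reduction above is purely formal once the degree bookkeeping is established.
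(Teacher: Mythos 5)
Parts i and ii of your proposal are correct. Over $\mathbb{Z}_2$ your argument for i is essentially the paper's (invertibility of $[pt]$ in $QH(Q;\Lambda)$ plus one-dimensionality of the target degrees), and for ii you replace the paper's computation (which uses $h\circledast\alpha_0=0$ because $QH_{-2}(L)\cong QH_{2n-2}(L)=0$, together with $h*[pt]=ht$ in $QH(Q;\Lambda)$) by the characterization~\eqref{eq:inclusion_mod} of $i_L$ through the augmentation; both routes are legitimate, and your degree bookkeeping is fine.

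The genuine gap is in iii, which you reduce to two statements you leave unproved: that $\textnormal{inc}_*[L]\neq 0$ in $H_n(Q;\mathbb{Z}_2)$, and the quantum multiplication fact for middle-dimensional classes of the even quadric (your ``$\lambda+\mu=1$''). The second is exactly the quantum homology of the quadric (Beauville; Proposition~\ref{P:qhom-quadric} in the paper), but even granting it your route needs more than $\textnormal{inc}_*[L]\neq 0$: the answer you compute depends on the choice of dual class $a_0$. For instance $(a+b)*(a+b)=2[pt]+2[Q]t\equiv 0 \pmod 2$, so if $\textnormal{inc}_*[L]$ were one of the ruling classes, the two admissible duals would give $\alpha_n t$ and $0$ respectively, and you could not conclude without first pinning down $\textnormal{inc}_*[L]$ mod $2$. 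The paper's proof of iii sidesteps $[L]$'s ambient homology class entirely: reducing Proposition~\ref{P:qhom-quadric} mod $2$, the class $a\in QH_n(Q;\Lambda)$ is invertible (its square, or its product with $b$, is $[pt]$ or $[Q]t$), hence $a\circledast(-)\colon QH_i(L)\to QH_{i-n}(L)$ is an isomorphism; by one-dimensionality $a\circledast\alpha_n=\alpha_0$ and $a\circledast\alpha_0=\alpha_n t$, and the two-sided algebra axiom of Theorem~\ref{thm:alg_main} gives $\alpha_0\circ\alpha_0=(a\circledast\alpha_n)\circ\alpha_0=a\circledast(\alpha_n\circ\alpha_0)=a\circledast\alpha_0=\alpha_n t$. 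Thus the only ambient input is Beauville's computation, and neither $\textnormal{inc}_*[L]\neq0$ nor the separate Gromov--Witten lemma you flagged is needed.
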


\begin{rem}
   The significance of the signs in the formulae above comes from the
   fact that we expect our machinery to hold with coefficients in $\Z$
   and, if so, these are the signs that we obtain when taking into
   account orientations. As we shall see these signs play a
   significant role in some applications - see Corollary*
   \ref{c:jcirci-quad}.
\end{rem}

\subsection{A criterion for Lagrangian
  intersections.}\label{sec:Lagr_int_str}

We describe here a criterion for Lagrangian intersections which is
somewhat more general than Corollary~\ref{cor:inters} and which is
stated in terms of the machinery described in
Theorem~\ref{thm:alg_main}.

Let $L_0, L_1 \subset M$ be two monotone Lagrangian submanifolds.  Let
$\Lambda_0 = \mathbb{Z}_2[t_0^{-1}, t_0]$, $\Lambda_{1} =
\mathbb{Z}_2[t_1^{-1}, t_1]$ be the associated rings, graded by $\deg
t_0 = -N_{L_0}$ and $\deg t_1 = N_{L_1}$. Recall
from~\S\ref{subsec:coeff} that we also have the ring $\Gamma =
\mathbb{Z}_2[s^{-1},s]$, $\deg s = -2C_M$, and that $\Lambda_0$,
$\Lambda_1$ are $\Gamma$-modules. Consider now the ring
$\Lambda_{0,1}=\Lambda_0 \otimes_{\Gamma} \Lambda_1$ with the grading
induced form both factors (it is easy to see that this grading is well
defined). Equivalently, $$\Lambda_{0,1} \cong \mathbb{Z}_2[t_0^{-1},
t_1^{-1}, t_0, t_1] / \{ t_0^{2C_M/N_{L_0}} = t_1^{2C_M/N_{L_1}} \}.$$
Note that $\Lambda_{0,1}$ is  a $\Lambda_0$-algebra, a
$\Lambda_1$-algebra as well as $\Gamma$-algebra. Thus we have well
defined quantum homologies $QH(L_0;\Lambda_{0,1})$,
$QH(L_0;\Lambda_{0,1})$ as well as $QH(M;\Lambda_{0,1})$.

With the above notation we have two canonical maps. The first one is
the quantum inclusion $i_{L_0}: QH_*(L_0;\Lambda_{0,1}) \to
QH_*(M;\Lambda_{0,1})$, mentioned at point~\ref{I:q-inclusion} of
Theorem~\ref{thm:alg_main}. The second map is $j_{L_1}:
QH_*(M;\Lambda_{0,1}) \to QH_{*-n}(L_1;\Lambda_{0,1})$, defined by
$j_{L_1}(a) = a \circledast [L_1]$. Consider the composition:
$$j_{L_1} \circ i_{L_0} :
QH_*(L_0;\Lambda_{0,1}) \longrightarrow QH_{*-n}(L_1;\Lambda_{0,1}).$$

\begin{thm} \label{cor:inters_i_j} If $j_{L_1} \circ i_{L_0} \neq 0$,
   then $L_0\cap L_1 \neq \emptyset$.
\end{thm}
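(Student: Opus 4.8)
The plan is to argue by contradiction: assume $L_0 \cap L_1 = \emptyset$ and deduce that $j_{L_1} \circ i_{L_0} = 0$. The geometric idea is that both maps $i_{L_0}$ and $j_{L_1}$ are defined by counting pseudo-holomorphic objects (pearly trajectories/holomorphic strips) that are constrained to lie near $L_0$ and near $L_1$ respectively, and the composition is computed by a count of configurations that interpolate between a neighborhood of $L_0$ and a neighborhood of $L_1$. When $L_0$ and $L_1$ are disjoint, we can choose all the auxiliary data (Morse functions, metrics, almost complex structure, and — crucially — a perturbing Hamiltonian that separates the two Lagrangians) so that no such configuration of the relevant energy can exist, hence the composed chain map is identically zero.

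The key steps, in order: (1) Recall from the proof of Theorem~\ref{thm:alg_main}~\ref{I:q-inclusion} the chain-level description of $i_{L_0}$: it is induced by a chain map $\mathcal{C}(L_0;\Lambda_{0,1};f_0,\rho_0,J) \to \mathcal{C}(M;\Lambda_{0,1})$ which is a deformation of the singular inclusion, realized by counting pearly half-trajectories on $L_0$ with a free marked point mapped into $M$. (2) Recall the chain-level description of $j_{L_1}(\,\cdot\,) = (\,\cdot\,) \circledast [L_1]$: the module action $\circledast$ is defined (as in point~\ref{I:qmod-qprod}) by a count of configurations consisting of a pearly trajectory on $L_1$ with one incidence point constrained to a cycle in $M$ representing the input quantum class; taking the input to be $[L_1]$ itself, these configurations have their ``$M$-part'' shrinking to a point on $L_1$. (3) Compose the two: a generator contributing to $(j_{L_1} \circ i_{L_0})(x)$ gives rise to a broken configuration whose $M$-component is a (possibly constant) $J$-holomorphic sphere or disk-fragment meeting both a neighborhood of $L_0$ and a neighborhood of $L_1$, or — in the ``classical'', zero-energy part — a Morse/PSS-type flow line from $L_0$ to $L_1$ in $M$. (4) Now use the disjointness: choose an almost complex structure $J$ and a Hamiltonian perturbation $H$ such that $L_0$ and $L_1$ lie in disjoint open sets $U_0, U_1$ and the relevant moduli spaces are cut out transversally; the zero-energy contribution vanishes because there are no Morse trajectories connecting the two disjoint pieces of the relevant Morse data, and the positive-energy contributions can be excluded by a monotonicity/energy argument — every nonconstant holomorphic piece bridging $U_0$ and $U_1$ has area bounded below by a quantity that, by rescaling the coefficient ring or by passing to a suitable displacing Hamiltonian, exceeds the area available in the degree filtration of $\Lambda_{0,1}$. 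More robustly, one invokes the same mechanism as in Floer theory for disjoint Lagrangians: after a Hamiltonian push the intersection pattern is empty, and the chain homotopy invariance of $i_{L_0}$ and $j_{L_1}$ (points~\ref{I:q-inclusion}, \ref{I:qmod-qprod}) lets us compute $j_{L_1}\circ i_{L_0}$ with data for which the defining count is literally empty.

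The main obstacle I expect is step (3)–(4): making precise that the composition $j_{L_1} \circ i_{L_0}$ is genuinely computed by configurations that must ``see'' both Lagrangians, and that disjointness forces such configurations to have energy outside the window detectable by $\Lambda_{0,1}$. The subtlety is that $i_{L_0}(x)$ lands in $QH_*(M;\Lambda_{0,1})$, an honest ambient class which a priori has no memory of being near $L_0$; the point is to work at the chain level before passing to homology, where $i_{L_0}(x)$ is represented by a cycle supported (up to controlled energy) in an arbitrarily small neighborhood of $L_0$, and then to observe that applying $\circledast [L_1]$ to a cycle supported near $L_0$ — with $L_1$ far away — produces something supported near $L_1$ only through holomorphic bridges whose energy is bounded below uniformly. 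Formalizing this ``locality with energy bound'' for the module action, and checking it is compatible with the comparison isomorphisms so that the vanishing is independent of choices, is the technical heart; everything else (the Gromov compactness input, transversality for generic $(f_i,\rho_i,J,H)$, and the algebra of the degree filtration from point~\ref{I:spectral-seq}) is routine given Theorem~\ref{thm:alg_main}. Finally, since the statement is about a {\em Lagrangian intersection invariant}, one can alternatively phrase the whole argument through the Floer-theoretic incarnation of $i_{L_0}$ and $\circledast$ via point~\ref{I:comparison}, where the disjointness hypothesis trivializes the relevant Floer complexes directly; this is likely the cleanest route to write down.
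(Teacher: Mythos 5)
Your strategy has a genuine gap, and it is located exactly where you place the ``technical heart''. In the chain-level description of the composition, the only thing that bridges the $L_0$-side and the $L_1$-side is \emph{ambient Morse theory}: $i_{L_0}$ outputs a combination of critical points of a Morse function $g:M\to\R$ (the pearly configuration on $L_0$ ends with a $-\nabla g$ flow line descending to such a critical point, see \S\ref{subsubsec:defin_alg_str}\,d), and $j_{L_1}(a)=a\circledast[L_1]$ is computed from flow lines of the same ambient function entering disks with boundary on $L_1$ (\S\ref{subsubsec:defin_alg_str}\,c). These connecting flow lines carry no symplectic energy and are in no way obstructed by $L_0\cap L_1=\emptyset$; there are no ``two disjoint pieces of the relevant Morse data'', since the Morse data lives on all of $M$. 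For the same reason there is no energy window: $\Lambda_{0,1}$ contains all powers $t_0^{\pm},t_1^{\pm}$, and in the monotone setting every Maslov index (hence every energy level) contributes, so a uniform lower bound on the area of a holomorphic ``bridge'' kills nothing. In fact the classical ($t^0$) parts of $i_{L_0}$ and of $j_{L_1}$ are the singular inclusion and the classical exterior action, which are nontrivial irrespective of disjointness; so when $j_{L_1}\circ i_{L_0}$ does vanish it is by cancellation, never by emptiness of moduli spaces, and no choice of $(f_i,\rho_i,J,H)$ can make ``the defining count literally empty''. Your fallback via point~\ref{I:comparison} does not help either: the Floer complex of the \emph{pair} $(L_0,L_1)$ never enters the definition of $j_{L_1}\circ i_{L_0}$ (only $CF(L_i,L_i)$ and periodic-orbit Floer theory do), so disjointness trivializes nothing there.

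What actually makes the theorem work is a Hamiltonian-dynamical mechanism, not locality. From $j_{L_1}\circ i_{L_0}(x)\neq 0$ one extracts, for \emph{every} $\phi\in\widetilde{Ham}(M)$, a two-sided estimate on the spectral invariant of the ambient class $i_{L_0}(x)$: it is bounded above by $\mathrm{height}_{L_0}(\phi)$ because the class lies in the image of $i_{L_0}$ (the Floer-theoretic model $\tilde{i}_{L_0}$ plus an energy estimate for the half-tubes), and bounded below by $\mathrm{depth}_{L_1}(\phi)-K$ because the class acts nontrivially on $QH(L_1;\Lambda_{0,1})$ through the module structure; this is Lemma~\ref{lem:action}, extended to $\Lambda_{0,1}$-coefficients. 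If $L_0\cap L_1=\emptyset$ one chooses a normalized Hamiltonian equal to $C_0$ on $L_0$ and $C_1$ on $L_1$ with $C_1>C_0+K$, and the two bounds contradict each other. So the auxiliary Hamiltonian is not a perturbation used to empty out moduli spaces, as in your step (4); it is the test object whose action values on the two Lagrangians are played against each other. To repair your argument you would essentially have to reproduce this spectral-invariant (or an equivalent action-filtration) argument rather than a vanishing-by-disjointness argument.
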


\begin{rem} \label{rem:int_etc} 
   \begin{enumerate}[a.]
     \item It is possible to show~\cite{Bi-Co:jcirci} (see
      also~\cite{Bi-Co:Yasha-fest}) that the condition $j_{L_0}\circ
      i_{L_1} \neq 0$ implies the non-vanishing of the Floer homology
      $HF(L_0,L_1)$ (when defined).
     \item The map $j_{L_1}$ has appeared before in a different
      setting in the work of Albers~\cite{Alb:extrinisic}.
   \end{enumerate}
\end{rem}

Here is a consequence of this theorem which provides a different proof
of Corollary~\ref{cor:inters}. To state it we fix some more notation.
As discussed before, for any Lagrangian submanifold the inclusion of
the associated coefficient rings $\La^{+}\to \La$ induces a map of
pearl complexes (when defined) $p:\mathcal{C}(L;\La^{+}; f,\rho,J)\to
\mathcal{C}(L;\La;f,\rho,J)$ which is canonical in homology. Denote by
$IQ^{+}(L)$ the image of $p_{\ast}:QH(L;\La^{+})\to QH(L;\La)$, the
map induced in homology by $p$, and notice that $IQ^{+}(L)$ is a
$\La^{+}$-module so that it makes sense to say whether a class $z\in
IQ^{+}(L)$ is divisible by $t$ in $IQ^{+}(L)$: this means that there
is some $z'\in IQ^{+}(L)$ so that $z=tz'$.

\begin{cor}\label{cor:inter_Mas_Chern}
   Let $L \subset M$ be a non-narrow monotone Lagrangian submanifold.
   Let $[pt]\in QH(M;\La)$ be the class of the point. If the product
   $[pt]\circledast [L]$ is not divisible by $t^{2C_{M}/N_L}$ in
   $IQ^{+}(L)$ then $L$ must intersect any non-narrow monotone
   Lagrangian in $M$.
\end{cor}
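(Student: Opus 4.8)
The plan is to derive the conclusion from Theorem~\ref{cor:inters_i_j} by a Novikov-coefficient bookkeeping argument. Fix a non-narrow monotone Lagrangian $L'\subset M$; we must show $L\cap L'\neq\emptyset$, so suppose for contradiction that $L\cap L'=\emptyset$. Work over the ring $\Lambda_{0,1}=\Lambda_{L'}\otimes_{\Gamma}\Lambda$ of \S\ref{sec:Lagr_int_str}, whose two distinguished Novikov variables --- call them $t'$ (for $L'$) and $t$ (for $L$) --- satisfy $(t')^{2C_{M}/N_{L'}}=t^{2C_{M}/N_{L}}=:s$. Applying the contrapositive of Theorem~\ref{cor:inters_i_j} with $(L_{0},L_{1})=(L',L)$, the disjointness forces the vanishing of the composition
\[
j_{L}\circ i_{L'}\ :\ QH_{*}(L';\Lambda_{0,1})\longrightarrow QH_{*-n}(L;\Lambda_{0,1}),\qquad j_{L}(a)=a\circledast[L].
\]

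Next I would exhibit a good class to feed into this composition. Since $L'$ is non-narrow, $QH(L';\Lambda_{L'})\neq 0$, so by point~\ref{I:q-inclusion} of Theorem~\ref{thm:alg_main} the augmentation $\epsilon_{L'}\colon QH_{*}(L';\Lambda_{L'})\to\Lambda_{L'}$ is non-trivial; being graded and $\Lambda_{L'}$-linear it is therefore surjective (a nonzero graded ideal of $\mathbb{Z}_{2}[(t')^{-1},t']$ contains a unit), so there is $\beta_{0}\in QH_{0}(L';\Lambda_{L'})$ with $\epsilon_{L'}(\beta_{0})=1$. Putting $h=[M]$ in relation~\eqref{eq:inclusion_mod} and using that $[M]$ acts as the identity on the $QH(M)$-module $QH(L')$, we get $\langle PD([M]),i_{L'}(\beta_{0})\rangle=\epsilon_{L'}(\beta_{0})=1$; since $PD([M])$ is the unit of $H^{*}(M;\mathbb{Z}_{2})$, this says that the $H_{0}(M;\mathbb{Z}_{2})$-component of $i_{L'}(\beta_{0})$ equals $[pt]$. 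A degree count identifies $QH_{0}(M;\Lambda_{L'})$ with $\bigoplus_{k\geq 0}H_{kN_{L'}}(M;\mathbb{Z}_{2})\,(t')^{k}$ (in particular only non-negative powers of $t'$ occur), whence
\[
i_{L'}(\beta_{0})=[pt]+t'\,w,\qquad w=\sum_{k\geq 1}z_{k}\,(t')^{k-1},\quad z_{k}\in H_{kN_{L'}}(M;\mathbb{Z}_{2});
\]
note that the ``error term'' $w$ carries only constant, $H_{*}(M;\mathbb{Z}_{2})$-valued coefficients.

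Now I would base change this identity to $\Lambda_{0,1}$ (naturality of $i_{L'}$ in the coefficients, point~\ref{I:extension} of Theorem~\ref{thm:alg_main}), apply the $\Lambda_{0,1}$-linear map $j_{L}$, and use $j_{L}\circ i_{L'}=0$ to obtain, over $\mathbb{Z}_{2}$,
\[
[pt]\circledast[L]=t'\,\bigl(w\circledast[L]\bigr)\qquad\text{in }QH(L;\Lambda_{0,1}).
\]
For the descent, recall that $\Lambda_{0,1}$ is a free $\Lambda$-module with basis $1,t',\dots,(t')^{d-1}$, $d=2C_{M}/N_{L'}$, multiplication by $t'$ acting as the cyclic shift that sends $(t')^{d-1}$ to $(t')^{d}=s=t^{2C_{M}/N_{L}}$; in particular $QH(L;\Lambda)\hookrightarrow QH(L;\Lambda_{0,1})$. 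Expanding $w\circledast[L]=\sum_{k\geq 1}(z_{k}\circledast[L])\,(t')^{k-1}$ and writing $w\circledast[L]=\sum_{j=0}^{d-1}Y_{j}\,(t')^{j}$ with $Y_{j}\in QH(L;\Lambda)$, each $Y_{j}$ is a $\Lambda^{+}$-linear combination of the classes $z_{k}\circledast[L]$. Since each $z_{k}\in H_{*}(M;\mathbb{Z}_{2})$ is a constant coefficient and $[L]\in IQ^{+}(L)$, we have $z_{k}\circledast[L]\in IQ^{+}(L)$ (the module action is defined over $\widetilde{\Lambda}^{+}$, points~\ref{I:qmod-qprod} and~\ref{I:extension} of Theorem~\ref{thm:alg_main}), and as $IQ^{+}(L)$ is a $\Lambda^{+}$-module it follows that $Y_{j}\in IQ^{+}(L)$ for all $j$. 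Comparing $(t')^{0}$-components of the displayed identity finally yields $[pt]\circledast[L]=t^{2C_{M}/N_{L}}\,Y_{d-1}$ with $Y_{d-1}\in IQ^{+}(L)$; that is, $[pt]\circledast[L]$ is divisible by $t^{2C_{M}/N_{L}}$ in $IQ^{+}(L)$, contradicting the hypothesis.

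The argument uses as black boxes only Theorem~\ref{cor:inters_i_j}, the augmentation relation~\eqref{eq:inclusion_mod}, the non-triviality of $\epsilon_{L'}$ for non-narrow $L'$, and the fact that $i_{L'}$ and $\circledast$ are defined over positive ($\widetilde{\Lambda}^{+}$-type) coefficients and natural in the coefficient ring. Accordingly, the one point that requires genuine care is the \emph{positivity} bookkeeping: at each base change and each application of $\circledast$ one must check that no negative power of $t$ or $t'$ is created and that the error term retains only constant coefficients, since it is exactly this which upgrades divisibility in $QH(L;\Lambda)$ --- which is automatic --- to divisibility inside $IQ^{+}(L)$, the content of the hypothesis.
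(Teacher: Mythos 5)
Your proposal is correct and follows essentially the same route as the paper's proof: both arguments feed a degree-zero class of the second Lagrangian whose quantum inclusion has the form $[pt]+(\text{higher order }t'\text{-terms with constant }H_*(M;\Z_2)\text{-coefficients})$ into the composition $j_{L}\circ i_{L'}$ of Theorem~\ref{cor:inters_i_j}, and then use the freeness of $\La_{0,1}$ over $\La$ to conclude that vanishing of the composition would force $[pt]\circledast[L]$ to be divisible by $t^{2C_{M}/N_{L}}$ in $IQ^{+}(L)$, contradicting the hypothesis. The only (legitimate) local difference is how that class and its leading term are obtained: the paper produces it from the non-degeneracy of the pairing in Proposition~\ref{p:duality} together with a chain-level pearl representative $m_{0}+\sum_{i>0}x_{i}t_{0}^{i}$, reading off positivity at the chain level, whereas you work purely at the homology level, using the non-triviality of the augmentation $\epsilon_{L'}$ from Theorem~\ref{thm:alg_main}~i and formula~\eqref{eq:inclusion_mod} with $h=[M]$ plus degree bookkeeping in $QH_{0}(M;\La_{L'})$.
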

Any non-narrow monotone Lagrangian $L \subset \C P^{n}$ satisfies the
condition in the statement and so Corollary \ref{cor:inter_Mas_Chern}
implies Corollary \ref{cor:inters}. Indeed, put $z = [pt] \circledast
[L] \in IQ^{+}_{-n}(L)$. Assume that $z = t^{2C_{{\mathbb{C}}P^n}/N_L}
z'$ for some $z' \in IQ^+(L)$. We have $2C_{{\mathbb{C}}P^n} = 2n+2$
and $|t^{2C_{{\mathbb{C}}P^n}/N_L}| = -(2n+2)$. Therefore, $|z'| = -n
+ 2n+2 = n+2$. But for degree reasons $IQ^+_l(L)=0$ for every $l > n$
and so $z' = 0$. In particular $z = 0$. On the other hand as $[pt] \in
QH(M; \Lambda)$ is invertible and $[L] \neq 0$ we must have $z \neq
0$. A contradiction.

\

The proof of Corollary \ref{cor:inter_Mas_Chern} is given in
\S\ref{sb:jcirci} after the proof of Theorem \ref{cor:inters_i_j}.

\begin{rem}
   \begin{enumerate}[a.]
     \item By Theorem~\ref{thm:alg_main}, $L$ is non-narrow if and
      only if $[L] \neq 0 \in QH(L)$. The reason is that $[L]$ is the
      unity of $QH(L)$ when viewed as a ring. Moreover, whenever $M$
      is point invertible and $L$ is not narrow the product
      $[pt]\circledast [L]$ does not vanish. Of course, the
      non-divisibility condition in the statement of
      Corollary~\ref{cor:inter_Mas_Chern} is an additional strong
      restriction.
     \item The criterion in Corollary \ref{cor:inter_Mas_Chern} does
      not apply to Lagrangians $L$ in the quadric which satisfy
      $H_{1}(L;\Z)=0$ so it does not lead to intersection results in
      this case. However, later in the paper (in
      Corollary~\ref{c:jcirci-quad}) we will see that
      Theorem~\ref{cor:inters_i_j} can also be applied to this setting
      but by working with integer coefficients, thus under the
      assumption that our machinery continues to work when taking into
      account orientations.
   \end{enumerate}
\end{rem}

\subsection{Simplification of notation} \label{Sbs:simp-not} As
mentioned before, whenever we use the rings $\Lambda$ and
$\Lambda^{+}$ we will drop them from the notation in the following
way:
\begin{equation} \label{Eq:simp-not}
   \begin{aligned}
      \mathcal{C}(L;f,\rho,J) & = \mathcal{C}(L;\Lambda;f,\rho,J),
      \quad
      QH(L) = QH(L; \Lambda),\\
      \mathcal{C}^{+}(L;f,\rho,J) & =
      \mathcal{C}(L;\Lambda^{+};f,\rho,J), \quad Q^{+}H(L) =
      QH(L;\Lambda^{+}).
   \end{aligned}
\end{equation}

Another simplification is the following. Theorem~\ref{thm:alg_main}
involves three different algebraic operations: the quantum
intersection product $*$, the Lagrangian quantum product $\circ$, and
the external module operation $\circledast$:
\begin{equation} \label{Eq:simp-not-oper}
   \begin{aligned}
      & *:QH_k(M;\mathcal{R}) \otimes QH_l(M;\mathcal{R}) \to
      QH_{k+l-2n}(M;\mathcal{R}), \\
      & \circ: QH_i(L;\mathcal{R}) \otimes QH_j(L;\mathcal{R}) \to
      QH_{i+j-n}(L;\mathcal{R}), \\
      & \circledast: QH_k(M;\mathcal{R}) \otimes QH_j(L;\mathcal{R})
      \to QH_{i+j-2n}(L;\mathcal{R}).
   \end{aligned}
\end{equation}
As all these operations commute in the sense that $QH(L;\mathcal{R})$
is an algebra over $QH(M;\mathcal{R})$ we will sometimes denote all
these operations by $*$.


\section{Sketch of proof for Theorem~\ref{thm:alg_main}}
\label{sec:main_quant_proof}We will explain the ideas behind the proof
but, as mentioned in the introduction, we will not prove here this
theorem in full.  However, all the technical details which are omitted
here can be found in \cite{Bi-Co:qrel-long}. The reason for proceeding
in this way is that, on one hand, many of the actual technical
verifications are not novel for specialists but quite long so
including them here does not seem judicious. On the other hand, it is
not possible to apply efficiently this theorem in the absence of a
good understanding of the underlying moduli spaces and thus it is
important to give a sufficiently detailed description of the
construction of our machinery.  We will also shortly review the main
ideas behind the proof of transversality as well as the basic argument
needed to prove the identities contained in the statement of the
theorem.

\subsection{The moduli spaces}\label{subsec:moduli_def}
It is useful to view our further constructions as a ``quantum''
version of standard constructions in Morse theory. In particular, in
Morse theory, the Morse differential is modeled by a tree with one
entry and one exit but no interior vertex. The same is true for a
Morse morphism which relates two Morse complexes.  The intersection
product is modeled on trees with two entries and one exit. For the
associativity of this product, are required trees with three entries
and one exit.  The quantum version of this construction consists in
allowing each edge in these simple trees to be subdivided by a finite
number of quantum contributions represented by pseudo-holomorphic
disks or spheres.  Such contributions can also appear at the vertices
of the trees.  Obviously, a more precise definition is required and we
proceed to give one below.

\

\underline{A. Combinatorial preliminaries.}  The trees needed here are
of a reasonably simple type because we only use some rather elementary
algebraic structures.  The vertices of these trees will be of two
types, corresponding to $J$-holomorphic disks (with boundary on $L$)
or $J$-holomorphic spheres, and the edges will correspond to flow
lines of Morse functions some defined on $L$ and some on $M$.  The
entries and the exit will correspond to critical points of these Morse
functions. Here is a more precise description, unavoidably quite
tedious.  Conditions i-iii below simply model the data: each edge in
the tree needs to carry a label (which geometrically corresponds to a
particular Morse function). Each interior vertex will correspond to
some $J$-holomorphic sphere or disk so that it needs to carry a label
given by some homotopy class etc. A stability restriction is needed
and is added as condition iv.  In the compactifications of such moduli
spaces appear configurations where one (or more) edges are represented
by flow lines of zero length.  The corresponding geometric objects
also appear by disk (or sphere) bubbling off.  For our construction it
is crucial that each configuration of this type appears exactly twice:
once by bubbling off and once by the degeneration of a flow line.  The
purpose of condition v is to insure precisely this property.  The
point vi describes how the flow lines arriving at a vertex represented
by a $J$-holomorphic curve are anchored to that curve.

\

Here are the precise details of the construction we consider connected
trees $\mathcal{T}$ with oriented edges embedded in
$\R\times[0,1]\subset \R^{2}$ with entries lying on the line
$\R\times\{1\}$ and a single exit which is situated on the line
$\R\times\{0\}$ and so that the edges strictly decrease the
$y$-coordinate. Clearly, at each internal vertex there is precisely
one ``exiting'' (or departing) edge and at least one ``entering'' (or
arriving) edge.  There will be at most three entries and one exit.  We
call such a tree, $\mathcal{T}$, {\em $(M,L)$-labeled} if the
following additional structure is given:
\begin{itemize}
  \item[i.] The entries and the exit have valence one (and they are
   the only vertices with this property).  The vertices of the tree -
   except for the entries and the exit - are labeled by elements of
   $\lambda\in H_{2}^{D}(M,L)$ or by elements $\mu\in H_{2}^{S}(M)$
   with $\omega (\lambda)\geq 0$, $\omega(\mu)\geq 0$.  The first kind
   of vertex will be called of disk type and the second will be called
   spherical. The set of vertices of $\mathcal{T}$ (including entries
   and the exit) is denoted by $v(\mathcal{T})$, the set of the
   spherical vertices is denoted by $v_{S}(\mathcal{T})$ and the set
   of disk type vertices is denoted by $v_{D}(\mathcal{T})$. The set
   of interior vertices will be denoted by
   $v_{int}(\mathcal{T})=v_{D}(\mathcal{T})\cup v_{S}(\mathcal{T})$.
   The class of an interior vertex $v$ will be denoted by $[v]\in
   H_{2}^{D}(M,L)$ or $\in H_{2}^{S}(M)$.
\end{itemize}
Let $\mathcal{F}_{L}$ be a finite set of Morse functions defined on
$L$ and let $\mathcal{F}_{M}$ be a finite set of Morse functions
defined on $M$. Put $\mathcal{F}=\mathcal{F}_{L}\cup \mathcal{F}_{M}$.
An $(M,L)$-labeled tree $\mathcal{T}$ is called $\mathcal{F}$-{\em
  colored} if it satisfies the following three properties:
\begin{itemize}
  \item[ii.] The set of edges of $\mathcal{T}$ is denoted by
   $e(\mathcal{T})$ and is partitioned into two classes, the edges of
   type $L$, $e_{L}(\mathcal{T})$, and the edges of type $M$,
   $e_{M}(\mathcal{T})$.  Each edge $e$ of type $L$ is colored by a
   Morse function $f_{e}\in \mathcal{F}_{L}$ and each edge $e$ of type
   $M$ is colored by a Morse function $f_{e}\in \mathcal{F}_{M}$.  For
   $v\in v(\mathcal{T})$ we let $n_{L}(v)$ be the number of edges of
   type $L$ which are incident to $v$ and we let $n_{M}(v)$ be the
   number of those edges of type $M$.  For an edge $e$ we let
   $e_{-}\in v(\mathcal{T})$ be the (initial) vertex where $e$ starts
   and we let $e_{+}$ be the end (or final) vertex of $e$.  If a
   vertex $v\in v_{S}(\mathcal{T})$, then $n_{L}(v)=0$.  If $v\in
   v_{D}(\mathcal{T})$, then $n_{L}(v)\geq 1$. If $e\in
   e_{L}(\mathcal{T})$ and $e_{-}$ (respectively $e_{+}$) is not an
   entry (respectively, not the exit), then $e_{\pm}\in
   v_{D}(\mathcal{T})$.

  \item[iii.] Each entry as well as the exit is labeled by a critical
   point of the Morse function corresponding to the incident edge.  In
   other words, for all edges $e$, if $e_{-}$ is an entry, then this
   implies that $e_{-}$ is labeled by a critical point of the function
   $f_{e}$ and similarly for the exit.  Any two distinct entries
   correspond to critical points of different Morse functions.

  \item[iv.]  At each vertex, distinct arriving edges are labeled by
   different Morse functions.  If a vertex $v\in v_{D}(\mathcal{T})$
   has the property $\omega([v])=0$ and $n_{L}(v)\leq 2$, then
   $n_{M}(v)\geq 1$.  If a vertex $v\in v_{S}(\mathcal{T})$ has the
   property $\omega([v])=0$, then $n_{M}(v)\geq 3$.
\end{itemize}

The coloring of our trees will be usually described by means of an
{\em exit rule}.  Namely, fix as before a collection $\mathcal{F}$ of
Morse functions (some on $L$, some on $M$). Notice that, for a planar
tree $\mathcal{T}$, at each vertex $v$, the planarity of the tree
induces an order among the arriving edges (by the values of the
$x$-coordinates of the intersections of these edges with a horizontal
line close to the vertex but above it).

\begin{itemize}
  \item[v.]  An exit rule $\Theta$ associates to each ordered vector,
   $(f_{1},\ldots, f_{s})$ with $f_{i}\in\mathcal{F}$, and symbol $S$
   which can be either $L$ or $M$, a new function $\Theta
   (f_{1},\ldots f_{s}; S)\in \mathcal{F}$.  An $\mathcal{F}$-colored
   tree $\mathcal{T}$ is called $\Theta$-{\em admissible} if, for each
   vertex of $\mathcal{T}$ whose exit edge is of type $S$ and whose
   arriving edges are colored, in order, by $(f_{1},\ldots, f_{s})$,
   the departing edge is colored by $\Theta( f_{1},\ldots,f _{s};
   S)\in \mathcal{F}_{S}$.
\end{itemize}

Given an exit rule $\Theta$ notice that, for any $(M,L)$-labeled tree
$\mathcal{T}$, if a coloring of the entry edges is given, then there
exists a unique $\mathcal{F}$-coloring of $\mathcal{T}$ that is
$\Theta$-admissible. Note also that, in order to color $\mathcal{T}$
in this way, we do not always need to know the value of $\Theta$ on
all possible configurations (since some of them might not appear in
any relevant trees).

\

We recall that the moduli spaces that we intend to construct consist
of $J$-holomorphic disks and spheres joined by Morse trajectories.  To
proceed from trees to these moduli spaces we need an additional
structure which describes how the flow lines are ``anchored'' to the
$J$-curves.  The structure in question is as follows:

\begin{itemize}
  \item[vi.]  A {\em marked point selector} for an
   $\mathcal{F}$-colored tree $\mathcal{T}$ is given by an assignment
   $Q$ which associates to each vertex $v\in v_{S}(\mathcal{T})$ a
   collection $Q_{v}$ of distinct points in $S^{2}$ which is in 1-1
   correspondence with the incident edges and, similarly, $Q$
   associates to a vertex $v\in v_{D}(\mathcal{T})$ a collection
   $Q_{v}\subset D$ so that if an edge $e$ is of type $M$ its
   corresponding marked point is in $Int(D)$ and if the edge $e$ is of
   type $L$ the corresponding marked point is in $\partial D$.
   Moreover, for $v\in v_{D}(\mathcal{T})$ the order among the marked
   points in $\partial D$ matches the order of the incident edges of
   type $L$ {\em clockwise} around the circle. If $e$ is an arriving
   edge (at some internal vertex) the respective marked point is
   denoted by $q_{+}(e)$ and if the edge is the exiting one, then the
   marked point is denoted by $q_{-}(e)$.
\end{itemize}
We denote $\mathcal{F}$-colored trees together with a marked point
selector $Q$ by $(\mathcal{T},Q)$ and we refer to the pair
$(\mathcal{T},Q)$ as an {\em $\mathcal{F}$-colored tree with marked
  points}.  The marked point selectors that will be used here satisfy
an additional property: they only depend on the type of the edge $e$,
the valence of the vertex $v$, on whether the edge $e$ is an exit edge
or an entry one and, in this last case, on the planar order of the
edge among the arriving edges at the vertex $v$.  In other words, we
can view such a marked point selector as an abstract rule which
associates a certain marked point to each edge incident to a vertex of
{\em any} $\mathcal{F}$-colored tree. In view of this, if $Q$ and $Q'$
are marked point selectors we can write $Q=Q'$ if the two
corresponding rules agree.

For a tree $\mathcal{T}$ we indicate its entries and the exit by a
symbol like $(x, y, z: w)$ where the first components - in this case,
they are three - are the labels of the entries {\em written in the
  planar order} and the last component indicates the label of the
exit.  We call this data the {\em symbol} of the tree $\mathcal{T}$.
The {\em class} of the tree $\mathcal{T}$, $[\mathcal{T}]\in
H_{2}^{D}(M,L)$ is defined to be the sum of the classes of the
interior vertices. We denote the symbol of the $\mathcal{F}$-colored
tree $\mathcal{T}$ by $\sym(\mathcal{T})$.

\

\underline{B. Construction of the moduli spaces.}  Fix an
$\mathcal{F}$-colored tree with marked points $(\mathcal{T}, Q)$.  Fix
also a pair $\rho=(\rho_{M}, \rho_{L})$ where $\rho_{L}$ is a
Riemannian metric on $L$ and $\rho_{M}$ is a Riemannian metric on $M$.
For every $f\in\mathcal{F}$ let $\gamma^{f}_{t}$ be the associated
negative gradient flows (with respect to the metric $\rho_{L}$ for the
functions defined on $L$ and with respect to the metric $\rho_{M}$ for
the functions defined on $M$). Denote by $(x_{1},\ldots, x_{l}: y)$
the symbol of $\mathcal{T}$.

\

For an $\omega$-compatible almost complex structure $J$ and a class
$\lambda\in H_{2}^{D}(M,L)$ (or in $H_{2}^{S}(M)$) let
$\mathcal{M}(\lambda, J)$ be the moduli space of parametrized
$J$-disks (respectively $J$-spheres) in the class $\lambda$.

\

The {\em pearl} moduli space modeled on $(\mathcal{T},Q)$ will be
denoted by $\mathcal{P}_{\mathcal{T},Q}( J, \rho)$ (or, if the data
involved is clear from the context, just $\mathcal{P}_{\mathcal{T}}$)
and it is defined as follows.  If $\mathcal{T}$ has no interior vertex
or, equivalently, it consists of precisely of one edge $e$ connecting
the entry (which is labeled by a critical point $x=x_{1}$ of $f_{e}$)
to the exit labeled by $y\in \Crit(f_{e})$, then
$\mathcal{P}_{\mathcal{T}}$ is the unparametrized moduli space of flow
lines of $\gamma^{f_{e}}$ connecting $x$ to $y$.

In case $\mathcal{T}$ contains an internal vertex, consider the
product
  $$\Pi(\mathcal{T})= \prod_{v\in v_{int}(\mathcal{T})} \mathcal{M}([v], J)$$  and let  $S_{\mathcal{T},Q}$  consist of all
  $\{u_{v}\}_{v \in v_{int}(\mathcal{T})}\in \Pi(\mathcal{T})$ subject
  to the constraints:

  \begin{itemize}
    \item[a.] For each internal edge $e\in e(\mathcal{T})$ there is
     $t\geq 0$ (called the {\em length} of $e$) such that
     $$\gamma_{t}^{f_{e}}(u_{e_{-}}(q_{-}(e)))=u_{e_{+}}(q_{+}(e))~.~$$
    \item[b.] For an entry edge, $e$, let $x_{i}$ be the critical
     point labeling the vertex $e_{-}$. We have
$$\lim_{t\to-\infty}\gamma_{t}^{f_{e}}(u_{e_{+}}(q_{+}(e)))=x_{i}~.~$$
\item[c.] For the exit edge $e$ we have
 $$\lim_{t\to \infty}\gamma_{t}^{f_{e}}(u_{e_{-}}(q_{-}(e)))=y~.~$$
\end{itemize}

Finally, define $\mathcal{P}_{\mathcal{T}, Q}=S_{\mathcal{T},Q}/\sim$
where $\sim$ is given by the action of the obvious reparametrization
groups which act on the $\mathcal{M}([v],J)$'s and preserve the marked
points.

\begin{figure}[htbp]
   \psfig{file=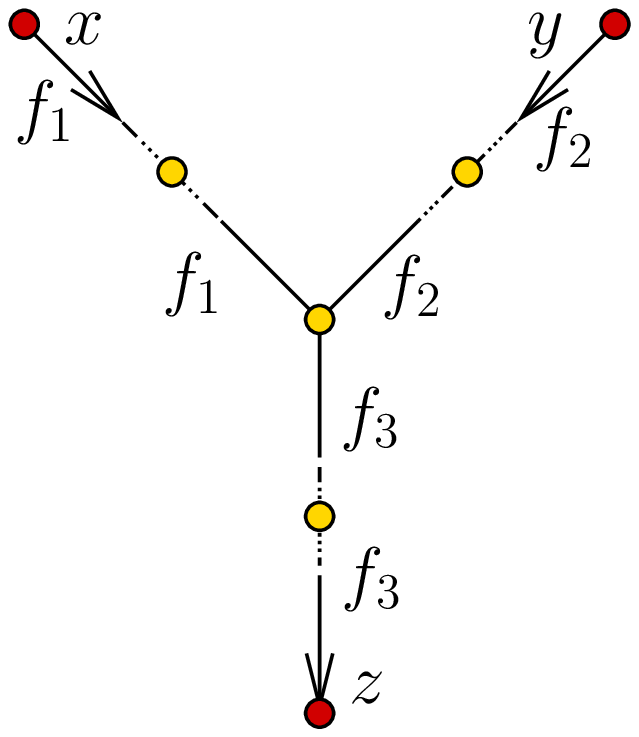, width=0.35 \linewidth} \quad
   \psfig{file=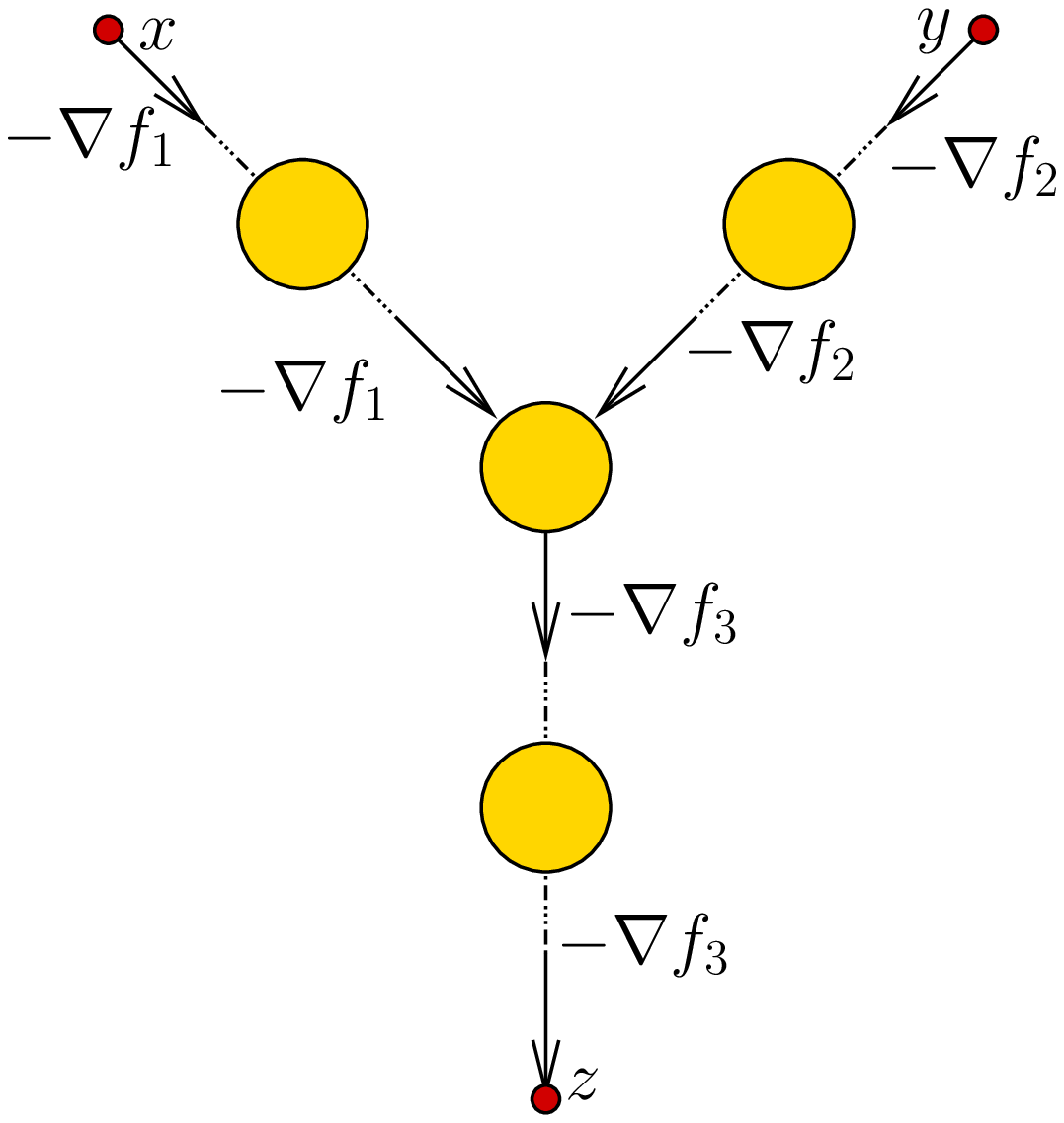, width=0.58 \linewidth}
   \caption{A tree of symbol $(x,y:z)$ on the left, and a pearly
     trajectory corresponding to it on the right.}
   \label{f:pearls-prod}
\end{figure}

The moduli space $\mathcal{P}_{\mathcal{T},Q}$ has a virtual dimension
which only depends on the structure encoded in the definition of the
colored trees with marked points.  This virtual dimension will be
denoted by $\delta(\mathcal{T})$.  When transversality is achieved, it
coincides with the actual manifold dimension of
$\mathcal{P}_{\mathcal{T},Q}$.  As we will see in the next section,
under this transversality assumption, the space
$\mathcal{P}_{\mathcal{T},Q}$ is a manifold, in general non-compact,
with a boundary consisting of configurations where some edge of
$\mathcal{T}$ has $0$-length.

Assume that the symbol of $\mathcal{T}$ is $(x_{1},\ldots, x_{k} : y)$
and that there are $s$ entries among the $x_{i}$'s which are critical
points of functions in $\mathcal{F}_{M}$.  Then the formula giving
this virtual dimension is:
\begin{equation}\label{eq:virtual_dim}
   \delta(\mathcal{T})=\sum _{i}|x_{i}| - |y| +\mu[\mathcal{T}] +\epsilon(k) -(s+k-1)n
\end{equation}
where $\epsilon(k)=-1$ if $k=1$, $y\in L$, and $\epsilon(k)=0$
otherwise.

\

\underline{C. Equivalence of trees.}  In the sequel two
$\mathcal{F}$-colored trees will be viewed as equivalent if the
underlying topological trees are isomorphic by a tree isomorphism
which preserves the order of the entering edges at each vertex and
which also preserves the labels and the coloring.

\begin{rem} Most of our moduli spaces are constructed according to the
   recipe above. In particular, they are all modeled on
   $(M,L)$-labeled trees. However, sometimes we need to work with
   variants of the last part of the construction. For example, we
   might use instead of Morse functions, Morse cobordisms; instead of
   a single almost complex structure we might require a family of such
   structures. Moreover, sometimes, some of the curves used in the
   construction satisfy a perturbed Cauchy-Riemann equation or the
   domains of some of the ``vertices'' in our trees will not be
   spheres or disks but rather, cylinders or strips etc.  In all these
   cases we will describe explicitly the (generally minor)
   modifications that are needed in the construction above.
\end{rem}

\subsection{Definition of the algebraic structures}\label{subsubsec:defin_alg_str}
The formalism given above allows us to define all the particular
moduli spaces needed for our various operations and we will describe
all these constructions below.  In all these cases, we indicate the
relevant moduli spaces by following the scheme above.  In each case we
will describe the various structures involved, namely, the class of
Morse functions $\mathcal{F}$, the exit rule $\Theta$ (we will give
its values only over that part of its domain which is relevant), the
marked point selector $Q$ as well as the symbol $\sym({\mathcal{T}})$
of the relevant trees.  We will also indicate in each case the formula
for the virtual dimension of the respective moduli spaces.

The definitions of our operations and their properties depend on the
transversality results which will be reviewed in the next section.
Moreover, the various relations that need to be proved require to
understand the compactification of these moduli spaces, a description
of their boundary and a gluing formula. This part will be discussed in
the last subsection.

\

Let $\mathcal{R}$ be a graded commutative
$\tilde{\Lambda}^{+}$-algebra as in \S\ref{subsec:coeff}. As before,
we fix a pair $\rho=(\rho_{L},\rho_{M})$ of Riemannian metric on $L$
and on $M$ as well as an almost complex structure $J$ compatible with
$\omega$.

\

{\em a.  The pearl complex and its differential}. Here and in the
points b and c below all the internal vertices are of disk type and all
internal edges are or type $L$ so that we omit from the notation of
$\Theta$ the symbol $S$ as $S=L$ in these three cases.

We consider a single Morse function $f:L\to \R$ and put
$\mathcal{F}=\{f\}$.  The pearl complex is
$$\mathcal{C}(L;\mathcal{R}; f,\rho_{L},J)=(\Z_{2}\langle\Crit(f)\rangle\otimes \mathcal{R}, d)~.~$$

The differential $d$ is defined for generic choices of our data. To
describe it, we consider $\mathcal{F}$-colored trees with marked
points, $(\mathcal{T},Q)$, with symbol $(x:y)$ with $x,y\in\Crit(f)$
and so that the marked point selector associates to each $e\in
e(\mathcal{T})$, $q_{-}(e)=+1\in \partial D$ and
$q_{+}(e)=-1\in\partial D$. It is easy to see that the virtual
dimension of the associated moduli spaces is given by
$\delta(\mathcal{T})=|x|-|y|+\mu[\mathcal{T}]-1$.

\begin{figure}[htbp]
   \psfig{file=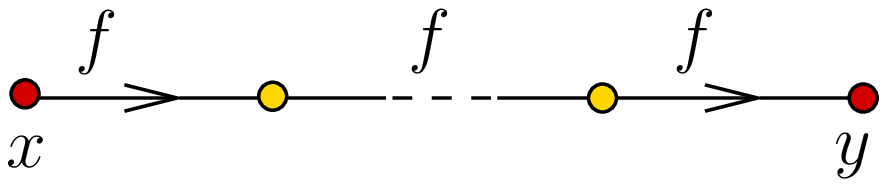, width=0.38 \linewidth} \\
   \psfig{file=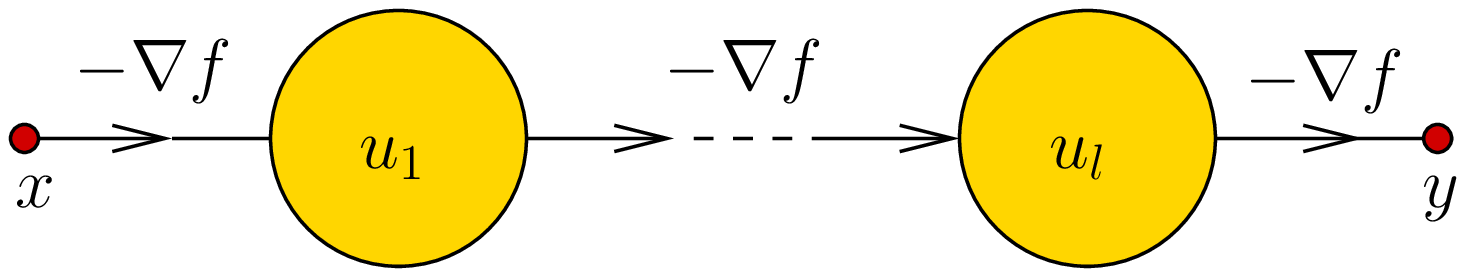, width=0.58 \linewidth}
   \caption{A tree of symbol $(x:y)$ at the top, and a pearly
     trajectory corresponding to it at the bottom.}
   \label{f:pearls-diff}
\end{figure}

We now put:
\begin{equation}\label{eq:pearl_moduli}\ dx=
   \sum_{y, (\mathcal{T},Q)}\#_{2}(\mathcal{P}_{\mathcal{T},Q})\ y\ T^{[\mathcal{T}]}
\end{equation}
where, $y, (\mathcal{T},Q)$ go over all the trees $(\mathcal{T},Q)$ as
above and we only count elements in $\mathcal{P}_{\mathcal{T},Q}$ when
the associated virtual dimension is $0$ (we will use the same
convention in the other examples below).  The relation $d^{2}=0$ is
obtained by using the same type of moduli spaces but with virtual
dimension equal to $1$. Notice that if $f$ has a single maximum, $P$,
then, for degree reasons, $P$ is a cycle in the pearl complex
$\mathcal{C}(L;\mathcal{R};f,\rho_{L},J)$ (the point here is that the
differential is defined over $\widetilde{\La}^{+}$).

We will omit $L$, $J$, $\rho$, $\mathcal{R}$ from the notation if they
are clear from the context.

{\em b. The quantum product}. In this case $\mathcal{F}=\{f_{1},
f_{2}, f_{3}\}$ with the three functions $f_{i}$ all defined on $L$.
The product is defined by:
\begin{equation}\label{eq:moduli_prod}
   \circ: \mathcal{C}(f_{1})\otimes_{\mathcal{R}}\mathcal{C}(f_{2})\to \mathcal{C}(f_{3}) \ , \ x\circ y =\sum_{(\mathcal{T},Q),z}(\#_{2}\mathcal{P}_{\mathcal{T},Q})\ z \ T^{[\mathcal{T}]}
\end{equation}
where the sum is taken over all the $\mathcal{F}$-colored trees with
marked points $(\mathcal{T}, Q)$ of symbol $(x,y:z)$ with
$x\in\Crit(f_{1})$, $y\in \Crit(f_{2})$ and $z\in\Crit(f_{3})$ which
are $\Theta$-admissible with $Q$ and $\Theta$ as follows.  First, the
marking selector verifies: if $e_{+}$ is of valence at most $2$ then
$q_{+}(e)=-1\in \partial D$; if $e_{-}$ is of valence at most $3$,
$q_{-}(e)=+1\in \partial D$; if $e_{+}$ is of valence $3$, and $e$ is
the $j$-th entering edge (in the planar order) at the vertex $e_{+}$
(clearly, $j\in\{1,2\}$), then $q_{+}(e)= e^{-\frac{2\pi j}{3}i}\in
\partial D$. In other words, at a vertex of valence $3$, the marked
(or incidence) points are the roots of order three of the unity.
Finally, the exit rule is $\Theta (f_{i})=f_{i} \ \forall
i\in\{1,2,3\}$, $\Theta(f_{1},f_{2})=f_{3}$.  The virtual dimension in
this case is $\delta(\mathcal{T})=|x|+|y|-|z|- n+\mu[\mathcal{T}]$.
Schematically, the trees used here and the associated configurations
are depicted in Figure \ref{f:pearls-prod}.

Similar moduli spaces but of virtual dimension $1$ are used to show
that the linear map defined by (\ref{eq:moduli_prod}) defines a chain
morphism and thus descends to homology.

A useful remark here is that we can also use instead of the three
functions $f_{1}$, $f_{2}$, $f_{3}$ only two function $f_{1}$ and
$f_{2}$ with the same exit rule as above except that for the vertex of
valence $3$ we require $\Theta(f_{1},f_{2})=f_{2}$. It is easy to see
that this definition provides a product
\begin{equation}\label{eq:simpl_q_prod}
   \circ :\mathcal{C}(f_{1})\otimes_{\mathcal{R}}\mathcal{C}(f_{2})\to \mathcal{C}(f_{2})
\end{equation}
which coincides in homology with the product given before (see also
the invariance properties described at point e). This is particularly
useful in verifying the associativity of the product as described at
point f below as it allows one to work in that verification with only
three Morse functions. Another reason why this description of the
product is useful is that, assuming that $f_{1}$ has a single maximum
$P$, we see that if a moduli space $\mathcal{P}_{\mathcal{T},Q}$ used
to define (\ref{eq:simpl_q_prod}) is of symbol $(P,y:z)$ and of
dimension $0$, then $y=z$ and $\mathcal{P}_{\mathcal{T},Q}$ consists
of the unique Morse trajectory of $f_{1}$ joining $P$ to $y$. Thus
$P\circ y=y$ hence $P$ is a unity at the {\em chain level} for the
product defined in (\ref{eq:simpl_q_prod}).

{\em c. The module structure.} We now have
$\mathcal{F}=\{f_{1},f_{2}\}$ with one Morse function $f_{1}:M\to \R$
and one Morse function $ f_{2}:L\to \R$.  We let
$CM(f_{1};\mathcal{R})=\Z_{2}\langle
\Crit(f_{1})\rangle\otimes\mathcal{R}$ be the Morse complex of $f_{1}$
tensored with the ring $\mathcal{R}$ (endowed with the Morse
differential $d=d_{Morse}\otimes 1$).  The module action is defined
by:
\begin{equation}\label{eq:moduli_module}
   \circledast: C(f_{1})\otimes_{\mathcal{R}} \mathcal{C}(f_{2})\to \mathcal{C}(f_{2}) \ , \ a\circledast x =\sum_{(\mathcal{T},Q),y}(\#_{2}\mathcal{P}_{\mathcal{T}})\ y \ T^{[\mathcal{T}]}
\end{equation}
where the sum is taken over all the $\mathcal{F}$-colored trees
$(\mathcal{T},Q)$ of symbol $(a,x:y)$ with $a\in \Crit(f_{1})$ and
$x,y\in \Crit(f_{2})$ which are $\Theta$-admissible for $Q$ and
$\Theta$ defined as follows: for all edges $e$ of type $L$,
$q_{+}(e)=-1\in\partial D$, $q_{-}(e)=+1\in \partial D$; if $e$ is an
edge of type $M$ (there can in fact be at most one such edge), then
$q_{+}(e)=0\in D$; $\Theta (f_{2})=f_{2}$,
$\Theta(f_{1},f_{2})=f_{2}$.  The virtual dimension in this case is
$\delta =|a|+|x|-|y|-2 n+\mu([\mathcal{T}])$.

\begin{figure}[htbp]
   \psfig{file=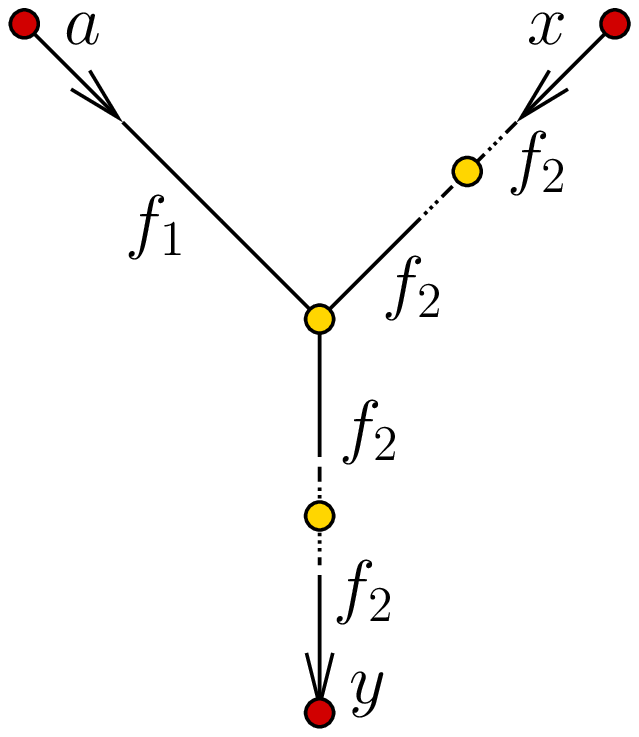, width=0.25 \linewidth} \quad
   \psfig{file=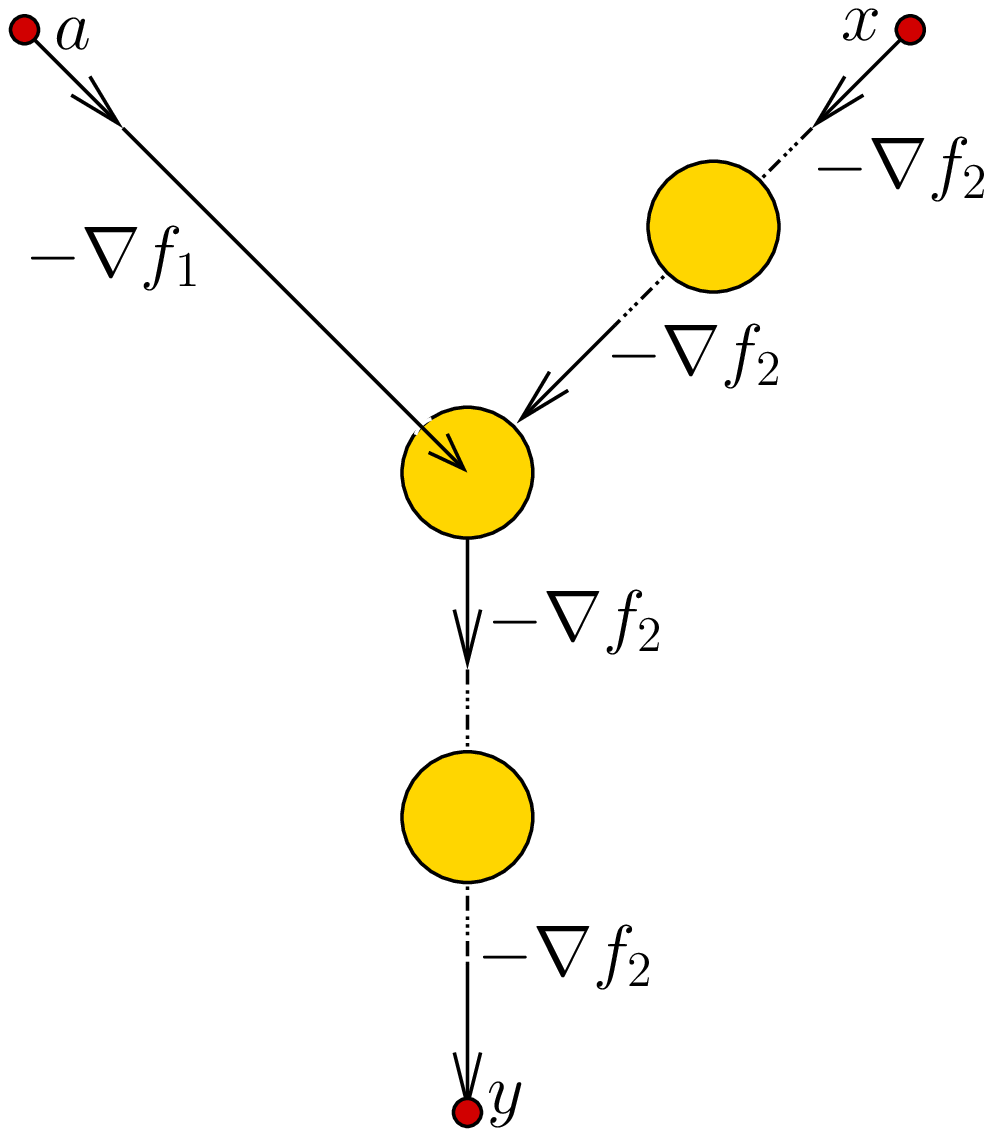, width=0.48 \linewidth}
   \caption{A tree of symbol $(a,x:y)$ on the left, and a pearly
     trajectory corresponding to it on the right.}
   \label{f:pearls-module}
\end{figure}

The same type of moduli spaces but of virtual dimension $1$ serve to
prove that this operation passes to homology. However, at this step a
modification is needed and has to do with the proof of transversality:
we need that in these moduli spaces if a vertex $v$ is of valence
three, then the corresponding curve $u_{v}$ is not pseudo-holomorphic
but rather it carries a small Hamiltonian perturbation of type:
\begin{equation} \label{Eq:delbar-pert}
   \begin{cases}
      u:(D, \partial D) \to (M,L) \\
      \partial_s u + J(u)\partial_t u = -X_F(s,t,u)-J(u)X_G(s,t,u)
   \end{cases}
\end{equation}
with $F,G: D \times M \to \mathbb{R}$ well chosen Hamiltonians and
$X_{F}$ and $X_{G}$ the respective Hamiltonian vector fields (see
\cite{McD-Sa:Jhol-2} and \cite{Bi-Co:qrel-long} for details).  The
reason why these perturbations are needed will be explained in the
next section and we refer to \cite{Bi-Co:qrel-long} for the full
construction.

{\em d. The inclusion $i_{L}$.} In this case we use one Morse function
$f_{1}:L\to \R$ and another Morse function $f_{2}:M\to \R$ and
$\mathcal{F}=\{f_{1},f_{2}\}$.  The relevant $\mathcal{F}$-colored
trees with marked points have symbol $(x:a)$ with $x\in \Crit(f_{1})$,
$a\in\Crit(f_{2})$. The marking is chosen as follows: for all the
edges $e$ of type $L$, $q_{-}(e)=+1$, $q_{+}(e)=-1$; for the edge $e$
of type $M$, $q_{-}(e)=0\in D$ (it is easy to see that the stability
condition iv in \S\ref{subsec:moduli_def} together with the form of
the symbol imply that there can only be a unique edge of type $M$. The
exit rule is $\Theta(f_{1};L)=f_{1}$, $\Theta(f_{1};M)=f_{2}$ (notice
that, this is the first place where the symbol $S$ in the definition
of the exit rule at point v in \S\ref{subsec:moduli_def} is of use;
moreover, because the symbol is $(x:a)$, the only disk type vertex
with the exit edge of type $M$ is the one just before the end of the
tree).  The virtual dimension is in this case
$\delta=|x|-|y|+\mu([\mathcal{T}])$ and the quantum inclusion is
defined by
 $$i_{L}:\mathcal{C}(f_{1})\to CM(f_{2};\mathcal{R})\ ; \  i_{L}(x)=\sum (\#_{2}\mathcal{P}_{\mathcal{T},Q})\ a\ T^{[\mathcal{T}]}~.~$$

 \

 {\em e. Invariance.} Assume given two sets of data $(f, \rho_{L}, J)$
 and $(f',\rho'_{L}, J')$ so that the pearl complexes $\mathcal{C}(L;
 \mathcal{R}; f,\rho_{L},J)$ and $\mathcal{C}(L;\mathcal{R};
 f,\rho'_{L},J')$ are defined. We now need to construct a chain
 morphism:
$$\phi^{F,\tilde{\rho}_{L},\tilde{J}}: 
\mathcal{C}(L; f,\rho_{L},J)\to\mathcal{C}(L;f',\rho'_{L},J')$$ which
induces a canonical isomorphism in homology (we omit the ring
$\mathcal{R}$ from the notation).  This morphism is associated to:
$\tilde{J}=\{J_{t}\}$, a smooth one parametric family of almost
complex structures with $J_{0}=J, J_{1}=J'$, $F:L\times [0,1]\to \R$,
a Morse homotopy (see \cite{Bi-Co:qrel-long} as well as
\cite{Co-Ra:Morse-Novikov}) between $f$ and $f'$ , $\tilde{\rho}_{L}$
a metric on $L\times [0,1]$ with
$\tilde{\rho}|_{L\times\{0\}}=\rho_{L}$ and $\tilde{\rho}|_{L\times
  \{1\}}=\rho'_{L}$.  In other words, we use here a slight
modification of our standard construction by taking
$\mathcal{F}=\{F\}$ and using trees as at point a, but with $F$
replacing $f$, $\tilde{\rho}_{L}$ replacing $\rho_{L}$ and $\tilde{J}$
instead of $J$. The symbol is $(x:y)$ with $x\in
\Crit_{k+1}(F)|_{L\times\{0\}}=\Crit_{k}(f)$ and
$y\in\Crit_{s}(F)|_{L\times\{1\}}=\Crit_{s}(f')$.  In particular, both
the marked point selector $Q$ and the exit rule are the same as at
point a.  The points a,b, c, in \S\ref{subsec:moduli_def} B. are also
modified as follows.

The set $S_{\mathcal{T},Q}$ is now a subset of the product
$$\Pi(\mathcal{T}, \tilde{J})=
\prod_{v\in v_{int}(\mathcal{T}) ,\ t\in [0,1]} \mathcal{M}_{t}([v],
\tilde{J})$$ where
$$\mathcal{M}_{t}([v],\tilde{J})=\{u:(D,\partial D)\to (M\times\{t\},
L\times\{t\}) \ \ | \ \bar{\partial}_{J_{t}}(u)=0\}~.~$$

The flow $\gamma_{t}^{f}$ is replaced by the negative gradient flow,
$\gamma_{t}^{F}$, of $F$ with respect to $\tilde{\rho_{L}}$ (which is
a flow on $L\times [0,1]$ ) and points a,b, c now apply without
further modifications.  In short, the curves which appear at the start
(and respectively the end) of the edge $e$ are $J_{t}$-holomorphic
where $t$ is determined by the second coordinate of the starting point
(respectively, end) of the flow line of $-\nabla(F)$ which corresponds
to $e$. Notice that in our construction all intervening curves are
genuinely $J_{t_{0}}$-holomorphic for some $t_{0}\in [0,1]$ in
contrast to the continuation method familiar in Floer theory.

The virtual dimension is $\delta=|x|-|y|+\mu[\mathcal{T}]$. The
morphism is defined by:
$$\phi^{F,\tilde{\rho}_{L},\tilde{J}}( x) = 
\sum(\#_{2}\mathcal{P}_{\mathcal{T},Q})\ y\ T^{[\mathcal{T}]}~.~$$

An additional parameter is required to show that the morphism induced
in homology is canonical - by constructing a chain homotopy between
any two morphisms as above which is associated to a Morse homotopy of
Morse homotopies.  Perfectly similar constructions provide chain
homotopies which proves the invariance of the quantum product and of
the module structure.

{\em f. The associativity type relations.} The purpose here is to
define the moduli spaces needed to prove the associativity of the
quantum product as well as the other relations at point ii of Theorem
\ref{thm:alg_main}.

For the associativity of the quantum product we will use three
functions $f_{i}:L\to \R$, $i\in \{1,2,3\}$ and the moduli spaces to
be considered are modeled on trees $\mathcal{T}$ of symbol
$(x_{1},x_{2},x_{3}:w)$ with $x_{i}\in\Crit(f_{i})$ and $w\in
\Crit(f_{3})$; the exit rule is $\Theta( f_{k_{1}},\ldots ,
f_{k_{i}})=f_{\max\{k_{1},\ldots, k_{i}\}}$.  We will now define a
particular {\em family} of marked point selectors
$\bar{Q}=\{Q_{\theta}\}$ consisting of one marked point selector
$Q_{\theta}$ for each $\theta\in (0, 2\pi/3)$.  This $Q_{\theta}$ is
as in the definition of the quantum product for all vertices of
valence $2$ and $3$ and in case one vertex $v$ is of valence $4$ then
the first two edges arriving at $v$ (in the planar order) and the exit
edge are attached at the roots of the unity of order $3$ - in the same
way as for the vertices of valence $3$. The third arriving edge $e$
verifies $q_{\theta,+}(e)=e^{i\theta}$.  The moduli spaces used to
prove the associativity of the quantum product are
$$\mathcal{P}_{\mathcal{T},\bar{Q}}=\cup_{\theta\in
  (0,2\pi/3)}\mathcal{P}_{\mathcal{T},Q_{\theta}}\times \{\theta\}
~.~$$

The resulting virtual dimension of this moduli space is $\delta =
|x_{1}|+|x_{2}|+|x_{3}| - |w|+\mu[\mathcal{T}]+1$ (the $+1$ comes from
the additional parameter $\theta$).

Both $0$ and $1$-dimensional such moduli spaces are needed to verify
associativity: the $0$ dimensional moduli spaces are used to define a
chain homotopy $\eta :\mathcal{C}(f_{1})\otimes_{\mathcal{R}}
\mathcal{C}(f_{2})\otimes_{\mathcal{R}}
\mathcal{C}(f_{3})\to\mathcal{C}(f_{3})$ and the $1$ dimensional
moduli spaces are used to prove the relation $((-\circ -) \circ -)
+(-\circ (-\circ -))=(d\eta +\eta d)(-\otimes -\otimes -)$. More
details appear in \cite{Bi-Co:qrel-long}.

\medskip To prove the relation $(a*b)\circledast x=a*(b\circledast x)$
with $a,b\in QH(M;\mathcal{R})$ and $x\in QH(L;\mathcal{R})$ we use
two functions $f_{1}, f_{2}: M\to \R$ and $f_{3}:L\to \R$.  The moduli
spaces in question are modeled on trees $\mathcal{T}$ of symbol
$(a,b,x:y)$ with $a\in\Crit(f_{1})$, $b\in\Crit(f_{2})$,
$x,y\in\Crit(f_{3})$.  The exit rule is $\Theta (f_{k_{1}},\ldots,
f_{k_{s}})=f_{\max\{k_{1},\ldots, k_{s}\}}$.  Again we will need to
define a special family of marked point selectors, denoted in this
case by $\tilde{Q}=\{Q_{\tau}\}$ for $\tau\in (-1,0)$. The marked
point selector $Q_{\tau}$ is as at point c for all vertices of valence
$2$ or $3$. If a vertex is of valence 4 then the marked points are the
same as at point c for the edges of type $L$.  At this vertex there
are also two entering edges of type $M$ and the respective marked
points are as follows: for the edge $e_{1}$ colored with $f_{1}$, we
put $q_{+}(e_{1})=0\in D^{2}$; for the edge $e_{2}$, colored with
$f_{2}$, we put $q_{+}(e_{1})=\tau\in (-1,0)\subset Int(D^{2})$.
Finally the moduli spaces needed here are:
$$\mathcal{P}_{\mathcal{T},\tilde{Q}}=\cup_{\tau\in
  (-1,0)}\mathcal{P}_{\mathcal{T},Q_{\tau}}\times \{\tau\} ~.~$$ We
will again need moduli spaces of this sort and of dimensions $0$ and
$1$. As at point c, to achieve transversality, some of the disks
appearing in these moduli spaces will need to be perturbed by using
perturbations as described by equation (\ref{Eq:delbar-pert}). More
precisely, in the moduli spaces of dimension $0$, if a vertex is of
valence $4$, then its corresponding curve is a perturbed $J$-disk. In
the moduli spaces of dimension $1$, the disks of valence $3$ as well
as the disk of valence $4$ (if present) need to be perturbed. Again,
for more details see \cite{Bi-Co:qrel-long}.

{\em g. Comparison with Floer homology.}  The version of Floer
homology that we need is defined in the presence of a generic
Hamiltonian $H:M\times [0,1]\to \R$.  Consider the path space
$\mathcal{P}_{0}(L)=\{\gamma\in C^{\infty}( [0,1], M)\ | \
\gamma(0)\in L\ , \ \gamma(1)\in L \ , \ [\gamma]=1\in\pi_{2}(M,L)\}$
and inside it the set of (contractible) orbits, or chords,
$\mathcal{O}_{H}\subset \mathcal{P}_{0}(L)$ of the Hamiltonian flow
$X_{H}$. Assuming $H$ to be generic we have that $\mathcal{O}_{H}$ is
a finite set.  Fix a generic almost complex structure $J$.

There is a natural epimorphism $p:\pi_{1}(\mathcal{P}_{0}(L))\to
H_{2}^{D}(M,L)$ and we take $\tilde{\mathcal{P}}_{0}(L)$ be the
regular, abelian cover associated to $\ker (p)$ so that
$H_{2}^{D}(M,L)$ acts as the group of deck transformations for this
covering.  Consider all the lifts
$\tilde{x}\in\tilde{\mathcal{P}}_{0}(L)$ of the orbits $x\in
\mathcal{O}_{H}$ and let $\tilde{\mathcal{O}}_{H}$ be the set of these
lifts.  Fix a base point $\eta_{0}$ in $\tilde{\mathcal{P}}_{0}(L)$
and define the degree of each element $\tilde{x}$ by
$|\tilde{x}|=\mu(\tilde{x},\eta_{0})$ with $\mu$ being here the
Viterbo-Maslov index.  Let $\mathcal{R}$ be a commutative
$\Z_{2}[H_{2}^{D}(M,L)]$-algebra (e.g.  $\mathcal{R}=\Lambda$, or $\La
'$ or $\Z_{2}[H_{2}^{D}(M,L)]$ itself but not $\La^{+}$ or
$\widetilde{\La}^{+}$).

The Floer complex is the $\mathcal{R}$-module:
$$CF_{\ast}(H,J) = \Z_{2}\langle\tilde{\mathcal{O}}_{H}\rangle
\otimes_{\Z_{2}[H_{2}^{D}(M,L)]} \mathcal{R}~.~$$ The differential is
given by $d\tilde{x}=\sum \#\mathcal{M}(\tilde{x},\tilde{y})
\tilde{y}$ where $\mathcal{M}(\tilde{x},\tilde{y})$ is the moduli
space of solutions $u:\R\times [0,1]\to M$ of Floer's equation
$\partial u/\partial s+J\
\partial u/\partial t+\nabla H(u,t)=0$ which verify $u(\R\times
\{0\})\subset L,\ u(\R\times\{1\})\subset L$ and they lift in
$\tilde{\mathcal{P}}_{0}(L)$ to paths relating $\tilde{x}$ and
$\tilde{y}$. Moreover, the sum is subject to the condition
$\mu(\tilde{x},\tilde{y}) -1=0$.

The comparison map from the pearl complex
$$\phi_{f,H}:\mathcal{C}(L; f,\rho_{L}, J)\to CF(L;H,J)$$
is defined by the PSS method (see \cite{PSS} and, in the Lagrangian
case, \cite{Bar-Cor:NATO}, \cite{Cor-La:Cluster-1},\cite{Alb:PSS}) as
well as the map in the opposite direction
$$\psi_{H,f}: CF(L;H,J) \to \mathcal{C}(L; f,\rho_{L}, J)~.~$$

In our language, the map $\phi_{f,H}$ is defined by counting elements
in moduli spaces modeled on trees of symbol $(x:\gamma)$ with $x\in
\Crit(f)$, $\gamma\in \tilde{\mathcal{O}}_{H}$ - thus notice a first
modification of the ``pearl'' construction, the exit of the tree is
labeled in this case by an orbit.  There will be just one Morse
function $f:L\to \R$ and the exit rule as well as the marked point
selector are as at point a (in \S\ref{subsubsec:defin_alg_str}).
However, the last vertex in the tree, the exit, will no longer
correspond to a critical point but rather to a solution $u:\R\times
[0,1]\to M$ of the equation
\begin{equation}\label{eq:perturbed}
   \partial u/\partial s+ J\partial u/\partial t+\beta(s)\nabla H(u,t)=0
\end{equation}
so that $\beta:\R\to [0,1]$ is an appropriate increasing smooth
function supported in the interval $[-1,+\infty)$ and which is
constant equal to $1$ on $[1,+\infty)$. This solution $u$ has also to
verify $u(\R\times\{0\})\subset L$, $u(\R\times\{1\})\subset L$,
$\lim_{s\to\infty}u(s,-)=\gamma(-)$ and $\lim_{s\to
  -\infty}u(s,-)=P\in L$ so that condition c in
\S\ref{subsec:moduli_def} B which describes the geometric relation
associated to the exit edge $e$, is replaced by: `` $\exists\ t>0$ so
that $\gamma^{f_{e}}_{t}(u_{e_{-}}(q_{-}(e)))=P$''.  The map
$\psi_{H,f}$ is given by using similar moduli spaces but with the
first vertex being a perturbed one (the perturbation will use the
function $\beta'=1-\beta$) and starting from an element of
$\tilde{\mathcal{O}}_{H}$. Proving that these maps are chain morphisms
and that their compositions induce inverse maps in homology depends,
in the first instance, on using one-dimensional moduli spaces as above
and, in the second, on yet some other moduli spaces which will produce
the needed chain homotopies. For $\phi_{f,H}\circ\psi_{H,f}$ these
moduli spaces are again modeled on trees with a single entry and exit,
as in the differential of the pearl complex, but both the exit and
entry vertices are of the perturbed type as in (\ref{eq:perturbed})
(with a perturbation $\beta'$ for the entry and $\beta$ for the exit).
In the case of $\psi_{H,f}\circ\phi_{f,H}$ one of the internal
vertices satisfies a perturbed equation but a function $\beta''$ with
support in an interval of type $[-r,r]$ is used instead of $\beta$
(see again \cite{Alb:PSS},\cite{Bi-Co:qrel-long} for details).

{\em h. The augmentation.} Fix a pearl complex $\mathcal{C}(L;
\mathcal{R}; f,\rho, J )$ where $\mathcal{R}$ is a
$\tilde{\Lambda}^{+}$ algebra (as in \S\ref{subsec:coeff}). Define
$$\epsilon_{L} :\mathcal{C}(L;\mathcal{R}; f,\rho , J ) \to \mathcal{R}$$
by $\epsilon_{L} (x) = 0$ for all critical points $x \in\Crit_{>0}(f)$
and $\epsilon_{L}(x) = 1$ for those critical points $x \in\Crit(f)$
with $|x|=1$. Notice that a (local) minimum $x_{0}$ can not appear in
the differential $dy = \sum a_{z,A} zT^{A}$ of any critical point y
except for $A = 0$ and $|y | = 1$. Indeed, a moduli space
$\mathcal{P}_{\mathcal{T}}$ modeled on a tree $\mathcal{T}$ of symbol
$(y: x_{0})$ as at the point a in this section is of dimension $|y
|-1+\mu[\mathcal{T} ]$ and thus can only be of dimension $0$ if
$[\mathcal{T} ] = 0$. Since for each critical point of index $1$ there
are precisely two flow lines emanating from it, we deduce that
$\epsilon_{L}\circ d=0$ and so $\epsilon_{L}$ is a chain map. The same
type of argument, now applied to the comparison map constructed in the
invariance argument at point e shows that, in homology, $\epsilon_{L}$
commutes with the canonical isomorphisms.

\subsection{Transversality} As mentioned before we will not give here
the full proof of transversality (we refer to \cite{Bi-Co:qrel-long}
for that). However, we will review the main ideas.

Given an $\mathcal{F}$-colored tree with marked points
$(\mathcal{T},Q)$ as defined in \S\ref{subsec:moduli_def} we discuss
the proof of the fact that, for generic $J$, the associated moduli
space $\mathcal{P}_{\mathcal{T},Q}$ is a manifold of dimension equal
to the virtual dimension $\delta(\mathcal{T})$. The finite family
$\mathcal{F}$ of Morse functions defined on $L$ or on $M$ is fixed
throughout the section and it contains at most three functions defined
on $L$ and two defined on $M$. The only moduli spaces to be treated
are those appearing in \S\ref{subsubsec:defin_alg_str}.

In the argument, slightly more general such moduli spaces will also be
needed. As before, the numbers of entries will always be at most $3$
and there will be a single exit. However, we will not impose any
particular restriction on the exit rule (in particular, all possible
exit rules will be allowed in the inductive argument below). Secondly,
we will need to prove the regularity of moduli spaces of type
$$\mathcal{P}_{\mathcal{T},Q}=
\bigcup_{s\in U}\mathcal{P}_{\mathcal{T},Q_{s}}\times \{s\}$$ where
$Q=\{Q_{s}\}_{s\in U}$ is a {\em family} of marked point selectors
$Q_{s}$ so that at most two of the marked points provided by $Q_{s}$
(and which are associated to vertices of valence at least $3$) are
allowed to take the values in the set $U$.  Here $U=U_{1}\times U_{2}$
where both $U_{i}\subset D$ are connected submanifolds without
boundary of dimension at most $2$.  These types of moduli spaces have
already appeared in the discussion of associativity at the point f in
\S \ref{subsubsec:defin_alg_str} and some additional ones will appear
in the transversality argument.  More precisely, our allowed choices
for these sets $U_{i}$ are as follows.  If $\dim U_{i}=0$, then
$U_{i}$ coincides with one of the marked points appearing in the
description of the marked point selectors in
\S\ref{subsubsec:defin_alg_str} (in other words, $U_{i}$ is one of the
points $+1,-1,e^{2\pi i/3} , e^{4\pi i/3}, 0\in D$); if $\dim
U_{i}=1$, then $U_{i}$ is one of the following two choices
$(-1,0)\subset D$ or $\{e^{it}\}_{0\leq t\leq 2\pi/3}\subset
\partial D$ (both have been already used at point f in
\S\ref{subsubsec:defin_alg_str}); finally, if $\dim U_{i}=2$, then
$U_{i}= Int(D)$.  We will still refer to these moduli spaces by
$\mathcal{P}_{\mathcal{T},Q}$ and refer to them as
$\mathcal{F}$-colored moduli spaces with marked points and, by a
slight abuse of notation, $Q$ will still be referred to as a marked
point selector.  The virtual dimension of these moduli spaces is given
by a formula similar to (\ref{eq:virtual_dim}) to which is added
another term depending on the dimension of the sets $U_{i}$ as above
and on the valence of the vertices to which these marked points are
associated. In view of this, we denote this virtual dimension by
$\delta(\mathcal{T},Q)$.

\

Let $\mathcal{P}^{\ast}_{\mathcal{T},Q}$ be the moduli spaces
associated to $\mathcal{F}$-colored trees with marked points
$(\mathcal{T},Q)$ which satisfy the additional condition that all the
$J$-holomorphic curves $u_{v}$ corresponding to the internal vertexes
$v\in v(\mathcal{T})$ have the property that they are {\em simple} and
that they are {\em absolutely distinct}. We recall that a curve
$u:\Sigma\to M$ is simple if it is injective at almost all points
$z\in Int(\Sigma)$ in the sense that $du_{z}\not=0$ and
$u^{-1}(u(z))=\{z\}$. The curves $(u_{v})$ are absolutely distinct if
no single curve $u_{v}$ has its image included in the union of the
images of the others, $Im(u_{v})\not\subset \cup_{v'\in
  v(\mathcal{T})\backslash\{v\}} Im(u_{v'})$.  By a straightforward
adaptation of now standard techniques, as in \cite{McD-Sa:Jhol-2}
Chapter 3 in particular Proposition 3.4.2, we obtain that
$\mathcal{P}^{\ast}_{\mathcal{T},Q}$ is a manifold of dimension
$\delta(\mathcal{T},Q)$, in general non-compact, with a boundary
consisting of configurations so that some edges in $\mathcal{T}$ are
represented by gradient flow lines of $0$-length (recall that we allow
the length of edges to be $\geq 0$).  Notice that, in case some
perturbed $J$-holomorphic curves appear also in the elements of
$\mathcal{P}_{\mathcal{T},Q}$ as at c in
\S\ref{subsubsec:defin_alg_str}, there is no need to impose any
similar condition to them: a choice of generic perturbations insures
the needed transversality.  To simplify the argument, we focus in the
proof below on the case where just a single almost complex structure
appears in the definition of our moduli spaces.  However, if as for
the invariance argument, point e in \S\ref{subsubsec:defin_alg_str},
we need to deal with a family $\tilde{J}=\{J_{t}\}_{t\in [0,1]}$ of
almost complex structures, then the ``absolutely distinct'' condition
only needs to be verified for the disks that are $J_{t}$-holomorphic
for each $t$ at a time and by taking this remark into account the
argument below adapts easily to this setting.

The key point is to show that
$\mathcal{P}^{\ast}_{\mathcal{T},Q}=\mathcal{P}_{\mathcal{T},Q}$ as
long as $\delta(\mathcal{T},Q)\leq 1$. In turn, the proof of this is
by induction.  To be more explicit, fix the symbol $
\sym(\mathcal{T})=(x_{1},x_{2},\ldots , x_{l}: y)$ of the tree
$\mathcal{T}$.  Fix some $k\in\N$. The combinatorial data used to
define $\mathcal{F}$-colored trees with marked points
$(\mathcal{T},Q)$ so that $\mu[\mathcal{T}]\leq k$ is finite. Thus, up
to isomorphism, there are only finitely many such trees.  Suppose, by
induction, that for all $\mathcal{F}$-colored trees with marked points
$(\mathcal{T'},Q')$ of symbol of length at most $4$ and with
$\mu[\mathcal{T}']<\mu([\mathcal{T}])$ and
$\delta(\mathcal{T}',Q')\leq 1$, we have
\begin{equation}\label{eq:regular_moduli}
   \mathcal{P}^{\ast}_{\mathcal{T}',Q'}=\mathcal{P}_{\mathcal{T}',Q'}~.~
\end{equation}
To prove identity (\ref{eq:regular_moduli}) for $\mathcal{T}$ it
suffices to show that the following {\em simplification} step is true:
\begin{equation}\label{eq:implication_regularity}
   \begin{matrix}
      \mathcal{P}_{\mathcal{T},Q}\not=
      \mathcal{P}^{\ast}_{\mathcal{T},Q} \ \Rightarrow\ & \exists
      (\mathcal{T}',Q')\ \mathrm{\ \ such\ that\ \ \
      }\sym(\mathcal{T}')=\sym(\mathcal{T}),
      \mu([\mathcal{T}'])<\mu([\mathcal{T}]), \\ &
      \delta(\mathcal{T}',Q')< 0 \ ,
      \mathcal{P}_{\mathcal{T}',Q'}\not=\emptyset ~.~
   \end{matrix}
\end{equation}
Indeed, if $\delta(\mathcal{T}',Q')<0$ , the identity
(\ref{eq:regular_moduli}) together with the regularity of the moduli
spaces consisting of simple, absolute distinct curves implies that
$\mathcal{P}_{\mathcal{T}',Q'}=\emptyset$ and the conclusion follows
by contradiction.

The key to prove (\ref{eq:implication_regularity}) is a structural
result concerning $J$-holomorphic disks which is the disk counterpart
of the {\em multiply-covered $\leftrightarrow$ almost everywhere
  injective} dichotomy valid in the case of $J$-holomorphic spheres.
One such result is due to Lazzarini \cite{Laz:discs}\cite{Laz:decomp}
(an alternative one is due to Kwon-Oh \cite{Kw-Oh:discs}). Here are
more details on this point.

Let $u:(D,\partial D) \to (M,L)$ be a non-constant $J$-holomorphic
disk. Put $\mathcal{C}(u)=u^{-1}(\{ du=0 \})$. Define a relation
$\mathcal{R}_u$ on pairs of points $z_1, z_2 \in \textnormal{Int\,}D
\setminus \mathcal{C}(u)$ in the following way:
\begin{equation*}
   z_1 \mathcal{R}_u z_2 \Longleftrightarrow
   \begin{cases}
      & \forall \textnormal{ neighborhoods } V_1, V_2 \textnormal{ of
      } z_1,z_2, \\
      & \exists \textnormal{ neighborhoods } U_1, U_2
      \textnormal{ such that:} \\
      & \textnormal{(i) } z_1 \in U_1 \subset V_1 ,
      z_2 \in U_2 \subset V_2. \\
      & \textnormal{(ii) } u(U_1)=u(U_2).
   \end{cases}
\end{equation*}
Denote by $\overline{\mathcal{R}}_u$ the closure of $\mathcal{R}_u$ in
$D \times D$. Note that $\overline{\mathcal{R}}_u$ is reflexive and
symmetric but it may fail to be transitive (see~\cite{Laz:decomp} for
more details on this). Define the {\em non-injectivity graph} of $u$
to be:
$$\mathcal{G}(u) = \{ z \in D \, | \, \exists z' \in \partial D
\textnormal{ such that } z \overline{\mathcal{R}}_u z' \}.$$ It is
proved in~\cite{Laz:decomp,Laz:discs} that $\mathcal{G}(u)$ is indeed
a graph and its complement $D \setminus \mathcal{G}(u)$ has finitely
many connected components. We use the following theorem due to
Lazzarini (See Proposition~4.1 in~\cite{Laz:decomp} as well
as~\cite{Laz:discs}).

\begin{thm}[Decomposition of disks, \cite{Laz:decomp,Laz:discs}]
   \label{T:decomposition}
   Let $u:(D,\partial D) \to (M,L)$ be a non-constant $J$-holomorphic
   disk. Then for every connected component $\mathfrak{D} \subset D
   \setminus \mathcal{G}(u)$ there exists a surjective map
   $\pi_{\overline{\mathfrak{D}}}: \overline{\mathfrak{D}} \to D$,
   holomorphic on $\mathfrak{D}$ and continuous on
   $\overline{\mathfrak{D}}$, and a simple $J$-holomorphic disk
   $v_{\mathfrak{D}}:(D,\partial D) \to (M,L)$ such that
   $u|_{\overline{\mathfrak{D}}} = v_{\mathfrak{D}}\circ
   \pi_{\overline{\mathfrak{D}}}$. The map
   $\pi_{\overline{\mathfrak{D}}}:\overline{\mathfrak{D}} \to D$ has a
   well defined degree $m_{\mathfrak{D}} \in \mathbb{N}$ and we have
   in $H_2^{D}(M,L;\mathbb{Z})$:
   $$[u]=\sum_{\mathfrak{D}} m_{\mathfrak{D}} [v_{\mathfrak{D}}],$$
   where the sum is taken over all connected components $\mathfrak{D}
   \subset D \setminus \mathcal{G}(u)$.
\end{thm}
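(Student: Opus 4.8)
The plan is to follow Lazzarini's analysis of the local and global structure of the non-injectivity graph; since this result is due to him, I only sketch the argument and refer to \cite{Laz:decomp,Laz:discs} (and the alternative treatment \cite{Kw-Oh:discs}) for the complete details. The overall strategy is to mimic the \emph{multiply covered $\leftrightarrow$ somewhere injective} dichotomy for $J$-holomorphic spheres (as in \cite{McD-Sa:Jhol-2}, Ch.~2--3), with the essential new input being a careful treatment of the boundary $\partial D$ and of the points where two sheets of $u$ overlap.

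First I would establish the local structure of $\mathcal{G}(u)$. The critical set $\mathcal{C}(u) = u^{-1}(\{du = 0\})$ is finite, since zeros of $du$ are isolated for a $J$-holomorphic curve. Away from $\mathcal{C}(u)$, the similarity principle shows that in suitable coordinates $u$ is an honest holomorphic embedding on small balls, and positivity of local intersections for $J$-holomorphic curves implies that two local sheets of $u$ sharing an image point either coincide on a neighborhood or meet in an isolated point with positive local multiplicity. Consequently, near a point of $\textnormal{Int}\,D \setminus \mathcal{C}(u)$ the set of $z$ that are $\overline{\mathcal{R}}_u$-related to some $z'$ is a finite union of embedded real-analytic arcs, while near a point of $\mathcal{C}(u)$ it is a finite union of arcs through that point (the branches of the critical locus). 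This gives that $\mathcal{G}(u)$ is a locally finite graph; global finiteness of $\mathcal{G}(u)$ and of the number of components of $D \setminus \mathcal{G}(u)$ then follows from the energy bound on $u$ together with monotonicity, which gives a uniform lower area bound on the relevant local scale and hence caps the number of vertices and edges.

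Next, fix a connected component $\mathfrak{D} \subset D \setminus \mathcal{G}(u)$. The point of removing $\mathcal{G}(u)$ is precisely that it removes the self-overlap of $u$, so that on $\mathfrak{D}$ the relation $\overline{\mathcal{R}}_u$ restricts to an honest equivalence relation whose quotient is again a disk, and $u|_{\mathfrak{D}}$ becomes a (possibly branched) cover of its image. I would then extract, as in the sphere case, a simple $J$-holomorphic disk $v_{\mathfrak{D}}:(D,\partial D) \to (M,L)$ with the same image as $u|_{\overline{\mathfrak{D}}}$, together with an induced map $\pi_{\overline{\mathfrak{D}}}:\overline{\mathfrak{D}} \to D$ satisfying $u|_{\overline{\mathfrak{D}}} = v_{\mathfrak{D}}\circ\pi_{\overline{\mathfrak{D}}}$ and holomorphic on $\mathfrak{D}$. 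Here one uses that $\mathfrak{D}$ (a component of the complement of a graph meeting $\partial D$) is conformally a disk, so that $\pi_{\overline{\mathfrak{D}}}$ may be normalized by a Riemann-mapping choice, and that it extends continuously to $\overline{\mathfrak{D}}$ and surjectively onto $D$ by removable-singularity and boundary-regularity theorems for $J$-holomorphic curves. Properness of $\pi_{\overline{\mathfrak{D}}}$ onto $D$ yields a well-defined degree $m_{\mathfrak{D}} \in \mathbb{N}$.

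Finally, the homological identity $[u] = \sum_{\mathfrak{D}} m_{\mathfrak{D}}[v_{\mathfrak{D}}]$ follows by summing the local contributions: $\mathcal{G}(u)$ has measure zero in $D$, the components $\mathfrak{D}$ cover $D \setminus \mathcal{G}(u)$, and on each $\overline{\mathfrak{D}}$ we have $u = v_{\mathfrak{D}}\circ\pi_{\overline{\mathfrak{D}}}$ with $\deg\pi_{\overline{\mathfrak{D}}} = m_{\mathfrak{D}}$; integrating any closed $2$-form over $D$ (equivalently, working at the level of currents) then decomposes as the weighted sum of its integrals over the $v_{\mathfrak{D}}$'s, which is exactly the asserted relation in $H_2^{D}(M,L;\mathbb{Z})$. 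I expect the main obstacle to be the boundary analysis: controlling precisely how $\mathcal{G}(u)$ meets $\partial D$, proving that each $\pi_{\overline{\mathfrak{D}}}$ extends continuously up to the boundary and is onto all of $D$ (boundary included), and coping with the fact that $\overline{\mathcal{R}}_u$ need not be transitive near $\partial D$ -- this is the technical heart of \cite{Laz:decomp}, and it is why we simply quote the theorem here rather than reproving it.
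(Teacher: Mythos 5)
The paper does not prove Theorem~\ref{T:decomposition} at all: it is imported verbatim as a black box from Lazzarini~\cite{Laz:decomp,Laz:discs} (with \cite{Kw-Oh:discs} as an alternative), and the surrounding text only explains how to \emph{use} it (via Lemmas~\ref{L:non-simple} and~\ref{L:uv}) to run the transversality induction. So there is no in-paper proof to compare against, and your decision to sketch the idea and then explicitly defer to Lazzarini is exactly what the source does.

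As a sketch of Lazzarini's argument your outline is reasonable and hits the main ingredients in the right order: unique continuation and the similarity principle to control overlaps away from the finite critical set; positivity of local intersections to get the local arc/graph structure of the non-injectivity locus; an energy/monotonicity argument for finiteness; uniformization of each complementary component plus removable-singularity and boundary-regularity theory to extract a simple disk factor $v_{\mathfrak{D}}$ and a proper holomorphic $\pi_{\overline{\mathfrak{D}}}$ with a well-defined degree; and a current/integration argument for the homology identity. You also correctly flag the genuine technical core, namely the boundary behaviour and the non-transitivity of $\overline{\mathcal{R}}_u$, which is precisely why the original proof occupies its own paper. One imprecision worth noting: the sentence claiming that on $\mathfrak{D}$ the relation $\overline{\mathcal{R}}_u$ ``restricts to an honest equivalence relation whose quotient is again a disk'' compresses a delicate step; in Lazzarini's treatment the simple disk $v_{\mathfrak{D}}$ is produced more directly from the image of $u|_{\mathfrak{D}}$ rather than as a quotient by $\overline{\mathcal{R}}_u$, and the subtlety is exactly that $\overline{\mathcal{R}}_u$ fails to be transitive, so one should be careful not to take a ``quotient by $\overline{\mathcal{R}}_u$'' literally. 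Since you explicitly quote the theorem rather than claim a self-contained proof, this is a fair summary and matches the paper's treatment.
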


Two Lemmas, \ref{L:non-simple} and \ref{L:uv}, to be stated a bit
later, are easy consequences of the theorem above and, as we will see,
they reduce our problem to a sequence of combinatorial verifications.

\

Returning to the proof of (\ref{eq:implication_regularity}) we proceed
in two steps. First we discuss the argument insuring that all
$J$-curves involved are simple. The second step will show that they
can also be assumed to be absolutely distinct.  We focus here on the
case $\dim(L)\geq 3$ and will comment on the case $\dim(L)\leq 2$ at
the end.

Thus, suppose that $u\in \mathcal{P}_{\mathcal{T},Q}$ is so that
$u=(u_{v})_{v\in v_{int}(\mathcal{T})}$ and for some internal vertex
$v\in v(\mathcal{T})$ the corresponding $J$-holomorphic curve $u_{v}$
is not simple.

In the trees used in this paper a sphere-type vertex does not carry
more than three incidence points. Therefore, in case $u_{v}$ is a
$J$-sphere it can clearly be replaced by a simple one $u'_{v}$ and the
marked point selector is not modified. This means that we may take in
this case $\mathcal{T}'$ to be topologically the same tree as
$\mathcal{T}$ except that the label of the vertex $v$ is now
$[u'_{v}]$ instead of $[u_{v}]$.  Thus we may now suppose that $u_{v}$
is a $J$-disk.  To deal with this case we will make use of the
following consequence of Theorem \ref{T:decomposition}.  We refer to
\cite{Bi-Co:qrel-long} for the proof.

\begin{lem} \label{L:non-simple} Suppose $n=\dim L \geq 3$. Then there
   exists a second category subset $\mathcal{J}_{\textnormal{reg}}
   \subset \mathcal{J}(M,\omega)$ such that for every $J \in
   \mathcal{J}_{\textnormal{reg}}$ the following holds. For every
   non-constant, non-simple $J$-holomorphic disk $u:(D, \partial D)
   \to (M,L)$ there exists a $J$-holomorphic disk $u':(D,
   \partial D) \to (M,L)$ with the following properties:
   \begin{enumerate}
     \item $u'(D) = u(D)$ and $u'(\partial D) = u(\partial D)$.
     \item $u'$ is simple.
     \item $\omega([u']) < \omega([u])$. In particular, if $L$ is
      monotone we also have $\mu([u']) < \mu([u])$.
   \end{enumerate}
\end{lem}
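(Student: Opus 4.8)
The plan is to derive the lemma from Lazzarini's decomposition (Theorem~\ref{T:decomposition}) together with a transversality statement asserting that, for generic $J$, the images of distinct simple $J$-holomorphic disks are in general position.

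First I would choose $\mathcal{J}_{\textnormal{reg}} \subset \mathcal{J}(M,\omega)$ so that three genericity properties hold: (a) every simple $J$-holomorphic disk is Fredholm regular; (b) for any two simple $J$-disks $v_1, v_2$ that are not reparametrizations of one another, the boundary loops $v_1(\partial D)$ and $v_2(\partial D)$ are disjoint (two transverse $1$-dimensional subsets of $L^n$ with $n \geq 3$ --- this is the point where $\dim L \geq 3$ is used), and $v_1(\textnormal{Int\,}D) \cap v_2(\textnormal{Int\,}D)$ is a finite set of interior points disjoint from both boundaries; and (c) a single simple disk has no $1$-dimensional self-overlap, i.e. there are no disjoint arcs $\gamma_1, \gamma_2 \subset D$ with $v(\gamma_1) = v(\gamma_2)$ (such an overlap would, by analytic continuation, force $v$ to factor through a nontrivial branched cover, contradicting simplicity). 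Properties (a)--(c) follow from standard Sard--Smale arguments adapted to disks, in the spirit of Chapter~3 of \cite{McD-Sa:Jhol-2}; the details are in \cite{Bi-Co:qrel-long}.

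Then, given a non-constant non-simple $J$-disk $u$ with $J \in \mathcal{J}_{\textnormal{reg}}$, I would invoke Theorem~\ref{T:decomposition} to write $D \setminus \mathcal{G}(u) = \bigsqcup_{\mathfrak{D}} \mathfrak{D}$ with simple pieces $v_{\mathfrak{D}}$ and $[u] = \sum_{\mathfrak{D}} m_{\mathfrak{D}} [v_{\mathfrak{D}}]$. Since $\mathcal{G}(u)$ is a $1$-complex, the components $\mathfrak{D}$, together with the edges of $\mathcal{G}(u)$ along which adjacent components meet (either in the interior of $D$ or along an arc of $\partial D$), form a connected ``dual graph''. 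Along any such edge $e$ the map $u$ is covered from both sides, so the arc $u(e)$ lies in $v_{\mathfrak{D}}(D) \cap v_{\mathfrak{D}'}(D)$ for the two adjacent pieces; by (b) this forces $v_{\mathfrak{D}}$ and $v_{\mathfrak{D}'}$ to be reparametrizations of each other (and the case of a self-overlap within a single piece is excluded by (c)). Propagating this over the connected dual graph, all the $v_{\mathfrak{D}}$'s are reparametrizations of one simple disk $v$. Hence $u(D) = \bigcup_{\mathfrak{D}} v_{\mathfrak{D}}(D) = v(D)$, $u(\partial D) = v(\partial D)$, and $[u] = \bigl( \sum_{\mathfrak{D}} m_{\mathfrak{D}} \bigr)[v]$ in $H_2^D(M,L)$. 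Non-simplicity of $u$ forces $\sum_{\mathfrak{D}} m_{\mathfrak{D}} \geq 2$ (a single component necessarily has closure all of $D$, hence degree $\geq 2$, while two or more components already give the bound), so $\omega([v]) \leq \frac{1}{2}\omega([u]) < \omega([u])$. Taking $u' = v$ yields properties (1)--(3); monotonicity of $L$ upgrades $\omega([u']) < \omega([u])$ to $\mu([u']) < \mu([u])$.

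The main obstacle --- and really the only substantial point --- is the genericity statement (b): for a generic $\omega$-compatible $J$, distinct simple $J$-disks should have boundary loops that are disjoint (when $n \geq 3$) and interiors meeting in a $0$-dimensional set disjoint from the boundaries. Proving this requires constructing a universal moduli space of pairs (more generally finite tuples) of simple disks, establishing surjectivity of the linearized operator at somewhere-injective configurations, and applying Sard--Smale, with extra care devoted to the boundary behaviour of the disks in $L$ and to the possibility of a boundary point of one disk coinciding with an interior point of another. The remaining bookkeeping --- that every edge of $\mathcal{G}(u)$ along which two components meet genuinely produces a $1$-dimensional overlap of the corresponding pieces --- is combinatorial, as the text anticipates, and is carried out in \cite{Bi-Co:qrel-long}; the case $\dim L \leq 2$ is treated separately.
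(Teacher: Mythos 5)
Your overall skeleton (Lazzarini's Theorem~\ref{T:decomposition} plus a genericity input plus a propagation argument over the adjacency structure of the pieces) is the intended one, but the genericity statement (b) that carries your argument is false, and this is a genuine gap rather than a detail. One cannot arrange, by choosing $J$ generically, that two simple $J$-disks which are not reparametrizations of one another have disjoint boundary images: the relevant coincidence locus has nonnegative expected dimension. Indeed, the space of pairs of unparametrized simple disks in classes $A_1,A_2$ with one boundary marked point each, subject to the single condition that the two marked points have the same image in $L$, has expected dimension $(n+\mu(A_1)-2)+(n+\mu(A_2)-2)-n=n+\mu(A_1)+\mu(A_2)-4\geq n>0$, so Sard--Smale gives you a manifold of that dimension, not the empty set. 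Concretely, for $\clift^{n}\subset \C P^{n}$ with $n\geq 3$, through a generic point $x\in L$ there pass several distinct simple Maslov-$2$ disks (one from each of the $n+1$ regular families, which persist under generic perturbation of $J$), and their boundaries all meet at $x$; so property (b) fails for every generic $J$. Your parenthetical ``two transverse $1$-dimensional subsets of $L^n$'' reasoning treats the boundary curves as freely perturbable, which they are not --- only $J$ can be perturbed, and the fiberwise index count above is what governs the outcome.

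What is actually achievable generically, and what the argument uses, is the much weaker nesting statement of Lemma~\ref{L:uv}: if two simple disks have images meeting in an \emph{infinite} set, then one image (together with its boundary image) is contained in the other; finite intersections, including boundary intersections, are allowed and do occur. With this input your propagation step must be modified: along an edge of $\mathcal{G}(u)$ shared by two components the images of the corresponding pieces overlap in an arc, hence in an infinite set, so Lemma~\ref{L:uv} gives containment, not that the pieces are reparametrizations of one another (strict containment of images of simple disks is not excluded and need not be excluded). One then chooses a piece $v_{\mathfrak{D}_0}$ whose image is maximal and uses connectivity of the adjacency structure to see that it contains the images and boundary images of all other pieces, so $u'=v_{\mathfrak{D}_0}$ satisfies $u'(D)=u(D)$ and $u'(\partial D)=u(\partial D)$; the strict inequality $\omega([u'])<\omega([u])$ then follows from $[u]=\sum_{\mathfrak{D}}m_{\mathfrak{D}}[v_{\mathfrak{D}}]$, positivity of $\omega$ on nonconstant disks, and the fact that non-simplicity forces either at least two pieces or a single piece with $m_{\mathfrak{D}}\geq 2$, exactly as in your last step.
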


We apply Lemma \ref{L:non-simple} to replace the $J$-disk $u_{v}$ by
the simple disk $u'_{v}$ provided by the Lemma. Thus, to
prove~\eqref{eq:implication_regularity}, the relevant tree
$\mathcal{T}'$ that we are looking for is identified with
$\mathcal{T}$ except that the vertex $v$ will now be labeled by
$[u'_{v}]$.  A slightly delicate point needs to be made concerning the
marked point selector $Q'$ corresponding to $\mathcal{T}'$. The way
this is constructed is the following: as $u'_{v}(D)=u_{v}(D)$, and
$u'_{v}(\partial D)=u_{v}(\partial D)$, the points $u_{v}(q_{\pm}(e))$
(where $e$ is an incident edge at $v$) can be lifted to the domain of
$u'_{v}$ and used as marked points there. Of course, this works only
if all these points, $u_{v}(q_{\pm}(e))$, are distinct. If this is not
the case some additional vertices need to be included in the tree so
that they correspond to constant disks or spheres which are related to
the vertex $v$ by edges colored by functions in $\mathcal{F}$ and of
$0$-length.

We still need to verify that $\delta (\mathcal{T}',Q')< 0$.  Given
that $N_{L}\geq 2$ and so $\mu(u'_{v})<\mu(u_{v})-1$ this inequality
is automatic if $Q'=Q$ because in this case
$\delta(\mathcal{T}',Q')\leq \delta(\mathcal{T},Q)-N_{L}$.  This is
the case if $v$ carries two or three marked points all on $\partial
D$. The same is true also if $v$ carries two marked points, one on the
boundary and one in the interior of $D$.  Suppose now that $v$ carries
two boundary marked points, $-1$ and $+1$, and the interior marked
point $0$ (as at point c in \S\ref{subsubsec:defin_alg_str}).  In this
case the marked point selector for $\mathcal{T}'$ can not be assumed
to be the same as that for $\mathcal{T}$: the internal marked point
for $u_{v'}$ can not be assumed anymore to be as assigned by $Q$ but
can be anywhere inside $D$ - in other words in this case
$Q'=\{Q_{s}\}_{s\in Int(D)}$.  In this situation we have
$\mu[\mathcal{T}']\leq\mu([\mathcal{T}])-N_{L}$ and it is easy to see
that $\delta(\mathcal{T}',Q')\leq \delta(\mathcal{T},Q)-N_{L}+1$.
Thus, if $\delta(\mathcal{T},Q)=0$ we still have
$\delta(\mathcal{T}',Q')<0$ so that (\ref{eq:regular_moduli}) remains
true for the moduli spaces needed to define the module structure
without the need to use any perturbations.  However, to prove the fact
that the operation defined there is a chain morphism we need to use
moduli spaces as before but which verify $\delta(\mathcal{T},Q)=1$.
This is precisely why we use perturbed $J$-holomorphic disks in this
case: as mentioned before, the proof of the transversality of the
relevant evaluation maps requires only the non-perturbed
$J$-holomorphic curves to be simple and absolutely distinct. The same
issue appears for both the $0$ and $1$-dimensional moduli spaces used
to prove the associativity of the module action as in the second part
of point f in \S\ref{subsubsec:defin_alg_str} and this shows that the
perturbations indicated there are necessary.  Full details for these
arguments are found in \cite{Bi-Co:qrel-long}.

We now pass to the second step: showing that the $J$-curves
$\{u_{v}\}_v$ are absolutely distinct. The main tool is the next
result which can be deduced too from Theorem \ref{T:decomposition}.
\begin{lem} \label{L:uv} Suppose $n=\dim L \geq 3$. Then there exists
   a second category subset $\mathcal{J}_{\textnormal{reg}} \subset
   \mathcal{J}(M,\omega)$ such that for every $J \in
   \mathcal{J}_{\textnormal{reg}}$ the following holds. If
   $u,w:(D,\partial D) \to (M,L)$ are simple $J$-holomorphic disks
   such that $u(D) \cap v(D)$ is an infinite set, then:
   \begin{itemize}
     \item[i.] either $u(D) \subset w(D)$ and $u(\partial D) \subset
      w(\partial D)$; or
     \item[ii.] $w(D) \subset u(D)$ and $w(\partial D) \subset
      u(\partial D)$.
   \end{itemize}
\end{lem}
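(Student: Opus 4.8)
The plan is a contradiction/propagation argument, run for $J$ in a suitable second category subset of $\mathcal{J}(M,\omega)$; by the evident symmetry in $u$ and $w$ it is enough to show that the hypothesis ``$u(D)\cap w(D)$ infinite'' forces one of the two nested configurations, and then, when $u(D)\subseteq w(D)$, to also obtain $u(\partial D)\subseteq w(\partial D)$. I would first record the genericity input that makes the dimension hypothesis $\dim L\geq 3$ work: for generic $J$, and for the finitely many homotopy classes that matter, every simple $J$-holomorphic disk $v:(D,\partial D)\to (M,L)$ has $v^{-1}(L)\cap\operatorname{Int}D$ discrete. Indeed, at an interior point $z$ with $v(z)\in L$ and $dv(z)\neq 0$ the $J$-complex line $\operatorname{im}(dv(z))$ meets the Lagrangian plane $T_{v(z)}L$ in a subspace of dimension $\leq 1$ — if it were $2$-dimensional, i.e.\ $\operatorname{im}(dv(z))\subset T_{v(z)}L$, then $J$-invariance of the line together with $\omega|_{T_{v(z)}L}=0$ would force $g(\xi,\xi)=\omega(\xi,J\xi)=0$ for $0\neq\xi\in\operatorname{im}(dv(z))$ — so for $n\geq 3$ the curve $v$ is non-transverse to $L$ precisely at such points, and a transversality argument for the parametrized evaluation $\mathcal{M}^{*}(A,L;J)\times\operatorname{Int}D\to M$ then shows that, for generic $J$, the set of interior points mapped into $L$ is $0$-dimensional for every simple disk in class $A$. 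I also fix $J$ so that the simple disks in these classes are immersed off a finite set and have $1$-dimensional non-injectivity graphs $\mathcal{G}(\cdot)$ as in Theorem~\ref{T:decomposition}.

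\emph{Interior step.} Since $u(D)\cap w(D)$ is infinite and lies in a compact subset of $M$, it has an accumulation point; discarding the nowhere dense contributions coming from the two boundaries, the branch points, and the graphs $\mathcal{G}(u),\mathcal{G}(w)$ (a reduction justified by dimension counts together with the genericity input above), one may assume the accumulation point is $p=u(z_0)=w(\zeta_0)$ with $z_0,\zeta_0$ interior injective immersed points. Near $z_0$ and $\zeta_0$ the maps $u$ and $w$ are embeddings onto $J$-holomorphic surfaces $\Sigma_u,\Sigma_w\subset M$, and near $p$ one has $u(D)\cap w(D)=\Sigma_u\cap\Sigma_w$; by the local structure of $J$-holomorphic curves, either $\Sigma_u\cap\Sigma_w$ is discrete near $p$ — excluded, since $p$ is non-isolated — or $\Sigma_u=\Sigma_w$ near $p$. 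Propagating this coincidence, the set $S$ of interior injective immersed points $z$ of $u$ with $u(z)\in w(\operatorname{Int}D)$ at which the images of $u$ and $w$ agree near $u(z)$ is open, and — modulo the same nowhere dense bad loci — closed in the connected dense open set of interior injective immersed points of $u$. Hence $S$ fills that set up to a nowhere dense remainder, so $u(\operatorname{Int}D)\subseteq w(D)$, and therefore $u(D)\subseteq w(D)$ after passing to closures. Running the same argument with $u$ and $w$ interchanged in the case $S=\emptyset$ yields instead $w(D)\subseteq u(D)$; this is the source of the dichotomy.

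\emph{Boundary step, and the main obstacle.} Assume $u(D)\subseteq w(D)$. Over the dense open set of interior injective immersed points of $w$ one locally lifts $u$ through $w$, writing $u=w\circ\varphi$ with $\varphi$ holomorphic; the place where Theorem~\ref{T:decomposition} and the boundary regularity of $J$-disks really enter is in showing that these local lifts patch, across the branch points and $\mathcal{G}(w)$, to a single continuous map $\varphi:D\to D$, holomorphic on $\operatorname{Int}D$. Since $u$ is simple, $\varphi$ cannot have degree $\geq 2$ (else $u$ would be multiply covered via $w$), so $\varphi$ is biholomorphic onto $\varphi(D)\subseteq D$. If $\varphi(D)\subsetneq D$, then the Jordan curve $\varphi(\partial D)=\partial\varphi(D)$ meets $\operatorname{Int}D$ in a nontrivial arc $\varphi(A)$ for some arc $A\subset\partial D$; then $u(A)=w(\varphi(A))\subseteq w(\operatorname{Int}D)$, while $u(A)\subseteq L$ forces $\varphi(A)\subseteq w^{-1}(L)\cap\operatorname{Int}D$, contradicting the discreteness of the latter. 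Hence $\varphi\in\operatorname{Aut}(D)$, so $u=w\circ\varphi$ as maps on $D$, and in particular $u(\partial D)=w(\partial D)$; combined with the interior step this proves the dichotomy. The main obstacle is the careful globalization of $\varphi$ out of the local lifts — the exact point where Lazzarini's decomposition is needed — and the bookkeeping of the nowhere dense ``bad'' loci throughout; the remaining low-dimensional cases $\dim L\leq 2$, where the genericity input on $v^{-1}(L)\cap\operatorname{Int}D$ fails, must be argued separately, as the text indicates.
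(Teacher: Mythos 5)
Your overall strategy (interior unique continuation plus a lifting argument at the boundary) is reasonable in spirit, but as written it has genuine gaps, and they sit exactly at the points the paper sidesteps by quoting Lazzarini's decomposition: the paper does not reprove this lemma but deduces it from Theorem~\ref{T:decomposition}, with details deferred to~\cite{Bi-Co:qrel-long}. First, your genericity input is not established: the pointwise observation that $\operatorname{im}(du(z))\cap T_{u(z)}L$ is at most one-dimensional does not make $u^{-1}(L)\cap \operatorname{Int}D$ discrete --- in the flat model $u(z)=(z,0,\dots,0)$ with $L=\mathbb{R}^n\subset\mathbb{C}^n$ this preimage is a whole arc, the relevant differential having rank $1$ there. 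Upgrading this to ``discrete for every simple disk, for generic $J$'' would need a jet-transversality argument over a positive-dimensional moduli space, which you neither give nor is standard; yet you use this discreteness twice (to dismiss the bad loci in the interior step and to exclude $\varphi(D)\subsetneq D$ in the boundary step). Second, the reduction to an interior accumulation point is not valid: an infinite set can lie entirely inside the nowhere dense sets you discard; in particular all intersections could be boundary-to-boundary points on $L$, accumulating at a point of $u(\partial D)\cap w(\partial D)$. There your interior local-structure argument never starts; one needs boundary positivity of intersections/unique continuation for disks on a Lagrangian (this is where $n\ge 3$, generic $J$, and the Lazzarini/Kwon--Oh structure theory genuinely enter), and genericity cannot simply make infinite boundary intersections impossible, since the nested configurations allowed by the conclusion have them.

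Third, the open/closed propagation does not give what you claim: removing the $1$-dimensional sets $\mathcal{G}(u)$, $\mathcal{G}(w)$ and $u^{-1}(w(\partial D))$ can disconnect the domain (the paper itself notes $D\setminus \mathcal{G}(u)$ has several components in general), so ``open and closed modulo nowhere dense loci'' only yields coincidence of the images on one component; crossing these barriers is precisely the content of Theorem~\ref{T:decomposition}. Finally, the globalization of the local lifts to a single continuous $\varphi:D\to D$, holomorphic inside --- which you yourself flag as ``the main obstacle'' --- is not carried out, and the degree argument (``$\varphi$ cannot have degree $\ge 2$ since $u$ is simple'') needs justification when $\varphi$ is not yet known to be proper onto $D$. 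So your text is a plausible outline, but the steps left open are exactly those for which the paper invokes Lazzarini's theorem; to turn it into a proof you should either supply the boundary intersection analysis, the jet-transversality statement, and a genuine patching argument, or, as the paper does, derive the statement from Theorem~\ref{T:decomposition} following~\cite{Bi-Co:qrel-long}.
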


This implies that if the $J$-curves in $\{u_{v}\}_{v \in
  v_{int}(\mathcal{T})}$ are not absolutely distinct, then there exist
two vertices $v_0$ and $v_1$ both corresponding to $J$-holomorphic
(unperturbed curves) so that $u_{v_1}(D)\subset u_{v_0}(D)$ and
$u_{v_1}(\partial D)\subset u_{v_0}(\partial D)$.  The aim now is to
show that we can ``simplify'' both $\{u_v\}_v$ and the tree by
eliminating $v_1$ (as well as possibly other vertices and edges) and
thus produce a new tree $(\mathcal{T}',Q')$ of lower Maslov number and
with $\delta(\mathcal{T}',Q')<0$ as well as a new element $\{u'_v\}_{v
  \in v_{int}(\mathcal{T}')} \in \mathcal{P}_{(\mathcal{T}', Q')}$
thus arriving at a contradiction.

There are three different cases to consider:
\begin{enumerate}[i.]
  \item $v_0$ and $v_1$ are independent, in the sense that they are on
   different branches of the tree.
  \item $v_0$ is above $v_1$ in the tree, in the sense that by
   following the tree starting from $v_0$ we reach $v_1$.
  \item $v_1$ is above $v_0$ in the tree.
\end{enumerate}
In the first two cases we obtain the new tree $\mathcal{T}'$ by simply
taking the branch in the tree above $v_0$ but containing $v_0$ and
pasting it in $\mathcal{T}$ with $v_0$ in the place of $v_1$.  Thus in
the tree $\mathcal{T}'$ the vertex $v_1$ has disappeared and has been
replaced with $v_{0}$. To avoid confusion we denote this vertex in
$\mathcal{T}'$ by $\hat{v}_{1}$. The corresponding pearly element
$\{u'_v\}_{v \in v_{int}(\mathcal{T}')}$ will satisfy $u'_v = u_v$ for
every $v \neq \hat{v}_1$ and $u'_{\hat{v}_1} = u_{v_0}$.  A similar
construction can be performed in the third case. Here is a more
precise description of this operation in each of the cases i-iii.

In case i we first remove from $\mathcal{T}$ the branch $B_{v_0}$ of
the tree lying above $v_0$ (and including $v_0$). Then we also remove
from $\mathcal{T}$ the path going from $v_0$ to the branch point below
$v_0$ which is closest to $v_0$. Denote the remaining tree by
$\mathcal{T}_0$. We define $\mathcal{T}'$ by gluing $B_{v_0}$ to
$\mathcal{T}_0$ identifying $v_0$ with $v_1$. This new vertex will be
now denoted by $\hat{v}_1$. We label $\hat{v}_1$ by the homology class
of $v_0$ and we define $Q'$ at $\hat{v}_1$ using the marked points of
both $v_0$ and $v_1$ except of the exit marked point of $v$ which
becomes irrelevant now and is hence dropped. See
figure~\ref{f:tree-oper-1} for an example.
\begin{figure}[htbp]
   \psfig{file=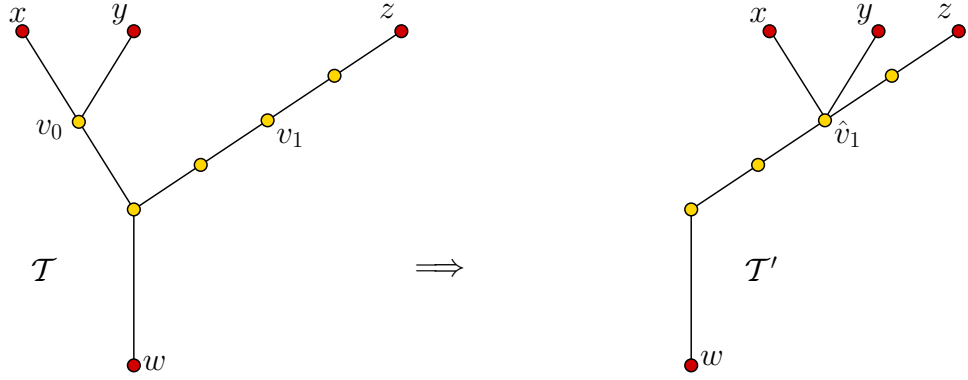, width=0.8 \linewidth}
   \caption{Passing from $\mathcal{T}$ to $\mathcal{T}'$ - case i.}
   \label{f:tree-oper-1}
\end{figure}

In case ii, if there is a branch point $\tilde{v}_{0,1}$ between $v_0$
and $v_1$ we define $\mathcal{T}'$ as follows. We delete from
$\mathcal{T}$ the branch $B_{v_0}$ as in case i above. We also delete
from $\mathcal{T}$ the path between $v_0$ and $\tilde{v}_{0,1}$ and
denote the remaining tree by $\mathcal{T}_0$. We define $\mathcal{T}'$
as in case i by gluing $B_{v_0}$ to $\mathcal{T}_0$ identifying $v_0$
with $v_1$, calling the this new vertex $\hat{v}_1$. As in case i
above, we label $\hat{v}_1$ by the homology class of $v_0$ and define
$Q'$ using the marked points of both $v_0$ and $v_1$, excluding the
exit marked point of $v_0$. See figure~\ref{f:tree-oper-2} for an
example.
\begin{figure}[htbp]
   \psfig{file=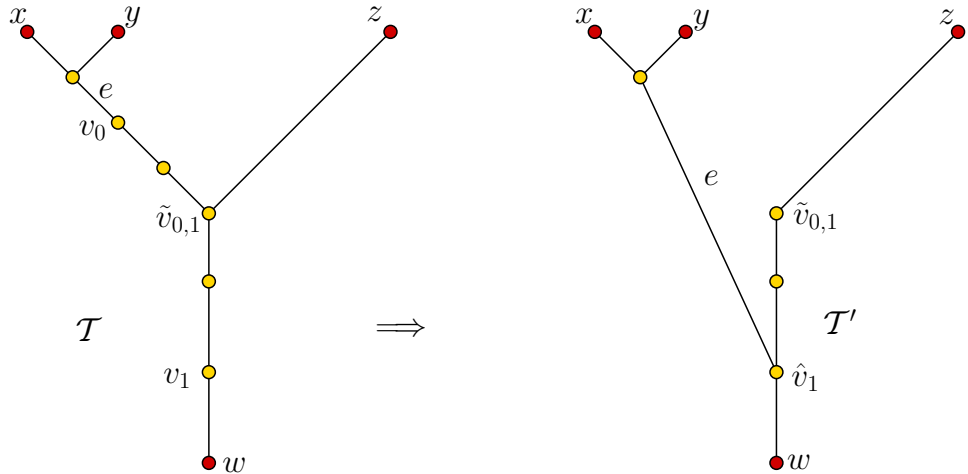, width=0.8 \linewidth}
   \caption{Passing from $\mathcal{T}$ to $\mathcal{T}'$ - case ii.}
   \label{f:tree-oper-2}
\end{figure}
To conclude case ii we need to describe $\mathcal{T}'$ in case there
is no branch point between $v_0$ and $v_1$. In that case, we just
define $\mathcal{T}'$ by removing the path between $v_0$ and $v_1$ and
identifying $v_0$ with $v_1$. This new vertex $\hat{v}_1$ is labeled
by the label of $v_0$ and the marked points now are inherited from
$v_0$ and $v_1$ except of the exiting marked point of $v_0$ and the
corresponding entering marked point of $v_1$ which are now dropped.

Suppose we are now in case iii, i.e. $v_0$ is lower than $v_1$ in the
tree. This case is dealt with similarly to ii. In this case, the tree
$\mathcal{T}'$ is obtained as before but with the roles of the
vertices $v_0$ and $v_1$ reversed: the branch above $v_1$ and
containing $v_1$ is grafted to the tree in the place of $v_0$ and the
branch leaving from $v_1$ and reaching the first branch point
separating $v_1$ and $v_0$ (or the portion in the tree between $v_1$
and $v_0$ if no such branch point exists) is omitted. The new vertex
(corresponding to $v_0$ and $v_1$ is now called $\hat{v}_0$. Again the
$J$-curves associated to the vertices of $\mathcal{T}'$ are the same
as the corresponding curves associated to the vertices of
$\mathcal{T}$ except that $u_{\hat{v}_0}=u_{v_0}$.

There is yet another point at which care should be taken (in all cases
i-iii). It may happen that some of the relevant marked points of $v_0$
and of $v_1$ coincide (again, we disregard those marked points that
are dropped as above), and in this case the description given above
for $(\mathcal{T}', Q')$ is incomplete. If such a coincidence of
marked points occurs we need to insert some additional vertices,
corresponding to constant $J$-curves, carrying distinguished marked
points as well as connecting edges. This modification is
straightforward and we will not go into more detail about it.

It now easily follows that the resulting tree $\mathcal{T}'$ has a
strictly lower Maslov index than $\mathcal{T}$. The dimension
verification is also immediate except if $v_1$ carries some internal
marked points. If there is a single such marked point and
$\delta(\mathcal{T},Q)=0$, then we take $Q'=\{Q_{s}\}_{s\in
  \textnormal{Int\,} D}$ (because the internal marked point may now
take any value inside $D$) and we still have, as in the reduction to
simple disks, $\delta(\mathcal{T}',Q')<0$. If $v_1$ carries two
internal incidence points or if it carries one but
$\delta(\mathcal{T},Q)=1$, then, by the particular choice of the
moduli spaces in \S\ref{subsubsec:defin_alg_str}, $v_1$ corresponds to
a perturbed $J$-holomorphic disk in contradiction to our starting
assumption.

\

The case $n\leq 2$ is easily reduced to a number of combinatorial
problems. The assumptions $n\leq 2$, $N_{L}\geq 2$ and
$\delta(\mathcal{T},Q)\leq 1$ imply that the total number of
$J$-curves is relatively small (for example, there are at most two for
the verifications involving the pearl complex) so that combinatorial
arguments apply in many of these cases. In fact, it is not hard to use
directly Theorem~\ref{T:decomposition} to deal with trees
$\mathcal{T}$ in which the total Maslov index of the vertices
represented by $J$-disks is at most $6$ (even if there might be
additional vertices corresponding to perturbed disks). This covers all
the verifications involved with the pearl complex and its invariance,
the product and its associativity and invariance, the definition of
the module structure and its invariance.  This also works for the
proof of the relation $(a\ast b)\circledast x=a\circledast
(b\circledast x), \ a,b\in QH(M;\mathcal{R}),\ x\in QH(L;\mathcal{R})$
for $N_{L}\geq 3$. Finally, the remaining case can also be dealt with
combinatorially.

\subsection{Compactness and the final step} The transversality
arguments in the previous section show that our moduli spaces are
manifolds.  We will start here by describing the structure of the
compactification of these moduli spaces. For this, besides the
transversality results described before, we only need the Gromov
compactness theorem (for disks see Frauenfelder \cite{Fr:msc}).  We
first remark that given an $\mathcal{F}$-colored tree with marked
points $(\mathcal{T},Q)$ and an associated moduli space
$\mathcal{P}_{\mathcal{T},Q}$ - constructed as described in
\S\ref{subsec:moduli_def}- there is a natural Gromov type topology on
$\mathcal{P}_{T}$ as well as a natural compactification
$\bar{\mathcal{P}}_{T}$.

In short, the elements of $\bar{P}_{\mathcal{T}}\backslash
P_{\mathcal{T}}$ are modeled on the tree $\mathcal{T}$ and the only
modification with respect to our definition in
\S\ref{subsec:moduli_def} concerns the points a, b, c, at the end of
that section.  Specifically, the product $\Pi(\mathcal{T})$ is
replaced by its compactification
$$\bar{\Pi}(\mathcal{T}) =
\prod_{v\in v_{int}(\mathcal{T})}\ \bar{\mathcal{M}}([v], J)\ , $$
where $\bar{\mathcal{M}}([v],J)$ is the Gromov compactification of
$\mathcal{M}([v],J)$ so that, for each internal vertex $v$, the
associated geometric object $u_{v}\in \bar{\mathcal{M}}([v],J)$.  The
points a,b, and c, are then replaced by the following variants:
\begin{itemize}
  \item[a'.] For each internal edge $e\in e(\mathcal{T})$, the points
   $u_{e_{-}}(q_{-}(e))$ and $u_{e_{+}}(q_{+}(e))$ are related by a
   possibly broken flow line (possibly of $0$-length) of
   $\gamma^{f_{e}}$.
  \item[b'.] For an entry edge, $e$, let $x_{i}$ be the critical point
   labeling the vertex $e_{-}$.  The point $x_{i}$ is related to the
   point $u_{e_{+}}(q_{+}(e))$ by a possibly broken flow line
   (possibly of $0$-length) of $\gamma^{f_{e}}$
  \item[c'.] For the exit edge $e$ so that the vertex $e_{+}$ is
   labeled by the critical point $y$ of $f_{e}$, the point
   $u_{e_{-}}(q_{-}(e))$ is related to $y$ by a possibly broken flow
   line (possibly of $0$-length) of $\gamma^{f_{e}}$.
\end{itemize}
A remark is needed concerning the marked point selectors. The various
marked points which correspond to the same vertex in the
configurations described above are again required to be distinct and
are given in the same way as that described in
\S\ref{subsubsec:defin_alg_str}. In particular, each time two (or
more) such incidence points ``merge'' a ghost curve needs to be
introduced.

\

From now on we will only focus on $\mathcal{F}$-colored trees that
are of virtual dimension $\delta(\mathcal{T})\leq 1$ and, in view of
our transversality results, we may assume that
(\ref{eq:regular_moduli}) is satisfied so that
$\mathcal{P}^{\ast}_{\mathcal{T}}=\mathcal{P}_{\mathcal{T}}$ (the role
of the marked point selector is less crucial in this part and we will
omit it from the notation).  Under this hypothesis, the first key
remark is that each element in
$\bar{u}\in\bar{\mathcal{P}}_{\mathcal{T}}\backslash
\mathcal{P}_{\mathcal{T}}$ contains exactly one configuration among
the three types below:
\begin{itemize}
  \item[i.] a flow line broken exactly once.
  \item[ii.] a vertex $v_{\bar{u}}\in v(\mathcal{T})$ corresponds to a
   cusp curve with precisely two components (which can be ghosts).
  \item[iii.]  a flow line of length $0$.
\end{itemize}
The reason for this is that if more than a single such configuration
occurs we can extract from $\bar{u}$ an object
$u'\in\mathcal{P}_{\mathcal{T}'}$ with $\delta(\mathcal{T}')< 0$ which
is impossible because such a moduli space of negative virtual
dimension is regular and thus void.

The second important remark is that the condition $N_{L}\geq 2$
insures that no ``lateral'' bubbling is possible. More explicitly,
this means that if the element $\bar{u}$ verifies condition ii, then
the incidence points associated to the vertex $v_{u}$ are distributed
among the two components of the cusp curve so that not all of them are
in just one component. This happens because, otherwise, the component
which does not carry any of these incidence points can be omitted thus
giving rise to an object $u'$ which belongs to a moduli space of
virtual dimension lower by at least $N_{L}$ than
$\delta(\mathcal{P}_{\mathcal{T}})$ which again is not possible.

\

The last step is to use the description of the compactification given
above to verify the various relations required to establish the
theorem (as described at the points a, b, c, d, e, f, g, and i in \S
\ref{subsubsec:defin_alg_str}). The technical ingredient for this
verification is gluing.  Gluing procedures have already appeared for
example in \cite{FO3} and for full details we refer again to
\cite{Bi-Co:qrel-long}. This gluing procedure insures that, when
$\delta(\mathcal{T})=1$, each element $\bar{u}$ which is modeled on
the tree $\mathcal{T}$ and which verifies exactly one of the
properties i, ii, iii above actually belongs to
$\bar{\mathcal{P}}_{\mathcal{T}}\backslash \mathcal{P}_{\mathcal{T}}$
and appears as a boundary element of
$\bar{\mathcal{P}}_{\mathcal{T}}$.

Finally, the verification of the relations mentioned involves in an
essential way the fact that our algebraic operations are defined by
using $\Theta$-admissible trees. The role of the exit rule $\Theta$
(as described at point v in \S\ref{subsec:moduli_def}) is as follows:
for a tree $\mathcal{T}$ with $\delta(\mathcal{T})=1$, if $\bar{u}\in
\partial \bar{\mathcal{P}}_{\mathcal{T}}$ verifies ii above, then, due
to the fact that ``lateral'' bubbling is not possible, $\bar{u}$ is
also an element of $\partial\bar{\mathcal{P}}_{\mathcal{T}'}$ where
$\mathcal{T}'$ is the tree obtained from $\mathcal{T}$ by replacing
the vertex $v_{\bar{u}}$ by two vertices (corresponding to the two
components of the cusp curve associated to $v_{\bar{u}}$) related by
an edge of length $0$ whose type is {\em uniquely} determined by the
exit rule.  Moreover, by gluing, each $\bar{u}\in
\partial \bar{\mathcal{P}}_{\mathcal{T}}$ verifying iii is an element
in the boundary of a moduli space modeled on a tree obtained from
$\mathcal{T}$ by replacing the two vertices related by the edge of
$0$-length by a single vertex.  Denote by
$\partial_{1}(\bar{\mathcal{P}}_{\mathcal{T}})$ the parts of the
boundary of $\bar{\mathcal{P}}_{\mathcal{T}}$ formed by the points
verifying i.  and fix the symbol $(x_{1},x_{2},\ldots x_{l} : y)$ of
$\mathcal{T}$. When summing over all trees (of virtual dimension $1$)
and of fixed symbol we see that the configurations of types ii and iii
cancel (as we will see below, due to the presence of perturbations in
some of our moduli spaces, an additional argument is sometimes needed
at this point) and so we deduce:
$$\sum_ {\mathrm{ symb\/} (\mathcal{T'}) =
  (x_{1},\ldots x_{l}: y)}\#\
\partial_{1}(\bar{\mathcal{P}}_{\mathcal{T}'})=0~.~$$

The relations that need to be justified are then obtained by
identifying each element $\bar{u}\in
\partial_{1}(\mathcal{P}_{\mathcal{T}})$ of type i with precisely one
element of the product
$\mathcal{P}_{\mathcal{T}_{1}}\times\mathcal{P}_{\mathcal{T}_{2}}$
where $\mathcal{T}_{1}$ and $\mathcal{T}_{2}$ are the two trees
obtained as follows: first, introduce in $\mathcal{T}$ an additional
vertex $\tilde{v}$ on the edge which corresponds to the broken flow
line and then let $\mathcal{T}_{1}$, $\mathcal{T}_{2}$ be the two
(sub)-trees which have in common only the vertex $\tilde{v}$ and whose
union gives $\mathcal{T}$.

Clearly, in what concerns the comparison with Floer homology - point g
in \S\ref{subsubsec:defin_alg_str}~- the argument above needs to be
modified slightly. The required modification is however obvious and we
will not discuss it further.  However, a more substantial addition to
the argument is needed in the case of the perturbations of type
(\ref{Eq:delbar-pert}) which were introduced in the moduli spaces
needed to verify that the module action is a chain map and to check
some of the related associativity - as at points c and f in
\S\ref{subsubsec:defin_alg_str}.  This happens in precisely two cases.
The first - concerning the fact that the module operation is a chain
map - has to do with the identification of an element
$\bar{u}\in\partial_{1} (\mathcal{P}_{\mathcal{T}})$ with an element
of the product $\mathcal{P}_{\mathcal{T}_{1}}\times
\mathcal{P}_{\mathcal{T}_{2}}$.  The problem here is that, by the
definition of the relevant $1$-dimensional spaces at the end of c in
\S\ref{subsubsec:defin_alg_str} we see that such a $\bar{u}$ can be
viewed as as product of two configurations modeled on two trees
$\mathcal{T}_{1}$ and $\mathcal{T}_{2}$ but one of these
configurations contains a vertex of valence three which corresponds to
a perturbed curve. At the same time both
$\mathcal{P}_{\mathcal{T}_{1}}$ and $\mathcal{P}_{\mathcal{T}_{2}}$
are moduli spaces of virtual dimension $0$ and so, following the
definition of the module action and the pearl differential, they do
not contain perturbed curves.

The second case concerns the verification of the associativity type
relations involving the module action and it arises if the initial
curve leading by bubbling to an element $\bar{u}\in
\partial\bar{\mathcal{P}}_{\mathcal{T}}$ is in fact a perturbed curve
(verifying (\ref{Eq:delbar-pert})), and carrying $4$ marked points,
two of which are interior points and each of the two components of the
resulting cusp curve carries one interior point. The problem in this
case is that just one of the resulting cusp curves verifies the
perturbed equation and the other one is a usual $J$-holomorphic curve
(the definition of the marked point selector in this case implies that
the lower component in the tree is the perturbed one) and this
configuration $\bar{u}$ does not actually appear as an object of type
iii.  The reason is that in the relevant moduli spaces all the
vertices of valence three correspond to perturbed curves and thus, the
configurations of type iii in this case contain a cusp curve with both
components being perturbed.

The solution to these two issues turns out to be simple: a further
analysis of the moduli spaces involved in both cases shows that if the
relevant perturbations are small enough - which can be obviously
assumed - then the two types of configurations which are compared in
each case are in bijection. This is proved by a cobordism argument
which is possible because both the perturbed and the un-perturbed
configurations are regular - see again \cite{Bi-Co:qrel-long} for more
details.

\section{Additional tools}\label{sec:add_tools}

In this section we introduce a number of additional tools which will
be useful for the proof of the main theorems and in related
computations.

\subsection{Minimal pearls}\label{subsec:minimal} 
As before, we assume here that $L\subset (M,\omega)$ is monotone.
Suppose that for some almost complex structure $J$ and Morse function
$f:L\to \R$ the pearl complex $\mathcal{C}^{+}(L;f,\rho_{L}, J)$ is
defined. It is clear that if $f$ is a perfect Morse function, in the
sense that the differential of its Morse complex is trivial, then the
pearl complex is most efficient for computations. Clearly, not all
manifolds admit perfect Morse functions. However, we will see that,
algebraically, we can always reduce the pearl complex to such a
minimal form (a similar construction in the cluster set-up has been
sketched in \cite{Cor-La:Cluster-1}).

\

It is crucial to work here over a ``positive'' coefficient ring. We
will use in this section $\La^{+}=\Z_{2}[t]$.  In the algebraic
considerations below the fact hat $\deg(t)\leq -2$ plays an important
role.

Let $G$ be a finite dimensional graded $\Z_2$-vector space and let
$\mathcal{D}=(G\otimes \La^{+}, d)$ be a chain complex with a
differential $d$ which is $\La^{+}$-linear - in other words
$\mathcal{D}$ is a $\La^{+}$-chain complex.  For an element $x\in G$
let $d(x)=d_{0}(x)+d_{1}(x)t$ with $d_{0}(x)\in G$. In other words
$d_{0}$ is obtained from $d(x)$ by treating $t$ as a polynomial
variable and putting $t=0$. Clearly $d_{0}:G\to G$, $d_{0}^{2}=0$.
Similarly, for a chain morphism $\xi$ we denote by $\xi_{0}$ the
$d_{0}$-chain morphism obtained by making $t=0$.  Let $\mathcal{H}$ be
the homology of the complex $(G,d_{0})$. We refer to this homology as
$d_{0}$-homology in contrast to $d$-homology which is denoted by
$H_{\ast}(\mathcal{D})$.

\begin{prop} \label{prop:min_model} With the notation above there
   exists a chain complex
   $$\mathcal{D}_{min}=
   (\mathcal{H}\otimes \La^{+},\delta), \ {\rm with}\ \delta_{0}=0$$
   and chain maps $\phi:\mathcal{D}\to \mathcal{D}_{min}$,
   $\psi:\mathcal{D}_{min}\to \mathcal{D}$ so that:
   $\phi\circ\psi=id$, $\phi_{0}$ and $\psi_{0}$ induce isomorphisms
   in $d_{0}$-homology and $\phi$ and $\psi$ induce isomorphisms in
   $d$-homology.  Moreover, the properties above characterize
   $\mathcal{D}_{min}$ up to (a generally non-canonical)
   isomorphism.
\end{prop}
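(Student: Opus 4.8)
The plan is to prove this purely homologically, via the \emph{homological perturbation lemma} applied to the $t$-adic perturbation of the classical complex $(G,d_0)$; the grading hypothesis $\deg t\le -2$ enters precisely in guaranteeing that the perturbation series are \emph{finite} sums, so that $\mathcal{D}_{min}$ genuinely lives over $\Lambda^+=\mathbb{Z}_2[t]$ (and not merely over a $t$-adic completion). Since $\Lambda^+$ is concentrated in non-positive degrees, the $\Lambda^+$-linear differential decomposes as a finite sum $d=d_0+d_1t+d_2t^2+\cdots$ with $d_j\colon G_*\to G_{*-1+jN_L}$, where $N_L=-\deg t\ge 2$. Working over the field $\mathbb{Z}_2$, I would first choose a splitting $G=B\oplus\mathcal{H}'\oplus C$ with $\ker d_0=B\oplus\mathcal{H}'$, $B=\operatorname{im}d_0$, $\mathcal{H}'\cong\mathcal{H}$, producing a degree-preserving projection $\phi_0\colon G\twoheadrightarrow\mathcal{H}$, an inclusion $\psi_0\colon\mathcal{H}\hookrightarrow G$, and a degree $+1$ homotopy $h\colon G\to G$ with $\phi_0\psi_0=\mathrm{id}$, $\psi_0\phi_0-\mathrm{id}=d_0h+hd_0$, and the annihilation conditions $h^2=0$, $h\psi_0=0$, $\phi_0h=0$ (arranged by the standard modification of $h$). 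Thus $(G,d_0)$ strongly deformation retracts onto $(\mathcal{H},0)$.

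Next I would treat $\delta d:=d-d_0=\sum_{j\ge 1}d_jt^j$ as a perturbation of $d_0$. The crucial point is that $h\,\delta d$ and $\delta d\,h$ are \emph{nilpotent}: composing any summand $d_jt^j$ ($j\ge 1$) with $h$ raises the internal ($G$-component) degree by exactly $jN_L\ge 2$, and $G$ is bounded in degree, so iterating raises it out of range. Hence $(\mathrm{id}-h\,\delta d)$ and $(\mathrm{id}-\delta d\,h)$ are invertible with \emph{polynomial} Neumann inverses $B=\sum_k(h\,\delta d)^k$ and $A=\sum_k(\delta d\,h)^k$ — this is exactly where $\deg t\le -2$ (more precisely $N_L\ge 1$, together with $\dim_{\mathbb{Z}_2}G<\infty$) is used. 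The homological perturbation lemma then yields strong deformation retract data between $(G\otimes\Lambda^+,d)$ and $(\mathcal{H}\otimes\Lambda^+,\delta)$, with
$$\delta=\phi_0\,\delta d\,B\,\psi_0,\qquad \phi=\phi_0\,A,\qquad \psi=B\,\psi_0,\qquad H=h\,A,$$
all $\Lambda^+$-linear and degree-preserving, satisfying $\phi\circ\psi=\mathrm{id}$ and $\psi\circ\phi-\mathrm{id}=dH+Hd$, and with $\delta^2=0$. Setting $t=0$ recovers $\phi_0,\psi_0$, which induce isomorphisms in $d_0$-homology by construction; $\phi,\psi$ induce mutually inverse isomorphisms in $d$-homology by the SDR relations; and since $\delta d=O(t)$ while $A,B=\mathrm{id}+O(t)$ we get $\delta_0=0$. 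This establishes existence.

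For uniqueness, suppose $(\mathcal{D}_{min},\phi,\psi)$ and $(\mathcal{D}'_{min},\phi',\psi')$ both satisfy the conclusion, and set $g:=\phi'\circ\psi\colon\mathcal{D}_{min}\to\mathcal{D}'_{min}$, a $\Lambda^+$-linear chain map. Its classical part $g_0=\phi'_0\circ\psi_0$ factors as $(\mathcal{H},0)\xrightarrow{\psi_0}(G,d_0)\xrightarrow{\phi'_0}(\mathcal{H}',0)$; since $\mathcal{D}_{min}$ and $\mathcal{D}'_{min}$ have vanishing classical differential, their $d_0$-homologies are literally $\mathcal{H}$ and $\mathcal{H}'$, so $g_0$ equals the map it induces in $d_0$-homology, namely the composite of the two isomorphisms coming from $\psi_0$ and $\phi'_0$ — hence $g_0$ is an isomorphism. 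Finally, filtering $\Lambda^+$ by powers of $t$, each homogeneous component of $\mathcal{D}_{min}$ and $\mathcal{D}'_{min}$ is finite-dimensional (as $G$ is finite-dimensional and each $(\Lambda^+)_d$ is at most one-dimensional), and the associated graded of $g$ is $g_0\otimes\mathrm{id}$, an isomorphism; therefore $g$ is an isomorphism of $\Lambda^+$-complexes. It depends on the chosen splitting and homotopy data, hence is non-canonical in general.

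\textbf{Main obstacle.} The only genuine subtlety is the finiteness of the Neumann series $A,B$: over an arbitrary $\widetilde{\Lambda}^+$-algebra the perturbation lemma produces a minimal model only after $t$-adic completion, and it is the combination of the grading condition $\deg t\le -2$ with $\dim_{\mathbb{Z}_2}G<\infty$, forcing $h\,\delta d$ to be nilpotent, that keeps $\delta$, $\phi$, $\psi$ polynomial in $t$. Everything else — the splitting, the side conditions on $h$, the SDR identities, and the filtered-isomorphism argument for uniqueness — is routine homological bookkeeping.
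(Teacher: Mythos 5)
Your argument is correct, and it reaches the statement by a genuinely different route than the one taken in the paper. You package the construction as an application of the homological perturbation lemma: fix a contraction of $(G,d_{0})$ onto $\mathcal{H}$ with the usual side conditions, observe that the perturbation $\delta d=\sum_{j\geq 1}d_{j}t^{j}$ composed with the homotopy strictly raises the internal $G$-degree (here is where $\deg t<0$ and $\dim_{\Z_{2}}G<\infty$ enter), so the Neumann series are finite, and then read off $\delta$, $\phi$, $\psi$ (and a homotopy $H$) from the standard transfer formulas; with the side conditions one indeed gets $\phi\circ\psi=id$ on the nose, $\delta_{0}=0$, and the required isomorphisms in $d_{0}$- and $d$-homology. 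The paper instead constructs $\phi$, $\psi$, $\delta$ by an explicit downward induction on the degree of the generators (using the basis $B_{X}, B_{Y}, B_{Y'}$, which is exactly your splitting $\mathcal{H}'\oplus B\oplus C$), producing the correction terms for $\psi$ from the acyclicity of $\ker\phi$; your nilpotency-of-$h\,\delta d$ observation plays the role that the degree estimates $|d_{1}(y')|\geq |y'|+1$ play there. Your uniqueness argument is essentially the paper's: set $g=\phi'\circ\psi$, note $g_{0}$ is an isomorphism because both minimal complexes have vanishing classical differential, and conclude via the $t$-degree filtration (this is Remark~\ref{rem:unique}~b). What each approach buys: yours gives closed formulas and an explicit SDR, which makes existence and the homotopy data immediate from standard machinery; the paper's hand-rolled induction is elementary and, more to the point, the specific inductive formulas for $\phi$ and $\psi$ (e.g. $\phi(y')=0$, $\phi(y)=\phi(y-dy')$, $\psi_{0}(\tilde{x})=x$) are exploited verbatim later, in Lemma~\ref{lem:ex_disks2} and in \S\ref{sec:proof_main} (cf. \S\ref{sbsb:further-min}), so if one adopts the perturbation-lemma construction one should check that the transferred maps have the analogous properties (they do, since the contraction kills $B_{Y'}$ and the series corrections are $O(t)$), or else axiomatize what is actually needed of the structure maps.
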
 
Concerning the the uniqueness part of the statement  see also~\S\ref{sbsb:further-min}.

\

Here is an important consequence of Proposition~\ref{prop:min_model}:
\begin{cor}\label{cor:min_pearls}
   There exists a complex
   $\mathcal{C}^{+}_{min}(L)=(H_{\ast}(L;\Z_{2})\otimes
   \La^{+},\delta)$, with $\delta_{0}=0$ and so that, for any
   $(L,f,\rho,J)$ such that $\mathcal{C}^{+}(L;f,\rho,J)$ is defined,
   there are chain morphisms $\phi:\mathcal{C}^{+}(L;f,\rho,J)\to
   \mathcal{C}^{+}_{min}(L)$ and $\psi : \mathcal{C}^{+}_{min}(L)\to
   \mathcal{C}^{+}(L;f,\rho,J)$ which both induce isomorphisms in
   quantum homology as well as in Morse homology and verify
   $\phi\circ\psi=id$. The complex $\mathcal{C}^{+}_{min}(L)$ with
   these properties is unique up to (a generally non-canonical)
   isomorphism.
\end{cor}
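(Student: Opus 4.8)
The plan is to deduce the corollary from Proposition \ref{prop:min_model} applied to the pearl complex, and then to upgrade the resulting complex from a ``$(f,\rho,J)$-dependent'' object to a genuine invariant by transporting it along the canonical comparison isomorphisms. First I would apply Proposition \ref{prop:min_model} directly: take $\mathcal{D}=\mathcal{C}^{+}(L;f,\rho,J)=(\Z_2\langle\Crit(f)\rangle\otimes\La^{+},d)$, so that $G=\Z_2\langle\Crit(f)\rangle$. By Theorem~\ref{thm:alg_main} the differential $d$ is positive with classical component $d^{\textnormal{Morse}}\otimes 1$; hence the operator $d_0$ obtained by setting $t=0$ is precisely the Morse differential, and its homology $\mathcal{H}$ is $H_*(L;\Z_2)$. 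Proposition \ref{prop:min_model} then yields a complex $\mathcal{D}_{min}=(H_*(L;\Z_2)\otimes\La^{+},\delta)$ with $\delta_0=0$ together with maps $\phi,\psi$ with $\phi\circ\psi=\mathrm{id}$, inducing isomorphisms in $d_0$-homology (i.e.\ Morse homology) and in $d$-homology (i.e.\ $Q^+H(L)$). This already gives all the asserted properties \emph{for a fixed triple} $(f,\rho,J)$; denote this complex $\mathcal{C}^{+}_{min}(L;f,\rho,J)$.

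The remaining task is to see that $\mathcal{C}^{+}_{min}(L)$ can be chosen independently of $(f,\rho,J)$ up to non-canonical isomorphism. For this I would use the uniqueness clause of Proposition \ref{prop:min_model} together with the canonical comparison morphisms of Theorem~\ref{thm:alg_main}(i). Given two triples $(f_0,\rho_0,J_0)$ and $(f_1,\rho_1,J_1)$, the invariance morphism $\Phi:\mathcal{C}^{+}(L;f_0,\rho_0,J_0)\to\mathcal{C}^{+}(L;f_1,\rho_1,J_1)$ from \S\ref{subsubsec:defin_alg_str}e (with $\mathcal{R}=\La^{+}$) induces an isomorphism in $d$-homology; moreover its classical part $\Phi_0$ is the usual Morse continuation map, hence an isomorphism in $d_0$-homology. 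Composing $\phi^{(1)}\circ\Phi\circ\psi^{(0)}:\mathcal{C}^{+}_{min}(L;f_0,\rho_0,J_0)\to\mathcal{C}^{+}_{min}(L;f_1,\rho_1,J_1)$ gives a chain map between two minimal complexes which is an isomorphism on $d$-homology, and whose degree-zero part is an isomorphism on $d_0$-homology. Since both complexes are minimal ($\delta_0=0$), on these complexes ``$d_0$-homology'' is just the underlying space $H_*(L;\Z_2)$ and ``$d$-homology'' is $H_*(L;\Z_2)\otimes\La^{+}$-flavoured; a chain isomorphism (or, invoking the characterization part of Proposition \ref{prop:min_model} directly, the fact that any minimal model of a given $\La^{+}$-complex is unique up to isomorphism) then shows the two are isomorphic. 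Fixing once and for all one such $\mathcal{C}^{+}_{min}(L)$ and, for each triple, composing the maps above with the resulting isomorphism, produces the desired $\phi:\mathcal{C}^{+}(L;f,\rho,J)\to\mathcal{C}^{+}_{min}(L)$ and $\psi:\mathcal{C}^{+}_{min}(L)\to\mathcal{C}^{+}(L;f,\rho,J)$ with all stated properties.

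The main obstacle I anticipate is not in the homological algebra but in verifying that the hypotheses of Proposition \ref{prop:min_model} are actually met, i.e.\ that $d_0$ equals the Morse differential and that $d_0$-homology is $H_*(L;\Z_2)$ --- this relies on the positivity of $d^{\La^{+}}$ and the condition $\deg t\le -2$ (equivalently $N_L\ge 2$), which is why the argument is run over $\La^{+}$ rather than $\La$. A secondary delicate point is the coherence of the identifications across all triples: one should check that the composed isomorphisms $\phi^{(1)}\circ\Phi\circ\psi^{(0)}$ are compatible with the cocycle condition satisfied by the comparison morphisms (which in turn follows from the ``Morse homotopy of Morse homotopies'' argument of \S\ref{subsubsec:defin_alg_str}e and the functoriality in Proposition \ref{prop:min_model}), so that ``$\mathcal{C}^{+}_{min}(L)$'' is well defined up to a coherent system of (non-canonical) isomorphisms rather than merely pairwise isomorphic. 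Once this bookkeeping is done, the statement follows. Full details are deferred to \cite{Bi-Co:qrel-long}.
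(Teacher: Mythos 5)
Your proposal is correct and follows essentially the same route as the paper: apply Proposition \ref{prop:min_model} to the pearl complex (using that the classical part of the pearl differential is the Morse differential, so the $d_0$-homology is $H_*(L;\Z_2)$), then transport the structure maps to an arbitrary triple via the canonical comparison morphisms and correct by the chain isomorphism supplied by Lemma \ref{lem:isomorphisms_min} and Remark \ref{rem:unique} so that $\phi\circ\psi=id$, with uniqueness coming from the uniqueness clause of Proposition \ref{prop:min_model}. The only remark is that the cocycle-coherence issue you flag is not actually needed: the corollary claims uniqueness only up to a non-canonical isomorphism, so pairwise identifications suffice.
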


We call the complex provided by this corollary the {\em minimal pearl
  complex} and the maps $\phi$, $\psi$ the {\em structure maps
  associated} to $\mathcal{C}^{+}(L;f,\rho,J)$ (or shorter, to $f$).
This terminology originates in rational homotopy where a somewhat
similar notion is central. There is a slight abuse in this notation
as, while any two complexes as provided by the corollary are
isomorphic this isomorphism is not canonical. Obviously, in case a
perfect Morse function exists on $L$ any pearl complex associated to
such a function is already minimal.  As mentioned before, in the
arguments below it is essential that the differential and morphisms
are defined over $\Lambda^{+}$ (but the same constructions also work
over $\widetilde{\Lambda}^{+}$; see \S\ref{subsec:coeff} for the
various Novikov rings available).  In case we need to work over
$\Lambda$ we define
$\mathcal{C}_{min}(L)=\mathcal{C}^{+}_{min}(L)\otimes_{\Lambda^{+}}\Lambda$.

\begin{rem}\label{rem:product_min}
   a. An important consequence of the existence of the chain morphisms
   $\phi$ and $\psi$ is that all the algebraic structures described
   before (product, module structure etc.) can be transported and
   computed on the minimal complex. For example, the product is the
   composition:
   \begin{equation}\label{eq:prod_min}
      \mathcal{C}^{+}_{min}(L)\otimes
      \mathcal{C}^{+}_{min}(L)\stackrel{\psi_{1}\otimes\psi_{2}}
      {\longrightarrow}\mathcal{C}^{+}(L;f_{1},\rho,J)\otimes
      \mathcal{C}^{+}(L;f_{2},\rho,J)\stackrel{\ast}{\to}
      \mathcal{C}^{+}(L;f_{3},\rho,J)\stackrel{\phi_{3}}{\to}
      \mathcal{C}^{+}_{min}(L)
   \end{equation}
   where $\psi_{i}, \phi_{i}$ are the structural maps given by
   Corollary \ref{cor:min_pearls} and which correspond to the
   complexes associated to $f_{i}$. There is a cycle in
   $\mathcal{C}_{min}^{+}(L)$ equal to $\phi(P)$ where $P$ is the
   maximum of any Morse function $f$ so that
   $\mathcal{C}^{+}(L;f,\rho,J)$ is defined and so that $f$ has a
   single maximum; $\psi,\phi$ are the associated structural maps. By
   degree reasons in dimension $n=\dim(L)$ $\phi=\phi_{0}$ and
   $\psi=\psi_{0}$ and so this cycle is independent on the choice of
   $f$ and of that of the associated structural maps and it coincides
   with $[L] \otimes 1$ where $[L]$ is the fundamental class of $L$.
   By a slight abuse of notation we will continue to denote by $[L]$
   both the cycle $\phi(P)$ as well as its quantum homology class.  In
   homology, the product defined by (\ref{eq:prod_min}) has as unity
   the fundamental class $[L]$.  Moreover, with the simplified
   description of the quantum product given in (\ref{eq:simpl_q_prod})
   - where $f_{2}=f_{3}$ we obtain a product so that $[L]$ is the unity
   at the chain level.  It also follows from the fact that
   $\phi_{0},\psi_{0}$ induce isomorphisms in Morse homology that the
   ``minimal'' product described above is a deformation of the
   intersection product.

   b. A consequence of point a is that $HF(L)\cong QH(L)=0$ iff there
   is some $x \in \mathcal{C}^+_{min}(L) = H(L; \Z_{2})\otimes \La^+$
   so that $\delta x=[L]t^{k}$. Indeed, suppose that $QH(L)=0$. Then,
   as for degree reasons $[L]$ is a cycle in $\mathcal{C}_{min}(L)$,
   we obtain that it has to be also a boundary. This means that there
   exists $a \in \mathcal{C}_{min}(L)$ so that $\delta a = [L]$.
   Multiplying $a$ by a large enough positive power $k$ of $t$ gives
   an element $x = a t^k$ which now lies in $\mathcal{C}^{+}_{min}(L)$
   and such that $\delta x=[L]t^{k}$. Conversely, if $\delta x = [L]
   t^k$ then $[L]$ is a boundary in $\mathcal{C}_{min}(L)$. On the
   other hand the cycle $[L] \in \mathcal{C}_{min}(L)$ represents the
   unity for the product on $H_*(\mathcal{C}_{min}(L)) \cong QH_*(L)$
   just mentioned at point a above. Thus the unity is $0$, hence
   $QH(L)=0$.

   c. It is also useful to note that there is an isomorphism
   $QH(L)\cong H(L;\Z_{2})\otimes \La$ iff the differential
   $\delta$ in $\mathcal{C}_{min}(L)$ is identically zero.
\end{rem}
We now proceed to the proof of the Proposition and of its Corollary.

\

{\em Proof of Proposition \ref{prop:min_model}.}  We start with a
useful algebraic property.
\begin{lem}\label{lem:isomorphisms_min} Let $\mathcal{D'}=(G'\otimes
   \La^{+},d')$ and $\mathcal{D}''=(G''\otimes\La^{+},d'')$ be two
   $\La^{+}$-chain complexes.  If a chain morphism $\xi:
   \mathcal{D}'\to\mathcal{D}''$ which is $\La^{+}$-linear is so that
   $\xi_{0}$ induces an isomorphism in $d_{0}$-homology, then $\xi$
   induces an isomorphism in $d$-homology.
\end{lem}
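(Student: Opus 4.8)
The plan is to deduce the statement from the acyclicity of the mapping cone of $\xi$, exploiting that we work over the positive ring $\Lambda^{+} = \mathbb{Z}_2[t]$ with $\deg t = -N_L < 0$.

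First I would form the mapping cone $C(\xi)$: as a $\Lambda^{+}$-module it is $(G' \oplus G'') \otimes \Lambda^{+}$ (with the usual degree shift on the $G'$-summand), and its differential $D$ is given by $D(g', g'') = (d' g', \, \xi(g') + d'' g'')$. This is again a finite rank free $\Lambda^{+}$-chain complex, and by the long exact sequence of the mapping cone, $\xi$ induces an isomorphism in $d$-homology if and only if $C(\xi)$ is $D$-acyclic. Setting $t = 0$ commutes with forming the cone, so $D_0$ is precisely the differential of the mapping cone of $\xi_0 \colon (G', d'_0) \to (G'', d''_0)$; since by hypothesis $\xi_0$ is an isomorphism in $d_0$-homology, that cone is acyclic. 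Writing $V = G' \oplus G''$ (shifted), the problem is thus reduced to the following statement: \emph{a free $\Lambda^{+}$-chain complex $\mathcal{E} = (V \otimes \Lambda^{+}, D)$ with $(V, D_0)$ acyclic is itself acyclic.}

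To prove this, I would fix a homogeneous $D$-cycle $z$ of some total degree $m$ and show it is a boundary. Because $\deg t < 0$ and $V$ is finite dimensional, the degree-$m$ part $\mathcal{E}_m = \bigoplus_{j \ge 0} V_{m + j N_L} \otimes t^j$ is finite dimensional over $\mathbb{Z}_2$; in particular the exponents of $t$ appearing in any element of $\mathcal{E}_m$ are bounded above. Write $z = \sum_{j \ge a} v_j\, t^j$ with $v_a \neq 0$. Since over $\Lambda^{+}$ the differential is automatically positive, the term of $Dz = 0$ of lowest $t$-degree is $D_0 v_a$, so $D_0 v_a = 0$; acyclicity of $(V, D_0)$ yields $w_a$ with $D_0 w_a = v_a$, and then $z - D(w_a t^a)$ is again a homogeneous $D$-cycle in $\mathcal{E}_m$ but with strictly larger minimal $t$-exponent. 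Since the maximal $t$-exponent stays bounded, iterating this finitely many times reduces $z$ to $0$, exhibiting $z$ as a $D$-boundary; hence $H_*(\mathcal{E}) = 0$.

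I expect the only real subtlety to be exactly the bookkeeping that makes this iteration terminate after finitely many steps rather than producing an infinite series: this uses crucially both that $G', G''$ are finite dimensional and that $\deg t < 0$ (so that, in each fixed total degree, only finitely many powers of $t$ occur), precisely the feature emphasized in the text. Equivalently, one can run the degree filtration of \eqref{eq:filtration} and compare the associated spectral sequences of $\mathcal{D}'$ and $\mathcal{D}''$ (or of $C(\xi)$): the filtration is finite in each total degree, hence the spectral sequences converge, and on the $E_1$-page the comparison map is $H_*(\xi_0)$ up to reindexing, which is an isomorphism by hypothesis (resp., the $E_1$-page of $C(\xi)$ vanishes since it is built from $H_*(V, D_0) = 0$).
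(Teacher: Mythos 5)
Your argument is correct, and it is in essence the same mechanism as the paper's, just unpacked by hand: the paper's proof simply filters both complexes by powers of $t$ (the degree filtration $\mathcal{F}^k\Lambda^+ = t^k\Lambda^+$), observes that $\xi$ is filtered and that $E^1(\xi)$ is exactly $H_*(\xi_0)$, and concludes by comparison of the degree spectral sequences — which is precisely the "equivalent" formulation you give in your closing paragraph. What you do differently is to replace the spectral-sequence citation by the mapping cone of $\xi$ plus an explicit induction on the minimal $t$-exponent of a homogeneous cycle: the cone $C(\xi)$ is again a finite-rank free $\Lambda^+$-complex, setting $t=0$ gives the cone of $\xi_0$ (acyclic by hypothesis), and then killing the lowest-order term $v_a t^a$ by a $D_0$-preimage strictly raises the valuation while the top $t$-exponent in a fixed total degree stays bounded because $\dim_{\mathbb{Z}_2}(G'\oplus G'')<\infty$ and $\deg t = -N_L<0$. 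The payoff of your version is that it is elementary and self-contained, and it makes explicit the convergence/termination point (finitely many powers of $t$ per total degree) that the paper's three-line spectral-sequence argument leaves implicit; the paper's version is shorter and reuses machinery (the degree filtration and its spectral sequence) that recurs elsewhere, e.g. in Proposition~\ref{cor:symm} and point~\ref{I:spectral-seq} of Theorem~\ref{thm:alg_main}. One small remark: your appeal to "positivity" of the differential is automatic here exactly because $\Lambda^+=\mathbb{Z}_2[t]$ is concentrated in non-positive degrees, so $\delta_l=0$ for $l<0$ by definition of an $\mathcal{R}$-complex — worth stating, but not a gap.
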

\begin{proof} 
   Recall that the filtration $\mathcal{F}^{k}\La^{+}=t^{k}\La^{+}$
   induces a filtration, called the degree filtration, on any free
   $\La^{+}$- module. The resulting spectral sequence induced on any
   $\La^{+}$-chain complex is called the degree spectral sequence.
   Clearly, $\xi$ respects the degree filtration and thus it induces a
   morphism relating the degree spectral sequences of $\mathcal{D}'$
   and $\mathcal{D}''$.  We notice that $E^{1}(\xi)$ is identified
   with the morphism induced by $\xi_{0}$ in $d_{0}$-homology.
   Therefore, this is an isomorphism.  As we work over $\Z_{2}$ this
   implies that $H_{\ast}(\xi)$ is an isomorphism.  \end{proof}

\begin{rem}\label{rem:unique} 
   a. Under the assumptions in Lemma \ref{lem:isomorphisms_min}, the
   same spectral sequence argument also shows that the chain morphism
   $$\xi\otimes id_{\La}:\mathcal{D}'\otimes_{\La^{+}}\La \to 
   \mathcal{D}''\otimes_{\La^{+}}\La$$ induces an isomorphism in
   homology.

   b. Let $G'$, $G''$ be finite dimensional, graded $\Z_2$-vector
   spaces.  We claim that a $\La^{+}$-linear morphism $$\xi:G'\otimes
   \La^{+}\to G''\otimes \La^{+}$$ is an isomorphism if and only if
   $\xi_{0}$ is an isomorphism. Indeed, any such $\xi$ can be viewed
   as a morphism of chain complexes by assuming that the differentials
   in the domain and target are trivial. We deduce from Lemma
   \ref{lem:isomorphisms_min} that, if $\xi_{0}$ is an isomorphism,
   then $\xi$ is an isomorphism.  Conversely, if $\xi$ is an
   isomorphism, then $t\xi:t(G'\otimes \La^{+})\to t (G''\otimes
   \La^{+})$ is an isomorphism. As $\xi_{0}$ is identified with the
   quotient morphism
   $$\frac{G'\otimes \La^{+}}{t(G'\otimes \La^{+})}\to
   \frac{G''\otimes \La^{+}}{t (G''\otimes \La^{+})}$$ induced by
   $\xi$, it follows that $\xi_{0}$ is an isomorphism.
\end{rem}

We now return to the proof of Proposition~\ref{prop:min_model}.  Start
by choosing a basis for the complex $(G,d_{0})$ as follows: $G=\Z_2
\langle x_{i}: i\in I \rangle\oplus\ \Z_2 \langle y_{j}:j\in
J\rangle\oplus\ \Z_2\langle y'_{j}: j\in J\rangle$ so that
$d_{0}x_{i}=0$, $d_{0}(y_{j})=0$, $d_{0}y'_{j}=y_{j}$, $\forall j\in
J$.  For further use, we denote $B_{X}=\{x_{i}: i\in I\}$,
$B_{Y}=\{y_{j} : j\in J\}$, $B_{Y'}=\{y'_{j} : j\in J\}$.

Clearly, $\mathcal{H}\cong \Z_2 \langle x_{i}\rangle $ and we will
identify further these two vector spaces and denote
$\mathcal{D}_{min}=\Z_2 \langle\tilde{x}_{i}\rangle\otimes \La^{+}$
where $\tilde{x}_{i}, i\in I$ are of the same degree as the $x_{i}$'s
(the differential on $\mathcal{D}_{min}$ remains to be defined). We
will construct $\phi$ and $\psi$ and $\delta$ so that
$\phi_{0}(x_{i})=\tilde{x}_{i}$, $\phi_{0}(y_{j})=\phi_{0}(y'_{j})=0$
and $\psi_{0}(\tilde{x}_{i})=x_{i}$. If $\psi_{0}$ and $\phi_{0}$
verify these properties, then, they induce an isomorphism in
$d_{0}$-homology and, by Remark \ref{lem:isomorphisms_min}, $\psi$ and
$\phi$ induce isomorphisms in $d$-homology.

The construction is by induction. We fix the following notation:
$\mathcal{D}^{k}= \Z_2\langle x_{i},\ y'_{j},\ y_{j}\ : \ \
|x_{i}|\geq k, |y'_{j}|\geq k\ \rangle\otimes \La^{+}$.  Similarly, we
put $\mathcal{D}^{k}_{min}=\Z_2\langle\tilde{x}_{i} \ : \ |x_{i}|\geq
k \rangle\otimes \La^{+}$.  Notice that there are some generators in
$\mathcal{D}^{k}$ which are of degree $k-1$, namely the $y_{j}$'s of
that degree. With this notation we also see that $\mathcal{D}^{k}$ is
a sub-chain complex of $\mathcal{D}$ (because $dy'_{i}=d_{1}(y'_{i})t$
and so $|d_{1}(y'_{i})|\geq |y'_{i}|+1$, the same type of relation
holds for $x_{i}$ and for $y_{i}$ we have
$dy_{i}=y'_{i}+d_{1}(y_{i})t$) .  Assume that $n$ is the maximal
degree of the generators in $G$.  For the generators of
$\mathcal{D}^{n}$ we let $\phi$ be equal to $\phi_{0}$, we put
$\delta=0$ on $\mathcal{D}_{min}^{n}$ and we also let $\psi=\psi_{0}$
on $\mathcal{D}_{min}^{n}$. To see that $\phi:\mathcal{D}^{n}\to
\mathcal{D}_{min}^{n}$ is a chain morphism with these definitions it
suffices to remark that if $y\in B_{Y}$, $|y|=n-1$ , then
$y=d_{0}y'=dy'$ and so $dy=0$.

We now assume $\phi,\delta,\psi$ defined on $\mathcal{D}^{n-s+1}$,
$\mathcal{D}_{min}^{n-s+1}$ so that $\phi,\psi$ are chain morphisms,
they induce isomorphisms in homology and $\phi\circ \psi=id$.  We
intend to extend these maps to $\mathcal{D}^{n-s}$,
$\mathcal{D}_{min}^{n-s}$.  We first define $\phi$ on the generators
$x\in B_{X}$, $y'\in B_{Y'}$ which are of degree $n-s$:
$\phi(x)=\tilde{x}$, $\phi(y')=0$. We let
$\delta(\tilde{x})=\phi^{n-s+1}(dx)$ ( when needed, we use the
superscript $(-)^{n-s+1}$ to indicate the maps previously constructed
by induction). Here it is important to note that, as $d_{0}x=0$, we
have that $dx\in\mathcal{D}^{n-s+1}$. We consider now the generators
$y\in B_{Y}\cap\mathcal{D}^{n-s}$ which are of degree $n-s-1$ and we
put $\phi(y)=\phi^{n-s+1}(y-dy')$. This makes sense because $y-dy'\in
\mathcal{D}^{n-s+1}$.  We write $dy'=y+y''$ and we first see $\phi
(dy')=0=\delta(\phi(y'))$ so that, to make sure that $\phi^{n-s}$ is a
chain morphism with these definitions, it remains to check that
$\delta\phi(y)=\phi(d y)$ for all $y\in B_{Y}$ of degree $n-s-1$.  But
$\delta \phi(y)=-\delta \phi^{n-s+1}(y'')$ and as $\phi^{n-s+1}$ is a
chain morphism, we have $\delta \phi^{n-s+1}(y'')=\phi^{n-s+1} d(y'')$
which implies our identity because $d y''+dy=d^{2}y'=0$.  Clearly,
$\phi^{n-s}_{0}$ induces an isomorphism in $d_{0}$-homology and hence
in $d$-homology too.

To conclude our induction step it remains to construct the map $\psi$
on the generators $\tilde{x}$ of degree $n-s$.  We now consider the
difference $dx-\psi^{n-s+1}(\delta \tilde{x})$ and we want to show
that there exists $\tau\in \mathcal{D}^{n-s+1}$ so that
$d\tau=dx-\psi^{n-s+1}(\delta \tilde{x})$ and $\tau\in \ker
(\phi^{n-s+1})$. Assuming the existence of this $\tau$ we will put
$\psi(\tilde{x})=x-\tau$ and we see that $\psi$ is a chain map and
$\phi\circ\psi=id$.  To see that such a $\tau$ exists remark that
$w=dx-\psi^{n-s+1}(\delta \tilde{x})\in \mathcal{D}^{n-s+1}$ and
$dw=-d(\psi^{n-s+1}(\delta \tilde{x}))=-\psi^{n-s+1}(\delta\circ
\delta \tilde{x})=0$ (because $\psi^{n-s+1}$ is a chain map).
Moreover, $\phi(w)=\phi^{n-s+1}(dx)-\delta\tilde{x}=0$ because
$\phi^{n-s+1}\circ\psi^{n-s+1}=id$.  Therefore $w$ is a cycle
belonging to $\ker (\phi^{n-s+1})$. But $\phi^{n-s+1}$ is a chain
morphism which induces an isomorphism in homology and which is
surjective. Therefore $H_{\ast}(\ker(\phi^{n-s+1}))=0$. Thus there
exists $\tau\in\ker(\phi^{n-s+1})$ so that $d\tau=w$ and this
concludes the induction step.

This construction concludes the first part of the statement and to
finish the proof of the proposition we only need to prove the
uniqueness result. For this, suppose $\phi': \mathcal{D}\to
\mathcal{D}'$ and $\psi':\mathcal{D}'\to \mathcal{D}$ are chain
morphisms so that $\phi'\circ\psi'=id$ with $\mathcal{D}'=(H\otimes
\La^{+},\delta')$, $\delta'_{0}=0$ and $H$ some graded,
$\Z_{2}$-vector space and $\phi'$, $\psi'$, $\phi'_{0}$, $\psi'_{0}$
induce isomorphisms in the respective homologies.  We want to show
that there exists a chain map $c:\mathcal{D}_{min}\to \mathcal{D}'$ so
that $c$ is an isomorphism.  To this end we define
$c(u)=\phi'\circ\psi(u)$, $\forall u\in \mathcal{D}_{min}$.  Now
$H_{\ast}(\phi_{0})$ and $H_{\ast}(\phi'_{0})$, $H_{\ast}(\psi_{0})$,
$H_{\ast}(\psi'_{0})$ are all isomorphisms (in $d_{0}$-homology). So
$H(c_{0})$ is an isomorphism but as $\delta_{0}=0=\delta'_{0}$ we
deduce that $c_{0}$ is an isomorphism.  By Remark \ref{rem:unique} b,
the map $c$ is an isomorphism.  \qed

\

{\em Proof of Corollary \ref{cor:min_pearls}.}  Fix a triple
$f^{0},\rho^{0},J^{0}$ and assume that
$\mathcal{C}^{+}(L;f^{0},\rho^{0}, J^{0})$ is defined. Apply
Proposition \ref{prop:min_model} to this complex.  Denote by
$(\mathcal{C}^{+}_{min},\phi,\psi)$ the resulting minimal complex and
the chain morphisms as in the statement of \ref{prop:min_model}. The
only part of the statement which remains to be proved is that given a
different set of data $(f',\rho',J')$ so that
$\mathcal{C}^{+}(L;f',\rho',J')$ is defined, there are appropriate
morphisms $\phi',\psi'$ as in the statement.  There are comparison
morphisms: $h:\mathcal{C}^{+}(L;f'\rho',J')\to
\mathcal{C}^{+}(L;f^{0}\rho^{0},J^{0})$ as well as
$h':\mathcal{C}^{+}(L;f^{0}\rho^{0},J^{0})\to
\mathcal{C}^{+}(L;f'\rho',J')$ so that, by construction, both $h$ and
$h'$ are inverse in homology and both induce isomorphisms in Morse
homology (and these two isomorphisms are also inverse). Define
$\phi':\mathcal{C}^{+}(L;f',\rho',J')\to \mathcal{C}^{+}_{min}$,
$\psi'':\mathcal{C}^{+}_{min}\to \mathcal{C}^{+}(L;f',\rho',J')$ by
$\phi'=\phi\circ h$ and $\psi''=h'\circ \psi$. It is clear that
$\phi'$, $\psi''$, $\phi'_{0}$ and $\psi''_{0}$ induce isomorphisms in
homology. Moreover, as $h_{0}$ and $h'_{0}$ are inverse in homology
and $\delta_{0}=0$ in $\mathcal{C}^{+}_{min}$ it follows that
$\phi'_{0}\circ \psi''_{0}=id$.  This means by Lemma
\ref{lem:isomorphisms_min} that $v=\phi'\circ\psi''$ is a chain
isomorphism so that $v_{0}$ is the identity. We now put
$\psi'=\psi''\circ v^{-1}$ and this verifies all the needed
properties.  The uniqueness of $\mathcal{C}^{+}_{min}(L)$ now follows
from the uniqueness part in Proposition \ref{prop:min_model}.  \qed

\subsubsection{Further remarks on minimal models}
\label{sbsb:further-min} While the minimal complex $\mathcal{D}_{min}$
associated to $\Lambda^+$-complex $\mathcal{D}$ is unique (upto
isomorphism),  this is not the case  for the structural
maps $\phi$ and $\psi$. For these maps we expect uniqueness
in a weaker sense such as uniqueness upto chain homotopy, however we
will not pursue this direction here. On the other hand, we will use
minimal models in~\S\ref{sec:proof_main} quite frequently. In fact,
in~\S\ref{sec:proof_main} we will have to use the specific choice of the
morphisms $\phi$, $\psi$ (as well as $\phi_0$, $\psi_0$) that is
constructed in the proof of Proposition~\ref{prop:min_model}. It seems
plausible that this can be avoided by axiomatizing more the theory of
minimal models, but we will not do this here since we view the minimal
model as a purely computational tool.

\subsection{Geometric criterion for the vanishing of $QH(L)$}
\label{Sb:criteria-QH}

Let $L\subset (M,\omega)$ be a monotone Lagrangian submanifold.
Remark \ref{rem:product_min} b provides a criterion for the vanishing
of $QH(L)$.  We provide here a more geometric such criterion which is
useful when $N_{L}=2$ which we will assume in this section.

Denote by $\partial:H_2(M,L;\mathbb{Z}) \to H_1(L;\mathbb{Z})$ the
boundary homomorphism and by
$\partial_{\mathbb{Z}_2}:H_2(M,L;\mathbb{Z}) \to H_1(L;\mathbb{Z}_2)$
the composition of $\partial$ with the reduction mod $2$,
$H_1(L;\mathbb{Z}) \to H_1(L;\mathbb{Z}_2)$. Given $A \in H_2^D(M,L)$
and $J \in \mathcal{J}(M,\omega)$ consider the evaluation map
$$ev_{A,J}: (\mathcal{M}(A,J) \times \partial D)/G \longrightarrow L,
\quad ev_{A,J}(u,p) = u(p),$$ where $G = \textnormal{Aut}(D) \cong
PSL(2,\mathbb{R})$ is the group of biholomorphisms of the disk.

For every $J \in \mathcal{J}(M,\omega)$ let $\mathcal{E}_2(J)$ be the
set of all classes $A \in H_2^D(M,L)$ with $\mu(A)=2$ for which there
exist $J$-holomorphic disks with boundary on $L$ in the class $A$:
$$\mathcal{E}_2(J) = \{ A \in H_2^D(M,L) \mid \mu(A)=2, \quad
\mathcal{M}(A,J) \neq \emptyset \}.$$ Define:
$$\mathcal{E}_2 = \bigcap_{J \in \mathcal{J}(M,\omega)}
\mathcal{E}_2(J).$$ Standard arguments show that:
\begin{enumerate}
  \item $\mathcal{E}_2(J)$ is a finite set for every $J$.
  \item There exists a second category subset
   $\mathcal{J}_{\textnormal{reg}} \subset \mathcal{J}(M,\omega)$ such
   that for every $J \in \mathcal{J}_{\textnormal{reg}}$,
   $\mathcal{E}_2(J) = \mathcal{E}_2$. In other words, for generic
   $J$, $\mathcal{E}_2(J)$ is independent of $J$.
  \item For every $J \in \mathcal{J}$ and every $A \in
   \mathcal{E}_2(J)$ the space $\mathcal{M}(A,J)$ is compact and all
   disks $u \in \mathcal{M}(A,J)$ are simple.
  \item For $J \in \mathcal{J}_{\textnormal{reg}}$ and $A \in
   \mathcal{E}_2$, the space $(\mathcal{M}(A,J) \times \partial D)/G$
   is a compact smooth manifold without boundary. Its dimension is $n
   = \dim L$. In particular, for generic $x \in L$, the number of
   $J$-holomorphic disks $u \in \mathcal{M}(A,J)$ with $u(\partial D)
   \ni x$ is finite.
  \item For every $A \in \mathcal{E}_2$ and $J_0, J_1 \in
   \mathcal{J}_{\textnormal{reg}}$ the manifolds $(\mathcal{M}(A,J_0)
   \times \partial D)/G$ and $(\mathcal{M}(A,J_1) \times \partial
   D)/G$ are cobordant via a compact cobordism. Moreover, the
   evaluation maps $ev_{A,J_0}$, $ev_{A,J_1}$ extend to this
   cobordism, hence $\deg_{\mathbb{Z}_2} ev_{A,J_0} =
   \deg_{\mathbb{Z}_2} ev_{A,J_1}$.  In other words
   $\deg_{\mathbb{Z}_2} ev_{A,J}$ depends only on $A \in
   \mathcal{E}_2$.
  \item In fact, the set $\mathcal{J}_{\textnormal{reg}}$ above can be
   taken to be the set of all $J \in \mathcal{J}(M,\omega)$ which are
   regular for all classes $A \in H_2^D(M,L)$ in the sense that the
   linearization of the $\overline{\partial}_J$ operator is surjective
   at every $u \in \mathcal{M}(A,J)$.
\end{enumerate}

Let $J \in \mathcal{J}_{\textnormal{reg}}$ and let $x \in L$ be a
generic point. Define a one dimensional $\mathbb{Z}_2$-cycle
$\delta_x(J)$ to be the sum of the boundaries of all $J$-holomorphic
disks with $\mu=2$ whose boundaries pass through $x$. Of course, if a
disk meets $x$ along its boundary several times we take its boundary
in the sum with appropriate multiplicity. Thus the precise definition
is:
\begin{equation} \label{Eq:delta-x} \delta_x(J) = \sum_{A \in
     \mathcal{E}_2} \; \sum_{(u,p) \in ev_{A,J}^{-1}(x)} u(\partial
   D).
\end{equation}
By the preceding discussion the homology class $D_1 = [\delta_x(J)]
\in H_1(L;\mathbb{Z}_2)$ is independent of $J$ and $x$. In fact
\begin{equation} \label{Eq:D1} D_1 = \sum_{A \in \mathcal{E}_2}
   (\deg_{\mathbb{Z}_2} ev_{A,J})
   \partial_{\mathbb{Z}_2}A.
\end{equation}

\begin{prop} \label{P:criterion-QH=0-1} Let $L \subset (M,\omega)$ be
   a monotone Lagrangian submanifold with $N_L = 2$.  If $D_1 \neq 0$
   then $QH_*(L)=0$. \label{I:criterion-QH-1}
\end{prop}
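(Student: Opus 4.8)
\emph{Plan of proof.} The plan is to work with the minimal pearl complex $\mathcal{C}^{+}_{min}(L)=(H_{*}(L;\Z_{2})\otimes\La^{+},\delta)$ from Corollary~\ref{cor:min_pearls}, and to apply the vanishing criterion of Remark~\ref{rem:product_min}~b: $QH_{*}(L)=0$ if and only if there is $x\in\mathcal{C}^{+}_{min}(L)$ with $\delta x=[L]t^{k}$ for some $k$. Since $N_{L}=2$ and $\delta_{0}=0$, the differential splits as $\delta=\sum_{l\geq 1}\delta_{l}t^{l}$ with $\delta_{l}\colon H_{j}(L;\Z_{2})\to H_{j-1+2l}(L;\Z_{2})$. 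Hence, for a class $\beta\in H_{n-1}(L;\Z_{2})$ the only term of $\delta(\beta\otimes 1)$ that can be nonzero is $\delta_{1}(\beta)\in H_{n}(L;\Z_{2})\cong\Z_{2}\langle[L]\rangle$, because $\delta_{l}(\beta)\in H_{\geq n+2}(L;\Z_{2})=0$ for $l\geq 2$; here the fact that $\deg t=-N_{L}=-2$ is exactly what forces these degree reasons. So the whole problem reduces to finding $\beta\in H_{n-1}(L;\Z_{2})$ with $\delta_{1}(\beta)=[L]$: then $x=\beta\otimes 1$ works with $k=1$.

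The heart of the argument is the identification of $\delta_{1}\colon H_{n-1}(L;\Z_{2})\to H_{n}(L;\Z_{2})$ with the intersection pairing against $D_{1}$. First I would note that, by the degree spectral sequence used in the proof of Proposition~\ref{prop:min_model} (where the structure map $\phi$ induces an isomorphism on the $E^{1}$-page), $\delta_{1}$ on $\mathcal{C}^{+}_{min}(L)$ coincides, under the canonical identification of the $E^{1}$-pages with $H_{*}(L;\Z_{2})\otimes\La^{+}$, with the map induced on Morse homology by the order-$t$ part $d_{1}$ of the pearl differential of an auxiliary complex $\mathcal{C}^{+}(L;f,\rho,J)$. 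Choosing $f$ with a single maximum $P$ (which represents $[L]$), the coefficient $\langle d_{1}q,P\rangle$ for a critical point $q$ of index $n-1$ is the count of the $0$-dimensional pearl moduli space of symbol $(q:P)$ with one interior vertex; since $N_{L}=2$ and all edges are of type $L$, the stability condition iv of \S\ref{subsec:moduli_def} forbids constant vertices, so this vertex is a simple $J$-holomorphic disk of Maslov index $2$, with its marked point $+1$ sent to $P$ and its marked point $-1$ lying on the (closure of the) unstable manifold $W^{u}(q)$, an $(n-1)$-cycle representing the Morse class of $q$. Unwinding this count — using that $P$ is a generic point, that by point~3 of the list in \S\ref{Sb:criteria-QH} all the relevant disks are simple and their moduli spaces compact, and matching the multiplicity convention of~\eqref{Eq:delta-x} with disks whose boundary meets $P$ more than once — identifies $\langle d_{1}q,P\rangle$ with the mod-$2$ intersection number of $W^{u}(q)$ with the $1$-cycle $\delta_{P}(J)$. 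Passing to homology and invoking~\eqref{Eq:D1}, this yields $\delta_{1}(\beta)=(D_{1}\cdot\beta)\,[L]$, where $D_{1}\cdot\beta\in H_{0}(L;\Z_{2})\cong\Z_{2}$ denotes the value of the intersection pairing $H_{1}(L;\Z_{2})\otimes H_{n-1}(L;\Z_{2})\to\Z_{2}$.

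Granting this, I would finish as follows. Over the field $\Z_{2}$, Poincar\'e duality makes the pairing $H_{1}(L;\Z_{2})\otimes H_{n-1}(L;\Z_{2})\to\Z_{2}$ perfect, so the hypothesis $D_{1}\neq 0$ produces $\beta\in H_{n-1}(L;\Z_{2})$ with $D_{1}\cdot\beta\neq 0$; hence $\delta_{1}(\beta)=[L]$ and $\delta(\beta\otimes 1)=[L]t$, so $QH_{*}(L)=0$ by Remark~\ref{rem:product_min}~b. The step I expect to be the main obstacle is the geometric identification of the second paragraph: carefully setting up the bijection between the pearl moduli space of symbol $(q:P)$ and the transverse intersections of $\delta_{P}(J)$ with $W^{u}(q)$ (bookkeeping of the boundary marked points, the reparametrization group, and the multiplicities in the definition of $\delta_{x}(J)$), together with the verification that the $\delta_{1}$ of the minimal model agrees with the $d_{1}$ induced on Morse homology. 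The rest is routine grading arithmetic over $\La^{+}$.
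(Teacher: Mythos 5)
Your proof is correct and takes essentially the same route as the paper's: in both arguments the heart is the identification of the $t$-linear part of the differential on degree-$(n-1)$ classes with the intersection pairing against $\delta_x(J)$ (equivalently against $D_1$), followed by $\Z_2$-Poincar\'e duality to produce a class whose differential is $[L]t$, so that the unit dies and $QH_*(L)=0$. The only difference is packaging: the paper performs the computation directly in the pearl complex of a Morse function with a unique maximum (so no identification of the minimal model's $\delta_1$ with the map induced by $\partial_1$ on Morse homology is needed), whereas you route the same count through $\mathcal{C}^{+}_{min}(L)$ and the criterion of Remark~\ref{rem:product_min}~b.
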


\begin{proof}
   Choose a generic $J \in \mathcal{J}(M,\omega)$. Let $f:L \to
   \mathbb{R}$ be a generic Morse function with precisely one local
   maximum at a point $x \in L$ and fix a generic Riemannian metric on
   $L$.  Denote by $(CM_*(f), \partial_0)$, $(\mathcal{C}_*(f,J),d)$
   the Morse and pearl complexes associated to $f$, $J$ and the chosen
   Riemannian metric. As discussed in \S\ref{subsubsec:defin_alg_str}
   b, $x$ is a cycle in the pearl complex of $f$ and its quantum
   homology class is the unity.

   For degree reasons the restriction of $d$ to $CM_{n-1}(f) \subset
   \mathcal{C}_{n-1}(f,J)$ is given by $d=\partial_0 + \partial_1 t$,
   where $\partial_1:CM_{n-1}(f) \to CM_n(f)=\mathbb{Z}_2 x$ counts
   pearly trajectories with holomorphic disks of Maslov index $2$.
   Since $x$ is a maximum of $f$, no $-\nabla f$ trajectories can
   enter $x$ (i.e. $W_x^s(f) = \{x\}$). Therefore for every $y \in
   \textnormal{Crit}_{n-1}(f)$ we have
   \begin{equation} \label{Eq:del-1}
      \partial_1 y = \#_{\mathbb{Z}_2} \bigl( W_y^u(f) \cap
      \delta_x(J)\bigr)x.
   \end{equation}

   Assume now that $D_1 \neq 0$. By Poincar\'{e} duality there exists
   an $(n-1)$-dimensional cycle $C$ in $L$ such that
   $$\#_{\mathbb{Z}_2} C \cap \delta_x(J) \neq 0.$$
   Let $z \in CM_{n-1}(f)$ be a $\partial_0$-cycle representing $[C]
   \in H_{n-1}(L;\mathbb{Z}_2)$. Then $$d(z) = \partial_1(z)t =
   \#_{\mathbb{Z}_2} \bigl( W_z^u \cap \delta_x(J)\bigr)x t =
   \#_{\mathbb{Z}_2} \bigl(C \cap \delta_x(J)\bigr)x t = axt$$ for
   some non-zero scalar $a$. (Of course, $a \neq 0$ is the same as
   $a=1$ here, since we work over $\mathbb{Z}_2$. However we wrote
   $ax$ to emphasize that the argument works over every field.)  It
   follows that $[x] = 0 \in QH_n(L)$. But, as $[x]$ is the unity of
   $QH_*(L)$, we deduce $QH_*(L)=0$.
\end{proof}

\subsection{Action of the symplectomorphism group} \label{Sb:act-symp}

We now describe a property of our machinery which is very useful in
computations when symmetry is present. In this section $\mathcal{R}$
is any of the rings described in \S\ref{subsec:coeff}.

\begin{prop}\label{cor:symm} Let $\phi:L\to L$ be a diffeomorphism which
   is the restriction to $L$ of an ambient symplectic diffeomorphism
   $\bar{\phi}$ of $M$. Let $f,\rho,J$ be so that the pearl complex
   $\mathcal{C}(L;\mathcal{R};f,\rho,J)$ is defined. There exists a
   chain map:
   $$\tilde{\phi}:\mathcal{C}(L;\mathcal{R};f,\rho,J)\to
   \mathcal{C}(L;\mathcal{R};f,\rho,J)$$ which respects the degree
   filtration, induces an isomorphism in homology, and so that the
   morphism $E^{1}(\tilde{\phi})$ induced by $\tilde{\phi}$ at the
   $E^{1}$ level of the degree spectral sequence coincides with
   $H_{\ast}(\phi)\otimes {id}_{\Lambda^{+}}$ (where $H_{\ast}(\phi)$
   is the isomorphism induced by $\phi$ on singular homology).  The
   map $\bar{\phi}\to \tilde{\phi}$ induces a representation:
   $$\bar{h}:Symp (M,L)\to Aut (QH_{\ast}(L;\mathcal{R}))$$
   where $Aut(QH_{\ast}(L;\mathcal{R}))$ are the ring automorphisms of
   $QH_{\ast}(L;\mathcal{R})$ preserving the augmentation and
   $Symp(M,L)$ are the symplectomorphisms of $M$ which keep $L$
   invariant. The restriction of $\bar{h}$ to $Symp_{0}(M)\cap
   Symp(M,L)$ takes values in the automorphisms of $QH(L;\mathcal{R})$
   as an algebra over $QH(M;\mathcal{R})$ (here $Symp_{0}(M)$ is the
   component of the identity in $Symp(M)$).
\end{prop}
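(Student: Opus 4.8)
The plan is to realize $\tilde\phi$ as the composition of a purely geometric \emph{pushforward} by $\bar\phi$ with the canonical comparison isomorphism of Theorem~\ref{thm:alg_main}, and then to check that the resulting automorphism of $QH_\ast(L;\mathcal{R})$ is multiplicative in $\bar\phi$. To begin, $\bar\phi$ transports all the data entering the pearl complex: since $\bar\phi^{\ast}\omega=\omega$ the structure $\bar\phi_{\ast}J:=d\bar\phi\circ J\circ d\bar\phi^{-1}$ is again $\omega$-compatible, and $(f\circ\phi^{-1},\phi_{\ast}\rho)$ is Morse--Smale on $L$. For every $\mathcal{F}$-colored tree with marked points $(\mathcal{T},Q)$ the rules $u_v\mapsto\bar\phi\circ u_v$ on the $J$-disks and $J$-spheres and $\gamma\mapsto\bar\phi\circ\gamma$ on the gradient flow lines give a diffeomorphism of pearl moduli spaces
$$\mathcal{P}_{\mathcal{T},Q}(J,\rho)\;\xrightarrow{\;\cong\;}\;\mathcal{P}_{\mathcal{T},Q}(\bar\phi_{\ast}J,\phi_{\ast}\rho),$$
the critical-point labels being relabelled by $\phi$ and the class $[\mathcal{T}]\in H_2^D(M,L)$ by $\bar\phi_{\ast}$ (the combinatorial data, marked point selectors and virtual dimensions are unchanged, and the perturbations of type \eqref{Eq:delbar-pert} are transported too). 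Counting points produces a chain isomorphism
$$\Phi_{\bar\phi}\colon\mathcal{C}(L;\mathcal{R};f,\rho,J)\;\xrightarrow{\;\cong\;}\;\mathcal{C}(L;\mathcal{R};f\circ\phi^{-1},\phi_{\ast}\rho,\bar\phi_{\ast}J)$$
intertwining the differentials, the products $\circ$, the module actions $\circledast$ (together with the induced pushforward $\bar\phi_{\ast}^M$ on $QH(M;\mathcal{R})$), the inclusions $i_L$ and the augmentations, simply because $\bar\phi$ carries each defining moduli space of \S\ref{subsubsec:defin_alg_str} to its counterpart for the transported data. As $\bar\phi$ preserves the Maslov index on $H_2^D(M,L)$, $\Phi_{\bar\phi}$ respects the degree filtration, and on the $E^1$ page of the associated spectral sequence --- which is canonically $H_\ast(L;\mathbb{Z}_2)\otimes\mathcal{R}$ --- it is induced by pushing Morse trajectories forward by $\phi$, hence equals $H_\ast(\phi)\otimes\mathrm{id}$. (For $\mathcal{R}=\La$ or $\La^{+}$ this map is $\mathcal{R}$-linear; for $\mathcal{R}$ recording classes in $H_2^D(M,L)$ it is semilinear over the automorphism of $\mathcal{R}$ induced by $\bar\phi_{\ast}$.)

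Post-composing with a chain-level representative $c$ of the canonical comparison isomorphism $QH_\ast(L;\mathcal{R};f\circ\phi^{-1},\phi_{\ast}\rho,\bar\phi_{\ast}J)\to QH_\ast(L;\mathcal{R};f,\rho,J)$ from point i of Theorem~\ref{thm:alg_main}, set $\tilde\phi=c\circ\Phi_{\bar\phi}$. Since $c$ respects the degree filtration, restricts on $E^1$ to the identity (under the canonical identification of Morse homology with singular homology) and is an isomorphism in homology, the same properties hold for $\tilde\phi$ and $E^1(\tilde\phi)=H_\ast(\phi)\otimes\mathrm{id}$. Passing to homology and using that both $c$ and $\Phi_{\bar\phi}$ are compatible with the structures of Theorem~\ref{thm:alg_main}(\ref{I:qmod-qprod}) and with the augmentation (point i), the induced map $\bar h(\bar\phi)$ is a ring automorphism of $QH_\ast(L;\mathcal{R})$ fixing the unit $[L]$ and the augmentation $\epsilon_L$, with $\bar h(\bar\phi)(a\circledast\alpha)=\bar\phi_{\ast}^M(a)\circledast\bar h(\bar\phi)(\alpha)$.

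The step I expect to be the main obstacle is the homomorphism property $\bar h(\bar\phi\circ\bar\psi)=\bar h(\bar\phi)\circ\bar h(\bar\psi)$. At the chain level $\Phi_{\bar\phi\bar\psi}=\Phi_{\bar\phi}\circ\Phi_{\bar\psi}$ is tautological, but the chain-level comparison maps are coherent only in homology. The key point is that $\Phi_{\bar\phi}$ sends any comparison map between two triples to a comparison map between the $\bar\phi$-transported triples, because the auxiliary data underlying the comparison construction (Morse homotopies and one-parameter families of almost complex structures, with the moduli spaces of point e of \S\ref{subsubsec:defin_alg_str}) are themselves transported by $\bar\phi$; hence, by uniqueness of the canonical comparison isomorphism, $\bar\phi_{\ast}c$ induces the canonical comparison in homology. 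Writing $\bar h(\bar\phi)=c_{\phi}\circ\Phi_{\bar\phi}$ and $\bar h(\bar\psi)=c_{\psi}\circ\Phi_{\bar\psi}$ in homology, one gets $c_{\phi}\circ\Phi_{\bar\phi}\circ c_{\psi}\circ\Phi_{\bar\psi}=c_{\phi}\circ(\bar\phi_{\ast}c_{\psi})\circ\Phi_{\bar\phi}\circ\Phi_{\bar\psi}$, and $c_{\phi}\circ(\bar\phi_{\ast}c_{\psi})$ equals, by the cocycle property of the canonical comparison isomorphisms, the canonical comparison back to $(f,\rho,J)$; so the composite is $\bar h(\bar\phi\bar\psi)$. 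This yields the representation $\bar h\colon Symp(M,L)\to\mathrm{Aut}(QH_\ast(L;\mathcal{R}))$ into augmentation-preserving ring automorphisms.

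Finally, for the last assertion I would invoke the standard fact that $Symp_0(M)$ acts trivially on $QH_\ast(M;\mathcal{R})$: any element of $Symp_0(M)$ acts trivially on $H_\ast(M;\mathbb{Z}_2)$ and fixes every class in $H_2^S(M)$, and the quantum product is determined by these data together with the genus-zero Gromov--Witten counts, which are invariant under symplectic isotopy. Hence for $\bar\phi\in Symp_0(M)\cap Symp(M,L)$ we have $\bar\phi_{\ast}^M=\mathrm{id}$, so $\bar h(\bar\phi)(a\circledast\alpha)=a\circledast\bar h(\bar\phi)(\alpha)$; that is, $\bar h(\bar\phi)$ is $QH_\ast(M;\mathcal{R})$-linear, and therefore an automorphism of $QH_\ast(L;\mathcal{R})$ as a two-sided algebra over $QH_\ast(M;\mathcal{R})$.
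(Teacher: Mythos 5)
Your proposal is correct and follows essentially the same route as the paper: define the tautological pushforward identification of pearl complexes ($\Phi_{\bar\phi}$, the paper's $h^{\phi}$), compose with the canonical comparison map, read off the $E^1$ statement from the Morse-theoretic part, prove multiplicativity by observing that $\bar\phi$ transports comparison data (Morse homotopies, families of $J$'s) to comparison data for the transported triples, and deduce the $Symp_0$ statement from the fact that the induced action on the ambient quantum homology is $H_{\ast}(\bar\phi)\otimes\mathrm{id}$, hence trivial for $\bar\phi$ isotopic to the identity.
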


\begin{proof} 
   To ease notation, we omit the ring $\mathcal{R}$ in the writing of
   the pearl complexes below.

   Assume that $\phi:L\to L$ is a diffeomorphism which is the
   restriction to $L$ of the symplectomorphism $\bar{\phi}$ and
   $f,\rho,J$ are such that the chain complex
   $\mathcal{C}(L;f,\rho,J)$ is defined. Let
   $f^{\phi}=f\circ\phi^{-1}$.  There exists a basis preserving
   isomorphism
   $$h^{\phi}:\mathcal{C}(L;f,\rho,J)\to
   \mathcal{C}(L;f^{\phi},\rho^{\ast},J^{\ast})$$ induced by $x \to
   \phi(x)$ for all $x\in \Crit(f)$ where $\rho^{\ast},J^{\ast}$ are
   obtained by the push-forward of $\rho,J$ by means of $\phi$ and the
   symplectomorphism $\bar{\phi}$. The isomorphism $h^{\phi}$ acts in
   fact as an identification of the two complexes.

   Next, there is also the standard comparison chain morphism,
   canonical up to chain homotopy
   $$c: \mathcal{C}(L;f^{\phi},\rho^{\ast}, J^{\ast})\to
   \mathcal{C}(L;f,\rho,J)~.~$$

   We now consider the composition $\tilde{\phi}=c\circ h^{\phi}$.  It
   is clear that this map induces an isomorphism in homology and that
   it preserves the the ring structure and the augmentation (as each
   of its factors does so). We now inspect the Morse theoretic
   analogue of these morphisms - in the sense that we consider instead
   of the complexes $\mathcal{C}(L, f,- )$ the respective Morse
   complexes $C(f,-)$. It is easy to see that the Morse theoretic
   version of $\tilde{\phi}$ induces in Morse homology precisely
   $H_{\ast}(\phi)$. But this means that at the $E^{1}$ stage of the
   degree filtration the morphism induced by $\tilde{\phi}$ has the
   form $H_{\ast}(\phi)\otimes id_{\mathcal{R}}$.

   We now denote $k=\bar{h}(\bar{\phi})$ and we need to verify that
   for any two elements $\bar{\phi}, \bar{\psi}\in Symp(M)$ we have
   $\bar{h}(\bar{\phi}\circ\bar{\psi})=\bar{h}(\phi)\circ\bar{h}(\psi)$.
   It is easy to see that this is implied by the commutativity of the
   following diagram:
   $$\xy\xymatrix@+10pt{ \mathcal{C}(L;f')
     \ar[r]^{h^{\phi}}\ar[d]_{c} &
     \mathcal{C}(L;(f')^{\phi})\ar[d]^{c'}\\
     \mathcal{C}(L;f)\ar[r]_{h^{\phi}}& \mathcal{C}(L,f^{\phi}) }
   \endxy $$ for any two Morse function $f$ and $f'$ so that the
   respective complexes are defined. To verify this commutativity,
   first we use some homotopy $H$, joining $f$ to $f'$, to provide the
   comparison morphism $c$ and we then use the homotopy $H\circ
   \phi^{-1}$ to define $c'$.

   Finally, recall that the module structure of $QH(L)$ over $QH(M)$
   is defined by using an additional Morse function $F:M\to \R$. If we
   put $F^{\bar{\phi}}=F\circ \bar{\phi}^{-1}$ we see easily that the
   external operations defined by using $f,F,\rho,J$ and
   $f^{\phi},F^{\bar{\phi}},\rho^{\ast},J^{\ast}$ are identified one
   to the other via the application $h^{\phi}$ (extended in the
   obvious way to the critical points of $F$).  There is a usual
   comparison map $\bar{c}$ relating the Morse complex of
   $F^{\bar{\phi}}$ to that of $F$. Together with $c$ the map
   $\bar{c}$ identifies - in homology - the external product
   associated to $f^{\phi},F^{\bar{\phi}},\rho^{\ast},J^{\ast}$ and
   the external product associated to $f,F,\rho,J$.  At the level of
   the quantum homology of $M$ the composition $\bar{c}\circ h^{\phi}$
   induces $H_{\ast}(\bar{\phi})\otimes id_{\mathcal{R}}$.  Therefore,
   if $\bar{\phi}\in Symp_{0}(M)$, it follows that this last map is
   the identity and proves the claim.
\end{proof}
\begin{rem}
   It results from the proof above that for $\bar{h}(\phi)$ to be an
   algebra automorphism it is sufficient that $\bar{\phi}$ induce the
   identity at the level of the singular homology of $M$, e.g. when
   $\phi$ is homotopic to the identity.
\end{rem}

\subsection{Duality} \label{Sbs:duality} We start by fixing some
algebraic notation and conventions. Let $\mathcal{R}$ be a commutative
$\widetilde{\Lambda}^{+}$ algebra.  Suppose that
$(\mathcal{C},\partial)$ is a free $\mathcal{R}$-chain complex.  Thus
$\mathcal{C}=G\otimes \mathcal{R}$ with $G$ some graded
$\mathbb{Z}_2$-vector space. We let
$$\mathcal{C}^{\odot}=\hom_{\mathcal{R}}(\mathcal{C},\mathcal{R})$$
graded so that the degree of a morphism $g:\mathcal{C}\to \mathcal{R}$
is $k$ if $g$ takes $\mathcal{C}_{l}$ to $\mathcal{R}_{l+k}$ for all
$l$.

Let $ \mathcal{C}'=\hom_{\Z_2}(G,\Z_2)\otimes \mathcal{R}$ be graded
such that if $x$ is a basis element of $G$, then its dual $x^{\ast}\in
\mathcal{C}'$ has degree $|x^{\ast}|=-|x|$. There is an obvious degree
preserving isomorphism $\psi:\mathcal{C}^{\odot}\to \mathcal{C}'$
defined by $\psi(f)=\sum_{i}f(g_{i})g_{i}^{\ast}$ where $(g_{i})$ is a
basis of $G$ and $(g_{i}^{\ast})$ is the dual basis. We define the
differential of $\mathcal{C}^{\odot}$, $\partial ^{\ast}$, as the
adjoint of $\partial$:
$$
\langle \partial^{\ast} f,x \rangle = \langle f,\partial x \rangle \
,\ \forall x\in \mathcal{C},f\in \mathcal{C}^{\odot}~.~$$ Clearly,
$\mathcal{C}^{\odot}$ continues to be a chain complex (and not a
co-chain complex).

An additional algebraic notion will be useful: the co-chain complex
$\mathcal{C}^{\ast}$ associated to $\mathcal{C}$. To define it, for a
graded $\Z_{2}$-vector space $V$ let $V^{inv}$ be the graded vector
space obtained by reversing the degree of the elements in $V$: if
$v\in V^{inv}$, then its degree is $|v|=-deg_{V}(v)$. Clearly,
$(V\otimes W)^{inv}= V^{inv}\otimes W^{inv}$.

For the complex $\mathcal{C}$ as above we let
$\mathcal{C}^{\ast}=(\mathcal{C}^{\odot})^{inv} =
\hom_{\mathbb{Z}_2}(G,\mathbb{Z}_2)^{inv}\otimes \mathcal{R}^{inv}$.
The complex $\mathcal{C}^{\ast}$ is obviously a co-chain complex and
its differential is a $\mathcal{R}^{inv}$-module map.  The cohomology
of $\mathcal{C}$ is then defined as
$H^{k}(\mathcal{C})=H^{k}(\mathcal{C}^{\ast})$.  Obviously, there is a
canonical isomorphism: $H_{-k}(\mathcal{C}^{\odot}) \cong
H^{k}(\mathcal{C}^{\ast})$.

A particular case of interest here is when
$\mathcal{C}=\mathcal{C}(L;\mathcal{R}; f,\rho,J)$.  In this case we
denote:
$$QH^{k}(L;\mathcal{R})=
H^{k}(\mathcal{C}(L;\mathcal{R};f,\rho,J)^{\ast})~.~$$

Notice that the chain morphisms $\eta: \mathcal{C}\to
\mathcal{C}^{\odot}$ of degree $-n$ are in $1-1$ correspondence with
the chain morphisms of degree $-n$:
$$\tilde\eta:\mathcal{C}\otimes_{\mathcal{R}} \mathcal{C}\to
\mathcal{R}~.~$$ via the formula $\tilde{\eta}(x\otimes
y)=\eta(x)(y)$.  Here the ring $\mathcal{R}$ on the right hand-side is
considered as a chain complex with trivial differential.

For $n\in \Z$ and any chain complex $\mathcal{C}$ as before we let
$s^{n}\mathcal{C}$ be its $n$-fold suspension. This is a chain complex
which coincides with $\mathcal{C}$ but its graded so that the degree
of $x$ in $s^{n}\mathcal{C}$ is $n+$ the degree of $x$ in
$\mathcal{C}$.  A particular useful case where both duality and
suspension appear is in the following sequence of obvious
isomorphisms: $H_{k}(s^{n}\mathcal{C}^{\odot})\cong
H_{k-n}(\mathcal{C}^{\odot})\cong H^{n-k}(\mathcal{C}^{\ast})$.

\begin{prop}\label{p:duality}
   Let $n=\dim(L)$.  There exists a degree preserving morphism of
   chain complexes:
   $$\eta: \mathcal{C}(L;\mathcal{R};f,\rho,J)\to
   s^{n}(\mathcal{C}(L;\mathcal{R};f,\rho,J)^{\odot})$$ which is a
   morphism of $\mathcal{R}$-modules and induces an isomorphism in
   homology. In particular, we have an isomorphism: $\eta:
   QH_{k}(L;\mathcal{R})\to QH^{n-k}(L;\mathcal{R})$. The
   corresponding (degree $-n$) bilinear map
   $$H(\tilde\eta):QH(L;\mathcal{R})\otimes_{\mathcal{R}} 
   QH(L;\mathcal{R})\to \mathcal{R}$$ coincides with the product
   described at point~\ref{I:qmod-qprod} of Theorem~\ref{thm:alg_main}
   composed with the augmentation $\epsilon_{L}$. When
   $\mathcal{R}=\La$ the pairing $H(\tilde\eta)$ is non-degenerate.
   Moreover, for any $k\in \Z$ the induced pairing:
   $$H(\tilde\eta)^{0}:QH_{k}(L)\otimes_{\Z_{2}} QH_{n-k}(L)
   \to \La_{0}=\Z_{2}$$ is non-degenerate.
\end{prop}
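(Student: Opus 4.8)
The plan is to build the chain map $\eta$ out of two ingredients --- a ``reversal'' identification and a comparison map --- and then to read off the four assertions. Throughout, I would fix $(f,\rho,J)$ generic enough that both $\mathcal{C}(L;\mathcal{R};f,\rho,J)$ and $\mathcal{C}(L;\mathcal{R};-f,\rho,J)$ are defined; this is a residual condition, and by the invariance in Theorem~\ref{thm:alg_main} the homology-level conclusions will not depend on the choice. Morse theory identifies $\Crit_{k}(f)=\Crit_{n-k}(-f)$, and the negative gradient flow of $-f$ (for the metric $\rho$) is the time-reversal of that of $f$. Given a pearly trajectory of symbol $(x\!:\!y)$ and class $A$ for $(f,J)$, I would reverse every negative gradient flow line of $f$ --- obtaining a negative gradient flow line of $-f$ --- and precompose every disk with the biholomorphism $z\mapsto -z$ of $D$, which interchanges the incidence points $+1,-1\in\partial D$, preserves $J$-holomorphicity, and preserves the class. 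This produces a pearly trajectory of symbol $(y\!:\!x)$ and class $A$ for $(-f,J)$, with matching virtual dimension. The resulting bijection of $0$-dimensional moduli spaces shows that, under the $\mathcal{R}$-linear extension of $x^{*}\mapsto x$, the adjoint differential $d^{*}$ on $\mathcal{C}(L;\mathcal{R};f,\rho,J)^{\odot}$ is carried to the differential of $\mathcal{C}(L;\mathcal{R};-f,\rho,J)$, and the degrees agree after an $n$-fold suspension. Hence one gets an $\mathcal{R}$-linear chain isomorphism $R:\mathcal{C}(L;\mathcal{R};f,\rho,J)^{\odot}\xrightarrow{\ \cong\ }s^{-n}\mathcal{C}(L;\mathcal{R};-f,\rho,J)$.

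Next I would take, from the invariance construction in \S\ref{subsubsec:defin_alg_str} (point e), applied with the constant family of almost complex structures $J$ and any Morse homotopy from $f$ to $-f$, a comparison chain map $c:\mathcal{C}(L;\mathcal{R};f,\rho,J)\to\mathcal{C}(L;\mathcal{R};-f,\rho,J)$ realizing the canonical comparison isomorphism in homology, and set $\eta=(s^{n}R)^{-1}\circ c$. This $\eta$ is $\mathcal{R}$-linear and degree preserving, and being a composite of a quasi-isomorphism with an isomorphism it induces an isomorphism in homology; combined with the tautological isomorphisms $H_{k}(s^{n}\mathcal{C}^{\odot})\cong H_{k-n}(\mathcal{C}^{\odot})\cong H^{n-k}(\mathcal{C}^{*})$ recorded in \S\ref{Sbs:duality}, this yields $\eta:QH_{k}(L;\mathcal{R})\xrightarrow{\ \cong\ }QH^{n-k}(L;\mathcal{R})$.

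To identify the induced bilinear form, I would first note that $H(\tilde\eta)$ does not depend on the choice of $c$ (any two such comparison maps are chain homotopic), so it may be computed with a convenient choice. Unwinding the definitions, $\tilde\eta(x\otimes y)=\eta(x)(y)$ is the $T$-weighted count of comparison pearly configurations running from a cycle representing $x$ to the $(-f)$-critical point corresponding to $y$; reinterpreting the terminal time-reversed $(-f)$-flow line as a forward $f$-flow line arriving at $y$ turns this into the moduli space defining the quantum product $x\circ y$ followed by evaluation against the index-$0$ critical points, i.e.\ $\epsilon_{L}(x\circ y)$. Making this matching precise --- keeping track of the incidence points, and, if one prefers, running a $1$-dimensional cobordism argument of the kind used throughout \S\ref{sec:main_quant_proof} --- gives $H(\tilde\eta)=\epsilon_{L}\circ(-\circ -)$. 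This is the analogue for the pearl complex of the classical fact that Poincar\'e duality intertwines the Morse differential of $-f$ with the transpose of that of $f$ and recovers the intersection pairing, now with holomorphic disks included; I expect this step to be the main obstacle, both because of the moduli-space bookkeeping and because one must check carefully that the marked points behave correctly under reversal.

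For the non-degeneracy over $\Lambda=\Z_{2}[t^{-1},t]$, the key algebraic observation is that $t$ is a homogeneous unit of $\Lambda$, so multiplication by $t$ is an isomorphism $QH_{j}(L;\Lambda)\cong QH_{j-N_{L}}(L;\Lambda)$; hence every finitely generated graded $\Lambda$-module is free, in particular $QH_{*}(L;\Lambda)$, which is a subquotient of the finite-rank free complex $\mathcal{C}(L;\Lambda;f,\rho,J)$. Freeness kills the Ext term in universal coefficients, so $H_{*}(\mathcal{C}^{\odot})\cong\hom_{\Lambda}(H_{*}(\mathcal{C}),\Lambda)$ as graded $\Lambda$-modules, and the isomorphism $\eta$ then amounts exactly to saying that the adjoint $QH(L;\Lambda)\to\hom_{\Lambda}(QH(L;\Lambda),\Lambda)$ of $H(\tilde\eta)$ is an isomorphism, i.e.\ $H(\tilde\eta)$ is a perfect, in particular non-degenerate, $\Lambda$-bilinear pairing. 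Finally, since $QH_{*}(L;\Lambda)$ is graded free and $\Lambda_{m}=0$ unless $N_{L}\mid m$, with $\Lambda_{0}=\Z_{2}$, a degree-$(k-n)$ $\Lambda$-linear functional on $QH(L;\Lambda)$ is determined by its restriction to the degree-$(n-k)$ piece, so the degree-$(k-n)$ summand of $\hom_{\Lambda}(QH(L;\Lambda),\Lambda)$ is canonically $\hom_{\Z_{2}}(QH_{n-k}(L),\Z_{2})$; restricting the perfect $\Lambda$-adjoint of $H(\tilde\eta)$ to degree $k$ then identifies it with the $\Z_{2}$-adjoint of $H(\tilde\eta)^{0}:QH_{k}(L)\otimes_{\Z_{2}}QH_{n-k}(L)\to\Lambda_{0}=\Z_{2}$, which is therefore an isomorphism.
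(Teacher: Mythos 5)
Your construction of $\eta$ coincides with the paper's: your reversal identification $R$ is precisely the inverse of the map $\eta'_f$ used there (reversing flow lines, with the disks and their boundary marked points carried along), and $\eta$ is then the composition with the $f\to -f$ comparison morphism. Likewise your non-degeneracy argument over $\Lambda$, though phrased through graded-freeness of $QH(L;\Lambda)$ plus the universal coefficient theorem over the PID $\Lambda$, is an equivalent repackaging of the paper's proof that the evaluation map $\rho:H_l(\mathcal{C}^{\odot})\to(\hom_{\Lambda}(H(\mathcal{C}),\Lambda))_l$ of Lemma~\ref{l:non-deg} is an isomorphism: both rest on the single fact that every nonzero homogeneous element of $\Lambda$ is invertible, which the paper exploits via the graded splitting $\mathcal{C}=Z'\oplus d(E)\oplus E$ of Lemma~\ref{l:gr-complement}, and your closing degree bookkeeping identifying $(\hom_{\Lambda}(QH(L;\Lambda),\Lambda))_{k-n}$ with $\hom_{\Z_2}(QH_{n-k}(L),\Z_2)$ is exactly the extension/restriction argument used there for $H(\tilde\eta)^{0}$.

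The one step where your plan, taken literally, would not go through is the identification $H(\tilde\eta)=\epsilon_L(-\circ-)$. You propose to unwind the comparison configurations from $f$ to $-f$ and ``reinterpret the terminal reversed flow line.'' But $f$ and $-f$ are never in general position: they have the same critical points and the stable manifolds of $f$ are the unstable manifolds of $-f$, so the linear comparison moduli spaces of symbol $(x:y)$ with the exit-rule switch (the map $\phi^{f,f'}$ of the paper, which is what your unwinding implicitly invokes) are not defined/transversal for $f'=-f$; and if you instead compute with the Morse-homotopy comparison map $c$, its pearly configurations live over $L\times[0,1]$ and do not decompose into an ``$f$-part followed by a $(-f)$-part,'' so the asserted bijection with the moduli spaces of symbol $(x,y:m)$ defining $\epsilon_L(x\circ y)$ is not available as stated. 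The paper's remedy -- and the ingredient missing from your sketch, which you correctly flagged as the main obstacle -- is to introduce an auxiliary Morse function $f'$ in general position with $f$, factor $\eta_{*}$ in homology as $\phi^{G}_{*}\circ(\eta'_{f'})_{*}\circ\phi^{f,-f'}_{*}$, and then match the $0$-dimensional $\phi^{f,-f'}$-configurations of symbol $(x:y)$, $y\in\Crit(-f')$, with the product configurations of symbol $(x,y:m)$, where $m$ is the minimum of the third function; with that device in place the rest of your argument is fine.
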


{\em Proof of Proposition \ref{p:duality}}.  For any two pearl
complexes $\mathcal{C}(L;\mathcal{R};f,\rho_{L},J)$ and
$\mathcal{C}(L;\mathcal{R};f',\rho'_{L},J')$ the construction at point
\S \ref{subsubsec:defin_alg_str} e. provides a comparison chain
morphism relating them.  There is an alternative way to construct a
comparison map
$$\phi^{f,f'}: \mathcal{C}(L;\mathcal{R};f,\rho_{L},J)\to
\mathcal{C}(L;\mathcal{R};f',\rho_{L},J')$$ in case $f$ and $f'$ are
in general position. In homology, this induces the same morphism as
the one provided by the map $\phi^{F,\tilde{\rho_{L}},\tilde{J}}$
constructed at point e in \S\ref{subsubsec:defin_alg_str}. This
alternative comparison map is useful in the understanding of duality.
The definition of this map is:
$$\phi^{f,f'}(x)=\sum_{\mathcal{T}'}
\# (\mathcal{P}_{\mathcal{T}'})y t^{\mu[\mathcal{T}']/N_{L}}$$ where
the sum is taken over all the trees $\mathcal{T}'$ of symbol $(x:y)$,
$x\in\Crit(f)$, $y\in \Crit(f')$ and $|x|-|y|+\mu([\mathcal{T}'])=0$.
We put in this case $f_{1}=f$ and $f_{2}=f'$.  The exit rule - point v
in \S\ref{subsec:moduli_def} B - needs to be slightly modified for
these trees: in the tree $\mathcal{T}'$ there is one special vertex
$v_{0}$ so that for all vertices above it the exit rule is
$\Theta(f_{1})=f_{1}$, for all the vertices below it the exit rule is
$\Theta(f_{2})=f_{2}$ and at $v_{0}$ the exit rule is
$\Theta(f_{1})=f_{2}$. Condition iv in \S\ref{subsec:moduli_def} B is
also slightly modified in the sense that the vertex $v_{0}$ is allowed
to verify $\omega([v_{0}])=0$.  The marked point selector is as at
point \S\ref{subsubsec:defin_alg_str} a. The duality map
$$\eta : \mathcal{C}(L;\mathcal{R};f,\rho, J)\to
s^{n}(\mathcal{C}(L;\mathcal{R};f,\rho,J)^{\odot})$$ is defined as the
composition of two maps, the first is the canonical identification of
chain complexes obtained by ``reversing'' the flow
$\eta'_{f}:\mathcal{C}(L;\mathcal{R}; -f,\rho, J)\to
s^{n}(\mathcal{C}(L;\mathcal{R};f,\rho, J)^{\odot})$ (which sends each
critical point $x\in \Crit_{k}(-f)$ to $x\in \Crit_{n-k}(f)$) and the
second is $\phi^{F,\rho_{L}, J}$ where $F$ is a Morse homotopy between
$f$ and $-f$ so that $\eta=\eta'_{f}\circ \phi^{F,\rho_{L},J}$.  To
prove the identity $H(\tilde{\eta})=\epsilon_{L}(-\ast-)$, let $f'$ be
another Morse function in generic position with $f$. In homology
$\eta_{\ast}=\phi^{G,\rho_{L},J}_{\ast}\circ (\eta'_{f'})_{\ast}\circ
\phi^{F',\rho_{L}, J}_{\ast}$ where $F'$ is a Morse homotopy from $f$
to $-f'$ and $G$ is a Morse homotopy from $f'$ to $f$. Thus we also
have $\eta_{\ast}=\phi^{G,\rho_{L},J}_{\ast}\circ
(\eta'_{f'})_{\ast}\circ \phi^{f,-f'}_{\ast}$.  The relation we want
to justify follows by comparing the moduli spaces associated to the
trees $\mathcal{T}'$ of symbol $(x:y)$ with $x\in \Crit(f)$,
$y\in\Crit(-f')$ used in the definition of $\phi^{f,-f'}$ and the
moduli spaces associated to trees $\mathcal{T}$ of symbol $(x,y:m)$
(with $f=f_{1}$, $f'=f_{2}$) where $x\in \Crit(f)$, $y\in \Crit(f')$,
$m\in \Crit_{0}(f_{3})$ used in the definition of the product $-\ast
-$ at the point b in \S\ref{subsubsec:defin_alg_str}.  Here $m$ is the
unique minimum of the function $f_{3}$.  Indeed it is immediate to see
that the $0$-dimensional such moduli spaces are in bijection and this
implies the claimed identity.

It remains to prove that the pairing $H(\tilde\eta)^0$ (and thus $H(\tilde{\eta})$) 
is
non-degenerate when $\mathcal{R} = \Lambda$. From now on we put
$\mathcal{R} = \Lambda$ and omit it from the notation.

Let $\mathcal{C}$ be a finite rank free $\Lambda$-chain complex (e.g.
$\mathcal{C} = \mathcal{C}(L; f, \rho, J)$). Consider the following
pairing:
\begin{equation} \label{eq:pairing} \Theta : H_k(\mathcal{C}) \otimes
   H_{-k}(\mathcal{C}^{\odot}) \to \Lambda_0 = \mathbb{Z}_2,
\end{equation}
which is defined as follows. Given two classes $a \in
H_k(\mathcal{C})$, $g \in H_{-k}(\mathcal{C}^{\odot})$ choose cycles
representing them, $a = [\alpha]$, $g = [\varphi]$, and define
$\Theta(a \otimes g) = \varphi(\alpha)$. It is easy to see that
$\Theta$ is well defined. We will prove below the following.
\begin{lem} \label{l:Theta-non-degenerate} The pairing $\Theta$ is
   non-degenerate.
\end{lem}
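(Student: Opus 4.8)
The plan is to exploit the following elementary but crucial fact: although $\Lambda = \mathbb{Z}_2[t^{-1},t]$ is not a field, it is a \emph{graded field} — every nonzero homogeneous element equals $t^{j}$ for some $j$, hence is invertible, so the only homogeneous ideals of $\Lambda$ are $0$ and $\Lambda$. Consequently the category of graded $\Lambda$-modules behaves exactly like the category of vector spaces: every graded $\Lambda$-module is free, every graded submodule is a direct summand, and $\hom_\Lambda(-,\Lambda)$ is an exact functor on graded modules. Since the differential $\partial$ of $\mathcal{C}$ is $\Lambda$-linear and homogeneous of degree $-1$, the subspaces $\ker\partial$ and $\mathrm{im}\,\partial$ are graded $\Lambda$-submodules. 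I would therefore first split $\mathcal{C}$, as a chain complex of graded $\Lambda$-modules, in the form $\mathcal{C} \cong B' \oplus \mathcal{H} \oplus B$, where $\mathcal{H}$ is a chosen graded complement of $\mathrm{im}\,\partial$ inside $\ker\partial$ (so $\mathcal{H} \cong H_*(\mathcal{C})$ as graded $\Lambda$-modules), $B = \mathrm{im}\,\partial$, and $\partial$ restricts to an isomorphism $B' \xrightarrow{\ \sim\ } B$ while vanishing on $\mathcal{H}\oplus B$. This is nothing more than linear algebra over the graded field $\Lambda$.

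Next I would dualize this decomposition. Because all the summands are free $\Lambda$-modules, $\mathcal{C}^{\odot} = \hom_\Lambda(\mathcal{C},\Lambda)$ splits correspondingly, and a direct inspection of the adjoint differential $\partial^{\odot}$ shows that it carries $B^{\odot}$ isomorphically onto $(B')^{\odot}$ and vanishes on $(B')^{\odot}\oplus\mathcal{H}^{\odot}$. Hence $H_*(\mathcal{C}^{\odot}) \cong \mathcal{H}^{\odot} = \hom_\Lambda(H_*(\mathcal{C}),\Lambda)$, the graded $\Lambda$-dual of the homology, with the degree conventions of \S\ref{Sbs:duality}: the degree $-k$ part of $H_*(\mathcal{C}^{\odot})$ is identified with the space of degree $-k$ homogeneous $\Lambda$-linear functionals on $H_*(\mathcal{C})$. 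Under this identification — which is compatible with the passage to cycle and cocycle representatives, so that $\Theta$ is well defined, as already noted — the pairing $\Theta$ of \eqref{eq:pairing} becomes precisely the evaluation pairing $H_k(\mathcal{C}) \otimes \hom_\Lambda(H_*(\mathcal{C}),\Lambda)_{-k} \longrightarrow \Lambda_0 = \mathbb{Z}_2$.

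It then remains to verify that this evaluation pairing is non-degenerate. Choosing a homogeneous $\Lambda$-basis $e_1,\dots,e_r$ of $H_*(\mathcal{C})$, say with $\deg e_i = d_i$, and the dual basis $e_1^{*},\dots,e_r^{*}$, one computes $H_k(\mathcal{C}) = \bigoplus_{i:\, N_L \mid k-d_i}\mathbb{Z}_2\, t^{(d_i-k)/N_L}e_i$ and $\hom_\Lambda(H_*(\mathcal{C}),\Lambda)_{-k} = \bigoplus_{i:\, N_L \mid k-d_i}\mathbb{Z}_2\, t^{(k-d_i)/N_L}e_i^{*}$, indexed by the same set; in these bases the matrix of the evaluation pairing is the identity, since $\langle t^{a}e_i,\, t^{b}e_j^{*}\rangle = \delta_{ij}\,t^{a+b}$ equals $\delta_{ij}$ whenever the degrees match (i.e.\ $a+b=0$). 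Hence the evaluation pairing, and therefore $\Theta$, is non-degenerate. I expect the only genuine obstacle here to be conceptual: once one observes that $\Lambda$ is a graded field, all the usual potential obstructions to such a duality statement — torsion in $H_*(\mathcal{C})$, $\mathrm{Ext}$-terms in a universal-coefficient sequence — vanish automatically, and what is left is the routine graded linear algebra above, the only mildly delicate point being the degree bookkeeping.
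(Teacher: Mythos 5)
Your argument is correct and is essentially the paper's own: the splitting $\mathcal{C}\cong B'\oplus\mathcal{H}\oplus B$, justified by the invertibility of nonzero homogeneous elements of $\Lambda$, is precisely the decomposition $\mathcal{C}=Z'\oplus d(E)\oplus E$ of Lemma~\ref{l:gr-complement}, and your identification of $H_{-k}(\mathcal{C}^{\odot})$ with the degree $-k$ homogeneous $\Lambda$-linear functionals on $H_*(\mathcal{C})$, compatibly with $\Theta$, is exactly the content of the map $\rho$ of Lemma~\ref{l:non-deg}. The only cosmetic difference is the final step, where you check non-degeneracy of the resulting evaluation pairing using a homogeneous basis and its dual, whereas the paper extends a functional by $\Lambda$-linearity in degrees $k+qN_L$ and adjusts degrees by multiplying with powers of $t$.
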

Note that in view of the canonical isomorphisms $QH_*(L) \cong
H_{*-n}(\mathcal{C}(L;f, \rho, J)^{\odot})$ the non-degeneracy of
$\Theta$ (for $\mathcal{C} = \mathcal{C}(L;f, \rho, J)$) implies that
$H(\tilde{\eta})^0$ is non-degenerate.

We now proceed to prove Lemma~\ref{l:Theta-non-degenerate}.  Given $l
\in \mathbb{Z}$ denote by $(\hom_{\Lambda}(H(\mathcal{C}),
\Lambda))_l$ the space of $\Lambda$-linear morphisms $h:
H(\mathcal{C}) \to \Lambda$ that have degree $l$.  Consider now the
following canonical map:
$$\rho: H_l(\mathcal{C}^{\odot}) \to 
(\hom_{\Lambda}(H(\mathcal{C}), \Lambda))_l,$$ defined as follows.
Given $g \in H_l(\mathcal{C}^{\odot})$, choose a cycle $\varphi \in
\mathcal{C}^{\odot}_{l} = (\hom_{\Lambda}(\mathcal{C}, \Lambda))_l$
that represents $g$. Clearly, $\varphi$ descends to a map
$H_*(\mathcal{C}) \to \Lambda_{*+l}$ which we define to be $\rho(g)$.
It is easy to see that the map $\rho$ is well defined.  Note also that
we have $$\Theta(a \otimes g) = \rho(g)(a), \quad \forall a \in
H_k(\mathcal{C}), g \in H_{-k}(\mathcal{C}^{\odot}).$$
\begin{lem} \label{l:non-deg} Let $\mathcal{C}$ be as above. Then for
   every $l \in \mathbb{Z}$ the map $\rho$ is an isomorphism.
\end{lem}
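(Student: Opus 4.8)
\emph{Proof of Lemma~\ref{l:non-deg} (plan).}
The plan is to reduce the statement to homological algebra over $\Lambda=\mathbb{Z}_2[t^{-1},t]$, the key input being that every finitely generated graded $\Lambda$-module is free. I would establish this first. Since $\deg t=-N_L<0$, the ring $\Lambda$ is concentrated in degrees divisible by $N_L$, so any graded $\Lambda$-module $M$ splits as a direct sum of graded $\Lambda$-submodules $M=\bigoplus_{r=0}^{N_L-1}M^{(r)}$, where $M^{(r)}$ collects the homogeneous components of $M$ of degree $\equiv r \pmod{N_L}$. On each summand, multiplication by $t$ is a $\mathbb{Z}_2$-linear map $M^{(r)}_j\to M^{(r)}_{j-N_L}$ whose two-sided inverse is multiplication by $t^{-1}$; hence all the spaces $M^{(r)}_j$ with $j\equiv r$ are mutually isomorphic $\mathbb{Z}_2$-vector spaces, finite-dimensional when $M$ is finitely generated, and lifting a $\mathbb{Z}_2$-basis of any one of them to all degrees via the powers of $t$ produces a $\Lambda$-basis of $M^{(r)}$. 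Thus $M$ is free; in particular $H(\mathcal{C})$ is a finite-rank free graded $\Lambda$-module. This is exactly where the invertibility of $t$ (equivalently, the use of $\Lambda$ rather than $\Lambda^{+}$) is essential.

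Next I would split $\mathcal{C}$ as a $\Lambda$-complex. Write $Z=\ker d$, $B=\operatorname{im}d$, viewed as graded $\Lambda$-submodules of the finite-rank free module $\mathcal{C}$. Over the PID $\Lambda$ the module $B\subseteq\mathcal{C}$ is free, so the short exact sequence of graded $\Lambda$-modules $0\to Z\to\mathcal{C}\stackrel{d}{\to}B\to 0$ splits and $Z$ is free; since $H(\mathcal{C})$ is free by the previous step, the sequence $0\to B\to Z\to H(\mathcal{C})\to 0$ also splits. Choosing graded splittings realizes $\mathcal{C}$ as a direct sum $\mathcal{C}\cong\mathcal{C}^{\mathrm{min}}\oplus\mathcal{C}^{\mathrm{ac}}$, where $\mathcal{C}^{\mathrm{min}}$ carries the zero differential and equals $H(\mathcal{C})$, and $\mathcal{C}^{\mathrm{ac}}$ is acyclic (it is a shifted copy of $B$ mapping isomorphically onto $B$ by the differential).

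Finally I would dualize. The functor $\hom_\Lambda(-,\Lambda)$ is additive, sends a complex with zero differential to a complex with zero differential, and sends the acyclic complex $\mathcal{C}^{\mathrm{ac}}$ (a free module mapping isomorphically to a free module) to an acyclic complex. Hence $H_l(\mathcal{C}^{\odot})\cong\bigl((\mathcal{C}^{\mathrm{min}})^{\odot}\bigr)_l=\bigl(\hom_\Lambda(H(\mathcal{C}),\Lambda)\bigr)_l$, the degree-$l$ part of the graded $\Lambda$-linear dual of $H(\mathcal{C})$. It remains to check that this identification is the map $\rho$ of the statement: the projection $\mathcal{C}^{\odot}\to(\mathcal{C}^{\mathrm{min}})^{\odot}$ restricts a cocycle $\varphi$ to the chosen copy of $H(\mathcal{C})$ sitting inside the cycles of $\mathcal{C}$, and since $\varphi$ vanishes on boundaries this restriction is exactly the induced map $\rho([\varphi]):H(\mathcal{C})\to\Lambda$. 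Therefore $\rho$ is an isomorphism, which also finishes the proofs of Lemma~\ref{l:Theta-non-degenerate} and of the non-degeneracy assertions in Proposition~\ref{p:duality}.

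There is no geometric difficulty here; the only point that needs care is the grading bookkeeping — the residue-class decomposition, the degree convention on $\mathcal{C}^{\odot}$ (a morphism has degree $k$ if it raises degrees by $k$), and the matching of the abstract splitting isomorphism with the explicitly defined map $\rho$ — together with the observation, flagged above, that $\deg t\le -2$ and $t$ is a unit, which is why everything is done over $\Lambda$ rather than over $\Lambda^{+}$.
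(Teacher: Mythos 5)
Your argument is correct and follows essentially the same route as the paper: the heart in both cases is the graded splitting $\mathcal{C}=Z'\oplus d(E)\oplus E$ with $d|_{E}:E\to d(E)$ an isomorphism, which rests on the invertibility of the nonzero homogeneous elements of $\Lambda$ (your ``every graded $\Lambda$-module is free'' is the paper's Lemma~\ref{l:gr-mod-homogeneous}/Lemma~\ref{l:gr-complement} in slightly stronger form, and both break down over $\Lambda^{+}$ exactly as you flag). The only difference is cosmetic: you conclude by dualizing the splitting functorially and identifying the projection onto $(Z')^{\odot}$ with $\rho$, whereas the paper checks injectivity and surjectivity of $\rho$ directly from the same decomposition; the underlying computation is the same.
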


Before proving this lemma that let us see how it implies the
non-degeneracy of $\Theta$ (hence that of $H(\tilde{\eta})^0$).
\begin{proof}[Proof that $\Theta$ is non-degenerate]
   Let $0 \neq a \in H_k(\mathcal{C})$. Choose a homomorphism $\phi_k:
   H_k(\mathcal{C}) \to \Lambda_0 = \mathbb{Z}_2$ with $\phi_k(a) \neq
   0$.  Extend $\phi_k$ to a an $\Lambda$-linear homomorphism $\phi:
   H_*(\mathcal{C}) \to \Lambda_{*-k}$ (this extension can be done by
   linearity over $\Lambda$ in degrees $* = k + q N_L$ and by $0$ in
   all other degrees).  Clearly $\Theta(a \otimes \rho^{-1}(\phi)) =
   \phi(a) \neq 0$.

   Assume now that $0 \neq g \in H_{-k}(\mathcal{C}^{\odot})$. Then
   $\rho(g) : H_*(\mathcal{C}) \to \Lambda_{*-k}$ is a non-trivial
   homomorphism. This means that there exists $j \in \mathbb{Z}$ and
   $b \in H_j(\mathcal{C})$ such that $\rho(g)(b) \neq 0$. As
   $\rho(g)(b) \in \Lambda_{j-k}$ it follows that $N_L \mid (j-k)$.
   Put $a = t^{(j-k)/N_L} b \in H_k(\mathcal{C})$. Clearly $\rho(g)(a)
   \neq 0$, which implies that $\Theta(a \otimes g) \neq 0$. This
   concludes the proof of the non-degeneracy of $\Theta$, modulo the
   proof of Lemma~\ref{l:non-deg}.
\end{proof}

To prove Lemma~\ref{l:non-deg} we need some more preparation. Let
$\mathcal{R}$ be a commutative graded ring and $M$ a graded
$\mathcal{R}$-module. Denote by $\pi_i: M \to M_i$ the projection on
the $i$'th component of $M$. Let $N \subset M$ be a submodule. We say
that $N$ is a graded submodule if for every $x \in N$ we have
$\pi_i(x) \in N$ for every $i \in \mathbb{Z}$. In that case the
grading of $M$ induces a grading on $N$ and $N$ becomes a graded
$\mathcal{R}$-module by itself. Note that not every submodule of a
graded module is graded. However:
\begin{lem} \label{l:gr-mod-homogeneous}
   \begin{enumerate}[i.]
     \item A submodule $N \subset M$ is a graded submodule iff it is
      generated (over $\mathcal{R}$) by a collection $\{x_s\}_{s \in
        \mathcal{S}}$ of homogeneous elements.  In particular, if
      $N_1, N_2 \subset M$ are graded submodules then so is $N_1 +
      N_2$. \label{I:gr-mod-1}
     \item Let $\mathcal{R} = \Lambda$. Let $M$ be a free finite rank
      graded $\Lambda$-module and $N \subset M$ a graded submodule.
      Then there exists a graded submodule $Q \subset M$ which is a
      complement of $N$, i.e. $N \oplus Q = M$.
      \label{I:gr-mod-2}
   \end{enumerate}
\end{lem}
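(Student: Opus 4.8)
The plan is to treat the two parts separately; part~\ref{I:gr-mod-1} is formal, and the only (mild) content is in part~\ref{I:gr-mod-2}.

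For part~\ref{I:gr-mod-1} I would argue both implications directly. For the ``if'' direction, suppose $N$ is generated over $\mathcal{R}$ by homogeneous elements $\{x_s\}_{s\in\mathcal{S}}$ with $x_s\in M_{d_s}$. Given $x\in N$, write $x=\sum_{s\in F}r_sx_s$ with $F\subset\mathcal{S}$ finite and $r_s\in\mathcal{R}$, decompose each $r_s=\sum_k(r_s)_k$ into homogeneous components $(r_s)_k\in\mathcal{R}_k$, and observe that the degree-$i$ part is $\pi_i(x)=\sum_{s\in F}(r_s)_{i-d_s}x_s$, which is again an $\mathcal{R}$-combination of the $x_s$ and hence lies in $N$; so $N$ is graded. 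For the ``only if'' direction, if $N$ is a graded submodule then $\pi_i(x)\in N$ for every $x\in N$ and every $i$, so $N$ is generated---even as an abelian group---by the homogeneous set $\bigcup_i(N\cap M_i)$. The ``in particular'' statement is then immediate: two graded submodules $N_1,N_2$ are each generated by a homogeneous set, hence so is $N_1+N_2$, which is therefore graded by the first implication.

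For part~\ref{I:gr-mod-2} the point I would use is that $\Lambda=\mathbb{Z}_2[t^{-1},t]$ is a \emph{graded field}: every nonzero homogeneous element is of the form $t^m$ and is invertible. Hence multiplication by $t$ is a $\mathbb{Z}_2$-linear isomorphism $M_j\to M_{j-N_L}$ with inverse multiplication by $t^{-1}$, and because $N$ is a $\Lambda$-submodule it satisfies $tN_j=N_{j-N_L}$, where $N_j:=N\cap M_j$. The complement is then built as follows: partition $\mathbb{Z}$ into residue classes modulo $N_L$; in each class $c$ fix one degree $j_0=j_0(c)$ and pick (over $\mathbb{Z}_2$) a vector-space complement $Q'_{j_0}$ of $N_{j_0}$ in $M_{j_0}$; for any other degree $j=j_0-mN_L$ in that class set $Q'_j:=t^mQ'_{j_0}\subset M_j$; and put $Q:=\bigoplus_jQ'_j$.

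Finally I would verify the two claims, both routine. Degree by degree, applying the $\mathbb{Z}_2$-isomorphism $t^m$ to $M_{j_0}=N_{j_0}\oplus Q'_{j_0}$ gives $M_j=N_j\oplus Q'_j$, so $M=N\oplus Q$. And $Q$ is a graded $\Lambda$-submodule: it is by construction a graded $\mathbb{Z}_2$-subspace, and since $\Lambda$ is generated over $\mathbb{Z}_2$ by $t$ and $t^{-1}$ it suffices that $tQ\subseteq Q$ and $t^{-1}Q\subseteq Q$, which follows from $tQ'_j=t^{m+1}Q'_{j_0}=Q'_{j-N_L}$ and the analogous identity for $t^{-1}$. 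The one place where care is genuinely needed---the ``main obstacle'', such as it is---is exactly this last compatibility: choosing the complements $Q'_j$ independently in each degree would not yield a $\Lambda$-submodule, which is why a single choice per residue class is propagated by powers of $t$. (Note that finiteness of the rank of $M$ plays no role in this argument.)
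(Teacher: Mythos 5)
Your proof is correct, and for part~\ref{I:gr-mod-2} it takes a genuinely different route from the paper's. The paper builds $Q$ inductively: it repeatedly picks a homogeneous element $x_k\in M\setminus(N+Q^{(k-1)})$, shows $\Lambda x_k\cap(N+Q^{(k-1)})=0$ (precisely because $t^r$ is a unit, the same invertibility fact you use), and stops after finitely many steps because $M$ is free of finite rank and $\Lambda$ is a PID, so submodules of $M$ have bounded rank. Your construction instead exploits the ``graded field'' structure of $\Lambda$ head-on: it builds $Q$ degree by degree, choosing a $\mathbb{Z}_2$-complement in one degree per residue class mod $N_L$ and transporting it by powers of $t$. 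This buys you something concrete: you do not need $M$ to have finite rank, nor do you invoke the PID property of $\Lambda$; your observation that finiteness plays no role is accurate, and the paper's own hypothesis of finite rank is, by your argument, unnecessary for this lemma. (It is used elsewhere, e.g.\ in Lemma~\ref{l:gr-complement} and Lemma~\ref{l:non-deg}, where the relevant $\mathcal{C}$ really is a finite-rank pearl complex.) The paper's inductive approach has the mild advantage of producing a homogeneous $\Lambda$-basis of $Q$ as a byproduct, and is slightly shorter to state; but conceptually yours isolates the right structural hypothesis more cleanly, and in fact the paper's own remark following the lemma --- that the statement requires every nonzero element of each $\mathcal{R}_j$ to be invertible --- is exactly the feature your proof makes explicit and central.
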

\begin{proof}
   The proof of statement~\ref{I:gr-mod-1} is straightforward, so we
   omit it.

   We prove~\ref{I:gr-mod-2}. Choose a {\em homogeneous} element $x_1
   \in M \setminus N$ (if there are no such elements clearly $N = M$).
   Put $Q^{(1)} = \Lambda x_1$. We claim that $N \cap Q^{(1)} = 0$.
   Indeed, assume that $0 \neq \lambda x_1 \in N$ for some $\lambda
   \in \Lambda$. As $x_1$ is homogeneous and $N$ is a graded
   submodule, all the homogeneous components of $\lambda x_1$ must lie
   in $N$. In particular there exists $r \in \mathbb{Z}$ such that
   $t^r x_1 \in N$.  As $t^r$ is invertible it follows that $x_1 \in
   N$. A contradiction.

   We now continue the same construction inductively, namely we choose
   a homogeneous element $x_2 \in M \setminus (N + Q^{(1)})$. We claim
   that $\Lambda x_2 \cap (N + Q^{(1)}) = 0$. The argument is similar
   to the preceding one (for $N \cap \Lambda x_1 = 0$). The point is
   that $N + Q^{(1)}$ is a graded submodule. Put $Q^{(2)} = Q^{(1)} +
   \Lambda x_2$. Clearly we have $N \cap E^{(2)} = 0$. Note also that
   $Q^{(2)}$ and $N + Q^{(2)}$ are both graded submodules of $M$.

   Continuing this inductive construction we obtain, after a finite
   number of steps $\nu$, the desired complement $Q = Q^{(\nu)}$ which
   satisfies $N \oplus Q = M$. It is important here that $M$ is free
   of finite rank and that $\Lambda$ is a PID. These two conditions
   assure that every submodule of $M$ is also free with rank $\leq $
   the rank of $M$. In particular the process of defining $Q$
   concludes in a finite number of steps.
\end{proof}
\begin{rem}
   We remark that the statement at point~\ref{I:gr-mod-2} does not
   seem to hold if we replace $\Lambda$ by more general graded rings
   $\mathcal{R}$. In order for the proof above to work we need that
   every non-trivial element in {\em each} $\mathcal{R}_j$ ($\forall
   \, j \in \mathbb{Z}$) is invertible. This obviously holds for
   $\mathcal{R} = \Lambda$, but not for $\mathcal{R} = \Lambda^+$ for
   example.
\end{rem}

Coming back to a finite rank free $\Lambda$-chain complex
$(\mathcal{C},d)$, denote by $Z = \ker d \subset \mathcal{C}$ the
cycles and by $B = d(\mathcal{C}) \subset \mathcal{C}$ the boundaries.
Note that both $Z$ and $B$ are graded $\Lambda$-submodules of
$\mathcal{C}$. The following Lemma is an immediate consequence of
Lemma~\ref{l:gr-mod-homogeneous}-~\ref{I:gr-mod-2}.
\begin{lem} \label{l:gr-complement} There exist graded
   $\Lambda$-submodules $E \subset \mathcal{C}$ and $Z' \subset Z$
   such that $Z$ and $\mathcal{C}$ split as direct sums of graded
   $\Lambda$-modules: $$Z = Z' \oplus d(E), \quad \mathcal{C} = Z'
   \oplus d(E) \oplus E.$$ In particular, the restriction of $d$ to
   $E$, $d_{E} = d|_{E} : E \to d(E)$ is an isomorphism and $d(E) =
   B$.  Moreover, $E \oplus d(E)$ is an acyclic complex and
   $H_*(\mathcal{C}) \cong Z'_*$.
\end{lem}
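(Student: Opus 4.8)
The statement is essentially a formal consequence of Lemma~\ref{l:gr-mod-homogeneous}\ref{I:gr-mod-2}, applied twice. The first step is to record that $Z = \ker d$ and $B = d(\mathcal{C})$ are graded $\Lambda$-submodules of $\mathcal{C}$: this holds because $d: \mathcal{C} \to \mathcal{C}$ is $\Lambda$-linear and homogeneous (of degree $-1$), so kernel and image are spanned by homogeneous elements, and one invokes Lemma~\ref{l:gr-mod-homogeneous}\ref{I:gr-mod-1}. Also, since $\Lambda = \mathbb{Z}_2[t^{-1},t]$ is a principal ideal domain and $\mathcal{C}$ is free of finite rank, every submodule of $\mathcal{C}$ — in particular $Z$ — is again free of finite rank; this is what allows Lemma~\ref{l:gr-mod-homogeneous}\ref{I:gr-mod-2} to be applied with $Z$ in place of $M$.

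\textbf{Step 1: split $Z$.} Since $d^2 = 0$ we have $B \subset Z$, and $B$ is a graded submodule of the free finite-rank graded $\Lambda$-module $Z$. Apply Lemma~\ref{l:gr-mod-homogeneous}\ref{I:gr-mod-2} to obtain a graded complement $Z' \subset Z$ with $Z = Z' \oplus B$.

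\textbf{Step 2: split $\mathcal{C}$.} Now apply Lemma~\ref{l:gr-mod-homogeneous}\ref{I:gr-mod-2} to the graded submodule $Z \subset \mathcal{C}$, obtaining a graded complement $E$ with $\mathcal{C} = Z \oplus E = Z' \oplus B \oplus E$. Since $E \cap Z = 0$ and $Z = \ker d$, the restriction $d|_E : E \to \mathcal{C}$ is injective. Moreover $Z', B \subset Z = \ker d$, so $d(\mathcal{C}) = d(E)$, i.e. $d(E) = B$; hence $d_E := d|_E : E \to d(E) = B$ is an isomorphism, and $\mathcal{C} = Z' \oplus d(E) \oplus E$, $Z = Z' \oplus d(E)$, exactly as claimed.

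\textbf{Step 3: acyclicity and homology.} The submodule $E \oplus d(E) = E \oplus B$ is a subcomplex since $d(E \oplus B) = d(E) = B \subset E \oplus B$; its cycles are $(E \oplus B) \cap \ker d = B$ (if $e + b \in \ker d$ with $e \in E$, $b \in B$, then $d(e) = 0$, so $e = 0$ by injectivity of $d|_E$), and its boundaries are $d(E \oplus B) = B$, so it is acyclic. Finally $H_*(\mathcal{C}) = Z/B = (Z' \oplus B)/B \cong Z'$, a degree-preserving isomorphism since all summands are graded submodules, giving $H_*(\mathcal{C}) \cong Z'_*$. There is no substantial obstacle here; the only point requiring care is the preliminary observation that $\Lambda$ being a PID forces $Z$ to be free of finite rank, which is exactly the hypothesis needed to legitimately apply Lemma~\ref{l:gr-mod-homogeneous}\ref{I:gr-mod-2} in Step~1.
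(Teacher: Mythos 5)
Your proof is correct and follows exactly the route the paper intends: the paper simply declares the lemma an immediate consequence of Lemma~\ref{l:gr-mod-homogeneous}~\ref{I:gr-mod-2}, and you have fleshed out that same argument (two applications of the graded-complement lemma, plus the routine verifications). Your remark that $Z$ is free of finite rank because $\Lambda$ is a PID is precisely the point already noted at the end of the proof of Lemma~\ref{l:gr-mod-homogeneous}, so nothing is missing.
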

This decomposition is of course not canonical. 
\begin{proof}[Proof of Lemma~\ref{l:non-deg}] 
   We first show that $\rho$ is injective. Suppose that $\rho(g) = 0$.
   Choose a cycle $\varphi:\mathcal{C}_* \to \Lambda_{*+l}$
   representing $g$. As $\rho(g)=0$ we have $\varphi|_{Z'} = 0$ and
   since $\varphi$ is a cycle we also have $\varphi|_{d(E)}=0$. Define
   $\psi: \mathcal{C}_* \to \Lambda_{*+l+1}$ by $\psi|_{Z'} =
   \psi|_{E} = 0$ and $\psi|_{d(E)} = \varphi \circ d_{E}^{-1}$.
   Clearly we have $\psi \circ d = \varphi$ which means that $\varphi$
   is a boundary, hence $g = [\varphi]=0$.  This shows that $\rho$ is
   injective.

   It remains to show it is surjective. Let $\varphi:H_*(\mathcal{C})
   \to \Lambda_{*+l}$ be an element in
   $(\hom_{\Lambda}(H(\mathcal{C}), \Lambda))_l$. View $\varphi$ as
   $\varphi: Z'_{*} \to \Lambda_{*+l}$. Extend $\varphi$ by $0$ to $Z'
   \oplus d(E) \oplus E$. Call this extension $\varphi'$.  Clearly
   $\varphi'$ is a cycle in $\mathcal{C}_{l}^{\odot}$ and
   $\rho[\varphi'] = \varphi$. This concludes the proof of
   Lemma~\ref{l:non-deg} as well as that of Proposition
   \ref{p:duality}.
\end{proof}

\begin{rem}\label{rem:Pduality_more}
   a.  The relation between the duality above and Poincar\'e duality
   is as follows: in case $\mathcal{C}(-)$ in the statement is
   replaced with the Morse complex $C(f)$ of some Morse function
   $f:L\to \R$ (and we take $\mathcal{R}=\Z_{2}$) we may define the
   morphism $\eta:C(f)\to s^{n}(C(f)^{\odot})$ as a composition of two
   morphisms with the first being the usual comparison morphism
   $C(f)\to C(-f)$ and the second $C(-f)\to s^{n}(C(f)^{\odot})$ given
   by $\Crit(f)\ni x \to x^{\ast}\in\hom_{\Z_2}(C(f),\Z_2)^{inv}$. We
   have the identifications
   $H_{k}(s^{n}(C(f)^{\odot}))=H_{k-n}(C(f)^{\odot})=H^{n-k}(C(f))$
   and the morphism $\eta$ described above induces in homology the
   Poincar\'e duality map: $H_{k}(L)\to H^{n-k}(L)$.

   b. Proposition \ref{p:duality} also shows that $QH(L)$ together
   with the bilinear map $\epsilon_{L}(-\circ-)$ is a Frobenius
   algebra, though not necessarily commutative.

   c.  The quantum inclusion, $i_{L}$, the duality map, $\eta$, and
   the Lagrangian quantum product determine the module structure by
   the following formula (which extends (\ref{eq:inclusion_mod})):
   \begin{equation}\label{eq:mod_inclusion}
      \langle h, i_{L}(x\circ y) \rangle = \eta(y) (PD(h)\circledast x)
   \end{equation}
   where $h\in H^{\ast}(M;\Z_{2})$, $x,y\in QH_{\ast}(L;\mathcal{R})$.
   Of course, here $\eta(y)\in H_{\ast}(s^{n}(\hom_{\mathcal{R}}
   (\mathcal{C}(L;\mathcal{R};f),\mathcal{R}))$ so that it can be
   evaluated on $QH_{\ast}(L;\mathcal{R})$. As in formula
   (\ref{eq:inclusion_mod}), the pairing on the left side is the
   $\mathcal{R}$-linear extension of the standard Kronecker pairing.
\end{rem}

\subsection{Wide Lagrangians and identifications with singular
  homology} \label{sb:wide-canonical}

Let $L \subset (M, \omega)$ be a monotone wide Lagrangian. This means
that there exists an isomorphism $QH_*(L) \cong (H(L;\mathbb{Z}_2)
\otimes \Lambda)_*$.  However, in general {\em there is no such
  canonical isomorphism!}

To explain this better, denote by $\mathcal{F} = (f, \rho)$ pairs of
Morse data. For any two pairs $\mathcal{F}=(f, \rho)$ and
$\mathcal{F}' = (f', \rho')$ and any two choices of almost complex
structures $J$ and $J'$ denote by $\Psi_{\scriptscriptstyle
  (\mathcal{F}',J'), (\mathcal{F}, J)}:
H_*(\mathcal{C}(L;\mathcal{F},J)) \to H_*(\mathcal{C}(L;\mathcal{F}',
J'))$ the canonical isomorphism between the pearl homologies (as
described at point e in~\S\ref{subsubsec:defin_alg_str}). Denote by
$\Psi^{\textnormal{{\tiny Morse}}}_{{\scriptscriptstyle \mathcal{F}',
    \mathcal{F}}}: H_*(\mathcal{F}) \to H_*(\mathcal{F}')$ the
canonical isomorphism between the Morse homologies associated to
$\mathcal{F}$ and $\mathcal{F}'$. From this point of view,
$H_{\ast}(L;\Z_{2})\otimes \La$ is identified with the family of
homologies $H_{\ast}(\mathcal{F})\otimes\La$ related by the canonical
isomorphisms mentioned above. Similarly, the quantum homology
$QH_{\ast}(L)$ is identified with the family of homologies
$H_{\ast}(\mathcal{C}(L;\mathcal{F},J))$ together with the canonical
isomorphisms $\Psi_{\scriptscriptstyle (\mathcal{F}',J'),
  (\mathcal{F}, J)}$.  Therefore, specifying a map $I:
H(L;\mathbb{Z}_2) \otimes \La\to QH(L)$ is equivalent to having a
family of maps $I_{(\mathcal{F}, J)} : H(\mathcal{F})\otimes\La \to
H(\mathcal{C}(L; \mathcal{F}, J))$ indexed by regular pairs
$(\mathcal{F}, J)$ such that the following diagram commutes for every
two such pairs $(\mathcal{F},J)$, $(\mathcal{F}', J')$:
\begin{equation} \label{eq:HF-H}
   \begin{CD}
      (H(\mathcal{F}) \otimes \Lambda)_* @>{\Psi^{\textnormal{{\tiny
              Morse}}}_{{\scriptscriptstyle \mathcal{F}',
            \mathcal{F}}}}>> (H(\mathcal{F}') \otimes \Lambda)_* \\
      @V{I_{(\mathcal{F},J)}}VV @VV{I_{(\mathcal{F}',J')}}V \\
      H_*(\mathcal{C}(L;\mathcal{F},J)) @>{\Psi_{\scriptscriptstyle
          (\mathcal{F}',J'), (\mathcal{F}, J)}}>>
      H_*(\mathcal{C}(L;\mathcal{F}',J'))
   \end{CD}
\end{equation}
Of course, in order to define such a family of maps it is enough to
choose a reference pair $(\mathcal{F}_0, J_0)$, define
$I_{(\mathcal{F}_0, J_0)}$ and then all the rest of the
$I_{(\mathcal{F},J)}$ are uniquely determined.

The point is that, in general, these choices do not lead to a
canonical map $I$.  To illustrate this, consider for simplicity the
case when $L$ admits a perfect Morse function and consider only Morse
data $\mathcal{F} = (f, \rho)$ where $f:L \to \mathbb{R}$ is a perfect
Morse function. Write the pearl differential $d$ as $d = d_0 + d'$,
where $d_0$ is the Morse differential. As $f$ is perfect we have $d_0
= 0$, so that $d = d'$. Moreover, since we assume that $L$ is wide, a
dimension comparison shows that $d'$ must vanish too (for otherwise
the rank of $QH(L)$ would be smaller than that of $H(L) \otimes
\Lambda$). Thus $d=0$ for every pair $(\mathcal{F}, J)$ as above. It
follows that: $$H_*(\mathcal{F}) = \mathbb{Z}_2 \langle
\textnormal{Crit}_*(f) \rangle, \quad
H_*(\mathcal{C}(L;\mathcal{F},J)) = (\mathbb{Z}_2 \langle
\textnormal{Crit}(f) \rangle \otimes \Lambda)_*.$$ At first glance it
seems that a natural isomorphism between the singular and quantum
homologies can be defined by $I_{(\mathcal{F},J)}(x) = x$ for every $x
\in \textnormal{Crit}(f)$ for every $(\mathcal{F}, J)$ (with the Morse
function in $\mathcal{F}$ being perfect). A more careful inspection
shows that if we define the isomorphisms $I_{(\mathcal{F}, J)}$ in
this way the diagram~\eqref{eq:HF-H} might not commute. A close look
at the definition of the comparison morphism $\Psi_{\scriptscriptstyle
  (\mathcal{F}',J'), (\mathcal{F}, J)}$ from point e
in~\S\ref{subsubsec:defin_alg_str} (see also an alternative
description in the proof of Proposition~\ref{p:duality}) shows that
$\Psi_{\scriptscriptstyle (\mathcal{F}',J'), (\mathcal{F}, J)}$ might
differ from $\Psi^{\textnormal{{\tiny Morse}}}_{{\scriptscriptstyle
    \mathcal{F}', \mathcal{F}}}$ by some quantum terms. In fact we
have:
\begin{equation} \label{eq:psi-morse-qh} \Psi_{\scriptscriptstyle
     (\mathcal{F}',J'), (\mathcal{F}, J)} = \Psi^{\textnormal{{\tiny
         Morse}}}_{{\scriptscriptstyle \mathcal{F}', \mathcal{F}}} +
   \sum_{i\geq 1} \Phi^i_{\scriptscriptstyle (\mathcal{F}',J'),
     (\mathcal{F}, J)} t^i,
\end{equation}
where the term $\Phi^i$ maps $\mathbb{Z}_2 \langle
\textnormal{Crit}_*(f) \rangle$ to $\mathbb{Z}_2 \langle
\textnormal{Crit}_{*+i N_L}(f') \rangle$ and is defined by counting
elements in some moduli spaces involving $J$ and $J'$-holomorphic
disks with total Maslov index $iN_L$. (See the precise description in
the proof of Proposition~\ref{p:duality} in~\S\ref{Sbs:duality}.)  It
is not hard to write down examples where some of the quantum terms
$\Phi^i$ do not vanish (see~\cite{Bi-Co:Yasha-fest, Bi-Co:qrel-long}).
In fact, this turns out to be the case for the Clifford torus
$\mathbb{T}_{\textnormal{clif}} \subset {\mathbb{C}}P^n$
(see~\S\ref{subsec:Cliff_calc}).

Despite the above there are situations in which a {\em canonical}
isomorphism $QH(L) \cong H(L;\mathbb{Z}_2) \otimes \Lambda$ exists, at
least in some degrees.

\begin{prop} \label{p:wide-can-iso} Let $L^n \subset (M^{2n}, \omega)$
   be a monotone Lagrangian (not necessarily wide or narrow).
   \begin{enumerate}[i.]
     \item For every $q \geq n-N_L+2$ there exists a canonical
      isomorphism $I: H_q(L;\mathbb{Z}_2) \longrightarrow
      Q^{+}H_{q}(L)$.  Moreover, this isomorphism maps the fundamental
      class $[L]$ to the unity in $Q^{+}H(L)$. \label{I:can-iso-1}
     \item If $L$ is not narrow then the isomorphism $I$
      from~\ref{I:can-iso-1} exists also for $q = n-N_L+1$.
      \label{I:can-iso-2}
     \item If $L$ is wide, the isomorphism $I$ induces a canonical
      embedding $H_{q}(L;\Z_{2})\otimes \La_{\ast} \hooklongrightarrow
      QH_{q+\ast}(L)$ for every $q \geq n-N_L + 1$.  In particular
      (for wide Lagrangians), if $N_L \geq n+1$ we have a canonical
      isomorphism $(H(L;\mathbb{Z}_2) \otimes \Lambda)_* \cong
      QH_*(L)$. \label{I:can-iso-3}
   \end{enumerate}
\end{prop}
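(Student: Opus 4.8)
\emph{Approach.} The plan is to work throughout over the positive ring $\La^{+}=\Z_{2}[t]$ with $|t|=-N_{L}$ and to exploit the positivity of the pearl differential. On $\mathcal{C}^{+}(L;f,\rho,J)=CM(f)\otimes\La^{+}$ write $d=\partial_{0}+\sum_{i\geq 1}\partial_{i}t^{i}$, where $\partial_{0}=d^{\textnormal{Morse}}$ is the classical component (Theorem~\ref{thm:alg_main}) and $\partial_{i}\colon CM_{*}(f)\to CM_{*-1+iN_{L}}(f)$, so each higher term raises the Morse index by $iN_{L}-1\geq 1$. The decisive elementary remark is a degree count: in any total degree $q\geq n-N_{L}+1$ the group $\mathcal{C}^{+}_{q}(L;f,\rho,J)$ has no $t$-terms at all, since a summand $CM_{q+kN_{L}}(f)\,t^{k}$ with $k\geq 1$ would require critical points of index $q+kN_{L}\geq n+1$, which do not exist; hence $\mathcal{C}^{+}_{q}(L;f,\rho,J)=CM_{q}(f)$ as $\Z_{2}$-vector spaces for every $q\geq n-N_{L}+1$.

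\emph{Points~\ref{I:can-iso-1} and~\ref{I:can-iso-2}.} For $q\geq n-N_{L}+2$ the quantum part of $d$ vanishes on $\mathcal{C}^{+}_{q+1}$ and $\mathcal{C}^{+}_{q}$: each $\partial_{i}$ ($i\geq 1$) sends these into index $\geq q-1+N_{L}\geq n+1$, hence into $0$. Thus the segment $\mathcal{C}^{+}_{q+1}\to\mathcal{C}^{+}_{q}\to\mathcal{C}^{+}_{q-1}$ of the pearl complex computes the Morse homology $H_{q}^{\textnormal{Morse}}(f)\cong H_{q}(L;\Z_{2})$. A companion count shows that the canonical comparison chain maps between two data sets (\S\ref{subsubsec:defin_alg_str}~e), which split as $\Phi_{0}+\sum_{i\geq 1}\Phi_{i}t^{i}$ with $\Phi_{i}$ raising index by $iN_{L}$, reduce in degree $q$ to their Morse part $\Phi_{0}$, so they intertwine the above identifications with the canonical comparison isomorphisms in Morse homology; equivalently, diagram~\eqref{eq:HF-H} commutes in these degrees. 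This yields a canonical isomorphism $I\colon H_{q}(L;\Z_{2})\to Q^{+}H_{q}(L)$, and, using an $f$ with a single maximum $P$ — which is automatically a (Morse, hence pearl) cycle representing $[L]$, since $H_{n}(L;\Z_{2})=\Z_{2}$ forces $d^{\textnormal{Morse}}P=0$ — one checks $I([L])$ is the unity of $Q^{+}H(L)$. For $q=n-N_{L}+1$ the only surviving quantum term is $\partial_{1}\colon CM_{q}(f)\to CM_{n}(f)$, so a Morse cycle $z$ of degree $q$ is a pearl cycle iff $\partial_{1}z=0$; since $\partial_{1}z$ is then a Morse cycle of degree $n$ (from the $t^{1}$-part of $d^{2}=0$) and $CM_{n+1}(f)=0$, this holds iff its Morse class in $H_{n}(L;\Z_{2})=\Z_{2}[L]$ vanishes. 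The resulting map $[z]\mapsto[\partial_{1}z]$ is precisely the first differential of the degree spectral sequence, hence independent of $(f,\rho,J)$; I would show it is zero when $L$ is not narrow by evaluating on a single-maximum $f$: if it sent some $[z]$ to $[L]$, then $\partial_{1}z=P$ and $dz=Pt$, so $t\cdot(\textnormal{unity})=0$ in $Q^{+}H(L)$ and hence in $QH(L)$, forcing (as $t$ is invertible in $QH(L)\cong HF(L)$) that $QH(L)=0$, i.e. $L$ narrow, a contradiction. With that differential zero, the argument for~\ref{I:can-iso-1} applies unchanged to give the canonical $I$ in degree $n-N_{L}+1$ as well.

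\emph{Point~\ref{I:can-iso-3}.} If $L$ is wide then $L$ is not narrow, so the canonical isomorphisms $I\colon H_{q}(L;\Z_{2})\to Q^{+}H_{q}(L)$ hold for all $q\geq n-N_{L}+1$; moreover by Remark~\ref{rem:product_min}~c the differential of $\mathcal{C}_{min}(L)$, hence of $\mathcal{C}^{+}_{min}(L)$, vanishes, so $Q^{+}H(L)=(H(L;\Z_{2})\otimes\La^{+})_{*}$, $QH(L)=(H(L;\Z_{2})\otimes\La)_{*}$, and $Q^{+}H(L)\to QH(L)$ is the obvious inclusion. Composing $I$ with this inclusion gives, for $q\geq n-N_{L}+1$, an injection $\iota_{q}$ of $H_{q}(L;\Z_{2})$ onto the $t^{0}$-summand of $QH_{q}(L)$; extending $\iota_{q}$ $\La$-linearly produces the canonical embedding $H_{q}(L;\Z_{2})\otimes\La_{*}\hooklongrightarrow QH_{q+*}(L)$, which is injective because $QH(L)$ is a free $\La$-module. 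When $N_{L}\geq n+1$ one has $n-N_{L}+1\leq 0$, so these embeddings are available for every $q$ with $H_{q}(L;\Z_{2})\neq 0$; their sum over $q$ is a map $(H(L;\Z_{2})\otimes\La)_{*}\to QH_{*}(L)$ which restricts in each total degree to a direct sum of the isomorphisms $\iota_{q}$, hence is the desired canonical isomorphism.

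\emph{Main obstacle.} All analytic input is imported (Theorem~\ref{thm:alg_main}, Remark~\ref{rem:product_min}), so the real work is \emph{canonicity}. The delicate points I anticipate are: (a) verifying that the degree count also annihilates the quantum corrections $\Phi_{i}$ in the comparison chain maps, so that the degree-wise identification with singular homology is compatible with \emph{all} comparison isomorphisms and diagram~\eqref{eq:HF-H} commutes in the relevant range; and (b) in point~\ref{I:can-iso-2}, confirming that $[z]\mapsto[\partial_{1}z]$ really is the choice-independent first differential of the degree spectral sequence, so that evaluating it on one convenient Morse function suffices. Everything else reduces to bookkeeping with the $\La^{+}$-grading.
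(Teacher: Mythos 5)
Your proposal is correct and follows essentially the same route as the paper: the degree count identifying $\mathcal{C}^{+}_{q}(L;f,\rho,J)$ with $CM_{q}(f)$ for $q\geq n-N_{L}+1$, the vanishing of the quantum corrections $\Phi^{i}$ of the comparison maps to get canonicity (commutativity of~\eqref{eq:HF-H}), the ``$dz=Pt$ forces $QH(L)=0$'' contradiction in the borderline degree, and the minimal model with vanishing differential (Remark~\ref{rem:product_min}) to get injectivity into $QH(L)$ in the wide case. Your phrasing of point~ii via the first differential of the degree spectral sequence and of point~iii via injectivity of $p:Q^{+}H(L)\to QH(L)$ are only cosmetic variants of the paper's argument (note that the image of $p\circ I$ is not literally the $t^{0}$-summand in general, but only its injectivity, plus a degree count when $N_{L}\geq n+1$, is actually needed).
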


\begin{proof} 
   Let $\mathcal{F}=(f,\rho)$ be a a pair formed by a Morse function
   and a Riemannian metric on $L$ and let $J$ be an almost complex
   structure on $M$ such that the pearl complex
   $\mathcal{C}^{+}(L;\mathcal{F}, J)$ as well as the Morse complex
   $C(\mathcal{F})$ are defined. Throughout the proof we will assume
   without loss of generality that $f$ has a unique maximum which we
   denote by $m$.

   Write the pearl differential $d$ on $\mathcal{C}^+(L;\mathcal{F},
   J) = C(\mathcal{F}) \otimes \Lambda^+$ as $$d =
   \partial_0 + \partial_1 t + \cdots + \partial_{\nu} t^{\nu},$$
   where $\partial_0$ is the Morse differential and $\partial_i$ is an
   operator acting as $\partial_i: C_{k}(\mathcal{F}) \to C_{k-1+i
     N_L}(\mathcal{F})$. For degree reasons we have:
   \begin{equation}\label{eq:ident_degree}
      C_{\geq n-N_{L}+1}(\mathcal{F}) = 
      \mathcal{C}^{+}_{\geq n-N_{L}+1}(L;f,\rho,J).
   \end{equation}
   Moreover, $d = \partial_0$ on $C_{\geq n-N_L+2}(\mathcal{F})$ and
   $d = \partial_0 + \partial_1 t$ on $C_{n-N_L + 1}(\mathcal{F})$,
   where $\partial_1: C_{n-N_L+1}(\mathcal{F}) \to C_n(\mathcal{F})$.

   Point~\ref{I:can-iso-1} now easily follows since $x \in C_{\geq
     n-N_L+2}(\mathcal{F})$ is a $\partial_0$-cycle iff it is a
   $d$-cycle and $x$ is a $\partial_0$-boundary iff it is a
   $d$-boundary. Therefore, the map $$\tilde{I}:C_q(\mathcal{F})
   \longrightarrow \mathcal{C}^{+}_{q}(L;\mathcal{F},J), \quad
   \tilde{I}(x)= x,$$ descends to an isomorphism $I$ in homology.  As
   $m$ represents the fundamental class, $I$ clearly sends $[L]$ to
   the unity of $QH^+(L)$. This completes the proof
   of~\ref{I:can-iso-1} except of the canonicity of $I$ which will be
   proved soon.

   We turn to point~\ref{I:can-iso-2}.  We claim again that $x \in
   C_{n-N_{L}+1}(\mathcal{F})$ is a $\partial_0$-cycle iff it is a
   $d$-cycle and $x$ is a $\partial_0$-boundary iff it is a
   $d$-boundary. Indeed, the claim is obvious for boundaries since $d
   = \partial_0$ on $C_{n-N_L + 2}(\mathcal{F})$. It remains to show
   that $\partial_0$ and $d$-cycles coincide on
   $C_{n-N_L+1}(\mathcal{F})$. Let $x \in C_{n-N_L+1}(\mathcal{F})$.
   We have $d(x) = \partial_0(x) +
   \partial_1(x)t$. This implies that if $x$ is a $d$-cycle then it is
   also a $\partial_0$-cycle. Suppose now that $x$ is a
   $\partial_0$-cycle. We then have $d(x) =
   \partial_0(x) + \partial_1(x) t = \partial_1(x)t$. If $d(x) \neq 0$
   then $d(x) = mt$ which implies that $m$ is a boundary hence $QH(L)
   = 0$ and $L$ is narrow, contrary to our assumption. Thus $d(x)=0$
   and $x$ is a $d$-cycle.
   
   We can now extend the definition of $\tilde{I}$ to
   $\tilde{I}:C_{n-N_L+1}(\mathcal{F}) \longrightarrow
   \mathcal{C}^{+}_{n-N_L+1}(L;\mathcal{F},J)$ by $\tilde{I}(x) = x$,
   and as before $\tilde{I}$ descends to an isomorphism $I$ in
   homology.

   To conclude the proofs of
   points~\ref{I:can-iso-1},~\ref{I:can-iso-2} it remains to show that
   $I$ is canonical in the sense discussed before the statement of the
   proposition. To see this, write the map $\tilde{I}$ as
   $\tilde{I}_{\scriptscriptstyle(\mathcal{F}, J)}$ to denote the
   relation to the data $(\mathcal{F}, J)$. For degree reasons it
   follows that the maps $\Phi^i_{\scriptscriptstyle
     (\mathcal{F}',J'), (\mathcal{F}, J)}$ in~\eqref{eq:psi-morse-qh}
   vanish on $\mathcal{C}^+_q$ for $q \geq n-N_L+1$, hence the squares
   in~\eqref{eq:HF-H} commute. This completes the proof of the first
   point of the proposition.

   We now turn to the proof of~\ref{I:can-iso-3}.  Consider the
   canonical map $p: Q^+H(L) \to QH(L)$ induced by the extension of
   coefficients $\Lambda^+ \to \Lambda$. The embedding
   $H_q(L;\mathbb{Z}_2) \otimes \Lambda_* \hooklongrightarrow
   QH_{q+*}(L)$ is induced by the map $p \circ I: H_q(L;\mathbb{Z}_2)
   \to QH_q(L)$. So, the proof is reduced to showing that $p \circ I$
   is an injection. To see this, let $x \in C_q(\mathcal{F})$ be a
   $\partial_0$-cycle with non-trivial Morse homology class
   $[x]_{{\tiny Morse}}$, where $q \geq n-N_L+1$. By what we have just
   proved, $x$ is also a $d$-cycle. We have to prove that $x$, when
   viewed as an element in $\mathcal{C}_q(L; \mathcal{F},J)$, is not a
   $d$-boundary. Consider the minimal model $\mathcal{C}_{min}(L)$
   together with the structural map
   $\phi:\mathcal{C}(L;\mathcal{F},J)\to \mathcal{C}_{min}(L)$ as
   constructed in~\S\ref{subsec:minimal}. Recall that by that
   construction $\phi_0(x) = [x]_{{\tiny Morse}} \neq 0$, hence
   $\phi(x) \neq 0$. On the other hand, by~\ref{rem:product_min} c,
   the differential of $\mathcal{C}_{min}(L)$ vanishes because $L$ is
   wide. As $\phi$ is a chain map it follows that $x$ cannot be a
   $d$-boundary.
\end{proof}

\section{Proofs of the main theorems}\label{sec:proof_main}
This section is focused on proving the three main theorems of the
introduction.

\

Before we go on with the proof we would like to make a small but
useful algebraic observation which will be used many times in the
sequel. Consider the graded vector space $H(L;\mathbb{Z}_2) \otimes
\Lambda^{+}$ endowed with the  grading coming from both factors.  Let $a
\in (H(L;\mathbb{Z}_2) \otimes \Lambda^{+})_l$ be a homogeneous
element (of degree $l$). Then we can decompose $a$ in a unique way as:
$$a = \sum_{r \geq 0} a_{l+rN_L} t^r, \quad 
a_{l+rN_L} \in H_{l_+rN_L}(L;\mathbb{Z}_2).$$ Suppose now that
$a_{l+r_0N_l} = [L] \in H_n(L;\mathbb{Z}_2)$ for some $r_0$.  In that
case we will say that $a$ contains $[L]t^{r_0}$.  (Note that this can
happen only if $l+r_0N_L = n$.) Then, as $H_{>n}(L;\mathbb{Z}_2) = 0$,
$|t|<0$, the decomposition of $a$ cannot contain terms with $t$ of
higher order than $r_0$, i.e. $$a = [L]t^{r_0} + a_{n-N_L}t^{r_0-1} +
a_{n-2N_L}t^{r_0-2} + \cdots.$$ We will abbreviate this by writing $a
= [L]t^{r_0} + l.o.(t)$, where $l.o.(t)$ stands for terms of lower
order in $t$. Similarly, if a homogeneous element $a$ contains
$[pt]t^{l_0}$ for some $l_0 \geq 0$, then must have $a = [pt]t^{l_0} +
h.o.(t)$, where $h.o.(t)$ stands for terms of higher order in $t$.

A similar discussion applies to homogeneous elements in the positive
quantum homology $QH_*(M;\Lambda^+) = (H(M;\mathbb{Z}_2) \otimes
\Lambda^+)_*$, as well as in the positive pearl complex
$\mathcal{C}^+(L;f, \rho, J)$ in case the function $f$ has a unique
maximum and a unique minimum.

\subsection{Proof of Theorem~\ref{thm:floer=0-or-all}} \label{sb:prf-hf=0-or-all}

The argument is based on the minimal model machinery
from~\S\ref{subsec:minimal}. Consider the pearl complex
$\mathcal{C}^{+}(f,J)$ and recall from \S\ref{subsec:minimal} that
there exists a chain complex $(\mathcal{C}^{+}_{min}(L) =
H(L;\mathbb{Z}_2) \otimes \Lambda^+,\delta)$, unique up to
isomorphism, and chain morphisms $\phi:\mathcal{C}^{+}(f,J)\to
\mathcal{C}^{+}_{min}(L)$, $\psi:\mathcal{C}^{+}_{min}(L)\to
\mathcal{C}^{+}(f,J)$ so that $\phi\circ\psi=id$, $\delta_{0}=0$
(where $\delta_{0}$ is obtained from $\delta$ by putting $t=0$) and
$\phi$, $\psi$, $\phi_{0}$, $\psi_{0}$ induce isomorphisms in quantum
and Morse homologies.  By Remark \ref{rem:product_min} the quantum
product in $\mathcal{C}^{+}(f,J)$ can be transported by the morphisms
$\phi$ and $\psi$ to a product $\ast:
\mathcal{C}^{+}_{min}(L)\otimes\mathcal{C}^{+}_{min}(L)\to
\mathcal{C}^{+}_{min}(L)$ which is a chain map and a quantum
deformation of the singular intersection product and so that $[L]\in
H_{n}(L;\Z_{2})$ is the unity at the chain level (notice though that,
as the maps $\phi$ and $\psi$ are not canonical, this product is not
canonical either). As discussed before we put
$\mathcal{C}_{min}(L)=\mathcal{C}^{+}_{min}(L)\otimes_{\La^{+}}\La =
H(L;\mathbb{Z}_2) \otimes \Lambda$.  As in the statement of the
theorem we assume that $H_{\ast}(L;\Z_{2})$ is generated by $H_{\geq
  n-l}(L;\Z_{2})$.  In view of Remark \ref{rem:product_min} b. the
first point of the theorem reduces to the next lemma.

\begin{lem} \label{lem:min_diff} Suppose that $N_{L}\geq l+1$.  If
   $\delta$, the differential of the minimal pearl complex, does not
   vanish, then $[L]$ is a boundary in $\mathcal{C}_{min}(L)$,
   $QH(L)=0$, and $N_{L}=l+1$.
\end{lem}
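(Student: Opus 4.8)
The plan is to expand the differential of the minimal pearl complex in powers of $t$ and to show that its bottom $t$-term is a derivation of the classical intersection ring; the monotonicity bound $N_L \ge l+1$ then pins that term down completely.

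Write $\delta = \delta_0 + \delta_1 t + \cdots + \delta_\nu t^\nu$, where $\delta_p$ acts as $\delta_p : H_k(L;\mathbb{Z}_2) \to H_{k-1+pN_L}(L;\mathbb{Z}_2)$, and recall $\delta_0 = 0$ by the construction of the minimal model. Expand the chain-level product $*$ on $\mathcal{C}^{+}_{min}(L)$ (a chain map whose classical part $*_0$ is the singular intersection product $\cdot$, as recalled above) as $a * b = \sum_{q \ge 0}(a *_q b)\, t^q$. Suppose $\delta \neq 0$ and let $j_0 \ge 1$ be minimal with $\delta_{j_0} \neq 0$. The first — and main — step is to read off the coefficient of $t^{j_0}$ in the Leibniz identity $\delta(a * b) = \delta(a) * b + a * \delta(b)$: since $\delta_p = 0$ for all $0 \le p < j_0$, every term involving such a $\delta_p$ drops out, and what remains is $\delta_{j_0}(a \cdot b) = \delta_{j_0}(a) \cdot b + a \cdot \delta_{j_0}(b)$. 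Hence $\delta_{j_0}$ is a derivation of the ring $(H_*(L;\mathbb{Z}_2), \cdot)$.

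The rest is degree bookkeeping, made rigid by the hypotheses. Since $H_*(L;\mathbb{Z}_2)$ is generated as a ring by $H_{\ge n-l}(L;\mathbb{Z}_2)$, the nonzero derivation $\delta_{j_0}$ cannot vanish on all of $H_{\ge n-l}(L;\mathbb{Z}_2)$ (a derivation vanishing on a generating set vanishes identically); choose a homogeneous $x \in H_k(L;\mathbb{Z}_2)$ with $k \ge n-l$ and $\delta_{j_0}(x) \neq 0$. Then $0 \neq \delta_{j_0}(x) \in H_{k-1+j_0 N_L}(L;\mathbb{Z}_2)$ forces $k - 1 + j_0 N_L \le n$, so $l+1 \le j_0 N_L \le n+1-k \le l+1$. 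Thus all inequalities are equalities: $j_0 = 1$, $N_L = l+1$, $k = n-l$, and $0 \neq \delta_1(x) \in H_n(L;\mathbb{Z}_2)$, so $\delta_1(x) = [L]$. Because $N_L = l+1 \ge 2$ by monotonicity, $l \ge 1$, hence for each $p \ge 2$ the class $\delta_p(x)$ would lie in $H_{n-l-1+p(l+1)}(L;\mathbb{Z}_2)$, a group in degree $> n$, so it vanishes; together with $\delta_0(x) = 0$ this yields $\delta(x) = [L]t$ in $\mathcal{C}^{+}_{min}(L)$.

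Finally, $\delta(x) = [L]t$ is exactly the criterion of Remark~\ref{rem:product_min}~b, giving $QH(L) = 0$; and in $\mathcal{C}_{min}(L) = \mathcal{C}^{+}_{min}(L) \otimes_{\Lambda^+} \Lambda$, where $t$ is invertible, $\delta(t^{-1}x) = [L]$, so $[L]$ is a boundary. Combined with $N_L = l+1$, this is precisely the assertion of the lemma. The only genuinely delicate point is the first step — checking that the bottom $t$-coefficient of $\delta$ is an honest derivation of the intersection product; it relies on $\delta_0 = 0$ and on $*_0$ being the classical product, and once it is established the conclusion is forced by elementary degree counting.
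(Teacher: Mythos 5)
Your proof is correct, and it takes a genuinely different route from the paper's. The paper splits into two cases according to whether $\delta$ vanishes on $H_{n-l}(L;\mathbb{Z}_2)$: in the vanishing case it runs a downward induction on degree, using the Leibniz rule for the $r$-fold quantum product together with the generating hypothesis to propagate the vanishing of $\delta$ to all of $H_*(L;\mathbb{Z}_2)$; in the non-vanishing case it does a short degree count. Your argument instead isolates the bottom $t$-coefficient $\delta_{j_0}$ of $\delta$ and observes that, by reading off the $t^{j_0}$-coefficient of the chain-level Leibniz identity, $\delta_{j_0}$ is an honest derivation of the \emph{classical} ring $(H_*(L;\mathbb{Z}_2),\cdot)$. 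Since a derivation vanishing on generators vanishes identically, the generating hypothesis produces a homogeneous $x \in H_{\ge n-l}$ with $\delta_{j_0}(x)\neq 0$, and the chain of inequalities $l+1 \le j_0 N_L \le n+1-|x| \le l+1$ collapses everything at once to $j_0=1$, $N_L=l+1$, $|x|=n-l$, $\delta(x)=[L]t$. What this buys you is the elimination of the paper's inductive case analysis: the derivation property of $\delta_{j_0}$ packages in one conceptual step what the paper accomplishes by induction on degree. Both proofs ultimately rest on the same ingredients (chain-level Leibniz for the transported product on $\mathcal{C}^+_{min}$, the generating hypothesis, and degree counting with $\delta_0 = 0$), but your factoring through the classical derivation is a cleaner and slightly more illuminating organization of the same information.
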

\begin{proof}
   There are two possibilities: either $\delta = 0$ on
   $H_{n-l}(L;\mathbb{Z}_2)$, or $\delta \neq 0$ on that homology.

   Assume first that $\delta = 0$ on $H_{n-l}(L;\mathbb{Z}_2)$.  We
   claim that $\delta = 0$ everywhere. To show this we will prove by
   induction that $\delta = 0$ on $H_{\geq n-l-s}(L;\mathbb{Z}_2)$ for
   every $s \geq 0$.
   
   Indeed, for $s=0$ this is true since $N_L \geq l+1$ implies that
   $\delta = 0$ on $H_{\geq n-l+1}(L;\mathbb{Z}_2)$, and moreover we
   have assumed that $\delta = 0$ on $H_{n-l}(L;\mathbb{Z}_2)$.
   Assume now that the assertion is true for some $s \geq 0$ and let
   $x \in H_{\geq n-l-s-1}(L;\mathbb{Z}_2)$. By the assumptions of
   Theorem~\ref{thm:floer=0-or-all} we can write $x = \sum_j a_j$
   where each $a_j$ is expressed as (classical) intersection products
   of elements from $H_{\geq n-l}(L;\mathbb{Z}_2)$. We now claim that
   $\delta(a_j) = 0$ for every $j$. To see this write $a_1 = x_1 \cdot
   \ldots \cdot x_r$ with $x_i \in H_{\geq n-l}(L;\mathbb{Z}_2)$,
   where $- \cdot -$ is the classical intersection product. We then
   have $\delta (x_{i})=0$ and we write $\delta(x_{1}\ast
   x_{2}\ast\ldots\ast x_{r})= \sum_{i} x_{1}\ast \ldots
   \delta(x_{i})\ast\ldots\ast x_{r}=0$. At the same time
   \begin{equation}\label{eq:prod_quantprod}
      x_{1}\ast x_{2}\ast\ldots\ast x_{r}= a_1 + \sum_{q>0}z_{q}t^{q},
   \end{equation} 
   with $z_{j}\in H_{\geq n-l-s}(L;\Z_{2})$. (Recall that $|t| = -N_L
   \leq -2$). By the induction hypothesis we have $\delta (z_{j})=0$,
   hence $\delta(a_1) = 0$.  The same argument shows that $\delta(a_j)
   = 0$ for every $j$.  It follows that $\delta(x) = 0$. This proves
   that $\delta = 0$ on $H_{\geq n-l-s-1}(L;\mathbb{Z}_2)$ and
   completes the induction.

   We now turn to the second case: $\delta \neq 0$ on
   $H_{n-l}(L;\mathbb{Z}_2)$. First note that we must have $N_L=l+1$.
   Indeed, if $N_L \geq l+2$ then by degree reasons $\delta = 0$ on
   $H_{n-l}(L;\mathbb{Z}_2)$ and by what we have just proved we obtain
   $\delta = 0$ everywhere, a contradiction. Thus $N_L = l+1$. By
   degree reasons again it follows that $\delta$ sends
   $H_{n-l}(L;\mathbb{Z}_2)$ non-trivially to $H_n(L;\mathbb{Z}_2)t$.
   Thus there exists $x \in H_{n-l}(L;\mathbb{Z}_2)$ such that
   $\delta(x) = [L]t$. This implies that $[L]$ is a boundary. As $[L]$
   is the unity of $QH(L)$ we also obtain $QH(L) = 0$.
\end{proof}

We now pursue with the proof of the second point of
Theorem~\ref{thm:floer=0-or-all}.  Thus we assume that $L$ is narrow
and so $[L]$ is a boundary in $\mathcal{C}_{min}(L)$ and $N_{L}\leq
l+1$.  Let $K$ be the constant in the statement of the theorem,
$K=\max\{l+1,n+1-N_{L}\}$ when $N_{L}< l+1$ and $K=l+1$ when
$N_{L}=l+1$. Notice that
the degree $n$ component of $\mathcal{C}_{min}^{+}(L)$ is
one-dimensional.  This implies that, despite the fact that the minimal
pearl model is determined only up to a non-canonical isomorphism, the
generator in degree $n$ is canonical. It will be denoted (as before) by $[L]$.  

In the following lemma we denote the differential of the complex
$\mathcal{C}^{+}_{min}(L)$ by $\delta^+$ to distinguish it from its
extension $\delta = \delta^{+} \otimes 1$ defined on
$\mathcal{C}_{min}(L) = \mathcal{C}^{+}_{min}(L) \otimes_{\Lambda^+}
\Lambda$. The main step is:
\begin{lem} \label{lem:bdry_order} Either there exists some $x\in
   H_{\ast}(L;\Z_{2})$ so that $\delta^+(x)=[L]t^{q}+ l.o.(t)$ or
   there are $y,z\in H_{\ast}(L;\Z_{2})$ so that $y\ast z =
   [L]t^{q}+l.o.(t)$, where in both cases $0 < qN_{L} \leq K$.
\end{lem}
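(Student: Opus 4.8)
The plan is to run the whole argument inside the minimal pearl model $\mathcal{C}^{+}_{min}(L)=(H_{\ast}(L;\Z_{2})\otimes\La^{+},\delta^{+})$ of Corollary~\ref{cor:min_pearls}, on which, by Remark~\ref{rem:product_min}, the transported quantum product $\ast$ is $\La^{+}$-bilinear, obeys the Leibniz rule with respect to $\delta^{+}$, deforms the intersection product (so the $t^{0}$-part of a product of singular classes is their intersection product), and has $[L]$ as unit. Since $\delta^{+}$ is positive and minimal it splits as $\delta^{+}=\sum_{i\geq 1}\delta_{i}t^{i}$ with $\delta_{i}:H_{m}(L;\Z_{2})\to H_{m-1+iN_{L}}(L;\Z_{2})$, and the observation used throughout is that $\delta_{i}$ vanishes on $H_{\geq n-l}(L;\Z_{2})$ whenever $iN_{L}>l+1$, because the image would then land in degree $>n$. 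As $L$ is narrow, $QH(L)=0$, so by Remark~\ref{rem:product_min}b there is $a\in\mathcal{C}_{min}(L)$ of degree $n+1$ with $\delta a=[L]$; writing $a=\sum_{p\geq 1}b_{p}t^{-p}$ with $b_{p}\in H_{n+1-pN_{L}}(L;\Z_{2})$ and extracting the $t^{0}$-coefficient of $\delta a=[L]$ gives $\sum_{p}\delta_{p}(b_{p})=[L]$ inside $H_{n}(L;\Z_{2})=\Z_{2}[L]$; hence some $b_{p}$ satisfies $\delta^{+}(b_{p})=[L]t^{p}+l.o.(t)$, so the first alternative of the Lemma holds at some level, at which $pN_{L}\leq n+1$. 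Let $q_{0}$ denote the smallest such level; then $q_{0}N_{L}\leq n+1$, and it suffices to exhibit either alternative at a level $\leq K/N_{L}$. If $q_{0}N_{L}\leq K$ we are done with $q=q_{0}$. If $N_{L}=l+1$ then $q_{0}N_{L}\leq K$ is automatic: were $\delta^{+}$ zero on $H_{n-l}(L;\Z_{2})$, the induction in the proof of Lemma~\ref{lem:min_diff} would force $\delta^{+}\equiv 0$, contradicting $QH(L)=0$; so $\delta^{+}$ is nonzero on $H_{n-l}$, where for degree reasons it equals $\delta_{1}t$ with image in $H_{n}=\Z_{2}[L]$, giving $\delta^{+}(b)=[L]t$ for some $b\in H_{n-l}$ and hence $q_{0}=1$, $q_{0}N_{L}=l+1=K$.

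So assume from now on $N_{L}<l+1$ and $q_{0}N_{L}>K$; then $q_{0}N_{L}>l+1$ and $q_{0}N_{L}>n+1-N_{L}$. Pick a level-$q_{0}$ witness $b\in H_{n+1-q_{0}N_{L}}(L;\Z_{2})$ for the first alternative. Since $q_{0}N_{L}>l+1$ we have $|b|<n-l$, so the generation hypothesis lets us write $b=\sum_{j}a_{j}$ with each $a_{j}=x^{(j)}_{1}\cdots x^{(j)}_{r_{j}}$ a (classical) intersection product of classes $x^{(j)}_{a}\in H_{\geq n-l}(L;\Z_{2})$, necessarily with $r_{j}\geq 2$. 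Form the left-associated quantum products $P_{j}=x^{(j)}_{1}\ast\cdots\ast x^{(j)}_{r_{j}}$ and set $P=\sum_{j}P_{j}\in\mathcal{C}^{+}_{min}(L)$. The $t^{0}$-part of $P$ is $\sum_{j}a_{j}=b$, so $P=b+\sum_{i\geq 1}c_{i}t^{i}$ with $c_{i}\in H_{\ast}(L;\Z_{2})$.

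The heart of the argument is to compute the coefficient of $t^{q_{0}}$ in $\delta^{+}(P)$ in two ways, linked by a reduction lemma. On one side, $\delta^{+}(P)=\delta^{+}(b)+\sum_{i\geq 1}\delta^{+}(c_{i})t^{i}$, and since $\delta_{q_{0}}(b)=[L]$ the coefficient of $t^{q_{0}}$ is $[L]+\sum_{i+s=q_{0},\,i,s\geq 1}\delta_{s}(c_{i})$, a sum that lies in $H_{n}=\Z_{2}[L]$. On the other side, Leibniz gives $\delta^{+}(P)=\sum_{j,i}x^{(j)}_{1}\ast\cdots\ast\delta^{+}(x^{(j)}_{i})\ast\cdots\ast x^{(j)}_{r_{j}}=\sum_{j,i,s}t^{s}\,Z^{(j,i)}_{s}$, where $Z^{(j,i)}_{s}=x^{(j)}_{1}\ast\cdots\ast\delta_{s}(x^{(j)}_{i})\ast\cdots\ast x^{(j)}_{r_{j}}$ is an $r_{j}$-fold quantum product of singular classes; by the degree observation only $s$ with $sN_{L}\leq l+1<q_{0}N_{L}$ occur, so $q_{0}-s\geq 1$, and the coefficient of $t^{q_{0}}$ is $\sum_{j,i,s}(Z^{(j,i)}_{s})_{q_{0}-s}$, each term being the (degree-$n$, hence $0$ or $[L]$) top coefficient of $Z^{(j,i)}_{s}$. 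The bridge is the following: \emph{if a finite quantum product of singular classes in $\mathcal{C}^{+}_{min}(L)$ equals $[L]t^{p}+l.o.(t)$ with $p\geq 1$, then some two-fold product $u\ast u'$ of singular classes equals $[L]t^{p'}+l.o.(t)$ with $1\leq p'\leq p$}; I would prove this by induction on the number of factors, peeling off the last factor of the left-associated product and tracking where the top term can come from (the only delicate point being the case where the top term comes from a classical product, which by degrees forces both factors to be $[L]$ and lets the induction continue).

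Now the dichotomy. If some $(Z^{(j,i)}_{s})_{q_{0}-s}=[L]$, then $Z^{(j,i)}_{s}=[L]t^{q_{0}-s}+l.o.(t)$, and the reduction lemma yields a two-fold witness $u\ast u'=[L]t^{p'}+l.o.(t)$ with $1\leq p'\leq q_{0}-s\leq q_{0}-1$; then $p'N_{L}\leq q_{0}N_{L}-N_{L}\leq n+1-N_{L}\leq K$, so the second alternative of the Lemma holds at an admissible level and we are done. Otherwise all those top coefficients vanish, so the $t^{q_{0}}$-coefficient of $\delta^{+}(P)$ is $0$; comparing with the first computation gives $\sum_{i+s=q_{0}}\delta_{s}(c_{i})=[L]$, hence $\delta_{s_{0}}(c_{i_{0}})=[L]$ for some $i_{0},s_{0}\geq 1$ with $s_{0}=q_{0}-i_{0}<q_{0}$, i.e. the first alternative holds at a level strictly below $q_{0}$ — contradicting the minimality of $q_{0}$. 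Thus the second branch is impossible, the first branch gives the Lemma, and we are done. The step I expect to be the main obstacle is precisely this last dichotomy together with the reduction lemma: one must set up the quantum products (with a fixed left association) and the Leibniz expansion so that the error terms $Z^{(j,i)}_{s}$ are honestly products of singular classes, so that the degree bookkeeping forces every relevant top coefficient into $H_{n}$, and so that any two-fold witness squeezed out of an error term automatically sits at a level $\leq(n+1-N_{L})/N_{L}\leq K/N_{L}$; the input that makes the count close is exactly the bound $|x^{(j)}_{a}|\geq n-l$, which caps the powers of $t$ occurring in $\delta^{+}(x^{(j)}_{a})$ by $l+1$.
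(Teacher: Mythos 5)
Your proof is correct and reaches the same bound $K$, but it takes a genuinely different route from the paper once the extremal witness is chosen. Note first that for a homogeneous $x$ with $\delta^{+}(x)=[L]t^{q}+l.o.(t)$ one has $qN_{L}=n+1-|x|$, so your ``minimal level'' $q_{0}$ is the same thing as the paper's ``maximal degree'' witness $w$; up to that point (including the case $N_{L}=l+1$, which both arguments reduce to Lemma~\ref{lem:min_diff}, and the existence of a witness, which you get from a degree-$(n+1)$ primitive of $[L]$ rather than from $[L]t^{r}\in\mathrm{Im}\,\delta^{+}$) the two proofs agree. The divergence is in the low-degree case. The paper performs a \emph{single} binary splitting $w=w_{1}\cdot w_{2}=w_{1}\ast w_{2}+\sum_{i>0}z_{i}t^{i}$ -- the factors need not lie in $H_{\geq n-l}$, only decomposability of $w$ is used -- pairs $\delta^{+}w$ with the generator $\rho\in H^{n}(L;\Z_{2})$, kills the $z_{i}$-contributions by the extremality of $w$, and reads off a two-fold witness $u_{i}\ast w_{2}$ with $qN_{L}\leq n+1-N_{L}$ from a one-line degree count. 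You instead decompose the witness all the way into quantum products of classes from $H_{\geq n-l}$, compute the $t^{q_{0}}$-coefficient of $\delta^{+}(P)$ in two ways, and either contradict minimality of $q_{0}$ or produce a multi-fold product equal to $[L]t^{p}+l.o.(t)$; this is why you need the extra reduction lemma (induction on the number of factors) to come back down to a two-fold product $y\ast z$. That reduction lemma is sound as sketched: writing the subproduct as $\sum_{k}W_{k}t^{k}$, homogeneity forces every relevant coefficient into $H_{n}$, the case $m\geq 1$ gives the witness $W_{k}\ast x_{r}$ directly, and the classical case $m=0$ forces $W_{k}=x_{r}=[L]$, so the induction passes to a shorter product and terminates because a single singular class cannot contain $[L]t^{p}$ with $p\geq 1$. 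Your degree bookkeeping ($sN_{L}\leq l+1$ for generators, $q_{0}N_{L}\leq n+1$ from the primitive) then closes the estimate $p'N_{L}\leq q_{0}N_{L}-N_{L}\leq n+1-N_{L}\leq K$. In short, the paper's one-split trick buys economy -- no reduction lemma and no coefficient-comparison/contradiction step -- while your version decomposes down to $H_{\geq n-l}$-generators (which is what caps the powers of $t$ in $\delta^{+}$ of each factor) at the cost of the extra induction; both rest on the same minimal-model ingredients (chain-level Leibniz rule, the product being a deformation of the intersection product, positivity of $\delta^{+}$) that the paper's own proof also invokes.
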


\begin{proof}
   As $L$ is narrow, the first point of
   Theorem~\ref{thm:floer=0-or-all} implies that $N_L \leq l+1$.
   Assume first that $N_L = l+1$. Then by definition $K = l+1$. In
   this case, as the proof of Lemma~\ref{lem:min_diff} shows, there
   exists $x \in H_{n-l}(L;\mathbb{Z}_2)$ such that $\delta^+ x =
   [L]t$. Thus $x$ satisfies the statement of our lemma with $q=1$.

   We will assume from now on that $N_L < l+1$, and so $K \geq l+1,
   n+1-N_L$.  Let $w\in H_{\ast}(L;\Z_{2})$ be an element of maximal
   degree so that $\delta^+ w$ contains $[L]t^s$ for some $s \geq 0$.
   More precisely, denote by $\rho \in H^{n}(L;\mathbb{Z}_2)$ the
   generator (so that $\langle \rho, [L] \rangle = 1$). We require
   that $w$ is of minimal degree so that $\langle \rho,\delta w
   \rangle \not=0 \in \Lambda^+$. (Here and in what follows we extend
   the Kronecker pairing $\langle \cdot, \cdot \rangle$ to
   $H_*(L;\mathbb{Z}_2) \otimes \Lambda^+$ by linearity over
   $\Lambda^+$.)  Note that such a $w$ must exist, since $L$ is narrow
   hence $[L]t^r$ must be a $\delta^+$-boundary for some $r\geq 1$.

   If $|w|\geq n-l$, the statement of our lemma is verified with $x =
   w$ and $q=s$ because $q N_L = n - | \delta^+ w | = n+1 - |w|\leq
   l+1 \leq K$. Therefore we assume from now on that $|w|< n-l$. We
   know that $H_{\geq n-l}(L;\Z_{2})$ generates $H_{\ast}(L;\Z_{2})$
   as an algebra.  In particular, $|w|< n-l$ implies that $w$ is
   decomposable with respect to the intersection product.  We now
   write $$w=w_{1}\cdot w_{2}=w_{1}\ast w_{2}+\sum_{i>0}z_{i}t^{i},
   \quad \textnormal{with } \; |w| < |w_{1}|< n, \;\; |w| < |w_{2}| <
   n, \;\; |w| < |z_{i}|.$$ (Of course, $w$ can be a sum of such
   products but this does not make any difference in the argument and,
   in terms of notation, it is simpler to assume that just one such
   monomial appears.) Now
   $$\langle \rho, \delta^+ w\rangle=\langle \rho,
   (\delta^+ w_{1}) \ast w_{2} +w_{1}\ast (\delta^+ w_{2})\rangle +
   \sum_{i>0} \langle \rho, z_{i}\rangle t^{i}.$$ By the maximality of
   $|w|$, and the fact that $|w|<|z_i|$, we see that the second term
   on the left vanishes and we also get that for either $w_{1}$ or
   $w_{2}$, say $w_{1}$ (the other case is similar) we have $\langle
   \rho, (\delta^+ w_{1})\ast w_{2}\rangle = t^{q'}$ for some $q'>0$.
   We now write $\delta^+ w_{1}=\sum_{i>0} u_{i}t^{i}$ and we deduce
   that for some $i>0$ we have $\langle \rho, u_{i}\ast w_{2} \rangle
   = t^{q'-i}$.  Notice that $|u_{i}|=|w_{1}|+iN_{L}-1$. We put
   $q=q'-i$ (clearly $q\geq 0$ and we will show below that $q>0$). We
   now get: $$n-qN_{L}=|u_{i}\ast
   w_{2}|=|u_{i}|+|w_{2}|-n=|w_{1}|+iN_{L}-1 +|w_{2}|-n =|w|+i
   N_{L}-1\geq N_{L}-1.$$ Thus, $qN_{L}\leq n-N_{L}+1\leq K$ and the
   statement of our lemma will be verified with $y = u_i$ and $z =
   w_2$.

   It remains only to check that $q>0$. Assume by contradiction that
   $q=0$, or equivalently that $q'=i$. This implies that $u_i * w_2 =
   [L]$. But for degree reasons this cannot happen since $|w_2| < n$.
   A contradiction.
\end{proof}

To prove the second point of Theorem~\ref{thm:floer=0-or-all} we will
use Lemma~\ref{lem:bdry_order} to show that $L$ is uniruled of order
$K$. For this, we fix a generic almost complex structure $J$ as well
as a point $P\in L$. Fix a Morse function $f$ and a Riemannian metric
$\rho_L$ on $L$ so that the pair $(f, \rho_L)$ is Morse-Smale.
Moreover, we choose $f$ so that $P$ is its unique maximum. We also
pick a second Morse function $f_{1}$ so that the pair $(f_{1},
\rho_{L})$ is also Morse-Smale, and $f$ and $f_{1}$ are in general
position. We assume that $J$ is generically chosen so that
$\mathcal{C}^{+}(L;f,\rho_{L},J)$ and $\mathcal{C}^{+}(L;
f_{1},\rho_{L},J)$ are both defined as well as the relevant product.
As above, we let $\mathcal{C}^{+}_{min}(L)$ be the minimal pearl
complex and we fix $\phi,\psi,\phi_{1},\psi_{1}$, the structure maps
associated to $(f,\rho_{L},J)$ and, respectively, to
$(f_{1},\rho_{L},J)$ as constructed in the proof of
Proposition~\ref{prop:min_model} in~\S\ref{subsec:minimal}.

The following technical result is an easy consequence of the proof of
Proposition \ref{prop:min_model} and is valid independently of whether
$L$ is narrow or not.
\begin{lem} \label{lem:ex_disks2}
   \begin{enumerate}[i.]
     \item If there exists $z\in \Crit(f)$ so that
      $\phi(z)=[L]t^{s}+l.o.(t)$, $s>0$, then there exists $w\in
      \Crit(f)$ so that $dw=Pt^{s'}+l.o.(t)$ with $0<s'\leq s$.
      \label{I:ex_disks2-1}
     \item Let $a \in H_*(L;\mathbb{Z}_2)$ be a homogeneous element
      such that $\psi(a) = P t^s + l.o.(t)$, $s>0$. Then there exists
      $w \in \textnormal{Crit}(f)$ such that $dw = Pt^{s'} + l.o.(t)$
      with $0< s' < s$. \label{I:ex_disks2-2}
   \end{enumerate}
\end{lem}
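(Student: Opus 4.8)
\emph{Proof plan.} The idea is to unwind the explicit inductive construction of the structure maps $\phi,\psi$ from the proof of Proposition~\ref{prop:min_model} and to follow the coefficient of the top‑degree class. Recall from that construction the homogeneous $\mathbb{Z}_2$‑basis $B_X\sqcup B_Y\sqcup B_{Y'}$ of $G=\mathbb{Z}_2\langle\Crit(f)\rangle$ (with $d_0|_{B_X}=d_0|_{B_Y}=0$ and $d_0(y'_j)=y_j$), together with the defining relations $\phi(x)=\tilde x$ for $x\in B_X$, $\phi(y')=0$ for $y'\in B_{Y'}$, $\phi(y)=\phi\big((d-d_0)(y')\big)$ for $y\in B_Y$ with partner $y'$, and $\psi_0(\tilde x_i)=x_i$ for the classical ($t=0$) component $\psi_0$ of $\psi$, as well as $\phi\circ\psi=\mathrm{id}$. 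Since $f$ has a unique maximum $P$, the only critical point of degree $n$, we have $G_n=\mathbb{Z}_2 P$ and $d_0(G_{n+1})=0$, so $P\in B_X$, $\phi(P)=\tilde P=[L]$, and (as recorded in \S\ref{subsubsec:defin_alg_str}~a) $dP=0$ in the pearl complex. A basic remark used throughout: because $P$ (resp. $\tilde P$) is the top‑degree generator and $|t|=-N_L<0$, the $\Lambda^+$‑coefficient of $P$ (resp. $\tilde P$) in any homogeneous element is $0$ or a single monomial $t^r$; hence the hypotheses and conclusions of the lemma are precisely the statements $\langle\tilde P^*,\phi(z)\rangle=t^s$, $\langle P^*,\psi(a)\rangle=t^s$ and $\langle P^*,dw\rangle=t^{s'}$, where $\langle P^*,-\rangle$ extracts the $P$‑coefficient in $\Lambda^+$.

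The heart of the argument is the following recursive claim, which immediately yields point~i: \emph{if $y_0\in B_Y$ has degree $n-\sigma N_L$ with $\sigma\geq 1$ and $\langle\tilde P^*,\phi(y_0)\rangle=t^\sigma$, then there is $w\in\Crit(f)$ with $\langle P^*,dw\rangle=t^{s'}$ for some $1\leq s'\leq\sigma$}. One proves this by induction on $\sigma$. Writing the pearl differential as $d=d_0+\sum_{i\geq 1}d_i\,t^i$ (with $d_i:G\to G$) and using $\phi(y_0)=\phi\big(\sum_{i\geq 1}d_i(y_0')\,t^i\big)$, a degree count shows that only $i\in\{1,\dots,\sigma\}$ can contribute to $\langle\tilde P^*,\phi(y_0)\rangle$, each through the monomial $t^\sigma$; hence, mod $2$, there is $q\in\{1,\dots,\sigma\}$ with $\langle\tilde P^*,\phi(d_q(y_0'))\rangle=t^{\sigma-q}\neq 0$. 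If $q=\sigma$ then $d_\sigma(y_0')\in G_n$ must equal $P$, so $\langle P^*,dy_0'\rangle=t^\sigma$; expanding $y_0'\in B_{Y'}$ as a $\mathbb{Z}_2$‑sum of critical points produces some $w\in\Crit(f)$ with $\langle P^*,dw\rangle=t^\sigma$, and we are done. If $q<\sigma$, then $d_q(y_0')$ has degree $n-(\sigma-q)N_L<n$; decomposing it in the basis $B_X\sqcup B_Y\sqcup B_{Y'}$ and using that $\phi$ carries no positive $t$‑terms on $B_X$ and vanishes on $B_{Y'}$, only its $B_Y$‑component matters, so some $y_1\in B_Y$ of that degree satisfies $\langle\tilde P^*,\phi(y_1)\rangle=t^{\sigma-q}$, and the induction hypothesis with parameter $\sigma-q<\sigma$ finishes the claim (the base case $\sigma=1$ forces $q=\sigma$).

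Point~i follows at once: given $z\in\Crit(f)$ with $\langle\tilde P^*,\phi(z)\rangle=t^s$, $s\geq 1$, the element $z$ has degree $n-sN_L<n$; decomposing $z$ in the basis and discarding the $B_X$‑ and $B_{Y'}$‑parts (which, as above, cannot contribute in $t$‑degree $s\geq 1$) produces $y_0\in B_Y$ of degree $n-sN_L$ with $\langle\tilde P^*,\phi(y_0)\rangle=t^s$, and the claim with $\sigma=s$ gives $w$ with $\langle P^*,dw\rangle=t^{s'}$, $0<s'\leq s$. For point~ii, set $v:=\psi(a)-Pt^s$; then $\langle P^*,v\rangle=0$, and $\phi\circ\psi=\mathrm{id}$ together with $\phi(P)=\tilde P$ gives $\phi(v)=a-\tilde P\,t^s$, so $\langle\tilde P^*,\phi(v)\rangle=t^s$. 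Writing $v=\sum_i v_i\,t^i$ with $v_i\in G$, degree considerations force $v_i=0$ for $i>s$, and $\langle P^*,v\rangle=0$ forces $v_s=0$; moreover the $t^0$‑part $v_0$ equals the classical part $\psi_0(a)$, which lies in the span of $B_X$ and hence contributes nothing to $\langle\tilde P^*,\phi(v)\rangle$. Matching $\tilde P$‑coefficients (again a mod‑$2$ count over $i\in\{1,\dots,s-1\}$) yields some $q_1\in\{1,\dots,s-1\}$ with $\langle\tilde P^*,\phi(v_{q_1})\rangle=t^{s-q_1}\neq 0$; decomposing $v_{q_1}$ in the basis gives $y_0\in B_Y$ of degree $n-(s-q_1)N_L$ with $\langle\tilde P^*,\phi(y_0)\rangle=t^{s-q_1}$, and the claim with $\sigma=s-q_1\leq s-1$ produces $w$ with $\langle P^*,dw\rangle=t^{s'}$, $0<s'\leq s-q_1<s$, which is the asserted strict bound.

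The step I expect to be the main obstacle is the bookkeeping: one must keep rigorously separate the $\Lambda^+$‑coefficient of $P$ and the $t$‑degree components of the maps $\phi,\psi$, track at each stage which basis pieces these maps actually see, and check that every relevant degree constraint is tight enough to force the appropriate $\Lambda^+$‑coefficients to be monomials, so that the parity arguments are legitimate. This is also precisely where the proof genuinely uses the \emph{specific} construction of $\phi,\psi$ rather than their abstract characterisation — namely the formulas $\phi(x)=\tilde x$, $\phi(y')=0$, $\phi(y)=\phi((d-d_0)y')$, $\psi_0(\tilde x_i)=x_i$, and the fact that $P$ is a pearl cycle lying in $B_X$.
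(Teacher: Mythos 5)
Your proof is correct and follows essentially the same route as the paper's: both unwind the explicit construction of $\phi$ and $\psi$ on the adapted basis $B_X\cup B_Y\cup B_{Y'}$, use homogeneity and positivity of $\Lambda^{+}$ to force the $P$- (resp. $[L]$-) coefficients to be single monomials, and extract the critical point $w$ from the partner element $y'$ by a mod-$2$ count. The only difference is organizational: where the paper picks a maximal-degree element of $B_Y$ with the relevant property so that the argument terminates in one step, you run an explicit descent on the $t$-exponent through the relation $\phi(y)=\phi\bigl((d-d_0)y'\bigr)$, and in point ii you invoke $\psi_0(a)\in\mathrm{span}(B_X)$ where the paper uses $\phi(\psi_0(a))=a$ — equivalent facts coming from the same construction.
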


\begin{proof} 
   We begin with point~\ref{I:ex_disks2-1}. As in the proof of
   Proposition~\ref{prop:min_model}, change the basis in
   $\Z_{2}\langle\Crit(f)\rangle$ so that the generators forming the
   new basis are of three types $B_{X}, B_{Y}\subset \ker (d_{0})$ and
   $B_{Y'}$ so that $B_{Y}$ and $B_{Y'}$ are in bijection and
   $d_{0}(B_{Y'})=B_{Y}$ (where $d_{0}$ is the Morse differential).
   For $y\in B_{Y}$ we denote by $y'\in B_{Y'}$ the element so that
   $d_{0}(y')=y$. As $\mathcal{C}^+_n(L;f, \rho, J) = \mathbb{Z}_2 P$
   we have $P\in B_{X}$.  The map $\phi:\mathcal{C}^{+}(L;f,\rho,J)\to
   \mathcal{C}^{+}_{min}(L)$ is defined so that for $x\in B_{X}$,
   $\phi(x)=[x]$ ($[x]$ is the Morse homology class of $x$), for
   $y'\in B_{Y'}$, $\phi(y')=0$ and for $y\in B_{Y}$, $\phi(y)=
   \phi(y-dy')$.  Let $u \in B_{Y}$ be a generator of the highest
   degree among the the elements of $B_{Y}$ with the property that
   there exists $0<s'\leq s$ with $\phi(u)=[L]t^{s'}+l.o.(t)$ (since
   $\phi(x)=[x]$ for $x\in B_{X}$, $\phi(y')=0$ for $y'\in B_{Y'}$ and
   $\phi(z) = [L]t^{s}+l.o.(t)$ with $s>0$, there must be such a $u$).
   Write $$u - d u' = \sum_{i>0} x_i t^i + \sum_{j>0} y_j t^j +
   \sum_{k>0} y'_k t^k, \quad \textnormal{with } x_i \in B_X, y_j \in
   B_Y, y'_k \in B_{Y'}.$$ We now have: $$[L]t^{s'} + l.o.(t) =
   \phi(u) = \phi(u-d u') = \sum_{i>0} \phi(x_i) t^i + \sum_{j>0}
   \phi(y_j) t^j.$$ Note that $|y_j|>|u|$ and therefore by the
   maximality of $u$ none of the terms $\phi(y_j)$ can contribute an
   $[L]t^{s''}$, $s''>0$ to that sum. Moreover, none of the terms
   $\phi(y_j)$ can contribute $[L]$ to that sum since $\phi$ of an
   element in $B_{Y}$ is divisible by $t$. It follows that there
   exists $i_0$ such that the term $\phi(x_{i_0})t^{i_0}$ contributes
   the element $[L]t^{s'}$. As $\phi(x_{i_0}) =
   [x_{i_0}]_{\textnormal{{\tiny Morse}}}$ it follows that $x_{i_0} =
   P$ and $i_0 = s'$. As the degree $n$ part of $B_X$ is $P$, and
   $B_{Y}$, $B_{Y'}$ do not contain elements of degree $n$, it follows
   that $u-du' = Pt^{s'} + l.o.(t)$. As $u$ is a linear combination of
   pure critical points (it doesn't involve $t$'s) we now obtain that
   $du' = Pt^{s'} + l.o.(t)$ (we work here over $\mathbb{Z}_2$ so $P =
   -P$). Finally, there must be a critical point $w$ participating in
   $u'$ (which is a linear combination of critical points) so that $dw
   = Pt^{s'} + l.o.(t)$. This completes the proof of
   point~\ref{I:ex_disks2-1}.

   We turn to the proof of~\ref{I:ex_disks2-2}. Write
   \begin{equation} \label{eq:psi-a} \psi(a) = Pt^{s} + z_{s-1}t^{s-1}
      + \cdots + z_1 t + z_0,
   \end{equation}
   with $z_i \in \mathbb{Z}_2 \langle \textnormal{Crit}(f) \rangle$.
   Note that $z_0 = \psi_0(a)$ and that by the construction of $\phi$
   and $\psi$ in the proof of Proposition~\ref{prop:min_model}
   in~\S\ref{subsec:minimal} we also have $\phi(z_0) = a$. Recall also
   that $\phi \circ \psi = id$. Using this, and applying $\phi$ to
   both sides of~\eqref{eq:psi-a} we obtain:
   $$0 = [L]t^s + \phi(z_{s-1})t^{s-1} + \cdots + \phi(z_1) t.$$
   Clearly not all of $\langle \rho,\phi(z_1)\rangle, \ldots, \langle \rho,\phi(z_{s-1})\rangle$ can vanish
   (where, as before, $\rho\in H^{n}(L;\Z_{2})$ is the generator).
   Let $1 \leq j \leq s-1$ be an index such that $\langle \rho,\phi(z_j)\rangle
   \neq 0$. We then have $\phi(z_j) = [L] t^{s-j} + l.o.(t)$.  By
   point~\ref{I:ex_disks2-1} just proved, there exists $w$ and $0< s'
   \leq s-j < s$ such that $dw = Pt^{s'} + l.o.(t)$.
\end{proof}

We continue with the proof of point~\ref{I:dichotomoy-narrow} of
Theorem~\ref{thm:floer=0-or-all}. We begin by analyzing the first
possibility resulting from Lemma \ref{lem:bdry_order}:
$\delta^{+}x=[L]t^{q}+l.o.(t)$ for some $x\in H_{\ast}(L;\Z_{2})$ with
$0< qN_{L}\leq K$. 

Consider the map $\phi:\mathcal{C}^{+}(L;f,\rho_{L}, J)\to
\mathcal{C}^{+}_{min}(L)$. As the degree $n$ part of $B_{X}$ consists of
$P$ only, we have $\phi(P)=[L]$.  By the definition of $\phi$ there
exists $u \in \mathbb{Z}_2 \langle B_X \rangle$ such that $\phi(u) =
x$.  Write $du = \sum_{i \geq 0} a_i t^i$. We have: $[L]t^{q}+l.o.(t)
= \delta^+ x = \delta^+ \phi(u) = \phi(du) = \sum_{i \geq 0}
\phi(a_i)t^i$. Thus there exists $0 \leq j \leq q$ such that
$\phi(a_j)t^j = [L]t^{q}+l.o.(t)$. There are two possibilities: either
$j=q$ or $j<q$. In case $j=q$ we must have $\phi(a_j) = [L]$ hence
$a_j = P$ and it follows that $du = Pt^q + l.o.(t)$. The element $u$
might not be a single critical point of $f$ but a linear combination
of such. However there must be a critical point $w$ participating in
the linear combination $u$ such that $dw = Pt^q + l.o.(t)$. In case
$j<q$ we obtain $\phi(a_j) = [L]t^{q-j} + l.o.(t)$ and as $q-j>0$ we
deduce from~\ref{lem:ex_disks2} that there exists $w \in
\textnormal{Crit}(f)$ so that $dw = Pt^{q'}+l.o.(t)$ with $0< q'\leq
q-j$. Summarizing, we see that in both cases ($j=q$ and $j<q$) that
there is $w \in \textnormal{Crit}(f)$ so that $dw=Pt^{s}+l.o.(t)$ with
$0< s\leq q$.  This implies that there exists a {\em non-constant}
$J$-disk through $P$ of Maslov index at most $qN_{L}$.

It remains to discuss the second case: $y\ast z= [L]t^{q}+l.o.(t)$.
The argument is similar. By definition $y*z = \phi(\psi_1(y) *
\psi(z))$.  Write $\psi_1(y)=\sum_{i\geq 0} y_i t^i$, $\psi(z) =
\sum_{j\geq 0} z_j t^j$, with $y_i \in \mathbb{Z}_2 \langle
\textnormal{Crit}(f_1)\rangle$, $z_j \in \langle
\textnormal{Crit}(f)\rangle$ being homogeneous elements. The equality
$[L]t^q + l.o.(t) = \sum_{i,j} \phi(y_i*z_j) t^{i+j})$ implies that
there exist $i,j \geq 0$ such that $\phi(y_i*z_j) t^{i+j} = [L]t^q +
l.o.(t)$. Write $y_i*z_i = \sum_{k \geq 0} p_{k}t^{k}$. We get that
there exists $k \geq 0$ such that $\phi(p_k)t^{k+i+j} = [L] t^q +
l.o.(t)$. Now there are two possibilities: either $k+i+j < q$ or
$k+i+j = q$.

In the first case ($k+i+j < q$) we get $\phi(p_k) = [L]t^{q-(k+i+j)} +
l.o.(t)$ and so by Lemma~\ref{lem:ex_disks2}-\ref{I:ex_disks2-1} there
exists $w \in \textnormal{Crit}(f)$ such that $dw = P t^{s'} +
l.o.(t)$ with $0<s' \leq q-(k+i+j)$. It follows that there exists a
non-constant $J$-disk through $P$ with Maslov index $ \leq s' N_L \leq
q N_L$.

In the second case ($k+i+j = q$) we have $\phi(p_k) = [L]$ hence $p_k
= P$ and $y_i*z_j = Pt^k + l.o.(t)$. If $k>0$ there exists a
non-constant $J$-disk through $P$ with Maslov index $\leq k N_L \leq
qN_L$. In case $k=0$ we have $y_i*z_j = P$ hence for degree reasons
$z_j=P$ (and $y_i=P_1$, where $P_1$ is the maximum of $f_1$). It
follows that $\psi(z) = P t^j + l.o.(t)$. We have $j > 0$, for
otherwise $\psi(z)=P$ so $z=[L]$ which is impossible in view of our
starting equality $y*z = [L]t^q + l.o.(t)$ with $y \in
H_*(L;\mathbb{Z}_2)$ and $q>0$. Thus $\psi(z) = P t^j + l.o.(t)$ with
$0<j\leq q$. By Lemma~\ref{lem:ex_disks2}-~\ref{I:ex_disks2-2} there
exists $w \in \textnormal{Crit}(f)$ with $dw = Pt^{j'} + l.o.(t)$ with
$0<j'<j \leq q$ and it follows that there exists a non-constant
$J$-disk through $P$ with Maslov index $\leq j'N_L < qN_L$. This
concludes the proof of Theorem~\ref{thm:floer=0-or-all}. \Qed

\subsection{Proof of Theorem~\ref{theo:geom_rig}}
\label{sb:prf-thm-geom-rig} 

Recall that we now suppose that $M$ is point invertible of order $k$.
This means that in the quantum homology of $M$ with coefficients in
$\Gamma^{+}=\Z_{2}[s]$ there exists $a\in QH_{\ast}(M; \Gamma^{+})$,
$a=a_{0}+a_{1}s$ with $0 \neq a_{0}\in H_{\ast}(M;\mathbb{Z}_2)$ and
$a_{1}\in QH_{\ast}(M; \Gamma^{+})$ so that $[pt]\ast
a=[M]s^{k/2C_M}$.  Recall that here $|s|=-2 C_M$.  Denote
$QH(M;\La^{+})=QH(M)\otimes_{\Z_{2}[s]}\La^{+}$.  Clearly, we also
have in $QH(M;\La^{+})$, $[pt]\ast a=[M]t^{k/N_{L}}$.

\

We start with the point i. of the theorem. We first notice that the
relation $[pt]\ast a= [M]t^{k/N_{L}}$ implies $|a|-2n=2n - k$ and as
$a=a_{0}+a_{1}s$ we have $0\leq |a|\leq 2n$ and so $k=4n-|a|\geq 2n$.
We now use the module structure
$$QH(M;\La^{+})\otimes Q^{+}H(L)\to Q^{+}H(L)$$
to write:
\begin{equation}\label{eq:pt_equation} 
   a*([pt]*[L]) = (a*[pt])*[L] = ([pt]*a)*[L] = 
   [M]*[L] t^{k/N_{L}}=[L]t^{k/N_{L}}~.~
\end{equation}
We need to analyze equation (\ref{eq:pt_equation}) at the chain level.
For this, we fix a Morse function $f:L\to \R$ with a single maximum
$P_f$ as well as a Morse function $g:M\to \mathbb{R}$ with a single
maximum $P_g$ and a single minimum $m_g$. We also fix Riemannian
metrics $\rho_L$ and $\rho_M$ on $L$ and $M$. The Morse complex of $g$
tensored with $\La^{+}$ will be denoted by $C^{+}(g)$.  We also fix a
minimal pearl complex for $L$, $\mathcal{C}^{+}_{min}(L)$, together
with the two associated structural maps $\phi$ and $\psi$ as
in~\S\ref{subsec:minimal}. We use the module operation (on the chain
level) in the form:
$$C^{+}(g)\otimes \mathcal{C}^{+}_{min}(L)\to \mathcal{C}^{+}_{min}(L),$$
by transporting the module operation $C^{+}(g)\otimes
\mathcal{C}^{+}(L;f, \rho_L,J) \to \mathcal{C}^{+}(L;f, \rho_L, J)$
via the structural maps $\phi$, $\psi$, i.e. for $h \in C^+(g)$,
$\alpha \in \mathcal{C}^{+}_{min}(L)$ we define $h * \alpha = \phi
(a*\psi(\alpha))$.

We write
\begin{equation} \label{eq:m*L}
   y=m_g\ast [L]=\sum_{i>0}z_{i}t^{i}~,~ \quad
   \textnormal{where } z_i \in H_*(L;\mathbb{Z}_2).
\end{equation}
Note that there are no classical terms here (i.e. $i=0$) for degree
reasons, since $|y|= -n$.

\begin{lem}\label{lem:complL_unir}
   There exists $0< i < \frac{k}{N_{L}}$ such that $z_i \neq 0$.
\end{lem}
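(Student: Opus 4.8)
The plan is to argue by contradiction: I would assume that $z_i = 0$ for every $0 < i < k/N_L$ and deduce that $L$ is narrow, contradicting the standing hypothesis of Theorem~\ref{theo:geom_rig}(i).

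First I would transport \eqref{eq:pt_equation} to the minimal complex. Since $m_g$ represents the point class $[pt]$ and $[L]\in\mathcal{C}^{+}_{min}(L)$ represents the unit, $y = m_g * [L]$ is a cycle representing $[pt]\circledast[L]\in Q^{+}H_{-n}(L)$, and \eqref{eq:pt_equation} becomes $a*([pt]\circledast[L]) = [L]t^{k/N_L}$ in $Q^{+}H(L)$. One checks that $[L]t^{k/N_L}\neq 0$: its image under the canonical map $p : Q^{+}H(L)\to QH(L)$ equals $[L]t^{k/N_L}$, which is non-zero since $L$ is not narrow (so $[L]\neq 0$ in $QH(L)$) and $t$ is invertible in $\Lambda$; in particular $[pt]\circledast[L]\neq 0$. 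Under the contradiction hypothesis, $y = \sum_{i\ge k/N_L} z_i t^i = t^{k/N_L}y'$ with $y' = \sum_{i\ge k/N_L} z_i t^{\,i-k/N_L}\in\mathcal{C}^{+}_{min}(L)$ homogeneous of degree $k-n$; since $\delta^{+}$ is $\Lambda^{+}$-linear and $\mathcal{C}^{+}_{min}(L)$ is free over the domain $\Lambda^{+}$, the identity $0 = \delta^{+}y = t^{k/N_L}\delta^{+}y'$ forces $\delta^{+}y' = 0$. Thus $y'$ is a cycle and $[pt]\circledast[L] = t^{k/N_L}[y']$ with $[y']\in Q^{+}H_{k-n}(L)$.

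The crucial remaining step is $k > 2n$. The proof of Theorem~\ref{theo:geom_rig} already notes that $|a| = 4n-k$ with $0\le |a|\le 2n$, so one only has to exclude $k = 2n$. In that case $a$ is a homogeneous element of $QH(M;\Gamma^{+}) = H(M;\mathbb{Z}_2)\otimes\mathbb{Z}_2[s]$ of top degree $2n$; since $\deg s = -2C_M < 0$ it must be purely classical, $a = a_0\in H_{2n}(M;\mathbb{Z}_2)$. If $M$ is non-compact this group vanishes, contradicting $a_0\neq 0$; if $M$ is closed then $a_0 = [M]$ and hence $[pt] = [pt]*[M] = [M]s^{k/2C_M}$, which is impossible since $[pt]$ and $[M]s^{k/2C_M}$ are distinct basis elements of $QH(M;\Gamma^{+})$ (here $n\ge 1$). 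Therefore $k > 2n$, so $k-n > n$. But the positive pearl complex vanishes in degrees $> n$, so $Q^{+}H_{k-n}(L) = 0$; hence $[y'] = 0$, whence $[pt]\circledast[L] = t^{k/N_L}[y'] = 0$, contradicting $[pt]\circledast[L]\neq 0$. This contradiction finishes the proof.

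I expect the borderline case $k = 2n$ to be the only real obstacle: the degree count by itself gives only $k\ge 2n$, and excluding equality requires using that $[pt]$ genuinely lives in the $t^0$-component of $QH(M;\Gamma^{+})$ together with the strict negativity of $\deg s$. Everything else is routine bookkeeping with the degree filtration and the $\Lambda^{+}$-linearity of the operations supplied by Theorem~\ref{thm:alg_main}.
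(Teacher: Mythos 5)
Your argument is correct, but it resolves the crucial step differently from the paper. The paper keeps only the estimate $k\geq 2n$ (which kills $S_2=\sum_{i>k/N_L}z_it^i$), isolates the remaining top term $z't^{k/N_L}$, and works at the chain level with a representative $a'$ of $a$: a degree count shows $a'\ast z'$ is classical of degree $n$, and it cannot be nonzero because that would force $a=[M]$, contradicting $[pt]\ast a=[M]t^{k/N_L}$; hence $a'\ast S_1-[L]t^{k/N_L}\in \mathrm{Im}(\delta^+)$, and $S_1=0$ would make $[L]$ a boundary, contradicting non-narrowness. You instead sharpen the degree bound to the strict inequality $k>2n$ --- ruling out $k=2n$ by observing it would force $a=a_0=[M]$ in $QH(M;\Gamma^+)$, incompatible with the defining relation (essentially the same observation the paper uses to kill $a'\ast z'$, but deployed earlier and purely in ambient quantum homology). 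With $k>2n$ the whole of $y$ is divisible by $t^{k/N_L}$ under the contradiction hypothesis and its class lives in $Q^{+}H_{k-n}(L)=0$, so $[pt]\circledast[L]=0$, contradicting \eqref{eq:pt_equation} together with $[L]t^{k/N_L}\neq 0$ (non-narrowness again). Your route avoids choosing the chain representative $a'$ and the chain-level product computation, and it records a small structural fact not stated in the paper (a point-invertible manifold has order $k>2n$), which would even simplify the rest of the paper's argument since then $z'=0$ automatically; the paper's route stays within the bookkeeping already set up and is indifferent to whether $k=2n$ could occur. Both proofs ultimately rest on the same two inputs: identity \eqref{eq:pt_equation} plus $[L]\neq 0$ in $QH(L)$, and the impossibility of $a=[M]$. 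One small caveat: like the paper, you treat $a$ as homogeneous; this is harmless since invertibility of $[pt]$ (or just discarding components of $a$ of the wrong degree) makes homogeneity automatic, but it is worth a sentence.
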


\begin{proof}
   We write $y$ as a sum of three terms: $y=S_{1}+z't^{k/N_{L}} +
   S_{2}$ with $$S_{1}=\sum_{i=1}^{k/N_{L}-1}z_{i}t^{i}\ \ , \ \
   S_{2}=\sum_{i\geq k/N_{L}+1}z_{i}t^{i}$$ and $z_{i},z'\in
   H_{\ast}(L;\mathbb{Z}_2)$.  Notice that $S_{2}=0$ because $k\geq
   2n$, $|y|=-n$, $|z_{i}|\leq n$.

   Choose a cycle $a'\in C^{+}(g)$ which represents $a$. We have:
   $$a'\ast y= a'\ast S_{1}+ a'\ast z't^{k/N_{L}}$$
   and thus, $a'\ast S_{1}+ (a'\ast z' - [L])t^{k/N_{L}} \in
   Im(\delta^{+})$.

   We now claim that $a' * z' = 0$. To see this, first note that
   $|a'\ast z'|=|a|+|z'|-2n = (4n-k) + (-n+k) - 2n = n$.  Write $a'*z'
   = \sum_{q \geq 0} b_q t^q$, with $b_q \in H_*(L;\mathbb{Z}_2)$.  We
   have $|b_q| = |a'*z'| + qN_L = n + q N_L$, hence $b_q = 0$ for
   every $q\geq 1$. Thus $a'*z' = b_0$. Assume by contradiction that
   $b_0 \neq 0$. Then $|a'| = n+2n-|z'| \geq 2n$ and so $|a'|=2n$,
   hence $a'=P_f$ and $a=[M]$. This is impossible in view of our
   assumption that $[pt]*a = [M]t^{k/N_L}$. This proves that
   $a'*z'=0$.

   We now have: 
   \begin{equation} \label{eq:prod_bdry} a'\ast
      S_{1}-[L]t^{k/N_{L}}\in Im(\delta^{+}).
   \end{equation}
   From this equality we deduce $S_{1}\not=0$ and the statement of the
   Lemma. Indeed, if $S_{1}=0$, then $[L]$ is a boundary in
   $\mathcal{C}_{min}(L;\La)$ which implies that $L$ is narrow,
   contradicting our assumption.
\end{proof}

We continue with the proof of point i of Theorem~\ref{theo:geom_rig}.
In view of Lemma~\ref{lem:complL_unir} choose the {\em minimal} index
$0<i_0<\frac{k}{N_L}$ such that $z_{i_0} \neq 0$. We have $m_g*[L] =
z_{i_0} t^{i_0} + h.o.(t)$. ($h.o.(t)$ stands for higher order terms
in $t$.) We now have: $\phi(m_g*\psi([L])) = z_{i_0}t^{i_0} +
h.o.(t)$.  But $\psi([L]) = P_f$, hence $\phi(m_g*P_f) =
z_{i_0}t^{i_0} + h.o.(t)$.  Note that the classical term in $m_g*P_f$
vanishes and so $m_g*P_f = ut^l + h.o.(t)$ where $0 \neq u \in
\mathbb{Z}_2 \langle \textnormal{Crit}(f)\rangle$ and $l>0$. As
$\phi(m_g*P_f) = z_{i_0}t^{i_0} + h.o.(t)$ it follows that $l \leq
i_0$. By the definition of the moduli spaces giving the module action
(in~\S\ref{subsubsec:defin_alg_str}), this implies the claim at point i
of our theorem: for a generic $J$ there exists a non-constant $J$-disk
$v:(D, \partial D) \to (M,L)$ with $v(0) = m_g$ and such that
$\mu([v]) \leq l N_L \leq i_0 N_L \leq (\frac{k}{N_L}-1)N_L =
(k-N_L)$. In particular 
\begin{equation} \label{eq:width-i_0} 
   w(M \setminus L) \leq i_0 N_L \eta \leq (k-N_L) \eta.
\end{equation}
This completes the proof of point i of our theorem.

\ 

We now turn to the proof of the point ii of the theorem.  Recall that
$S_1 = \sum_{i=i_0}^{k/N_L -1} z_i t^i$ and that $1 \leq i_0 \leq
k/N_L - 1$. By assumption $L$ is wide so $\delta^{+}=0$, hence
by~\eqref{eq:prod_bdry} we get $a'*S_1 = [L]t^{k/N_L}$. Expending this
equality gives:
$$\sum_{i_0 \leq r+i \leq k/N_{L}}a'_{r}\ast z_{i}t^{r+i} = 
[L]t^{k/N_{L}},$$ where we have written $a'=\sum_{r \geq
  0}a'_{r}t^{r}$ (with $a'_{r}\in \mathbb{Z}_2 \langle \Crit(g)
\rangle$). The key remark is that:
\begin{equation}\label{eq:uni_L}
   \exists\ r\geq 0, \; i\geq i_0 \geq 1, \; \; \; 
   \textnormal{ such that } 
   (a'_{r}\ast 
   z_{i})t^{r+i}=[L]t^{k/N_L}+l.o.(t).
\end{equation}
This means that $\phi(a'_r * \psi(z_i))t^{r+i} = [L]t^{k/N_L} +
l.o.(t)$. Write $\psi(z_i) = \sum_{q \geq 0} x_{q}t^{q}$, where $x_{q}
\in \mathbb{Z}_2 \langle \textnormal{Crit}(f) \rangle$. It follows
that there exists $q$ such that $\phi(a'_r * x_{q})t^{q+r+i} =
[L]t^{k/N_L} + l.o.(t)$. Finally, writing $a'_r * x_{q} = \sum_{s \geq
  0} p_s t^s$ we deduce that there exists $s$ such that
$$\phi(p_s) t^{s+q+r+i} = [L] t^{k/N_L} + l.o.(t).$$
Put $\tau = \frac{k}{N_L} - (s+q+r+i)$. There are two main cases to be
considered: $s+q+r+i < k/N_L$ (i.e. $\tau>0$) and $s+q+r+i = k/N_L$
(i.e. $\tau = 0$). Before considering each case it is important to
note that as $i\geq i_0 \geq 1$ we always have $s, q, r <
\frac{k}{N_L}$.

\noindent \underline{{\textbf{Case 1:} $\mathbf{\tau>0}$.}} We have
$\phi(p_s) = t^{\tau}[L] + l.o.(t)$ with $\tau>0$ and we deduce from
Lemma~\ref{lem:ex_disks2} that there exists a critical point $w$ such
that $dw = P_f t^{\tau'} + l.o.(t)$ with $0<\tau' \leq \tau$.  It
follows that there exists a non-constant $J$-disk through $P_f$ with
Maslov index $\leq \tau' N_L \leq \tau N_L < k$, which proves the
desired uniruling property of $L$. In view of~\eqref{eq:width-i_0} we
also have $$w(L) + 2 w(M \setminus L) \leq 2\tau N_L \eta + 2i_0 N_L
\eta = 2(\frac{k}{N_L} - s - q - r - i +i_0)N_L \eta \leq 2k \eta.$$

\noindent \underline{{\textbf{Case 2:} $\mathbf{\tau=0}$.}} This means
that $\phi(p_s) = [L]$, hence $p_s=P_f$. Therefore 
\begin{equation} \label{eq:a'r*x_q}
   a'_r * x_q = P_f t^s + l.o.(t).
\end{equation}
There are again two cases: $s>0$ and $s=0$.

\noindent \underline{{\textbf{Case 2-i:} $\mathbf{\tau=0, s>0}$.}}  We
obtain from~\eqref{eq:a'r*x_q} that there exists a non-constant
$J$-disk through $P_f$ with Maslov index $\leq s N_L < k$.  As in
case~1 above we also have $$w(L)+2 w(M \setminus L) \leq 2s N_L \eta +
2i_0 N_L \leq 2(s+i)N_L \eta \leq 2k \eta.$$

\noindent \underline{{\textbf{Case 2-ii:} $\mathbf{\tau=0, s=0}$.}}
We will show now that this case is impossible.  To see this, first
note that by~\eqref{eq:a'r*x_q} we have that $a'_r*x_q = P_f$, hence
$a'_r = P_g$, $x_q = P_f$.  This implies that $a = [M]t^r + l.o.(t)$.
Write $a = [M]t^r + a_{r-1}t^{r-1} + \cdots + a_1 t + a_0$, where $a_j
\in H_*(M; \mathbb{Z}_2)$ are homogeneous elements. Recall that
$[pt]*a = [M]t^{k/N_L}$. Therefore $$[M]t^{k/N_L} = [pt]*a = [pt] t^r
+ [pt]*a_{r-1}t^{r-1} + \cdots [pt]*a_1 t + [pt]*a_0.$$ It follows
that there exists $0\leq j \leq r-1$ such that $([pt]*a_j)t^j =
[pt]t^r + h.o.(t)$, hence $([pt]*a_j) = [pt]t^{r-j} + h.o.(t)$.
Clearly this equality takes place in the image of the inclusion
$QH(M;\Gamma^+) \to QH(M; \Lambda^+)$ defined by $s \to t^{2C_M/N_L}$,
therefore we actually have in $QH(M; \Gamma^+)$:
\begin{equation} \label{eq:pt*a_j} 
   ([pt]*a_j) = [pt]s^{(r-j)N_L/2C_M} + h.o.(s).
\end{equation}
Note also that by the definition of $a_j$ we have $a_j \neq [M]$. We
will now show that such a relation is impossible in quantum homology.
To see this note that $r-j>0$ since $r-j=0$ would give $[pt]*a_j =
[pt]$ which is possible only if $a_j = [M]$ which is not the case. As
$r-j>0$, the relation~\eqref{eq:pt*a_j} implies that there exists a
homology class $A \in H_2^S(M)$ with $2c_1(A) = (r-j)N_L$ such that
$GW([pt], a_j, [M];A) \neq 0$. In particular, for generic $J$, the
moduli space of (simple) $J$-holomorphic rational curves
$u:\mathbb{C}P^1 \to M$ in the class $A$ which pass through a given
point in $M$ and intersect a cycle representing $a_j$ is not empty. To
estimate the dimension of this space denote by $\mathcal{M}(A,J)$ the
space of simple rational curves in the class $A$ and by $G =
Aut(\mathbb{C}P^1) \approx PSL(2,\mathbb{C})$ the group of
biholomorphisms of $\mathbb{C}P^1$. Consider the evaluation map $$ev:
\bigl(\mathcal{M}(A,J) \times \mathbb{C}P^1 \times \mathbb{C}P^1
\bigr)/G \to M \times M, \quad ev(u,z_1, z_2) = (u(z_1), u(z_2)).$$
The moduli space in question is $ev^{-1}(pt \times W^u_{a'_j})$,
where recall that $a'_j \in \mathbb{Z}_2 \langle \textnormal{Crit}(g)
\rangle$ is a Morse cycle representing $a_j$ and $W^u_{a'_j}$ stands
for the unstable submanifolds associated to the critical points in
$a'_j$.  By transversality we obtain the following dimension formula:
$$\dim ev^{-1}(pt \times W^u_{a'_j}) = 2n + 2c_1(A) + 2+2-6 + 
|a_j| - 4n = -2n+ (r-j)N_L - 2 + |a_j|.$$ On the other hand,
from~\eqref{eq:pt*a_j} we have $|a_j| = 2n-(r-j) N_L$. Putting this
into the dimension formula we get $\dim ev^{-1}(pt \times W^u_{a'j}) =
-2$, contradicting the fact that this space is not empty.  This rules
out Case 2-ii and concludes the proof of Theorem~\ref{theo:geom_rig}.

\Qed

\subsection{Proof of Theorem \ref{thm:action}} \label{subsec:proofT3}

We first recall the definition of the spectral invariants as well as
some other basic facts and we fix some conventions.

Consider a generic pair $(H,J)$ consisting of a $1$-periodic
Hamiltonian $H:M\times S^{1}\to \R$ and an almost complex structure
$J$ so that the Floer complex $CF_{\ast}(H,J)$ is well defined.
(Here, $CF(H,J)$ is the Floer complex for periodic orbits Floer
homology.) Let $I=\{\overline{\gamma}=(\gamma,\hat{\gamma})\}/\sim$
where $x$ is a contractible $1$-periodic orbit of the Hamiltonian flow
of $H$, $\hat{\gamma}:D \to M$ is a disk-capping of $\gamma$ (i.e.
$\hat{\gamma}|_{\partial D} = \gamma$) and the equivalence relation
$\sim$ is $\overline{\gamma}\sim\overline{\gamma}'$ if
$\gamma=\gamma'$ and $\omega(\hat{\gamma})=\omega(\hat{\gamma}')$.
Notice that $I$ is a $\Gamma$-module (we recall that
$\Gamma=\Z_{2}[s^{-1},s]$), the elements of $\Gamma$ acting by
changing the capping: $s \cdot (\gamma, \hat{\gamma}) = (\gamma,
\hat{\gamma_1})$, where $\omega(\hat{\gamma_1}) = \omega(\hat{\gamma})
- 2C_M \eta$. As $\Lambda$ is a $\Gamma$-module we will define the
Floer complex of interest here as: $CF(H,J;\Lambda)=\Z_{2}\langle
I\rangle \otimes_{\Gamma}\Lambda$ endowed with the usual Floer
differential.

Fix also a Morse function $f:L\to \R$ as well as a Riemannian metric
$\rho$ on $L$ so that the pearl complex $\mathcal{C}^{+}(L;f,\rho,J)$
is well defined.

We need to provide a Floer-theoretic description of our module
operation $\circledast$ which involves the two complexes above. This
is based on moduli spaces $\mathcal{P}'_{\mathcal{T}}$ similar to the
ones used in~\S\ref{subsubsec:defin_alg_str} c except that the vertex
of valence three in the string of pearls is now replaced by a
half-tube with boundary on $L$ and with the $-\infty$ end on an
element $\overline{\gamma} \in I$.  The symbol of the tree is
$(\overline{\gamma},x:y)$.  The total homotopy class $\lambda$ of the
configuration obtained in this way is computed by using the capping
associated to $\gamma$ to close the semi-tube to a disk and adding up
the homotopy class of this disk to the homotopy classes of the other
disks in the string of pearls.  More explicitly, a half tube as before
is a solution
$$u:(-\infty,0]\times S^{1}\to M$$
of Floer's equation
\begin{equation}\label{eq:Floer_semi_tube}\partial
u/\partial s +J\partial u/\partial t+\nabla H(u,t)=0
\end{equation}
with the boundary conditions $$u(\{0\}\times S^{1})\subset L \ \
\lim_{s\to-\infty}u(s,t)=\gamma(t)~.~$$ The marked points on the
``exceptional" vertex which corresponds to $u$ are so that the point
$u(0,1)$ is an exit point for a flow line and $u(0,-1)$ is the entry
point. Both compactification and bubbling analysis for these moduli
spaces are similar to what has been discussed before to which is added
the study of transversality and bubbling for the spaces of half-tubes
as described by Albers in~\cite{Alb:extrinisic}. As described
in~\cite{Alb:extrinisic}, an additional assumption is needed for this
part: $H$ is assumed to be such that no periodic orbit of $X^H$ is
completely included in $L$.

\begin{figure}[htbp]
   \psfig{file=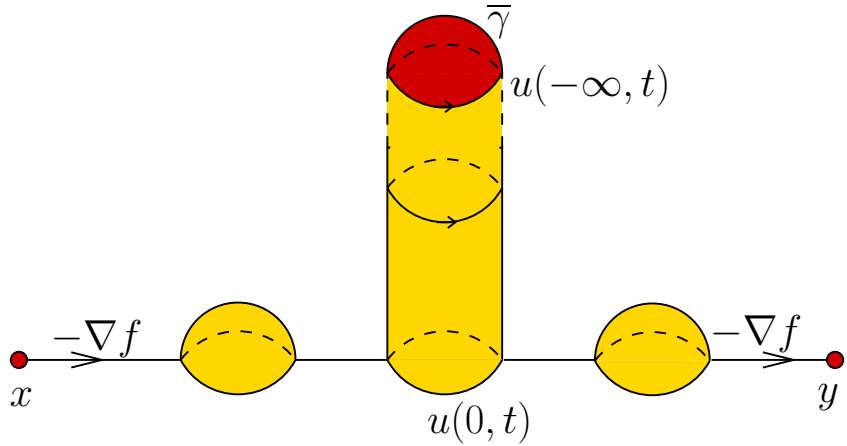, width=0.7 \linewidth}
   \caption{An element $v \in \mathcal{P}'_{\mathcal{T}}$.}
   \label{f:pearls-tube}
\end{figure}

Counting elements in these moduli spaces defines an operation:
$$\circledast_{F}: CF(H,J; \Lambda) \otimes_{\Lambda} 
\mathcal{C}(L;f,\rho,J)\to \mathcal{C}(L;f,\rho,J)~.~$$ Fix a
Morse-Smale pair $(g, \rho_M)$ on $M$ and let $C^{+}(g)$ be the
corresponding Morse complex tensored with $\La^{+}$. Recall the module
action defined in~\S\ref{subsubsec:defin_alg_str} c:
$$\circledast : C^{+}(g)\otimes_{\Lambda^+} 
\mathcal{C}^{+}(L;f,\rho,J)\to \mathcal{C}^{+}(L;f,\rho,J)~.~$$ There
are maps induced by the inclusion $\La^{+}\to \La$:
$$C^{+}(g)\to C^{+}(g)\otimes_{\La^{+}}\La =
C(g) \ \mathrm{and} \ \mathcal{C}^{+}(L;f,\rho,J)\to
\mathcal{C}(L;f,\rho,J)$$ which we will denote in both cases by $p$.

We will now use the Hamiltonian version of the
Piunikin-Salamon-Schwarz homomorphism~\cite{PSS}: $\widetilde{PSS}:
C(g) \to CF(H,J;\La)$. Standard arguments show that there is a chain
homotopy $\xi : C^{+}(g)\otimes_{\Lambda^+}
\mathcal{C}^{+}(L;f,\rho,J)\to \mathcal{C}(L;f,\rho,J)$ which
verifies:
\begin{equation}\label{eq:chtpy_PSS_mod}
   \widetilde{PSS}(p(x))\circledast_{F} p(y)-p(x\circledast y)=
   d\xi (x\otimes y) - \xi (d(x\otimes y)), 
\end{equation}
for every $x \in C^+(g)$, $y \in \mathcal{C}^+(L; f, \rho, J)$.

\

The Floer complex $CF_{\ast}(H,J)$ is filtered by the values of the
action functional
$$\mathcal{A}_{H}(\overline{\gamma})=\int_{S^{1}}H(\gamma(t),t)dt 
-\int_{D} \hat{\gamma}^{\ast}\omega$$ where
$\overline{\gamma}=(\gamma,\hat{\gamma})$, with $\gamma$ a
contractible $C^{\infty}$-loop in $M$ and $\hat{\gamma}$ a cap of this
loop.  This action is compatible with the action of $\Gamma$ and we
extend it on the generators of $CF(H,J;\Lambda) = \Z_{2}\langle
I\rangle \otimes_{\Gamma}\Lambda$ by:
$\mathcal{A}_{H}(\overline{\gamma}\otimes
t^{k})=\mathcal{A}_{H}(\overline{\gamma})-k\eta N_{L}$ (where $\eta$
is the monotonicity constant). The filtration of order $\nu\in\R$ of
the Floer complex, $CF^{\leq\nu}$, is the graded $\mathbb{Z}_2$-vector
space generated by all the elements $\overline{x} \otimes \lambda$ of
action at most $\nu$.

We emphasize that all the homology and co-homology classes to be
considered below are homogeneous. We now recall the definition of
spectral invariants following Schwarz~\cite{Sc:action-spectrum} and
Oh~\cite{Oh:spec-inv-1}. Fix $\alpha\in QH_*(M;\Lambda) =
(H(M;\mathbb{Z}_2)\otimes \Lambda)_*$ and define the spectral
invariant $\sigma(\alpha,H)$ of $\alpha$ by:
\begin{equation}\label{eq:spectral_hlgy}
   \sigma(\alpha,H) =
   \inf\{\nu : PSS (\alpha)\in
   \mathrm{Image}(\ H(CF^{\leq\nu})\to HF(H,J; \Lambda)\ )\},
\end{equation}
where $PSS:QH_*(M;\Lambda)\to HF_*(H,J;\Lambda)$ is the morphism
induced in homology by $\widetilde{PSS}$. Notice that by convention we
have $\sigma(0; H)=-\infty$.  Assuming that $H$ is normalized, it is
well known that $\sigma(\alpha,H)$ depends only on the class
$[\phi^H]\in \widetilde{Ham(M)}$ and on $\alpha$, and is therefore
denoted by $\sigma(\alpha, \phi^{H})$. We refer the reader
to~\cite{Oh:spec-inv-1, Oh:spec-inv-2, Oh:spec-inv-3, Oh:spec-inv-4,
  Sc:action-spectrum, McD-Sa:Jhol-2} for the foundations of the theory
of spectral invariants. See also~\cite{Vi:generating-functions-1} for
an earlier approach to the subject.

Let $L\subset M$ be a monotone Lagrangian submanifold.
Theorem~\ref{thm:action} is an immediate consequence of the first part
of Lemma~\ref{lem:action} below. To state it we fix some more
notation. As discussed before, the inclusion $\La^{+}\to \La$ induces
a map $p:\mathcal{C}^{+}(L;f,\rho,J)\to \mathcal{C}(L;f,\rho,J)$ which
is canonical in homology. We continue to denote the induced map in
homology by $p$ too. Denote by $IQ^{+}(L)$ the image of
$p:Q^{+}H(L)\to QH(L)$ and notice that $IQ^{+}(L)$ is a $\La^{+}$
module so that it makes sense to say whether a class $z\in IQ^{+}(L)$
is divisible by $t$ in $IQ^{+}(L)$: this means that there is some
$z'\in IQ^{+}(L)$ so that $z=tz'$.

\begin{lem} \label{lem:action}
   \begin{enumerate}[i.]
     \item Assume that $\alpha\in QH_*(M;\La^{+})$, $x,y\in
      Q^{+}H_*(L)$ are so that $p(y)$ is not divisible by $t$ in
      $IQ^{+}(L)$ and $\alpha*x=yt^{s}$.  Then we have the following
      inequality for every $\phi \in \widetilde{Ham}(M)$:
      $$\sigma(\alpha;\phi)\geq  
      \mathrm{depth}_{L}(\phi) - sN_{L}\eta ~.~$$
     \item Let $x\in Q^{+}H_*(L)$ and let $\phi\in\widetilde{Ham}(M)$.
      Then: $$\sigma(i_{L}(x);\phi)\leq \mathrm{height}_{L}(\phi)
      $$ where $i_{L}:QH(L)\to QH(M;\Lambda)$ is the quantum inclusion
      from Theorem~\ref{thm:alg_main} iii.
   \end{enumerate}
\end{lem}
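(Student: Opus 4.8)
\textbf{Proof plan for Lemma~\ref{lem:action}.}

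The plan is to derive both inequalities from the filtered version of the algebraic identities $\alpha * x = yt^s$ and from the relation~\eqref{eq:inclusion_mod} defining $i_L$, using the fact that the module operation $\circledast$, the quantum inclusion $i_L$, and the comparison maps between the pearl complex, the Floer complex and the Morse complex of $M$ all respect the natural action/degree filtrations. The main tool on the Floer side is the chain homotopy~\eqref{eq:chtpy_PSS_mod}, which identifies the Floer-theoretic module action $\widetilde{PSS}(p(x)) \circledast_F p(y)$ with $p(x \circledast y)$ up to a boundary, so that the usual energy estimates for Floer half-tubes translate into action estimates for the pearly module operation.

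For point~i, I would start from a Hamiltonian $H$ with $[H] = \phi$, and consider a cycle $\beta \in CF^{\leq \nu}(H,J;\Lambda)$ with $[\beta] = PSS(\alpha)$, where $\nu$ is close to $\sigma(\alpha;\phi)$. Applying $\circledast_F$ to $\beta$ and a pearl cycle representing $x$, and using~\eqref{eq:chtpy_PSS_mod}, one gets a pearl cycle representing $\alpha * x = y t^s$, whose ``capping'' data is controlled by the energy of the half-tubes appearing in the definition of $\circledast_F$. The key point is the half-tube energy identity: for a solution $u$ of~\eqref{eq:Floer_semi_tube} with $u(\{0\} \times S^1) \subset L$ and $\lim_{s \to -\infty} u = \gamma$, the symplectic area of the disk obtained by gluing the capping $\hat\gamma$ to $u$, minus $\mathcal{A}_H(\overline\gamma)$, is bounded by the energy (which is $\geq 0$), together with the contribution $\int_{S^1} H(\gamma(t),t)\,dt$. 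Tracking the total homotopy class $\lambda = \mu[\mathcal{T}]$ through the string of pearls and using monotonicity ($\omega(\lambda) = \eta\mu(\lambda)$), one obtains that the class $yt^s$ is represented with ``positive power of $t$ bounded by'' a quantity involving $\nu$ and $\inf_\gamma \int_{S^1} H(\gamma(t),t)\,dt$. Since $p(y)$ is not divisible by $t$ in $IQ^+(L)$, the exact power of $t$ occurring in the representative of $yt^s$ is exactly $s$ (one uses here that $Q^+H(L)$ is defined over $\Lambda^+$, where $t$ has negative degree, so no cancellation can produce a lower power), which pins down the estimate and yields $\sigma(\alpha;\phi) \geq \inf_{[H]=\phi}\inf_\gamma \int_{S^1} H(\gamma(t),t)\,dt - sN_L\eta = \mathrm{depth}_L(\phi) - sN_L\eta$ after taking supremum over $H$ and using normalization.

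For point~ii, the argument is dual and somewhat simpler. Using~\eqref{eq:inclusion_mod}, $i_L(x)$ is detected by $\epsilon_L(h \circledast x)$ for $h \in H_*(M)$; equivalently, one represents $i_L(x)$ at the chain level via the Morse complex of a function $g$ on $M$ and compares with the Floer complex through $\widetilde{PSS}$. For a Hamiltonian $H$ with $[H]=\phi$, one builds a Floer cycle representing $PSS(i_L(x))$ out of a pearl cycle for $x$ together with half-tubes going the ``other way'' (the $+\infty$ end on $\gamma$), whose energy again controls the action level from above by $\sup_\gamma \int_{S^1} H(\gamma(t),t)\,dt$, plus non-positive area/Maslov corrections. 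Taking infimum over $H$ in the class $\phi$ gives $\sigma(i_L(x);\phi) \leq \mathrm{height}_L(\phi)$. The hard part here, and the one requiring the most care, is the transversality and compactness for the moduli spaces $\mathcal{P}'_{\mathcal{T}}$ of mixed pearl/half-tube configurations — in particular verifying that no ``bad'' bubbling occurs (using $N_L \geq 2$ as in~\S\ref{sec:main_quant_proof}, together with Albers' analysis~\cite{Alb:extrinisic} of half-tube bubbling, and the extra assumption that no periodic orbit of $X^H$ lies entirely in $L$) so that the chain homotopy~\eqref{eq:chtpy_PSS_mod} and the analogous identity for $i_L$ genuinely hold on the nose; once those structural facts are in place, the action estimates themselves are the standard energy bookkeeping.
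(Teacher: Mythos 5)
Your plan follows essentially the same route as the paper's proof: for (i) you take a filtered Floer representative of $PSS(\alpha)$, pair it with a pearl cycle for $x$ via $\circledast_{F}$, use the chain homotopy \eqref{eq:chtpy_PSS_mod} to identify the result with a representative of $yt^{s}$ up to boundary, and conclude with the half-tube energy estimate plus monotonicity, while for (ii) the paper likewise realizes $PSS\circ i_{L}$ by a chain map built from pearl strings capped by outgoing Floer half-tubes and runs the same energy bookkeeping, including the caveat about orbits of $X^{H}$ lying on $L$. The one small imprecision is in how you invoke the non-divisibility hypothesis: what the argument needs (and what the paper extracts) is that some chain-level term of the $\circledast_{F}$-product has $t$-power at most $s$, not exactly $s$ --- if all terms had power $\geq s+1$ one could divide the cycle by $t^{s+1}$ and exhibit $p(y)$ as divisible by $t$ in $IQ^{+}(L)$, which is the contradiction actually used.
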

The second point of the lemma is an extension of a result of Albers
\cite{Alb:extrinisic}.

Before proving Lemma~\ref{lem:action}, we show how it implies
Theorem~\ref{thm:action}. Indeed, if $L\subset M$ is not narrow, then
$[L]\in QH(L)$ is not trivial and we have $[M]\ast [L]=[L]$ which
implies the first point of Theorem~\ref{thm:action} because, for
degree reasons, $p([L])$ is not divisible by $t$ in $IQ^{+}(L)$.
Moreover, if $M$ is point invertible of order $k$, then there is $a\in
QH(M;\La^{+})$ so that $[pt]\ast a= [M]t^{k/N_{L}}$.  Therefore,
setting $a'=a\ast [L]\in Q^{+}H(L)$ we get $[pt]\ast
a'=[L]t^{k/N_{L}}$ and by applying the lemma for $\alpha=[pt]$,
$x=a'$, $y=[L]$ we deduce Theorem~\ref{thm:action} ii. \Qed

\begin{proof}[Proof of Lemma~\ref{lem:action}.]
   i. We fix $\phi\in\widetilde{Ham(M)}$.  By inspecting the
   definition of \textit{depth} in \S \ref{subsubsec:spec-rig} we see
   that the inequality we need to prove is reduced to showing that for
   every normalized Hamiltonian $H$ with $[H]=\phi$ there exists a
   loop $\gamma: S^1 \to L$ such that:
   \begin{equation}\label{eq:loops_tubes}
      \sigma(\alpha, \phi) - 
      \int_{S^{1}}H(\gamma(t),t)dt+s\eta N_{L} \geq 0.
   \end{equation}
   By a small perturbation of $H$ we may assume that no closed orbit
   of $H$ is contained in $L$.

   Given any $\epsilon>0$, in view of the definition of
   $\sigma(\alpha, H)$, we may find in
   $CF^{\leq\sigma(\alpha,H)+\epsilon}$ a cycle $\zeta$ with
   $[\zeta]=PSS(\alpha)\in HF(H,J;\Lambda)$.  Write $\zeta=\sum
   \overline{\gamma_{i}}\otimes t^{{k}_{i}}$ where
   $\overline{\gamma_{i}}$ are generators of $CF(H,J)$ and
   $t^{k_{i}}\in\Lambda$, $k_{i}\in \Z$.  Represent also $x$ as a
   cycle in $\mathcal{C}^{+}(L;f,\rho,J)$, $x=[x']$ with $x'=\sum_{i
     \geq 0} x_{i}t^{i}$, $x_{i}\in \Z_{2}\langle\Crit(f)\rangle$.
   Similarly, represent also $y$ by a cycle $y'$ in
   $\mathcal{C}^{+}(L;f,\rho,J)$.  From
   equation~\eqref{eq:chtpy_PSS_mod} we deduce that
   $\zeta\circledast_{F} x'-y't^{s}\in Im(d)$, where $d$ is the
   differential in $\mathcal{C}(L;f,\rho,J)$.  Write
   $\zeta\circledast_{F} x'=\sum_{i} z_{i}t^{i}$ with $i\in \Z$,
   $z_{i}\in\Z_{2}\langle\Crit(f)\rangle$ (note that here we cannot
   assume anymore that $i \geq 0$ only). The fact that $y'$ is not
   divisible by $t$ in $IQ^{+}(L)$ implies that there is some
   $z_{r}\not=0$ with $r\leq s$. But this means that there are
   $\gamma_{i}$ and $x_{j}$ so that $(\overline{\gamma_{i}} \otimes
   t^{k_{i}})\circledast_F x_{j}t^{j}=z_{r}t^{r}+\ldots$ (where $\ldots$
   stands for other terms).  This means that there are critical points
   $x_j^0, z_r^0 \in \Crit(f)$ (participating in $x_j$ and $z_r$) so
   that the moduli space $\mathcal{P}'_{\mathcal{T}}$ (described at
   the beginning of the section), of symbol
   $(\overline{\gamma_{i}},x_j^0:z_r^0)$ and with $\mu(\mathcal{T}) =
   (r-j-k_{i})N_{L}$, is not void.  We now consider an element
   $v\in\mathcal{P}'_{\mathcal{T}}$ and we focus on the corresponding
   half-tube $u$ (which is part of $v$). The usual energy estimate for
   this half-tube gives:

   $$
   0\leq\int_{-\infty}^{0}\int_{S^{1}}||\partial u/\partial
   s||^{2}dtds= \int_{(-\infty,0]\times S^{1}}u^{\ast}\omega +
   \int_{S^{1}}H(\gamma_{i}(t),t)dt -\int_{S^{1}}H(u(0,t),t)dt~,~
   $$
   hence: $$\int_{S^{1}}H(u(0,t),t)dt\leq \int_{(-\infty,0]\times
     S^{1}} u^{\ast} \omega +\int_{S^{1}} H(\gamma_{i},t)dt.$$ We now
   claim that:
   \begin{equation}\label{eq:energy_est}
      \mathcal{A}_{H}(\gamma_{i})+(r-j-k_{i})\eta N_{L}\geq 
      \int_{(-\infty,0]\times
        S^{1}}u^{\ast}\omega+\int_{S^{1}}H(\gamma_{i}(t),t)dt~.~
   \end{equation}
   Indeed, $\eta \mu(\mathcal{T})$ equals the symplectic area of all
   the disks in $v$ $+$ the area of the tube $u$ $+$ the area of the
   cap $\hat{\gamma_i}$ corresponding to $\overline{\gamma_{i}}$. The
   inequality~\eqref{eq:energy_est} now follows because the disks in
   $v$ are $J$-holomorphic hence their area is non-negative. But now
   $\sigma(\alpha,H)+\epsilon\geq
   \mathcal{A}_{H}(\overline{\gamma_{i}} \otimes
   t^{k_{i}})=\mathcal{A}_{H}(\overline{\gamma_{i}})-k_{i}\eta N_{L}$
   and as $j \geq 0$, $s\geq r$ we obtain:
   $$\sigma(\alpha,H)+\epsilon +s\eta N_{L}\geq \int_{(-\infty,0]\times
     S^{1}}u^{\ast}\omega+\int_{S^{1}}H(\gamma_{i}(t),t)dt$$ so that
   by taking $\gamma(t)=u(0,t)$ we deduce
   inequality~\eqref{eq:loops_tubes}.

   \

   ii. Given a Hamiltonian $H$ with $\phi=\phi^{H}$, a Morse function
   $f$, a generic metric $\rho$ and a generic almost complex structure
   $J$ we will define a chain map
   $$\tilde{i}_{L}:\mathcal{C}(L;f,\rho,J)\to CF(H,J; \Lambda)$$
   so that the maps induced in homology by $\widetilde{PSS} \circ i_L$
   and by $\tilde{i_L}$ are equal. To describe this map, fix a
   particular capping $\hat{\gamma}'$ for each contractible
   $1$-periodic, orbit $\gamma$ of the Hamiltonian vector field
   $X^{H}$ of $H$. We denote these pairs by $\tilde{\gamma} =
   (\gamma,\hat{\gamma}')$.

   For a critical point $p\in \Crit(f)$ we define
   \begin{equation} \label{eq:tilde-i_L}
      \tilde{i}_{L}(p)=\sum_{\mathcal{T},\gamma} \#_{2}
      (\mathcal{P}''_{\mathcal{T}})\ \tilde{\gamma} \otimes
      t^{\mu(\mathcal{T})/N_{L}},
   \end{equation}
   where the moduli spaces $\mathcal{P}''_{\mathcal{T}}$ are similar
   to the ones used in~\S\ref{subsubsec:defin_alg_str} d except that
   the last (exceptional) vertex there as well as its exiting edge are
   replaced here by a Floer semi-tube; the Maslov index
   $\mu(\mathcal{T})$ is the sum of the Maslov indices of the disks in
   the chain of pearls summed with the Maslov index of the tube glued
   to the disk $\hat{\gamma}'$ with reversed orientation.  More
   precisely, the moduli spaces $\mathcal{P}''_{\mathcal{T}}$ used
   here correspond to trees $\mathcal{T}$ of symbol $(p:
   \tilde{\gamma})$. An element $v\in \mathcal{P}''_{\mathcal{T}}$
   consists of a pair $(u',u'')$ where $u''$ is a Floer semi-tube
   $$u'':[0,\infty)\times S^{1}\to M$$
   verifying Floer's equation~\eqref{eq:Floer_semi_tube} with the
   boundary conditions $$u''(\{0\}\times S^{1})\subset L \ , \
   \lim_{s\to\infty}u''(s,t)=\gamma(t)$$ and $u'$ is a string of
   pearls $u'=(u_{1},\ldots, u_{k})$ in $M$ associated to $f$,
   starting at the critical point $p\in \Crit(f)$ and so that the last
   incidence condition is
   $$\exists t>0,\ \gamma_{t}^{f}(u_{k}(1))=u''(0, -1)~.~$$
   In other words, $u'$ is an element as of a moduli space as those
   considered in the construction of the pearl differential
   \S\ref{subsubsec:defin_alg_str} a except that the end point is not
   $\in \Crit(f)$ but $u''(0,-1)$.  The Maslov index is given by
   $\mu(\mathcal{T})=\mu(u')+\mu(u''\# (\hat{\gamma}')^{-1})$ where
   $(\hat{\gamma}')^{-1}$ is the disk with the opposed orientation
   compared to $\hat{\gamma}'$, and $u''\# (\hat{\gamma}')^{-1}$
   indicates the surface obtained by gluing the tube $u''$ and the
   capping disk $(\hat{\gamma}')^{-1}$ along $\gamma$. The sum
   in~\eqref{eq:tilde-i_L} is taken over all $(\mathcal{T}, \gamma)$
   such that $|p|-\mu(\tilde{\gamma})+\mu(\mathcal{T})=0$.  It is easy
   to see that the definition of $\tilde{i}_L$ does not depend on the
   specific choice of the cappings $\hat{\gamma}'$ associated to each
   $\gamma$.

   The regularity issues for the moduli spaces
   $\mathcal{P}''_{\mathcal{T}}$ are similar to those discussed
   before. Finally, standard arguments show that by extending this
   definition by linearity over $\Lambda$ we obtain a chain map and
   that, the map induced in homology by $\tilde{i}_L$ coincides with
   $PSS \circ i_{L}$.

   The next step is to establish an action estimate for the
   configurations $v=(u',u'') \in \mathcal{P}''_{\mathcal{T}}$
   considered above. We recall that if $\tilde{\gamma}$ is a capped
   orbit as above, the element $\tilde{\gamma}\otimes t^{k}$ is a
   generator of $CF(H,J;\Lambda)$ and its action is
   $\mathcal{A}_{H}(\tilde{\gamma})-k\eta N_{L}$.  The energy estimate
   associated to $u''$ gives:
   $$0\leq\int_{S^{1}}H(u''(0,t),t)dt - \int_{S_{1}} H(\gamma(t),t) dt+
   \int_{[0,\infty)\times S^{1}}(u'')^{\ast}\omega,$$ and so
   $$\mathcal{A}_{H}(\tilde{\gamma})= -
   \int_{D}(\hat{\gamma}')^{\ast}\omega+\int_{S^{1}}H(\gamma(t),t) dt
   \leq \int_{S^{1}}H(u''(0,t),t)dt +
   \omega([u''\#(\hat{\gamma}')^{-1}])~.~$$ Clearly, $\omega([u''\#
   (\hat{\gamma}')^{-1}])= \mu(\mathcal{T})\eta-\omega([u'])$ and as
   $\omega([u'])\geq 0$ we deduce
   \begin{equation} \label{eq:A_H-i_L}
      \mathcal{A}_{H}(\tilde{\gamma}\otimes t^{\mu(\mathcal{T})/N_L} \leq
      \mathrm{height}_{L}(\phi)~.~
   \end{equation}

   Let $x\in Q^{+}H(L)$, and let $x'=\sum_{i \geq 0}
   x_{i}t^{i}\in\mathcal{C}^{+}(L;f,\rho,J)$, $x_{i}\in \mathbb{Z}_2
   \langle \Crit(f)\rangle$ be a pearl cycle that represents $x$.
   Write $\tilde{i}_{L}(x')=\sum_{i}\tilde{\gamma}_{i} \otimes
   t^{k_{i}}$. Consider {\em any} of the terms in this sum, say
   $\tilde{\gamma}_{j} \otimes t^{k_{j}}$.  There exists $r \geq 0$
   and a critical point $x_r^0$ participating in $x_r$, so that
   $\tilde{i}_{L}(x^0_{r}t^{r})$ contains $\tilde{\gamma}_{j} \otimes
   t^{k_{j}}$.  As $\tilde{i}_{L}$ is $\Lambda$-linear this means that
   $\tilde{i}_{L}(x^0_{r})$ contains $\tilde{\gamma}_{j} \otimes
   t^{k_{j}-r}$. From~\eqref{eq:A_H-i_L} we now obtain 
   $$\mathcal{A}(\tilde{\gamma}_{j}t^{k_{j}}) =
   \mathcal{A}(\tilde{\gamma}_{j}t^{(k_{j}-r)}) - rN_{L}\eta\leq
   \mathrm{height}_{L}(\phi)-rN_{L}\eta \leq
   \mathrm{height}_{L}(\phi).$$ Finally, since $\tilde{i}_L$ and $PSS
   \circ i_L$ coincide in homology we can represent $PSS (i_L(x))$ as
   a linear combination of of generators of $CF(H,J; \Lambda)$ each of
   action at most $\mathrm{height}_{L}(\phi)$ which implies our claim.
\end{proof}

\begin{rem} \label{r:lem-action}
   \begin{enumerate}[a.]
     \item Sometimes the point ii of Lemma~\ref{lem:action} can be
      used to estimate from above spectral invariants of homology
      classes $\alpha\in H_{\ast}(M)$. For example, it is easy to see
      that $i_{L}([L])=\textnormal{inc}([L])$, where
      $\textnormal{inc}_*: H_*(L;\mathbb{Z}_2) \to
      H_*(M;\mathbb{Z}_2)$ is the map induced by the inclusion $L
      \subset M$. Therefore whenever $\textnormal{inc}_*([L]) \neq 0$
      we obtain $\sigma(\textnormal{inc}_*([L]),\phi)\leq
      \mathrm{height}_{L}(\phi)$ for any $\phi\in\widetilde{Ham}(M)$.
      \item In a point invertible manifold the first part of
      Lemma~\ref{lem:action} provides an estimate from below of
      $\sigma([pt],\phi)$ and so, in view of the proofs of the
      intersection results discussed in
      Corollaries~\ref{cor:inters_i_j} and~\ref{cor:inters}, it is
      particularly important to get also an estimate from the above.
      The natural idea is to write $[pt]=i_{L}(x)$ for some class $x$.
      However, there are cases when $[pt]$ is not in the image of this
      map $i_{L}$ - see for example the case of the quadric $Q^{2n}$
      described in \S\ref{Sb:quad-lag}.
     \item In case $\Lambda=\Gamma$ we have $CF(H,J; \Lambda)=CF(H,J)
      = \Z_{2}\langle I\rangle$, where $I$ is the set of contractible
      $1$-periodic orbits of $X^H$ together with all possible cappings
      (modulo the usual identifications) $I = \{\overline{\gamma} =
      (\gamma, \hat{\gamma}) \} / \sim$. In this case the map
      $\tilde{i}_{L}$ can be written as $\tilde{i}_{L}(p) =
      \sum_{\mathcal{T}, \overline{\gamma}}
      \#_{2}(\mathcal{P}'''_{\mathcal{T}})\overline{\gamma}$ where the
      moduli space $\mathcal{P}'''_{\mathcal{T}}$ contains
      configurations as those in $\mathcal{P}''_{\mathcal{T}}$ but
      with the additional condition that $\mu(\mathcal{T})=0$. Indeed,
      as $\Lambda=\Gamma$ any element $\tilde{\gamma}\otimes t^{k}$
      can be written uniquely as some $\overline{\gamma}$.

      If additionally, we have $N_{L}>n+1$, then a dimension count
      shows that for the configurations $v=(u',u'')$ used to define
      $\tilde{i}_{L}$ we have $\mu(u')=0$ and so there are no
      $J$-disks present in the definition of $\tilde{i}_{L}$. Under
      these assumptions $\tilde{i}_L$ coincides with a map introduced
      by Albers in~\cite{Alb:extrinisic}.
     \item It is possible to define a pseudo-valuation $\nu: QH(L)\to
      \Z\cup \{\infty\}$ as follows. Notice first that for any $a\in
      QH(L)$ there exists $k\in \mathbb{Z}$ so that $t^{k}a\in
      IQ^{+}(L)$.  Define
      $$\nu(a) = \max \{s \in \mathbb{Z} \mid 
      t^{-s} a \in IQ^+(L) \} \in \mathbb{Z} \cup \{\infty\}.$$ It is
      easy to see that $\nu$ is well defined, and that it verifies
      $\nu(a) \geq 0$ iff $a \in IQ^+(L)$, $\nu(a)=\infty$ iff $a=0$,
      $\nu(a+b)\geq \min\{\nu(a),\nu(b)\}$, $\nu(a\ast b)\geq
      \nu(a)+\nu(b)$, and $\nu(ta)=\nu(a)+1$.  A similar function to
      $\nu$ has already been considered by Entov-Polterovich
      \cite{En-Po:rigid-subsets} in the context of ambient quantum
      homology.  The inequality at point~i of Lemma~\ref{lem:action}
      can now be reformulated as:
      $$\sigma(\alpha,\phi)-\mathrm{depth}_{L}(\phi)\geq 
      (\nu(x)-\nu(\alpha\ast x))N_{L}\eta, \quad \forall \, \alpha\in
      QH(M;\La), \; x\in QH(L).$$
   \end{enumerate}
\end{rem}

\subsection{Proof of Theorem \ref{cor:inters_i_j}}
\label{sb:jcirci}

Recall the setting of this theorem.  Given $L_0, L_1 \subset M$,
monotone Lagrangian submanifolds we have the two associated rings
$\Lambda_0 = \mathbb{Z}_2[t_0^{-1}, t_0]$, $\Lambda_{1} =
\mathbb{Z}_2[t_1^{-1}, t_1]$ be the associated rings, graded by $\deg
t_0 = -N_{L_0}$ and $\deg t_1 = N_{L_1}$ as well as the ring
$\Lambda_{0,1}=\Lambda_0 \otimes_{\Gamma} \Lambda_1$ where
$\Gamma=\Z_{2}[s^{-1},s]$, $|s|=-2C_{min}$.  Recall also the two
canonical maps: the quantum inclusion $i_{L_0}:
QH_*(L_0;\Lambda_{0,1}) \to QH_*(M;\Lambda_{0,1})$ and $j_{L_1}:
QH_*(M;\Lambda_{0,1}) \to QH_{*-n}(L_1;\Lambda_{0,1})$, defined by
$j_{L_1}(a) = a * [L_1]$. The claim of the theorem is that if the
composition:
$$j_{L_1} \circ i_{L_0} :
QH_*(L_0;\Lambda_{0,1}) \longrightarrow QH_{*-n}(L_1;\Lambda_{0,1}).$$
does not vanish, then $L_{0}$ and $L_{1}$ intersect.

We start the proof with a little more preparation.  First note that
since $\Lambda_{0,1}$ is a $\Gamma$-module we can naturally extend the
definition of periodic orbit Floer homology to coefficients in
$\Lambda_{0,1}$ as the homology of the complex $CF(H,J;\Lambda_{0,1})
= CF(H,J) \otimes_{\Gamma} \Lambda_{0,1}$. We denote this homology by
$HF(H,J;\Lambda_{0,1})$. Moreover, the PSS isomorphism naturally
extends to this case and we get an isomorphism $PSS:
HF_*(H,J;\Lambda_{0,1}) \to QH_*(M; \Lambda_{0,1})$. Similarly, we can
extend the action functional to the generators of
$CF(H,J;\Lambda_{0,1})$ by defining: $\mathcal{A}_{H}(\overline{x}
\otimes t_0^{k_0} \otimes t_1^{k_1}) = \mathcal{A}_{H}(\overline{x}) -
k_0 \eta_0 N_{L_0} - k_1 \eta_1 N_{L_1}$. Here $\eta_i =
(\omega/\mu)|_{H_2^D(M,L_i)}$, $i=0,1$, are the monotonicity constants
of the Lagrangians. (Clearly, $\eta_0 = \eta_1$, unless
$\omega|_{\pi_2(M)} = 0$ in which case we anyway have $C_M = \infty$,
$\Gamma = \mathbb{Z}_2$ hence $CF(H,J; \Lambda_{0,1}) = CF(H,J)
\otimes \Lambda_0 \otimes \Lambda_1$.) It is easy to see that this
extension of the action is well defined.  With these conventions we
have as before a filtration on $HF(H,J;\Lambda_{0,1})$ by action and
we can define spectral numbers $\sigma_{\Lambda_{0,1}}(\alpha, \phi)$
for every $\alpha \in QH(M; \Lambda_{0,1})$,
$\phi\in\widetilde{Ham}(M)$, in a standard way.  A straightforward
algebraic argument shows that for classes $\alpha \in QH(M) \subset
QH(M; \Lambda_{0,1})$ (as well as $\alpha \in QH(M; \Lambda_i)$,
$i=0,1$) these ``new'' spectral numbers coincide with the usual ones,
i.e.  $\sigma_{\Lambda_{0,1}}(\alpha, \phi) = \sigma(\alpha,\phi)$.
(The point is that $\Lambda_{0,1}$ is a free module over $\Gamma$.)
We will also need the ring $\Lambda^{+}_{0,1} = \Lambda^+_0
\otimes_{\Gamma^+} \Lambda^+_1$. As before we have $$\Lambda^{+}_{0,1}
\cong \mathbb{Z}_2[t_0, t_1] / \{ t_0^{2C_M/N_{L_0}} =
t_1^{2C_M/N_{L_1}}\}~.~$$

Next we remark that Lemma~\ref{lem:action} continues to hold if we
replace $L$ by one of the $L_i$'s, say $L_0$, replace $\Lambda$ by
$\Lambda_{0,1}$, $\Lambda^+$ by $\Lambda^+_{0,1}$ and the condition
that ``$p(y)$ is not divisible by $t$ in $IQ^{+}(L)$'' by ``$p(y)$ is
not divisible by $t_{0}$ in the image of the map $p$'' with $p$ the
canonical ``change of coefficients'' map $p:QH(L_{0};\La^{+}_{0,1})\to
QH(L_{0};\La_{0,1})$. The proof of the lemma carries out to this case
without any essential modifications.

Since $j_{L_1} \circ i_{L_0} \neq 0$ there exists $x\in
QH(L;\Lambda^+_{0,1})$ so that $j_{L_1}\circ i_{L_0}(x)\not=0$.  From
the modified version of Lemma~\ref{lem:action} discussed above, we
deduce that for some constant $K$ depending only on $j_{L_1}\circ
i_{L_0}(x)$ and for any $\phi\in\widetilde{Ham}(M)$ we have
$$\mathrm{depth}_{L_1}(\phi)-K\leq\sigma(i_{L_0}(x)) \leq
\mathrm{height}_{L_0}(\phi)~.~$$

Now assume by contradiction that $L_0\cap L_1=\emptyset$. Pick a
normalized Hamiltonian $H$ which is constant equal to $C_0$ on $L_0$
and constant equal to $C_1$ on $L_1$ with $C_1>C_0+K$. This
immediately leads to a contradiction and concludes the proof of
Theorem \ref{cor:inters_i_j} \qed

\

We now pass to the proof of Corollary~\ref{cor:inter_Mas_Chern}.  Put
$L_1 = L$ and let $L_0 \subset M$ be a non-narrow monotone Lagrangian.
The claim follows if we show that if $[pt]\ast [L_{1}]$ is not
divisible by $t^{2C_{M}/N_{L_{1}}}$ in $IQ^{+}(L_{1})$, then
$j_{L_{1}}\circ i_{L_{0}}\not=0$. We first fix a Morse function
$f_{0}:L_{0}\to \R$ and a metric $\rho_{0}$ on $L_{0}$ as well as an
almost complex structure $J$ on $M$ so that the pearl complex
$\mathcal{C}(L_0;\La_{0}; f_{0},\rho_{0},J)$ is defined.  We assume
that $f_{0}$ has a unique minimum $m_{0}$.  To simplify the notation,
we put $c_{i}=2C_{M}/N_{L_{i}}$.

By the non-degeneracy part in Proposition \ref{p:duality} there exists
a class $\alpha \in QH_{0}(L_{0};\La_{0})$ which is non-zero and is
represented by a pearl cycle of the form $m_{0}+\sum_{i>0} x_{i}
t_{0}^{i}$ with $x_{i}\in\Crit(f_{0})$. A priori this cycle belongs to
$\mathcal{C}(L_0; f_0, \rho_0, J)$, but as $|m_0|=0$ and $|t_0|<0$ all
the the powers of $t_0$ in this cycle must be non-negative. Thus, in
fact this cycle is in $\mathcal{C}^{+}(L_0; f_0, \rho_0, J)$ and
$\alpha \in IQ^+(L_0)$. In view of the coefficients extension
morphisms $QH(L_0; \Lambda^+_0) \to QH(L; \Lambda^+_{0,1}) \to QH(L;
\Lambda_{0,1})$ we will view from now on $\alpha$ as an element of the
image of these maps i.e. $ \alpha \in IQ^{+}(L_0; \Lambda_{0,1})
\subset QH(L_0; \Lambda_{0,1})$. Here we have used again the ring
$\Lambda^+_{0,1} = \Lambda^+_0 \otimes_{\Gamma^+} \Lambda^+_1 \cong
\mathbb{Z}_2[t_0, t_1] / \{ t_0^{c_0} = t_1^{c_1} \}$ and the
coefficients extension morphisms induced by the obvious inclusions
$\Lambda^+_0 \to \Lambda^{+}_{0,1} \to \Lambda_{0,1}$.

As $i_{L_{0}}$ extends (at the chain level) the inclusion in singular
homology we can write $i_{L_{0}}(\alpha)=[pt]+\sum_{j>0}
a_{j}t_{0}^{j}$ with $a_{j}\in H_{\ast}(M;\Z_{2})$. Notice that
$QH(L_{1};\La_{0,1})=QH(L_{1};\La_{1})\otimes_{\Gamma} \La_{0}$
because $\mathcal{C}(L_{1};\La_{0,1};f_{1},\rho_{1},J)=
(\mathcal{C}(L_{1};\La_{1};f_{1},\rho_{1},J)\otimes _{\Gamma}\La_{0},
d_{\La_{1}}\otimes id)$ and $\La_{0,1}$ is a free $\Gamma$-module.
Taking this into account, we now apply $j_{L_{1}}$ to
$i_{L_{0}}(\alpha)$ and we obtain:
\begin{equation} \label{eq:identity_la01}
   (j_{L_{1}}\circ i_{L_{0}})(\alpha)= y+ \sum_{j>0} y_{j}t_{0}^{j},
\end{equation}
where we have denoted $y=[pt]\ast [L_{1}]\in QH(L_{1};\La_{1})\otimes
1$ and $y_j = a_{j}\ast [L_{1}] \in QH(L_{1};\La_{1})\otimes 1$. It is
important to notice that in fact $y, y_{j}\in IQ^{+}(L_{1}) \otimes 1
\subset IQ^+(L_1; \Lambda_{0,1}) \subset QH(L_1; \Lambda_{0,1})$. Now
suppose by contradiction that $j_{L_{1}}\circ i_{L_{0}}(\alpha)=0$.
As $y \in IQ^{+}(L_{1}) \otimes 1$ identity~\eqref{eq:identity_la01}
implies that the second term on its right-hand side belongs to
$IQ^{+}(L_{1}) \otimes 1$. This can only happen if for every $j$ with
$y_j \neq 0$ we have $c_0 | j$, so that $t_0^j = (t_1^{c_1})^{j/c_0}$.
It now follows that $y$ is divisible by $t_1^{c_1}$, and obviously
this divisibility property continues to hold also in $IQ^+(L_1)$. A
contradiction. \Qed

\section{Various examples and computations.}\label{sec:exa_comp}
The first three subsections below contain the proofs of the
computational theorems in~\S\ref{Sbs:examples-comput} and of their
corollaries from~\S\ref{subsubsec:methods}.  The last subsection
contains the justification of Example~\ref{ex:narrow}.

\subsection{Lagrangians in $\C P^{n}$ with $2H_{1}(L;\Z)=0$.}
Here we prove Theorem \ref{T:qstruct-2H_1=0} and its Corollary
\ref{cor:RP}.

We recall our notation: we denote by $h = [{\mathbb{C}}P^{n-1}] \in
H_{2n-2}({\mathbb{C}}P^n;\mathbb{Z}_2)$ the class of a hyperplane so
that in the quantum homology $QH({\mathbb{C}}P^n)$ we have:
$$h^{*j} =
\begin{cases}
   h^{\cap j}, & 0 \leq j \leq n \\ [{\mathbb{C}}P^n] s, & j=n+1
\end{cases}
~.~$$

We will use quantum homology with coefficients in
$\Lambda=\Z_{2}[t^{-1},t]$ and so we recall that $QH(\C
P^{n};\La)=QH(\C P^{n})\otimes_{\Gamma} \Lambda$, where $\Gamma =
\mathbb{Z}_2[s^{-1}, s]$, $\deg s = -(2n+2)$, and $\Lambda$ becomes a
$\Gamma$-module by $s\to t^{(2n+2)/N_{L}}$.  Obviously, $h$ is
invertible in $QH(\C P^n)$ so that the existence of the module action
claimed in Theorem \ref{thm:alg_main} directly implies the first part
of:

\begin{lem} \label{T:Lag-CPn} Let $L \subset {\mathbb{C}}P^n$ be a
   monotone Lagrangian with $N_L \geq 2$. Then $QH_*(L)$ is
   $2$-periodic, i.e. $QH_i(L) \cong QH_{i-2}(L)$ for every $i \in
   \mathbb{Z}$ and the homomorphism $QH_i(L) \to QH_{i-2}(L)$ given by
   $\alpha \mapsto h*\alpha$ is an isomorphism for every $i \in
   \mathbb{Z}$.  Moreover, $H_{1}(L;\Z)\not=0$.
\end{lem}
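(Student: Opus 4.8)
The plan is to split the proof into the two separate assertions. The $2$-periodicity statement is essentially immediate: by Theorem~\ref{thm:alg_main}\ref{I:qmod-qprod}, $QH_*(L)$ is a module over $QH(\C P^n;\La)$, and as remarked just after the statement of that theorem, multiplication by an invertible homogeneous element $a \in QH_k(M;\La)$ produces isomorphisms $QH_i(L) \to QH_{i+k-2n}(L)$. So first I would observe that $h \in H_{2n-2}(\C P^n;\Z_2) \subset QH_{2n-2}(\C P^n;\La)$ is invertible (its inverse being $h^{\cap(n-1)}t^{-(2n+2)/N_L}$ up to identifications, or more simply $h^{*n} = [\C P^n]s$ in $QH(\C P^n)$ translates to $h$ being a unit). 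Since $k = 2n-2$ here, $k - 2n = -2$, so $\alpha \mapsto h \circledast \alpha$ gives the desired isomorphism $QH_i(L) \to QH_{i-2}(L)$ for all $i$.

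The substance is the claim $H_1(L;\Z) \neq 0$. The plan is to argue by contradiction: assume $H_1(L;\Z) = 0$. The first thing to extract is a constraint on $N_L$. Since $H_1(L;\Z)=0$, the boundary map $\pi_2(\C P^n, L) \to \pi_1(L)$ is trivial, so $\pi_2(\C P^n, L) \to \pi_2(\C P^n) = \Z$ is, after tensoring appropriately, such that every class in $H_2^D(\C P^n, L)$ comes from a sphere class; concretely one gets that $N_L$ is even and $N_L \mid 2C_{\C P^n} = 2n+2$, and in fact the standard argument (using that the disk classes are ``half'' of sphere classes when $\partial = 0$) forces $N_L = 2(n+1)/m$ for suitable $m$, but more relevantly $N_L$ is large — at least, $N_L \geq n+1$ when $H_1(L;\Z)=0$ fails to produce small Maslov disks. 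Actually the cleanest route: with $H_1(L;\Z)=0$ one has $N_L = 2C_{\C P^n}/j$ is even and moreover $H_*(L;\Z_2)$ has no odd-degree classes in degree $1$, so $QH_{\mathrm{odd}\text{ near }1}$ is controlled.

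The key mechanism I expect to use is the periodicity from the first part combined with Proposition~\ref{p:wide-can-iso} and the augmentation. Here is the plan in more detail. By $2$-periodicity, $QH_0(L) \cong QH_n(L) \cong \cdots$ shifting by $2$; in particular $QH_*(L)$ in each fixed parity is constant. Now $[L] \in QH_n(L)$ is the unit, hence nonzero, so $QH_n(L) \neq 0$ and therefore $QH_{n-2k}(L) \neq 0$ for all $k$. If $n$ is even this forces $QH_0(L) \neq 0$; if $n$ is odd, $QH_1(L) \neq 0$. The contradiction should come from comparing this with what $H_1(L;\Z_2)$ and $H_0(L;\Z_2)$ allow via the spectral sequence of Theorem~\ref{thm:alg_main}\ref{I:spectral-seq}, together with the fact that when $N_L$ is large enough Proposition~\ref{p:wide-can-iso} gives $QH_q(L) \cong H_q(L;\Z_2)$ canonically for $q \geq n - N_L + 1$; if $N_L \geq n+1$ (which $H_1(L;\Z)=0$ should force, analogously to the $N_L = n+1$ statement in Corollary~\ref{cor:RP} and to Corollary~\ref{cor:quadric}) then $QH_*(L) \cong H_*(L;\Z_2) \otimes \Lambda$ on the nose, $L$ is wide, and the $2$-periodicity descends to a $2$-periodicity of $H_*(L;\Z_2)$ — forcing $H_1(L;\Z_2) \cong H_{n-1}(L;\Z_2) \cong \cdots$ all nonzero, and by Poincar\'e duality (Proposition~\ref{p:duality}) and ring structure one derives that $H_1(L;\Z_2)$ generates and is nonzero, contradicting $H_1(L;\Z)=0$ (which gives $H_1(L;\Z_2) = 0$ since $2$-torsion would still survive — so one must be slightly careful: $H_1(L;\Z)=0$ does imply $H_1(L;\Z_2)=0$ by universal coefficients, as $H_0(L;\Z)=\Z$ is free).

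The main obstacle, I expect, is establishing the lower bound on $N_L$ cleanly — i.e.\ showing $H_1(L;\Z)=0 \Rightarrow N_L$ large enough to apply the wide-Lagrangian canonical isomorphism, and then making the parity/periodicity bookkeeping airtight. An alternative and perhaps safer route avoiding the $N_L$ estimate: use the augmentation $\epsilon_L : QH_*(L;\La) \to \La$ from Theorem~\ref{thm:alg_main}\ref{I:q-inclusion}, which is nontrivial and $\La$-linear of degree $0$, hence detects a nonzero class in $QH_{kN_L}(L)$ for some $k$; combine with the $2$-periodicity isomorphism $h\circledast(-)$ to move this class to degrees $1$ or $0$, and then pair against the module structure to force a nonzero odd-degree contribution to $H_*(L;\Z_2)$ concentrated in degree $1$ via the spectral sequence — but $H_1(L;\Z_2)=0$. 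I would write up whichever of these two packagings turns out shortest once the degree bookkeeping is on paper; the periodicity-plus-Poincar\'e-duality version seems most robust and is the one I would attempt first.
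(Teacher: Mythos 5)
Your first part (the $2$-periodicity via invertibility of $h$ and the module structure) is correct and is exactly the paper's argument. For the second part your overall strategy --- assume $H_1(L;\Z)=0$, force $N_L$ to be large, conclude wideness, and then contradict the $2$-periodicity --- is the same as the paper's, but two steps are not actually secured. First, you never pin down $N_L$: you only assert vaguely that $N_L\geq n+1$ ``should'' follow. What $H_1(L;\Z)=0$ actually gives is the exact value $N_L=2C_{\C P^n}=2n+2$, because the long exact sequence of the pair shows $H_2(\C P^n;\Z)\to H_2(\C P^n,L;\Z)$ is onto, so every disk class is (homologically) a sphere class and $\mu=2c_1$ on it. The exact value matters: with only $N_L\geq n+1$ you cannot invoke Theorem~\ref{thm:floer=0-or-all}~i to get wideness (at $N_L=l+1$ it gives only the wide/narrow dichotomy), and if $L$ were narrow then $QH(L)=0$ is trivially $2$-periodic and your contradiction evaporates. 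With $N_L=2n+2>n+1$ (take $l=n$) wideness does follow, and this is how the paper proceeds; Proposition~\ref{p:wide-can-iso}~iii, which you cite instead, has wideness as a hypothesis, so it cannot be the source of it.

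Second, your proposed contradiction is wrong for even $n$. You claim the $2$-periodicity of $QH_*(L)\cong (H(L;\Z_2)\otimes\Lambda)_*$ forces $H_1(L;\Z_2)\cong H_{n-1}(L;\Z_2)\cong\cdots$ all nonzero. When $n$ is odd this chain is anchored at $H_n(L;\Z_2)\neq 0$ and the argument works; when $n$ is even it is not anchored anywhere nonzero --- in fact periodicity then forces the odd-degree homology to vanish (e.g.\ $QH_1\cong QH_{n+1}=0$), which is perfectly consistent with $H_1(L;\Z_2)=0$, so no contradiction arises along that route, and the appeal to ``Poincar\'e duality and ring structure'' does not fill this in. The clean contradiction, valid for all $n$ and used in the paper, is purely by degree: since $|t|=-(2n+2)$, one has $(H(L;\Z_2)\otimes\Lambda)_{n+2}=0$ (equivalently $(H(L;\Z_2)\otimes\Lambda)_{-2}=0$) while $(H(L;\Z_2)\otimes\Lambda)_n\neq 0$ contains the fundamental class, contradicting $QH_{n+2}(L)\cong QH_n(L)$. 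Your alternative sketch via the augmentation is too vague to assess and is not needed once the degree comparison is made.
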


\begin{proof} The only part that still needs to be justified is that
   $H_{1}(L;\Z)\not=0$.  But if $H_{1}(L;\Z)=0$, then $N_{L} = 2
   C_{{\mathbb{C}}P^n} = 2n+2$ and by
   Theorem~\ref{thm:floer=0-or-all}~i we deduce that $L$ is wide (take
   $l = n$ in that theorem). The first part of the lemma implies in
   this case that $QH_*(L) \cong (H(L;\mathbb{Z}_2) \otimes
   \Lambda)_*$ is $2$-periodic which is impossible by degree reasons.
   Indeed, $(H(L;\mathbb{Z}_2) \otimes \Lambda)_n \neq 0$ but as $|t|
   = -2n-2$ we have $(H(L;\mathbb{Z}_2) \otimes \Lambda)_{n+2} \cong
   H_{n+2}(L;\mathbb{Z}_2) = 0$.
\end{proof}

\begin{rem}
   The first part of Lemma~\ref{T:Lag-CPn} was proved before by Seidel
   using the theory of graded Lagrangian
   submanifolds~\cite{Se:graded}.  The $2$-periodicity
   in~\cite{Se:graded} follows from the fact that ${\mathbb{C}}P^n$
   admits a Hamiltonian circle action which induces a shift by $2$ on
   graded Lagrangian submanifolds. Note that this is compatible with
   our perspective since that $S^1$-action gives rise to an invertible
   element in $QH({\mathbb{C}}P^n)$ (the Seidel element~\cite{Se:pi1,
     McD-Sa:Jhol-2}) whose degree is exactly $2n$ minus the shift
   induced by the $S^1$-action. In our case the Seidel element turns
   out to be $h$.
\end{rem}

We now focus on our main object of interest in the subsection.

\begin{lem}\label{lem:Maslov_RP}
   Let $L$ be a Lagrangian submanifold in $\C P^{n}$. If
   $2H_{1}(L;\Z)=0$ then $L$ is monotone, $N_{L}=n+1$, $L$ is wide and
   as a graded vector space we have $H_{\ast}(L;\Z_{2})\cong
   H_{\ast}(\R P^{n}; \Z_{2})$. Moreover, $QH_i(L) \cong \mathbb{Z}_2$
   for every $i \in \mathbb{Z}$.
\end{lem}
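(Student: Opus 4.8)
The plan is to prove Lemma~\ref{lem:Maslov_RP} in several stages, reducing most of the work to the general tools already established in the paper.

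\textbf{Step 1: Monotonicity and the value of $N_L$.} First I would show that the hypothesis $2H_1(L;\mathbb{Z})=0$ forces $L$ to be monotone with $N_L=n+1$. Recall that $\mathbb{C}P^n$ is itself monotone with $2C_{\mathbb{C}P^n}=2n+2$. Consider the map $\mu:\pi_2(\mathbb{C}P^n,L)\to\mathbb{Z}$. Using the long exact sequence of the pair $(\mathbb{C}P^n,L)$ and the Maslov index formula relating $\mu$ on $\pi_2(\mathbb{C}P^n,L)$ to $2c_1$ on $\pi_2(\mathbb{C}P^n)$, one sees that for $\alpha\in\pi_2(\mathbb{C}P^n,L)$ with $\partial\alpha\in H_1(L;\mathbb{Z})$, the class $2\alpha$ comes from $\pi_2(\mathbb{C}P^n)$ (since $2\partial\alpha=0$), and hence $2\mu(\alpha)=2c_1(2\alpha)/2\cdot 2 = 2(n+1)\cdot(\text{integer})$. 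More precisely, the constraint $2H_1(L;\mathbb{Z})=0$ implies that the image of $\partial:\pi_2(\mathbb{C}P^n,L)\to H_1(L;\mathbb{Z})$ is $2$-torsion, so $\mu(\pi_2(\mathbb{C}P^n,L))\subseteq (n+1)\mathbb{Z}$; the same argument shows $\omega$ is proportional to $\mu$ (inheriting proportionality from $\mathbb{C}P^n$ since twice any relative class is absolute). Since a Lagrangian admits non-constant holomorphic disks (by Gromov compactness / positivity, $L$ cannot be exact in $\mathbb{C}P^n$), there is a class with $\mu>0$, and $\mu$ being a multiple of $n+1$ gives $N_L\geq n+1$. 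The equality $N_L=n+1$ follows because $N_L\mid 2C_{\mathbb{C}P^n}=2n+2$ and $N_L\geq n+1$ leave only $N_L=n+1$ or $N_L=2n+2$; the latter would force $H_1(L;\mathbb{Z})=0$, and in that situation Lemma~\ref{T:Lag-CPn} already shows a contradiction.

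\textbf{Step 2: $H_*(L;\mathbb{Z}_2)\cong H_*(\mathbb{R}P^n;\mathbb{Z}_2)$ as graded vector spaces.} This is a purely topological fact following from $2H_1(L;\mathbb{Z})=0$ together with $L\subset\mathbb{C}P^n$ Lagrangian. I would invoke the standard argument (as in Seidel~\cite{Se:graded} or Biran~\cite{Bi:Nonintersections}): the condition $2H_1(L;\mathbb{Z})=0$ means the double cover $\tilde L\to L$ classified by the nonzero element of $H^1(L;\mathbb{Z}_2)$ (if $H_1(L;\mathbb{Z}_2)\neq 0$) together with Poincar\'e duality over $\mathbb{Z}_2$ and the constraint that $L$ is Lagrangian in $\mathbb{C}P^n$ (so $\chi(L)$ and the mod-$2$ Betti numbers are controlled) forces the $\mathbb{Z}_2$-homology to match that of $\mathbb{R}P^n$, i.e. $H_i(L;\mathbb{Z}_2)=\mathbb{Z}_2$ for $0\leq i\leq n$. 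Actually a cleaner route is to combine Step 3 below with the general machinery: once we know $N_L=n+1>n$ and $L$ is wide, Theorem~\ref{thm:alg_main}\ref{I:can-iso-3} combined with the $2$-periodicity of $QH_*(L)$ from Lemma~\ref{T:Lag-CPn} forces $\dim_{\mathbb{Z}_2} H_i(L;\mathbb{Z}_2)=\dim_{\mathbb{Z}_2}H_{i+2}(L;\mathbb{Z}_2)$ as virtual dimensions in the Floer homology; since $H_*(L;\mathbb{Z}_2)$ has total dimension at least... — here I would be careful, as this only pins down the Poincar\'e polynomial mod $2$-periodicity. The honest cleanest path is to cite the topological classification and then verify consistency.

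\textbf{Step 3: $L$ is wide and $QH_i(L)\cong\mathbb{Z}_2$.} Since $N_L=n+1$, the ring $H_*(L;\mathbb{Z}_2)$ is generated by $H_{\geq n-1}(L;\mathbb{Z}_2)$ — this is the key input needed to apply Theorem~\ref{thm:floer=0-or-all} with $l=1$ (here I would note that $H_{n-1}(L;\mathbb{Z}_2)$ generating the algebra follows from $H_*(L;\mathbb{Z}_2)\cong H_*(\mathbb{R}P^n;\mathbb{Z}_2)$, established in Step 2, since the generator of $H_{n-1}$ of $\mathbb{R}P^n$ generates its $\mathbb{Z}_2$-cohomology ring and dually its homology ring under intersection product). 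Since $N_L=n+1>l+1=2$ (for $n\geq 2$), Theorem~\ref{thm:floer=0-or-all}\ref{I:dichotomoy-narrow} gives immediately that $L$ is wide. Wideness means $QH_*(L)\cong(H(L;\mathbb{Z}_2)\otimes\Lambda)_*$. As $\deg t=-N_L=-(n+1)$, in any degree $i$ the module $(H(L;\mathbb{Z}_2)\otimes\Lambda)_i$ is isomorphic to $\bigoplus_{k\in\mathbb{Z}} H_{i+k(n+1)}(L;\mathbb{Z}_2)$. Using $H_j(L;\mathbb{Z}_2)=\mathbb{Z}_2$ for $0\leq j\leq n$ and $0$ otherwise, exactly one value $j=i\bmod(n+1)$ (taking the representative in $\{0,\dots,n\}$, which always works since there are $n+1$ residues and dimensions $0,\dots,n$) contributes, so $QH_i(L)\cong\mathbb{Z}_2$ for every $i\in\mathbb{Z}$.

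\textbf{Main obstacle.} The delicate point is Step 2 — establishing the topological identification $H_*(L;\mathbb{Z}_2)\cong H_*(\mathbb{R}P^n;\mathbb{Z}_2)$ — because the rest of the argument logically depends on it (it is what licenses taking $l=1$ in Theorem~\ref{thm:floer=0-or-all}). I expect this to require the classical argument using the universal cover of $L$, the fact that $\pi_1(L)$ has a normal subgroup of index dividing $2$, and Poincar\'e duality over $\mathbb{Z}_2$ for the closed manifold $L$, combined with the constraint coming from $L$ being monotone Lagrangian in $\mathbb{C}P^n$ (which via the Maslov class computation already pins down a lot). The cleanest write-up will cite~\cite{Se:graded} and~\cite{Bi:Nonintersections} for the $\mathbb{Z}_2$-homology statement and then note that everything else — monotonicity, $N_L=n+1$, wideness, and the degreewise rank-one statement for $QH_*(L)$ — follows formally as in Steps 1 and 3 from the general results of the paper.
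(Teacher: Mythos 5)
Your proposal has a genuine gap, and the order of your steps is essentially inverted relative to the argument that actually works.

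The central problem is Step 2. You correctly sense that identifying $H_*(L;\mathbb{Z}_2)$ with $H_*(\mathbb{R}P^n;\mathbb{Z}_2)$ is the crux, but you do not have a proof of it: your first attempt (double cover plus Poincar\'e duality) is not carried out, and your second attempt via $2$-periodicity stalls at ``this only pins down the Poincar\'e polynomial mod $2$-periodicity,'' after which you propose to just cite \cite{Se:graded} and \cite{Bi:Nonintersections}. This gap then makes Step 3 circular: you want to apply Theorem~\ref{thm:floer=0-or-all} with $l=1$, i.e.\ you need to already know that $H_{\geq n-1}(L;\mathbb{Z}_2)$ generates the homology ring, and that is precisely what Step 2 was supposed to provide. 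The way out is to reverse the order. First, $N_L \geq n+1$ (from the elementary Maslov computation) plus the fact that $H_1(L;\mathbb{Z}_2)\neq 0$ (which follows from Lemma~\ref{T:Lag-CPn}, using $2H_1(L;\mathbb{Z})=0$ and universal coefficients) already gives generation by $H_{\geq 1}(L;\mathbb{Z}_2)$ via Poincar\'e duality, so you can apply Theorem~\ref{thm:floer=0-or-all} with $l=n-1$ and conclude wideness without knowing the homology of $L$. Then, wideness plus the invertibility of $h\in QH(\mathbb{C}P^n;\Lambda)$ (Lemma~\ref{T:Lag-CPn}) makes $QH_*(L)\cong(H(L;\mathbb{Z}_2)\otimes\Lambda)_*$ $2$-periodic, and this $2$-periodicity pins things down completely: for $0\le j\le n$ one has $(H(L;\mathbb{Z}_2)\otimes\Lambda)_j\cong H_j(L;\mathbb{Z}_2)$ by degree reasons, the $2$-shift gives $H_{2i}\cong H_0=\mathbb{Z}_2$, and the chain $H_1\cong QH_1\cong QH_1 t^{-1}=QH_{n+2}\cong QH_n\cong H_n=\mathbb{Z}_2$ closes the odd-degree loop — there is no ambiguity ``mod $2$-periodicity'' as you feared.

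There is also a local error in your Step 1: you claim that $N_L=2n+2$ ``would force $H_1(L;\mathbb{Z})=0$.'' That implication is not justified — the image of $\mu$ on $H_2^D(\mathbb{C}P^n,L)$ could a priori equal $(2n+2)\mathbb{Z}$ even with $H_1(L;\mathbb{Z})$ nonzero torsion. All the elementary computation gives is $N_L\in\{n+1,2n+2\}$. The value $2n+2$ is ruled out only \emph{after} wideness and $2$-periodicity are in place: if $\deg t=-(2n+2)$ then $(H(L;\mathbb{Z}_2)\otimes\Lambda)_n\cong\mathbb{Z}_2$ while $(H(L;\mathbb{Z}_2)\otimes\Lambda)_{n+2}=0$, contradicting $2$-periodicity. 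So the determination of $N_L$ is actually downstream of the Floer-theoretic input, not upstream of it as your outline assumes.
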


\begin{proof}
   Since $2H_1(L;\mathbb{Z})=0$ it is easy to see that $L$ is
   monotone.  Moreover, a simple computation shows that the minimal
   Maslov number of $L$ is $N_L = k(n+1)$ with $k \in\{ 1,2\}$. We
   already know from Lemma~\ref{T:Lag-CPn} that
   $H_{1}(L;\Z_{2})\not=0$ so that $H_{\ast}(L;\Z_{2})$ is generated
   as an algebra by $H_{\geq 1}(L;\Z_{2})$.  Thus, by
   Theorem~\ref{thm:floer=0-or-all}~i, $L$ is wide so that, again by
   Lemma~\ref{T:Lag-CPn}, we deduce that $(H(L;\Z_{2})\otimes \La)_*$
   is $2$-periodic. This $2$-periodicity implies (for degree reasons)
   that $N_L$ cannot be $2(n+1)$, hence $k=1$ and $N_L = n+1$.
   Moreover the $2$-periodicity implies that $H_{2i}(L;\mathbb{Z}_2)
   \cong H_0(L;\mathbb{Z}_2) = \mathbb{Z}_2$ for every $0 \leq 2i \leq
   n$.  Similarly we have: $H_1(L;\mathbb{Z}_2) \cong QH_1(L) \cong
   QH_1(L) t^{-1} = QH_{n+2}(L) \cong QH_n(L) \cong
   H_n(L;\mathbb{Z}_2) = \mathbb{Z}_2$. Applying the $2$-periodicity
   again we obtain $H_{2i+1}(L;\mathbb{Z}_2)\cong \mathbb{Z}_2$ for
   every $1 \leq 2i+1 \leq n$. Summing up we see that
   $H_j(L;\mathbb{Z}_2) \cong \mathbb{Z}_2 \cong
   H_j({\mathbb{R}}P^n;\mathbb{Z}_2)$ for every $0 \leq j \leq n$.

   As for the last statement regarding $QH_i(L)$, we have:
   $$QH_{2j}(L) \cong QH_0(L) \cong H_0(L;\mathbb{Z}_2)=\mathbb{Z}_2,
   \quad QH_{2j+1}(L) \cong QH_1(L) \cong H_1(L;\mathbb{Z}_2) \cong
   \mathbb{Z}_2.$$
\end{proof}

\begin{lem} \label{l:L-RPn} There is a map $\phi:L\to \R P^{n}$
   inducing an isomorphism in $\Z_{2}$-singular homology. In
   particular $H_{\ast}(L;\Z_{2})$ is isomorphic to $H_{\ast}(\R
   P^{n};\Z_{2})$ as an algebra. Moreover, the isomorphism
   $\phi_{\ast}$ identifies the classical external product
   $H_{\ast}(\C P^{n};\Z_{2})\otimes H_{\ast}(L;\Z_{2})\to
   H_{\ast}(L;\Z_{2})$ with the corresponding action for $\R P^{n}
   \subset {\mathbb{C}}P^n$.
\end{lem}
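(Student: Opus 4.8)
The plan is to build $\phi$ by a covering space argument and then to check that it induces an isomorphism on $\mathbb{Z}_2$-cohomology, combining the quantum module structure of Theorem~\ref{thm:alg_main} with classical algebraic topology. By Lemma~\ref{lem:Maslov_RP} we have $H_*(L;\mathbb{Z}_2)\cong H_*(\mathbb{R}P^n;\mathbb{Z}_2)$, so $H_1(L;\mathbb{Z}_2)\cong\mathbb{Z}_2$; combined with the hypothesis $2H_1(L;\mathbb{Z})=0$ and the universal coefficient theorem this forces $H_1(L;\mathbb{Z})\cong\mathbb{Z}/2$. Hence $L$ has a unique nontrivial connected double cover, and its classifying map $L\to K(\mathbb{Z}/2,1)=\mathbb{R}P^{\infty}$ compresses, by cellular approximation (as $\dim L=n$), into the $n$-skeleton $\mathbb{R}P^n$; this is $\phi$. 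By construction $b:=\phi^*(a)$ is the generator of $H^1(L;\mathbb{Z}_2)$, where $a\in H^1(\mathbb{R}P^n;\mathbb{Z}_2)$ is the generator. Since $\phi^*$ is a ring map and $H^*(\mathbb{R}P^n;\mathbb{Z}_2)=\mathbb{Z}_2[a]/(a^{n+1})$, its image is the subring generated by $b$; as both cohomology rings have total $\mathbb{Z}_2$-dimension $n+1$, it suffices to prove $b^k\neq0$ for $0\le k\le n$, and then $\phi^*$, hence $\phi_*$, is an isomorphism.

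The even powers of $b$ come from the quantum module structure. By Lemma~\ref{T:Lag-CPn}, $L$ is wide and $h\circledast(-):QH_i(L)\to QH_{i-2}(L)$ is an isomorphism for all $i$, where $h=[\mathbb{C}P^{n-1}]$, and $N_L=n+1$ by Lemma~\ref{lem:Maslov_RP}. Since $L$ is wide and $N_L\ge n+1$, Proposition~\ref{p:wide-can-iso} gives a canonical isomorphism $QH_*(L)\cong(H(L;\mathbb{Z}_2)\otimes\Lambda)_*$ sending $[L]$ to the unit; as $|t|=-(n+1)$, a degree count shows $QH_{n-2j}(L)=H_{n-2j}(L;\mathbb{Z}_2)$ with no quantum part for $0\le 2j\le n$, so by construction of $\circledast$ in \S\ref{subsubsec:defin_alg_str} the element $h^{*j}\circledast[L]$ there coincides with its classical (constant-disk) part $h^{\cap j}\cap_L[L]$. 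Iterating $h\circledast(-)$ and using the relation $(u*v)\circledast x=u\circledast(v\circledast x)$ from point~\ref{I:qmod-qprod} of Theorem~\ref{thm:alg_main}, together with $h^{*j}=h^{\cap j}$ in $QH(\mathbb{C}P^n)$ for $j\le n$ (see~\eqref{Eq:qh-cpn}), we get that $h^{\cap j}\cap_L[L]=h^{*j}\circledast[L]=(h\circledast(-))^j[L]$ is nonzero in $H_{n-2j}(L;\mathbb{Z}_2)$ for $0\le 2j\le n$. Dually, writing $c:=\textnormal{inc}^*(a_{\mathbb{CP}})\in H^2(L;\mathbb{Z}_2)$ with $a_{\mathbb{CP}}=\mathrm{PD}(h)$ the generator of $H^2(\mathbb{C}P^n;\mathbb{Z}_2)$ and $\textnormal{inc}\colon L\hookrightarrow\mathbb{C}P^n$, this says that $c^j$ generates $H^{2j}(L;\mathbb{Z}_2)$ for every $0\le 2j\le n$.

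Next I would show $b^2=c$, equivalently $b^2\neq0$. For $b\in H^1$ one has $b^2=\mathrm{Sq}^1(b)$, which is the mod-$2$ reduction of the integral Bockstein $\tilde\beta(b)\in H^2(L;\mathbb{Z})$. From $H_1(L;\mathbb{Z})\cong\mathbb{Z}/2$ the universal coefficient theorem gives $H^1(L;\mathbb{Z})=0$ and $\textnormal{Tor}\,H^2(L;\mathbb{Z})=\textnormal{Ext}(\mathbb{Z}/2,\mathbb{Z})=\mathbb{Z}/2$; hence $\tilde\beta$ carries $b$ onto the generator of this $\mathbb{Z}/2$-torsion subgroup, and since $H^2(L;\mathbb{Z})$ has no element of order $4$, this torsion class reduces nontrivially mod~$2$. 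Thus $b^2\neq0$, so $b^2=c$ and $b^{2j}=c^j\neq0$ for $2j\le n$. For the top power: if $n$ is even then $b^n=(b^2)^{n/2}=c^{n/2}\neq0$ directly; if $n$ is odd then $c^{(n-1)/2}$ generates $H^{n-1}(L;\mathbb{Z}_2)$ and, by Poincar\'e duality, the cup pairing $H^1(L;\mathbb{Z}_2)\otimes H^{n-1}(L;\mathbb{Z}_2)\to H^n(L;\mathbb{Z}_2)$ is nondegenerate, so $b^n=b\cup c^{(n-1)/2}\neq0$. In either case $b^n\neq0$, whence $b^k\neq0$ for all $0\le k\le n$ (since $b^k\cup b^{n-k}=b^n\neq0$), and $\phi_*$ is an algebra isomorphism.

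It remains to match the external $\mathbb{C}P^n$-actions. The maps $\textnormal{inc}$ and $j\circ\phi\colon L\to\mathbb{C}P^n$, where $j\colon\mathbb{R}P^n\hookrightarrow\mathbb{C}P^n$ is the standard inclusion, agree on $H^*(\mathbb{C}P^n;\mathbb{Z}_2)$: this ring is generated in degree $2$, and $(j\phi)^*(a_{\mathbb{CP}})=\phi^*(j^*a_{\mathbb{CP}})=\phi^*(a^2)=b^2=c=\textnormal{inc}^*(a_{\mathbb{CP}})$, using the classical identity $j^*(a_{\mathbb{CP}})=a^2$. Since $\phi$ has odd $\mathbb{Z}_2$-degree, $\phi_*[L]=[\mathbb{R}P^n]$, so the projection formula $\phi_*(\phi^*(y)\cap[L])=y\cap[\mathbb{R}P^n]$ together with the Poincar\'e-duality description $\mathrm{PD}(u\cap_L\alpha)=\textnormal{inc}^*(\mathrm{PD}\,u)\cup\mathrm{PD}_L(\alpha)$ and its $\mathbb{R}P^n$-analogue yield $\phi_*(u\cap_L\alpha)=u\cap_{\mathbb{R}P^n}\phi_*(\alpha)$ for all $u\in H_*(\mathbb{C}P^n;\mathbb{Z}_2)$ and $\alpha\in H_*(L;\mathbb{Z}_2)$. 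The one genuinely nonclassical ingredient is the second paragraph, which manufactures the even-degree classes $c^j$ from the Lagrangian quantum module structure; the main obstacle is then assembling all the odd powers of $b$, which requires combining that quantum input with the Bockstein computation — where the hypothesis $2H_1(L;\mathbb{Z})=0$ is essential (it is exactly what excludes examples like $S^1\times S^{n-1}$) — and with Poincar\'e duality.
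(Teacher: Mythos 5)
Your proof is correct and follows essentially the same route as the paper's: construct $\phi$ from the classifying map of the generator of $H^1(L;\mathbb{Z}_2)$, obtain the even-degree cup powers from the $QH(\mathbb{C}P^n)$-module structure of Lemma~\ref{T:Lag-CPn} reduced classically by the degree count coming from $N_L=n+1$, get $(\alpha^1)^2\neq0$ from a Bockstein (you use the integral Bockstein and the absence of order-$4$ torsion in $H^2(L;\mathbb{Z})$, the paper uses the mod-$2$ Bockstein $\beta=Sq^1$ for $0\to\mathbb{Z}_2\to\mathbb{Z}_4\to\mathbb{Z}_2\to0$; these are equivalent), and finish with Poincar\'e duality. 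The differences are organizational: you build $\phi$ first and then verify $\phi^*$ is a ring isomorphism by proving $b^k\neq0$ for all $k\le n$, whereas the paper first identifies the ring $H_*(L;\mathbb{Z}_2)$ with $H_*(\mathbb{R}P^n;\mathbb{Z}_2)$ and then reads off that $\phi$ is a homology isomorphism because $H^1(\phi)$ is one; and you derive the even-degree generators by iterating $h\circledast(-)$ on $[L]$ only and then appeal to duality for the odd degrees, while the paper applies the isomorphism $h\circledast(-):QH_i\to QH_{i-2}$ in every degree and deduces classical consequences from its lowest-order term. You also spell out the external-product identification via $(j\circ\phi)^*=\textnormal{inc}^*$ and the projection formula, which the paper leaves implicit (``using the relation $h*\alpha_i=\alpha_{i-2}$ again''). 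None of these differences affects correctness.
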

\begin{proof}
   Let $\alpha_{i}\in QH_{i}(L) \cong \mathbb{Z}_2$ be the generator.
   In view of the canonical isomorphism $QH_*(L) \cong
   (H(L;\mathbb{Z}_2) \otimes \Lambda)_*$ we have $H_j(L;\mathbb{Z}_2)
   \cong QH_j(L)$ for every $0 \leq j \leq n$. Therefore we will view
   $\alpha_j$, $0 \leq j \leq n$, also as elements of
   $H_j(L;\mathbb{Z}_2)$.

   We first claim that $\alpha_{n-1}\ast
   \alpha_{n-1}=\alpha_{n-1}\cdot\alpha_{n-1}=\alpha_{n-2}$ (where
   $-\cdot -$ is the classical intersection product). For degree
   reasons this is equivalent to
   $\alpha_{n-1}\cdot\alpha_{n-1}\not=0$. In turn, this is equivalent
   to showing that $\alpha^{1}\cup \alpha^{1}\not=0$ in
   $H^{2}(L;\Z_{2})$ where $\alpha^{1}\in H^{1}(L;\Z_{2})$ is the
   generator (and so is Poincar\'e dual to $\alpha_{n-1}$). From the
   fact that $H^{1}(L;\Z_{2})=\Z_{2}$ and $H_{1}(L;\Z)$ is $2$-torsion
   we obtain that the Bockstein homomorphism, $\beta :
   H^{1}(L;\Z_{2})\to H^{2}(L;\Z_{2})$, associated to the exact
   sequence $0\to \Z_{2}\to \Z_{4}\to \Z_{2}\to 0$ is not trivial. But
   $\beta=Sq^{1}$, the first Steenrod square, which in this degree
   coincides with the square cup-product, so that $\alpha^{1}\cup
   \alpha^{1}\not=0$. This proves that $\alpha_{n-1} * \alpha_{n-1} =
   \alpha_{n-1} \cdot \alpha_{n-1} = \alpha_{n-2}$.

   In view of the first part of Lemma \ref{T:Lag-CPn} we know that
   $h\ast \alpha_{i}=\alpha_{i-2}$ for all $i$.  As
   $\alpha_{n-1}\cdot\alpha_{n-1}=\alpha_{n-2}$ it follows that the
   $\Z_{2}$-singular homology of $L$ coincides as an algebra with that
   of $\R P^{n}$.  Let $\bar{\phi}: L\to \R P^{\infty}$ be the
   classifying map associated to $\alpha^{1}$. As $\dim(L)=n$ we
   deduce that $\bar{\phi}$ factors via a map $\phi:L\to \R P^{n}$ and
   as the induced map in cohomology $H^{1}(\phi):
   H^1({\mathbb{R}}P^n;\mathbb{Z}_2) \to H^1(L;\mathbb{Z}_2)$ is an
   isomorphism it follows that $\phi$ induces an isomorphism in
   homology in all degrees. Moreover, using the relation $h\ast
   \alpha_{i}=\alpha_{i-2}$ again, we deduce that the classical
   external product coincides with that for $\R P^{n}$.
\end{proof}

We now turn to the proof of Theorem~\ref{T:qstruct-2H_1=0}. Point ii
has already been proved (in the proof of Lemma~\ref{l:L-RPn}). Before
we go on, recall that we have denoted by $\alpha_i \in QH_i(L) \cong
\mathbb{Z}_2$ is the generator. Clearly we have $\alpha_{i-r(n+1)} =
\alpha_i t^r$ for every $i,r \in \mathbb{Z}$.

Another important fact we will need below is the following. By Theorem
\ref{thm:alg_main} the quantum inclusion $i_{L}:QH(L)\to QH(M;\La)$ is
determined by the module action and the augmentation $\epsilon_{L}$
via the formula
\begin{equation} \label{eq:PD-i_L}
   \langle PD(y),i_{L}(x) \rangle = \epsilon_{L}(y\ast x).
\end{equation}

We are now ready to prove points iii and iv of
Theorem~\ref{T:qstruct-2H_1=0}. Assume first that $n$ is even, $n =
2l$. Denote by $h_{2r} \in H_{2r}({\mathbb{C}}P^n;\mathbb{Z}_2)$ the
generator, so that $h_{2n-2} = h$ and $h_{2r} = h^{*(n-r)}$ for every
$0 \leq r \leq n$. Fix $0 \leq 2k \leq n$. For degree reasons we have
$i_L(\alpha_{2k}) = e h_{2k}$ for some $e \in \mathbb{Z}_2$.
Applying~\eqref{eq:PD-i_L} with $x = \alpha_{2k}$ and $y = h_{2n-2k}$
we obtain: $$ e = \epsilon_L(h_{2n-2k} * \alpha_{2k}) = \epsilon_L
(h^{* k} * \alpha_{2k}) = \epsilon_L(\alpha_0) = 1.$$ Now fix $1 \leq
2k+1 \leq n-1$. For degree reasons, $i_L(\alpha_{2k+1}) = f
h_{2k+n+2}t$ for some $f \in \mathbb{Z}_2$.
Applying~\eqref{eq:PD-i_L} with $x = \alpha_{2k+1}$, $y = h_{n-2k-2}$
we obtain $$ft = \epsilon_L(h_{n-2k-2}*\alpha_{2k+1}) =
\epsilon_L(h^{*(k+1+l)}* \alpha_{2k+1}) = \epsilon_L(\alpha_{-2l-1}) =
\epsilon_L(\alpha_0 t)=t,$$ hence $f = 1$.  This concludes the proof
for even $n$. The case $n=$ odd is very similar, so we omit the
details.

It remains to prove point i of Theorem~\ref{T:qstruct-2H_1=0}. For
this end, first notice that since $[L] = \alpha_n$ we have
$\alpha_{n-2} = h*[L]$. As both $[L] \in QH(L)$ and $h \in
QH({\mathbb{C}}P^n;\Lambda)$ are invertible (each in its respective
ring) it follows that $\alpha_{n-2}$ is invertible too. By the proof
of Lemma~\ref{l:L-RPn} we have $\alpha_{n-2} = \alpha_{n-1} *
\alpha_{n-1}$, hence $\alpha_{n-1}$ is invertible too. It follows that
$(\alpha_{n-1})^{*(n-i)} \neq 0 \in QH_i(L)$, hence $\alpha_i =
(\alpha_{n-1})^{*(n-i)}$. As this is true for every $i \in \mathbb{Z}$
the claim at point~i of Theorem~\ref{T:qstruct-2H_1=0} readily
follows. This concludes the proof of all the statements of
Theorem~\ref{T:qstruct-2H_1=0} \Qed

We now turn to proving Corollary~\ref{cor:RP}. We begin with point iv.
This follows easily from points iii and iv of
Theorem~\ref{T:qstruct-2H_1=0} by looking at the classical part of the
quantum inclusion $QH_*(L) \to QH_*({\mathbb{C}}P^n;\Lambda)$.  Point
iii follows in a similar way from the fact that $h*[L] =
\alpha_{n-2}$.

As point i and ii of Corollary~\ref{cor:RP} has already been proved it
now remains to prove points v, vi and vii of that corollary. We group
these in the next lemma.
\begin{lem} \label{l:uniruled-L-CPn} For a Lagrangian $L$ in $\C
   P^{n}$ with $2 H_{1}(L;\Z)=0$ we have:
   \begin{itemize}
     \item[i.] $(\C P^{n},L)$ is $(1,0)$-uniruled of order $n+1$.
     \item[ii.]  $L$ is $2$-uniruled of order $n+1$. Moreover, given
      two distinct points $x,y\in L$, for a generic $J$ there is an
      even but non-vanishing number of disks of Maslov index $n+1$
      whose boundary passes through these two points.
     \item[iii.] For $n=2$, $(\C P^{2},L)$ is $(1,2)$-uniruled of
      order $6$.
   \end{itemize}
\end{lem}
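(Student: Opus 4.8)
The plan is to read off all three points from the quantum structure computed in Theorem~\ref{T:qstruct-2H_1=0}, fed into the uniruling mechanism already exploited in~\S\ref{sec:proof_main}. Recall throughout that $N_L=n+1$ and $L$ is wide (Lemma~\ref{lem:Maslov_RP}), that $\C P^n$ is point invertible of order $k=2C_{\C P^n}=2n+2$ with $[pt]=h^{*n}$, and that the embedding $\Gamma\hookrightarrow\Lambda$ is $s\mapsto t^2$. Point~i is then essentially free: since $L$ is wide it is non-narrow, so Theorem~\ref{theo:geom_rig}(i) applies, and its proof exhibits for generic $J$ and any prescribed $P\in\C P^n\setminus L$ (taken to be the unique minimum of an auxiliary Morse function on $\C P^n$) a non-constant $J$-disk $v$ with $v(0)=P$ and $\mu(v)\le k-N_L=n+1$; as $N_L=n+1$ forces $\mu(v)\ge n+1$ this is an equality, which is exactly $(1,0)$-uniruling of order $n+1$.

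For point~ii, the plan is to deduce the $2$-uniruling from the relation $\alpha_0\circ\alpha_0=\alpha_{-n}=\alpha_1 t\neq 0$ of Theorem~\ref{T:qstruct-2H_1=0}(i). Given distinct $x,y\in L$ and a generic $J$, I would choose Morse functions $f_1,f_2$ on $L$ with unique minima at $x$ and $y$; by point~\ref{I:can-iso-2} of Proposition~\ref{p:wide-can-iso} (applicable since $0=n-N_L+1$ and $L$ is non-narrow) the minima $m_x\in\Crit(f_1)$, $m_y\in\Crit(f_2)$ are pearl cycles representing $\alpha_0$. Hence $m_x\circ m_y$ is a degree-$(-n)$ cycle representing $\alpha_1 t\neq 0$; for degree reasons it can only contain a $t^1$-term, so $m_x\circ m_y=vt$ with $0\neq v\in\Z_2\langle\Crit(f_3)\rangle$. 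The term $vt$ records pearl configurations of total Maslov index $N_L=n+1$ in which the two entry flow-lines emanating from the minima are constant, so the two corresponding boundary marked points lie on the same trivalent disk-vertex $u_0$ and are mapped to $x$ and $y$; by minimality of $N_L$ (and the stability conditions of~\S\ref{subsec:moduli_def}, which prevent $u_0$ from being a ghost since $x\neq y$) $u_0$ is the unique non-constant disk of the configuration, of Maslov exactly $n+1$. This gives the $2$-uniruling. For the refined ``even but non-vanishing'' count, non-vanishing is immediate from the uniruling; evenness I would obtain from the model $L=\mathbb{R}P^n$, where the Maslov-$(n+1)$ disks through $x$ and $y$ are exactly the two halves into which the real locus of the unique complex line through $x$ and $y$ divides it, pairing disks into such real lines and yielding a free involution on the (compact, finite) moduli space.

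For point~iii ($n=2$, $N_L=3$, $2C_{\C P^2}=6$) the plan is to combine the interior-incidence mechanism of point~i with the product mechanism of point~ii, via the computation, in $QH(L;\Lambda)$, using Theorem~\ref{T:qstruct-2H_1=0}(i),(ii) and $[pt]=h^{*2}$:
$$[pt]\circledast(\alpha_0\circ\alpha_0)=h^{*2}\circledast(\alpha_1 t)=\alpha_{-3}\,t=\alpha_0\,t^2\neq 0 .$$
Given $P\in\C P^2\setminus L$ and distinct $Q_1,Q_2\in L$ and a generic $J$, I would choose a Morse function $g$ on $\C P^2$ with unique minimum $P$ and Morse functions on $L$ with unique minima $Q_1,Q_2$, and realize this identity at the chain level by a moduli space of the two-sided-algebra type (of the kind built in~\S\ref{subsubsec:defin_alg_str}f, for the compatibility of $\circledast$ with $\circ$) whose trees carry a distinguished vertex of valence four incident to the $M$-edge from $m_g$ (as an interior marked point), both $L$-edges from $m_{Q_1},m_{Q_2}$ (as boundary marked points) and the exit edge. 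Since these minima are pearl cycles (same argument as in point~ii) representing $[pt]$ and $\alpha_0,\alpha_0$, the chain-level expression is a degree-$(-6)$ cycle representing $\alpha_0 t^2\neq 0$, carrying only the power $t^2$; hence its $t^2$-coefficient is non-zero, which forces, for generic $J$, a configuration of total Maslov $2N_L=6$ whose distinguished vertex $u_0$ meets $P$, $Q_1$ and $Q_2$ and is non-constant (as $P\notin L$). Thus $u_0$ is a non-constant $J$-disk through $P,Q_1,Q_2$ of Maslov $\le 6$, i.e.\ $(\C P^2,L)$ is $(1,2)$-uniruled of order $6$; a dimension count showing that in $\C P^2$ a Maslov-$3$ disk cannot simultaneously meet a prescribed interior point and a prescribed boundary point then upgrades this to $\mu(u_0)=6$.

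The main obstacle is geometric rather than algebraic: the passage, in points~ii and~iii, from the non-vanishing of a pearl-level coefficient to the existence of a \emph{single} non-constant disk carrying \emph{all} the prescribed incidences. This rests on the minimality of $N_L$ together with the stability conditions of~\S\ref{subsec:moduli_def}, which rule out spurious ghost bubbles, and -- for point~iii -- on the bookkeeping identifying a moduli space realizing $[pt]\circledast(\alpha_0\circ\alpha_0)$ with all the incidence conditions concentrated on one vertex. The ``even'' half of the refinement in point~ii is the one genuinely new geometric input, proved by pairing the relevant minimal-Maslov disks into (real) degree-one rational curves as in the $\mathbb{R}P^n$ model; everything else is a routine application of the machinery of~\S\ref{sec:main_quant_proof} and the computations of~\S\ref{Sbs:examples-comput}.
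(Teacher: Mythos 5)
Your route through points~i and the non-vanishing half of~ii is sound, though it diverges from the paper's at a point worth noting. For~i you appeal wholesale to Theorem~\ref{theo:geom_rig}(i) (which gives order $\le k-N_L=n+1$, saturated since $N_L=n+1$), whereas the paper re-derives this directly from the chain-level identity $p*w=yt+\cdots$ coming from $[pt]\circledast\alpha_n=\alpha_1t$; both are fine. For the non-vanishing in~ii you use $\alpha_0\circ\alpha_0=\alpha_1t$ with the two prescribed points both placed at minima of two different Morse functions. The paper instead uses $\alpha_{n-1}\circ\alpha_0=\alpha_nt$ with $x,y$ placed at the maximum and minimum of a \emph{single} Morse function $f$; this choice is not incidental, as we now explain.

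The evenness claim in~ii is where your argument has a genuine gap. You propose to reduce to $\mathbb{R}P^n$ and exhibit a free involution on the space of Maslov-$(n+1)$ disks through $x,y$ by pairing the two halves of a real line. But the map $\phi\colon L\to\mathbb{R}P^n$ of Lemma~\ref{l:L-RPn} is only a $\mathbb{Z}_2$-homology equivalence, not a diffeomorphism, and nothing transports a count of $J$-holomorphic disks with boundary on $L$ to a count of disks with boundary on $\mathbb{R}P^n$. (Even for $L=\mathbb{R}P^n$, the geometric ``halves of a real line'' picture is tied to the integrable $J$; for a generic $J$ you would need an a priori invariance argument, which is circular here.) The paper's argument is both shorter and actually closes the gap: with $x=$ max of $f$, $y=$ min of $f$, the number mod $2$ of Maslov-$(n+1)$ disks through $x$ and $y$ is precisely the coefficient of $wt$ in the pearl differential $dm$, and $dm=0$ because $L$ is wide (as one sees by applying the structural map to the minimal model, where $\delta=0$). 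This is why the paper's choice $\alpha_{n-1}\circ\alpha_0$, with both prescribed points on the \emph{same} $f$, is the right one: it puts the relevant count on the nose as a differential coefficient. Your two-minima setup for non-vanishing does not feed into any such identification.

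For~iii, your algebraic identity $[pt]\circledast(\alpha_0\circ\alpha_0)=\alpha_0t^2$ is correct, but the moduli space you want to realize it on --- a tree with a distinguished valence-four vertex carrying one interior marked point and three boundary marked points --- is not among those constructed in~\S\ref{subsubsec:defin_alg_str}. The moduli spaces of point~f there have either four boundary marked points (associativity of~$\circ$) or two interior plus two boundary marked points (compatibility of~$*$ with~$\circledast$); the relation $a\circledast(x\circ y)=(a\circledast x)\circ y$ is part of the two-sided algebra structure but its chain-level realization via a single valence-four vertex is not set up in this paper. You would need to build and regularize that space, and then still run essentially the same dimension count you sketch at the end to rule out Maslov-$3$ components. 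The paper avoids all of this: since $\alpha_2=[L]$ for $n=2$, the module identity $[pt]\circledast\alpha_0=\alpha_2t^2$ alone gives $p*m=wt^2$ with both $m$ and $w$ prescribed (again by choosing $f$ with min at $Q_1$, max at $Q_2$); the only configurations are a single Maslov-$6$ disk through $p$, $Q_1$, $Q_2$ or two Maslov-$3$ disks, and the latter is excluded because for generic $J$ the union of images of Maslov-$3$ disks is $3$-dimensional and misses the generic point $p$. Your approach is not wrong in spirit, but it imports machinery not available in the paper and gains nothing over the simpler route.
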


\begin{proof}
   Fix a Morse function $f:L\to \R$ with a single minimum and a single
   maximum and fix also a perfect Morse function $g:\C P^{n}\to \R$.
   Fix also Riemannian metrics $\rho_{L}$ on $L$ and $\rho_{M}$ on $M
   = {\mathbb{C}}P^n$ as well as an almost complex structure $J$ so
   that the pearl complex
   $\mathcal{C}(f)=\mathcal{C}(L;\Lambda;f,\rho_{L},J)$ and the Morse
   complex (tensored with $\Lambda$) $C(g)$ are defined as well as the
   module product:
   $$C(g)\otimes \mathcal{C}(f)\to \mathcal{C}(f)~.~$$
   Let $f':L\to \R$ be a second Morse function (again with a single
   minimum and maximum) and assume that the pearl complex
   $\mathcal{C}(f')=\mathcal{C}(L;\Lambda;f',\rho_{L},J)$ is defined
   as well as the quantum product:
   $$\mathcal{C}(f')\otimes\mathcal{C}(f)\to \mathcal{C}(f)~.~$$

   We now prove point i.  We have the relation
   \begin{equation}\label{eq:pt_act}
      [pt]\ast \alpha_{n}=h^{\ast_{n}}\ast \alpha_{n}= \alpha_{-n} =
      \alpha_{1}t \in QH(L)
   \end{equation}
   where, as before, $h\in H_{2n-2}(\C P^{n}; \Z_{2})$ is the
   generator. Denote by $w$ the maximum of $f$ and by $p$ the minimum
   of $g$. The critical point $w$ is a cycle in $\mathcal{C}(f)$ and
   $[w]=\alpha_{n}$. Thus, in view of relation (\ref{eq:pt_act}) we
   have $p\ast w \neq 0 \in \mathcal{C}_{-n}(f)$. As
   $\mathcal{C}_{-n}(f) = \mathcal{C}_1(f)t = \mathbb{Z}_2 \langle
   \textnormal{Crit}(f) \rangle t$ (the last equality being true for
   degree reasons) we obtain that $p\ast w$ has a summand which is of
   the type $yt$, where $y \in \textnormal{Crit}_1(f)$.  Given the
   definition of the module action
   in~\S\ref{subsubsec:defin_alg_str}~c this means that there is a
   $J$-disk of Maslov index $n+1$ through the point $p$. As we may
   choose $g$ so that the point $p$ is anywhere desired in $\C
   P^{n}\backslash L$ this implies point i.

   For the point ii. we will use the relation
   \begin{equation}
      \alpha_{n-1}\ast \alpha_{0}=\alpha_{n}t~.~
   \end{equation}
   To exploit this we denote by $m$ the minimum of $f$ and we let $c$
   be a cycle in $\mathcal{C}(f')$ which represents $\alpha_{n-1}$.
   Because $L$ is wide, $m$ is a Morse cycle and $N_L = n+1$, we
   deduce that $m$ is also a cycle in $\mathcal{C}(f)$ so that
   $[m]=\alpha_{0}$. Thus we have, at the chain level, $c\ast m = wt$.
   In view of the definition of the quantum product
   in~\S\ref{subsubsec:defin_alg_str}~b, we deduce that for generic
   $J$ there exists a $J$-disk of Maslov index $n+1$ through both $w$
   and $m$. To finish with this point we need now to remark that the
   number $n(m,w)$ of such disks is even.  Indeed, if $d$ is the
   differential of the pearl complex $\mathcal{C}(f)$, notice that for
   degree reasons the differential of $m$ has the form $dm=\epsilon
   wt$ where $\epsilon\in\Z_{2}$ is the parity of $n(m,w)$. But, as
   mentioned above, $L$ is wide and so $\epsilon=0$.

   For the third point we use the relation:
   \begin{equation}\label{eq:qnt_pr_pt}
      [pt]\ast \alpha_{0}=\alpha_{2}t^{2},
   \end{equation}
   and the fact that, when $n=2$, $\alpha_{2}=[w]$. At the chain
   level~\eqref{eq:qnt_pr_pt} becomes $p\ast m=w t^{2}$.  By
   interpreting this relation in terms of the moduli spaces used
   in~\S\ref{subsubsec:defin_alg_str}~c to define the module product
   we deduce that there is a ``chain of pearls'' of one of the
   following types:
   \begin{itemize}
     \item[a.]  two disks $u_{1}$, $u_{2}$ joined by a flow line of
      $-\nabla f$ so that $m\in u_{1}(\partial D)$, $w\in
      u_{2}(\partial D)$, $\mu(u_{1})=\mu(u_{2})=3$ and $p$ belongs to
      the image of one of the $u_{i}|_{\textnormal{Int\,} D}$'s.
     \item[b.] a single disk $u$ of Maslov index $2n+2=6$ whose
      interior goes through $p$ and with $m,w\in u(\partial D)$
   \end{itemize}
   Notice that given two points $k\in\C P^{n}\backslash L$, and $k'\in
   L$, for a generic $J$, there is {\em no} disk of Maslov index $n+1$
   passing through both $k$ and $k'$ because the virtual dimension of
   the moduli spaces of such disks equals $-1$.  Thus generically,
   case~a is not possible and so we are left with case~b which proves
   claim~iii of the lemma.
\end{proof}

\subsection{The Clifford torus.} \label{subsec:Cliff_calc} This
subsection consists a sequence of results in which we prove all the
properties claimed in Theorem~\ref{T:clif2-qstruct} and
Corollary~\ref{cor:clifford}.

\begin{lem} \label{l:T_clif-wide}
   The Clifford torus $\clift^{n}\in\C P^{n}$ is wide and
   $N_{\clift^{n}}=2$.
\end{lem}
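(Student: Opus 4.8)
The plan is to dispose of the minimal Maslov number first, which is immediate, and then to prove wideness by showing that the minimal pearl differential of $\clift^{n}$ vanishes identically. For the Maslov number: $\clift^{n}$ is the monotone Lagrangian torus fibre over the barycentre of the moment polytope of the standard Hamiltonian $T^{n}$-action on $\C P^{n}$, and it bounds the $n+1$ ``axial'' holomorphic discs $A_{0},\dots,A_{n}$ of Maslov index $2$ (where $A_{i}$ is, up to reparametrisation, $z\mapsto[\dots:z:\dots]$ with $z$ in the $i$-th homogeneous coordinate), satisfying $\sum_{i}A_{i}=[\C P^{1}]$ and $\sum_{i}\partial A_{i}=0$ in $H_{1}(\clift^{n};\Z)$. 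Hence $N_{\clift^{n}}\le 2$, and since monotonicity (with the conventions of this paper) forces $N_{L}\ge 2$, we get $N_{\clift^{n}}=2$.

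For wideness, by Corollary~\ref{cor:min_pearls} together with Remark~\ref{rem:product_min}~c it suffices to prove that the differential $\delta$ of the minimal pearl complex $\mathcal{C}^{+}_{min}(\clift^{n})=(H(\clift^{n};\Z_{2})\otimes\Lambda^{+},\delta)$ vanishes. Write $\delta=\sum_{j\ge 1}\delta_{j}t^{j}$, where $\delta_{j}\colon H_{k}(\clift^{n};\Z_{2})\to H_{k-1+2j}(\clift^{n};\Z_{2})$ (recall $|t|=-N_{\clift^{n}}=-2$). Suppose $\delta\neq 0$ and let $j_{0}$ be minimal with $\delta_{j_{0}}\neq 0$. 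By Remark~\ref{rem:product_min}~a the complex $\mathcal{C}^{+}_{min}(\clift^{n})$ carries a product $\ast$ which is a chain map, so $\delta$ is a derivation of $\ast$, and $\ast$ deforms the classical intersection product $\cap$ on $H_{*}(\clift^{n};\Z_{2})$, i.e. $a\ast b=(a\cap b)+(\text{strictly higher order in }t)$ for $a,b\in H_{*}(\clift^{n};\Z_{2})$. Comparing the lowest-$t$-order terms on the two sides of $\delta(a\ast b)=(\delta a)\ast b+a\ast(\delta b)$ yields $\delta_{j_{0}}(a\cap b)=\delta_{j_{0}}(a)\cap b+a\cap\delta_{j_{0}}(b)$, so $\delta_{j_{0}}$ is a derivation of the ring $(H_{*}(\clift^{n};\Z_{2}),\cap)$. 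This ring is the exterior algebra generated, as a unital ring, by $H_{n-1}(\clift^{n};\Z_{2})$, so it is enough to check $\delta_{j_{0}}|_{H_{n-1}(\clift^{n};\Z_{2})}=0$; that would force $\delta_{j_{0}}=0$, a contradiction, so $\delta\equiv 0$ and $\clift^{n}$ is wide.

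It therefore remains to verify $\delta_{j}|_{H_{n-1}(\clift^{n};\Z_{2})}=0$ for every $j\ge 1$. For $j\ge 2$ this is automatic for degree reasons, since $\delta_{j}$ maps $H_{n-1}(\clift^{n};\Z_{2})$ into $H_{n+2(j-1)}(\clift^{n};\Z_{2})=0$. For $j=1$, I would use that, up to Poincaré duality, $\delta_{1}|_{H_{n-1}}$ is intersection with the class $D_{1}\in H_{1}(\clift^{n};\Z_{2})$ introduced in \S\ref{Sb:criteria-QH} (equations~\eqref{Eq:del-1},~\eqref{Eq:D1}, read off in the minimal model via the structure maps of \S\ref{subsec:minimal}); thus $\delta_{1}|_{H_{n-1}}=0$ iff $D_{1}=0$. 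Now $D_{1}$ does not depend on the almost complex structure, hence is fixed by every symplectomorphism of $\C P^{n}$ preserving $\clift^{n}$; in particular it is fixed by the symmetric group $S_{n+1}$ permuting the homogeneous coordinates, which lies in $Symp_{0}(\C P^{n})$ because $PU(n+1)$ is connected and which acts on $H_{1}(\clift^{n};\Z_{2})$ through $\sigma_{*}$. With this action $H_{1}(\clift^{n};\Z_{2})$ is the reduced permutation $\Z_{2}[S_{n+1}]$-module $\Z_{2}^{n+1}/\langle(1,\dots,1)\rangle$, which for $n\ge 2$ has \emph{no} nonzero $S_{n+1}$-fixed vector: a lift $v\in\Z_{2}^{n+1}$ of a fixed vector would satisfy $\tau v\in\{v,\,v+(1,\dots,1)\}$ for every transposition $\tau$, but $\tau v=v+(1,\dots,1)$ is impossible when $n+1\ge 3$ (a coordinate fixed by $\tau$ cannot be altered), so $v$ is fixed by all transpositions and hence lies in $\langle(1,\dots,1)\rangle$. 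Therefore $D_{1}=0$ and $\delta_{1}|_{H_{n-1}}=0$. (The case $n=1$, i.e. the equator $\clift^{1}\subset\C P^{1}$, is handled by hand: $\delta_{j}=0$ for $j\ge 2$ by degree, and $\delta_{1}([pt])$ is the $\Z_{2}$-count of the boundaries of the two Maslov-$2$ hemisphere discs through a point, hence $0$.)

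The step I expect to require the most care is the identification in the previous paragraph of the leading term $\delta_{1}$ of the \emph{minimal} pearl differential, restricted to $H_{n-1}$, with ``intersection with $D_{1}$'': equation~\eqref{Eq:del-1} is stated in \S\ref{Sb:criteria-QH} for the pearl complex of a particular single-maximum Morse function, so one must check it is preserved under passage to the minimal model (equivalently, compute that term directly in the minimal model), and, relatedly, that the product on the associated graded of the degree filtration of $\mathcal{C}^{+}_{min}(\clift^{n})$ is genuinely the classical intersection product, which is what legitimises the derivation argument. Both are routine given the constructions of \S\ref{subsec:minimal} and \S\ref{Sb:criteria-QH}; by contrast, the $N_{L}=2$ computation and the $S_{n+1}$-invariance of $D_{1}$ are elementary.
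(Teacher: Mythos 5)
Your proof is correct, and while it follows the same overall skeleton as the paper's (reduce wideness to the vanishing of the minimal pearl differential using that $H_*(\clift^{n};\Z_2)$ is generated by $H_{n-1}$, and then kill the only surviving obstruction, which is the degree-one class $D_1$ of \S\ref{Sb:criteria-QH}), it differs in two ways worth recording. First, instead of quoting Theorem~\ref{thm:floer=0-or-all} for the narrow/wide dichotomy, you re-derive the reduction by observing that the lowest-order term $\delta_{j_0}$ of the minimal differential is a derivation of the classical intersection ring; this is essentially the induction in Lemma~\ref{lem:min_diff} repackaged, and in the process you make explicit the converse direction of Proposition~\ref{P:criterion-QH=0-1} that the paper uses rather tersely, namely that $D_1=0$ together with generation by $H_{n-1}$ forces wideness (your identification of $\delta_1|_{H_{n-1}}$ with pairing against $D_1$ in the minimal model built from a single-maximum Morse function is indeed the routine computation you indicate, via $\delta(\tilde x)=\phi(dx)$ and equation~\eqref{Eq:del-1}). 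Second, and this is the genuinely different ingredient: the paper computes $D_1=0$ from Cho's explicit classification and regularity of the Maslov-$2$ holomorphic discs for the standard complex structure (the relation $c_0=-c_1-\cdots-c_n$), whereas you get $D_1=0$ "softly'' from equivariance: $D_1$ is natural under $Symp(M,L)$ (this follows from properties (1)--(6) of \S\ref{Sb:criteria-QH} and is the one step you should spell out), hence is an $S_{n+1}$-invariant element of the reduced permutation module $\Z_2^{n+1}/\langle(1,\dots,1)\rangle$, which has no nonzero invariants for $n\geq 2$; your separate treatment of $n=1$ is also fine. The trade-off is that the explicit disc analysis the paper invokes is reused later (e.g.\ for the product computations on $\clift^{2}$ in \S\ref{subsec:Cliff_calc}), while your symmetry argument is more economical for this lemma alone, needing only the axial discs to pin down $N_{\clift^{n}}=2$ and no regularity or classification statement for the standard $J$.
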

This Lemma was first proved by Cho~\cite{Cho:Clifford} by a direct
computation. Below we give a somewhat different proof.

\begin{proof}
   We first notice that by Theorem \ref{thm:floer=0-or-all} any
   Lagrangian torus $L$ is narrow or wide and if $N_{L}\geq 3$, then
   it is wide.  In the case of the Clifford torus, $\clift^{n}=
   \{[z_0: \cdots: z_n] \in {\mathbb{C}}P^n \mid |z_0| = \cdots |z_n|
   \}\subset \C P^{n}$, a simple computation shows that it is monotone
   and that $N_{\mathbb{T}_{\textnormal{clif}}} = 2$. Moreover
   (see~\cite{Cho:Clifford}), with the standard complex structure on
   $\C P^{n}$ there are exactly $n+1$ families of disks of Maslov
   index $2$ with boundary on $\mathbb{T}^n_{\textnormal{clif}}$,
   $\gamma_{0},\gamma_{1},\ldots, \gamma_{n}$ so that for any point
   $x\in \mathbb{T}^n_{\textnormal{clif}}$ there is precisely one disk
   $\Delta_{i}(x)$ from the family $\gamma_{i}$ passing through $x$.
   In fact we can describe these disks explicitly as follows.  Write
   $x = [x_0: \cdots: x_n] \in \mathbb{T}^n_{\textnormal{clif}}$ with
   $|x_i| = 1$ for every $i$. Then the disk $\Delta_i(x)$ is given by
   $D \ni z \mapsto [x_0: \cdots: x_{i-1}:z:x_{i+1}: \cdots : x_n] \in
   {\mathbb{C}}P^n$.

   It is proved in~\cite{Cho:Clifford} that these disks are regular
   and we can choose a basis of $H_{1}(\clift^{n};\Z)$ represented by
   the curves $c_{i}=\partial (\Delta_{i}(x))$, $1\leq i\leq n$.  In
   this basis, $c_{0}=\partial (\Delta_{0}(x))\simeq -c_{1}-c_{2} -
   \cdots -c_{n}$. Using the criterion for the vanishing of Floer
   homology in Lemma \ref{P:criterion-QH=0-1} we see that the cycle
   $D_{1}$ defined there is null-homologous and so $\clift^{n}$ is
   wide.
\end{proof}

For the $2$-dimensional Clifford torus we now pass to verifying the
properties of the quantum product as stated in
Theorem~\ref{T:clif2-qstruct}. Before we go into these computations
recall from~\S\ref{sb:wide-canonical} that although
$\mathbb{T}^2_{\textnormal{clif}}$ is wide there might not be a
canonical isomorphism
$H(\mathbb{T}^2_{\textnormal{clif}};\mathbb{Z}_2) \otimes \Lambda
\cong QH(\mathbb{T}^2_{\textnormal{clif}})$. This turns out to be
indeed the case (see~\cite{Bi-Co:Yasha-fest, Bi-Co:qrel-long}).
However, by Proposition~\ref{p:wide-can-iso} we have canonical
embeddings $H_1(\mathbb{T}^2_{\textnormal{clif}};\mathbb{Z}_2) \otimes
\Lambda_* \hooklongrightarrow
QH_{1+*}(\mathbb{T}^2_{\textnormal{clif}})$ and
$H_2(\mathbb{T}^2_{\textnormal{clif}};\mathbb{Z}_2) \otimes \Lambda_*
\hooklongrightarrow QH_{2+*}(L)$. This implies, for degree reasons,
that:
\begin{equation} \label{eq:H(T)=QH(T)}
   QH_1(\mathbb{T}^2_{\textnormal{clif}}) \cong H_1(L;\mathbb{Z}_2),
   \quad QH_0(\mathbb{T}^2_{\textnormal{clif}}) \cong
   H_0(\mathbb{T}_{\textnormal{clif}};\mathbb{Z}_2) \oplus
   [\mathbb{T}^2_{\textnormal{clif}}]t,
\end{equation}
where the first isomorphism is canonical and the second isomorphism is
not canonical but the second summand on its right-hand side (involving
the fundamental class $[\mathbb{T}^2_{\textnormal{clif}}]t$) is
canonical.

In view of~\eqref{eq:H(T)=QH(T)}, let $w =
[\mathbb{T}^2_{\textnormal{clif}}] \in
H_2(\mathbb{T}^2_{\textnormal{clif}};\mathbb{Z}_2)$ be the fundamental
class and let $a=[c_{1}], \ b=[c_{2}] \in H_1(L;\mathbb{Z}_2) \cong
QH_1(\mathbb{T}^2_{\textnormal{clif}})$. By the preceding discussion
$w$, $a$, $b$ can be viewed as well defined elements of
$QH(\mathbb{T}^2_{\textnormal{clif}})$.

\begin{lem} There is an element $m\in QH_{0}(\clift^{2})$ which
   together with $wt$ generates $QH_{0}(\clift^{2})$ so that we have
   $a\ast b=m+ w t$, $b\ast a=m$, $a\ast a=b\ast b= w t$, $m\ast
   m=mt+wt^{2}$.
\end{lem}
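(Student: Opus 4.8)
The plan is to compute the Lagrangian quantum product $\circ$ on the pearl complex of a \emph{perfect} Morse function on $\clift^{2}=T^{2}$, using the large symmetry group of the Clifford torus so that only one honest disk count is required. Fix a perfect Morse function $f$ on $T^{2}$ with a single minimum $m$ ($|m|=0$), two saddles ($|\cdot|=1$), and a single maximum $w$ ($|w|=2$), and choose $f$ together with the metric so that the unstable circles of the two saddles represent the classes $c_{1},c_{2}\in H_{1}(\clift^{2};\Z_{2})$ of the boundary loops of Cho's disk families $\gamma_{1},\gamma_{2}$ appearing in the proof of Lemma~\ref{l:T_clif-wide} (recall $c_{0}=c_{1}+c_{2}$ over $\Z_{2}$); call these saddles $a$ and $b$. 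Since $\clift^{2}$ is wide (Lemma~\ref{l:T_clif-wide}) and $f$ is perfect, the pearl differential on $\mathcal{C}^{+}(\clift^{2};f,\rho,J)$ vanishes, exactly as in the discussion of~\S\ref{sb:wide-canonical}; hence $QH(\clift^{2})\cong H(T^{2};\Z_{2})\otimes\Lambda$ as a $\Lambda$-module with basis $m,a,b,w$, and $w=[L]$ is the unit of $\circ$ (Remark~\ref{rem:product_min}).

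\textbf{Degree bookkeeping.} The product $\circ$ has degree $-n=-2$, is a deformation of the classical intersection product on $H_{*}(T^{2};\Z_{2})$, and its quantum corrections are strictly positive in $t$ (everything is defined over $\Lambda^{+}$). Since $|a|=|b|=1$, $|w|=2$, $|m|=0$, $|t|=-2$, and classically $a\cdot a=b\cdot b=0$, $a\cdot b=b\cdot a=m$, $m\cdot m=0$, this forces $a\circ b=m+\epsilon_{1}wt$, $b\circ a=m+\epsilon_{2}wt$, $a\circ a=\epsilon_{3}wt$, $b\circ b=\epsilon_{4}wt$, $m\circ m=\epsilon_{5}mt+\epsilon_{6}wt^{2}$, with $\epsilon_{i}\in\Z_{2}$ (here $m$ denotes the classical point class; note $QH_{0}(\clift^{2})=\Z_{2}m\oplus\Z_{2}wt$ and $QH_{-2}(\clift^{2})=\Z_{2}mt\oplus\Z_{2}wt^{2}$). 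Moreover $m$ and $wt$ are already linearly independent in $QH_{0}$: by the description of the augmentation in~\S\ref{subsubsec:defin_alg_str}\,h, $\epsilon_{L}(m)=1$ while $\epsilon_{L}(wt)=0$; so $m$ together with $wt$ generates $QH_{0}(\clift^{2})$, as claimed.

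\textbf{Symmetry reduction.} The cyclic permutation $\sigma\colon[z_{0}:z_{1}:z_{2}]\mapsto[z_{1}:z_{2}:z_{0}]$ lies in $PU(3)\subset Symp_{0}(\C P^{2})$ and keeps $\clift^{2}$ invariant, hence by Proposition~\ref{cor:symm} it induces an automorphism $\sigma_{*}$ of $QH(\clift^{2})$ as an algebra (with $\sigma_{*}[L]=[L]$). Since $\sigma$ permutes Cho's three disk families cyclically, on $H_{1}$ it acts by $\sigma_{*}a=a+b$, $\sigma_{*}b=a$ (so $\sigma_{*}^{3}=\mathrm{id}$). Applying $\sigma_{*}$ to the relations above and comparing coefficients of $wt$ in characteristic $2$ yields $\epsilon_{3}=\epsilon_{4}$ and $\epsilon_{1}+\epsilon_{2}+\epsilon_{4}=0$, hence $\epsilon_{3}=\epsilon_{4}=\epsilon_{1}+\epsilon_{2}$. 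Thus everything is determined, up to the harmless interchange of $a,b$ (which merely reflects the non-commutativity), by the single number $\epsilon_{3}=[a\circ a:wt]$.

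\textbf{The one disk count, and collecting.} It remains to show $\epsilon_{3}=1$, i.e.\ $a\circ a\ne0$. At the chain level $[a\circ a:wt]$ counts mod $2$ the pearl trajectories of symbol $(a,a:w)$ of total Maslov index $2$; since $W^{s}(w)=\{w\}$ and $N_{\clift^{2}}=2$, such a configuration must consist of a \emph{single} Maslov-$2$ disk whose boundary passes through $w$ at the marked point $+1$ and meets the unstable circle $W^{u}(a)\simeq c_{1}$ at the two cube-root-of-unity marked points. Using Cho's explicit description of the three Maslov-$2$ disks through $w$ and of their boundary circles $C_{0}\simeq c_{0},C_{1}\simeq c_{1},C_{2}\simeq c_{2}$, this count reduces to $\Z_{2}$-intersection numbers of $c_{1}$ with $c_{0},c_{1},c_{2}$ in $T^{2}$ (with the cyclic order of the three marked points fixed as in~\S\ref{subsubsec:defin_alg_str}\,b), and it comes out odd; hence $\epsilon_{3}=\epsilon_{4}=1$ and $\epsilon_{1}+\epsilon_{2}=1$. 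Relabelling so that $\epsilon_{1}=1$ gives $b\circ a=m$, $a\circ b=m+wt$, $a\circ a=b\circ b=wt$. Finally $m\circ m$ is purely algebraic: by associativity $m\circ m=(b\circ a)\circ(b\circ a)=b\circ(a\circ b)\circ a=b\circ(m+wt)\circ a=(b\circ m)\circ a+t(b\circ a)$, and $b\circ m=b\circ(b\circ a)=(b\circ b)\circ a=(wt)\circ a=ta$, so $(b\circ m)\circ a=t(a\circ a)=t\cdot wt=wt^{2}$; hence $m\circ m=mt+wt^{2}$. The main obstacle is this last geometric count: one must carefully evaluate the $\Z_{2}$-number of the Maslov-$2$ pearl trajectories contributing to $a\circ a$, keeping track of the cyclic order of the boundary marked points, of the reparametrization group, and of the non-generic coincidences where $W^{u}(a)$ meets a boundary circle at a critical point of $f$ (cured by a small perturbation, the count being unaffected by the intersection-number argument).
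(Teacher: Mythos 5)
Your symmetry reduction is a genuinely different route from the paper's. The paper fixes a perfect Morse function $f$ and a nearby perturbed copy $g$ (so that the comparison map is the Morse one), and then directly counts disks through $w$ for each of the four products $a'*b''$, $b'*a''$, $a'*a''$, $b'*b''$ (the candidate disks are $d_2$ and $d_3$; the cyclic order of the three boundary marked points decides which ones contribute). Your observation that the cyclic $\Z_3\subset PU(3)\subset Symp_0(\C P^2)$ acting by $\sigma_*a=a+b$, $\sigma_*b=a$, $\sigma_*w=w$ forces $\epsilon_3=\epsilon_4=\epsilon_1+\epsilon_2$ is correct (applying $\sigma_*$ to $a\circ a$ and $b\circ b$ avoids the indeterminacy in $\sigma_*(m)\in\{m,m+wt\}$, since those two relations have no $m$-term) and cleanly reduces the problem to a single $\Z_2$-coefficient.

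However, there is a genuine gap at precisely the one count you must still do. First, you cannot set up $a\circ a$ by requiring both entry marked points to lie on the \emph{same} circle $W^u(a)$: the product operation is defined via two Morse functions in general position (as in \eqref{eq:simpl_q_prod}), and a configuration hitting the same unstable circle at two marked points is not cut out transversally. One has to take a second perturbed function $g$, as the paper does, and count disks through $w$ meeting $W^u(a')$ and $W^u(a'')$ at the two non-trivial cube roots of unity. Second, the claim that the count ``reduces to $\Z_2$-intersection numbers of $c_1$ with $c_0,c_1,c_2$ and comes out odd'' is not a computation and is in fact misleading: over $\Z_2$ one has $c_1\cdot c_0=1$, $c_1\cdot c_1=0$, $c_1\cdot c_2=1$, which sum to $0$. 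The non-vanishing of $\epsilon_3$ does not come from a sum of intersection numbers; it comes from the fact that, after perturbing, both $d_2$ and $d_3$ meet the two circles and $w$, but only one of them does so in the prescribed \emph{cyclic order} (because $[\partial d_2]=b$ and $[\partial d_3]=-a-b$ wind around $a$ with opposite handedness). That ordering argument is exactly what your parenthetical gestures at but does not carry out, and it is the substantive content of the lemma.
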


\begin{proof}
   We consider a perfect Morse function
   $f:\mathbb{T}^2_{\textnormal{clif}}\to \R$ and, by a slight abuse
   in notation, we denote its minimum by $m$. Similarly, we denote its
   maximum by $w$ and we let the two critical points of index $1$ be
   denoted by $a'$ and $b'$. We pick $f$ so that the closure of the
   unstable manifold of $a'$ represents $a\in
   H_{1}(\mathbb{T}^2_{\textnormal{clif}};\mathbb{Z}_2)$ and the
   unstable manifold of the critical point $b'$ represents $b$.

   To simplify notation we denote the disk $\Delta_{i}(w)$ by $d_{i}$.
   See figure~\ref{f:clif-traj-1}.
   \begin{figure}[htbp]
      \begin{center}
         \epsfig{file=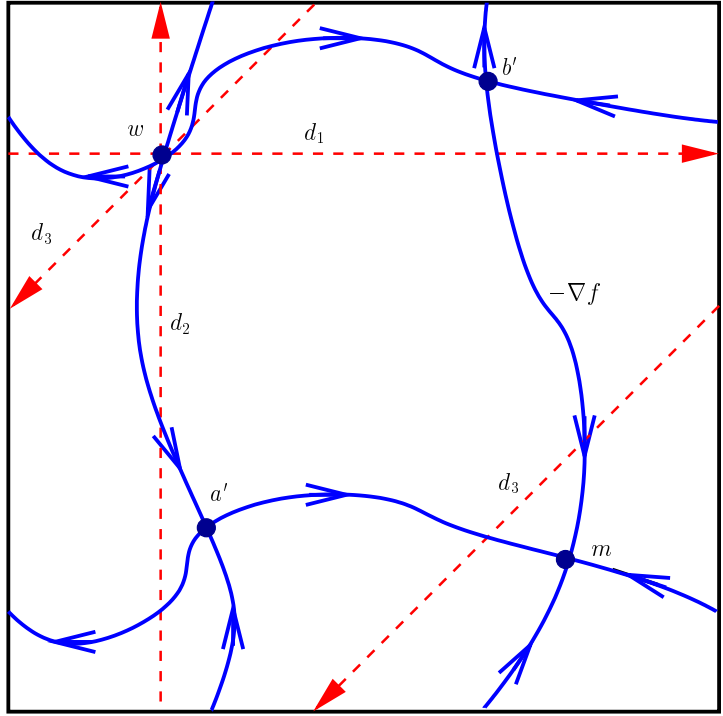, width=0.6\linewidth}
      \end{center}
      \caption{Trajectories of $-\nabla f$ and holomorphic disks on
        $\mathbb{T}^2_{\textnormal{clif}}$.}
      \label{f:clif-traj-1}
   \end{figure}
   By possibly perturbing the function $f$ slightly we may assume that
   the unstable manifold of $a'$ intersects $d_{2}$ and $d_{3}$ in a
   single point and is disjoint from $d_{1}$.  Similarly, we may
   assume that the unstable manifold of $b'$ intersects $d_{1}$ and
   $d_{3}$ at a single point and that this unstable manifold is
   disjoint from $d_{2}$. With these choices the pearl complex
   $(\mathcal{C}(f, J, \rho), d)$ is well defined. Here we take $J$ to
   be the standard complex structure of ${\mathbb{C}}P^2$, or a
   generic small perturbation of it and $\rho$ a generic small
   perturbation of the flat metric on
   $\mathbb{T}^2_{\textnormal{clif}}$.  As $f$ is perfect and
   $\clift^{2}$ is wide, the differential in $\mathcal{C}(f,J,\rho)$
   vanishes.  From now on we will view $m, a', b', w$ as generators
   (over $\Lambda$) of $QH_*(\mathbb{T}^2_{\textnormal{clif}})$.
   Recall that $m$ depends on the choice of $f$ in the sense that if
   we take another perfect Morse function $\widetilde{f}$ with minimum
   $\tilde{m}$, then $\tilde{m}$ might give an element of
   $QH_0(\mathbb{T}^2_{\textnormal{clif}})$ which is different than
   $m$. On the other hand $a', b', w \in QH$ are canonical.

   In order to compute the various products of $a'$ and $b'$ we use
   another perfect Morse function $g:\mathbb{T}^2_{\textnormal{clif}}
   \to \mathbb{R}$ with critical points $a''$, $b''$, $m''$, $w''$. We
   may choose $g$ to be a small perturbation of $f$ so that the
   unstable and stable manifolds of $a''$, $b''$ become ``parallel''
   copies of those of the corresponding points of $f$ (see
   figure~\ref{f:clif-traj-2}). Moreover, by taking $g$ to be close
   enough to $f $ (and keeping $J$ and $\rho$ fixed) we may assume
   that the comparison chain map $\Psi^{\textnormal{\tiny prl}} =
   \Psi_{\scriptscriptstyle (f, \rho, J), (g, \rho, J)} :
   \mathcal{C}(L; f, \rho, J) \to \mathcal{C}(L; g, \rho, J)$
   coincides with the Morse comparison chain map
   $\Psi^{\textnormal{{\tiny Morse}}}_{\scriptscriptstyle (f,\rho),
     (g, \rho)}$, namely:
   $$\Psi^{\textnormal{\tiny prl}}(a') = a'', \quad
   \Psi^{\textnormal{\tiny prl}}(b') = b'', \quad
   \Psi^{\textnormal{\tiny prl}}(m) = m'', \quad
   \Psi^{\textnormal{\tiny prl}}(w) = w''.$$ See point e
   in~\S\ref{subsubsec:defin_alg_str} as well as the proof of
   Proposition~\ref{p:duality} for various descriptions of the
   comparison map $\Psi^{\textnormal{\tiny prl}}$ (this map was
   denoted in the proof of Proposition~\ref{p:duality} by
   $\phi^{f,f'}$).

   \begin{figure}[htbp]
      \begin{center}
         \epsfig{file=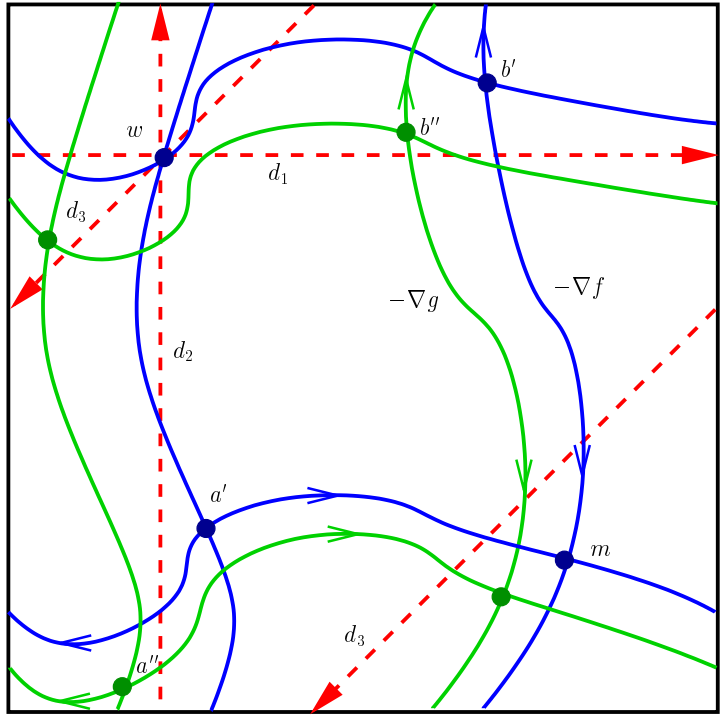, width=0.6\linewidth}
      \end{center}
      \caption{Trajectories of $-\nabla f$, $-\nabla g$ and
        holomorphic disks on $\mathbb{T}^2_{\textnormal{clif}}$.}
      \label{f:clif-traj-2}
   \end{figure}

   We now compute the product (on the chain level):
   $$*: \mathcal{C}(L; f, \rho, J) \otimes \mathcal{C}(L; g, \rho, J)
   \to \mathcal{C}(L; f, \rho, J).$$ For degree reasons we have:
   $$a'*b'' = m + \epsilon w t, \quad b'*a'' = m + \epsilon' wt, 
   \quad \textnormal{for some } \epsilon, \epsilon' \in
   \mathbb{Z}_2.$$ By the definition of the quantum product,
   $\epsilon$ is the number modulo $2$ of $J$-disks with $\mu = 2$
   going - in clockwise order !  - through the following points: one
   point in the unstable manifold of $a'$ then $w$ and, finally one
   point in the unstable manifold of $b''$.  Similarly, $\epsilon'$ is
   the number modulo $2$ of disks with $\mu=2$ going in order through
   a point in the unstable manifold of $b'$, $w$ and then a point in
   the unstable manifold of $a''$. There is a single disk through $w$
   which also intersects both the unstable manifolds of $a'$ and $b'$
   - the disk $d_{3}$.  However, the order in which the three types of
   points appear on the boundary of this disk implies that precisely
   one of $\epsilon$ and $\epsilon'$ is non-zero. Looking at
   figure~\ref{f:clif-traj-2} we see that for our choices of Morse
   data and $J$ we actually have $\epsilon=1$, $\epsilon' = 0$. Thus
   $a'*b'' = m+wt$, $b'*a'' = m$, hence in
   $QH(\mathbb{T}^2_{\textnormal{clif}})$ we have $a*b = m + wt$ and
   $b*a = m$.

   Next we compute $a*a$ and $b*b$ via $a'*a''$ and $b'*b''$.  To this
   end first note that $a'* a'' = \delta wt$ with $\delta\in\{0,1\}$
   (the classical term vanishes here since in singular homology we
   have $a \cdot a = 0$). There are precisely two pseudo-holomorphic
   disks that go through $w$ as well as through both unstable
   manifolds of $a'$ and of $a''$: the disks $d_{2}$ and $d_{3}$.  It
   is at this point that we use the fact that $[d_{2}]=b$,
   $[d_{3}]=-a-b$.  Indeed, this means that the order in which these
   three points lie on the boundary of each of these two disks is
   opposite. Thus, exactly one of these disks will contribute to
   $\delta$ and so $\delta=1$. (In fact, looking at
   figure~\ref{f:clif-traj-2} we see that the relevant disk is $d_2$.)
   A similar argument shows $b'\ast b''= wt$.  The formula for $m*m$
   follows now from the associativity of the product.  Indeed:
   $$m*m = (a*b+wt)*(b*a) = a*(b*b)*a + b*a t = mt+wt^2.$$
   (Recall that we are working over $\mathbb{Z}_2$.)
  \end{proof}

  \begin{rem}\label{rem:higher_cliff_prod} For the $n$-dimensional
     Clifford torus, $\clift^{n}\subset \C P^{n}$, let $t_1, \ldots,
     t_n$ be a basis of $H_{n-1}(\clift^{n};\Z_{2})$ dual to the basis
     $[c_1], \ldots, [c_n] \in H_{1}(\clift^{n};\Z_{2})$, with respect
     to the (classical) intersection product. The same argument as
     that giving the product $a\ast b$, $b\ast a$ in the proof of the
     lemma above shows that for $i\not=j$, $t_{i}\ast t_{j}+t_{j}\ast
     t_{i}=wt$ where $w$ represents the fundamental class.
  \end{rem}

  We now turn to determining the quantum module structure (points ii
  and iii in Theorem \ref{T:clif2-qstruct}). We recall that $h\in
  H_{2}(\C P^{2};\Z_{2})$ is the class of a hyperplane, hence in this
  case of a projective line $\mathbb{C}P^1 \subset {\mathbb{C}}P^2$.

  \begin{lem}\label{lem:clift_module}
     With the notations above we have:
     \begin{itemize}
       \item[i.] $h\ast a=at$, $h\ast b=bt$, $h\ast w=wt$, $h\ast m =
        mt$.
       \item[ii.] $i_{L}(m)=[pt]+ht+[\C P^{2}]t^{2}$,
        $i_{L}(a)=i_{L}(b)=i_{L}(w)=0$.
     \end{itemize}
  \end{lem}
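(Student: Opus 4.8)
\textbf{Proof plan for Lemma~\ref{lem:clift_module}.}

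The plan is to compute the module action $h\circledast(-)$ and the quantum inclusion $i_L$ directly from the explicit geometry of the $n+1$ families of Maslov-$2$ disks $\Delta_0,\ldots,\Delta_n$ with boundary on $\clift^2$ described in the proof of Lemma~\ref{l:T_clif-wide}, combined with degree constraints and the already-established product structure. For point~i, I would first argue that $h\circledast(-)$ must have a particularly rigid form. Since $h\in H_2(\C P^2;\Z_2)$ is invertible in $QH(\C P^2;\La)$ with $|h|=2$, Theorem~\ref{thm:alg_main}~\ref{I:qmod-qprod} tells us the map $h\circledast(-):QH_i(\clift^2)\to QH_{i-2}(\clift^2)$ is an isomorphism for every $i$. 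Combined with the dimension count from~\eqref{eq:H(T)=QH(T)}, this forces $h\circledast w = \lambda\, wt$, $h\circledast a = \mu\, at + (\text{possible }wt\text{ contribution?})$, etc., and a careful bookkeeping of degrees (using $|t|=-2$, $QH_1\cong H_1$, $QH_2\cong H_2$, $QH_0\cong H_0\oplus[\clift^2]t$) pins down which terms can occur; in particular $h\circledast w$, being of degree $0$ and lying in the image of $H_2\otimes\La$ under the canonical embedding, must be $\lambda wt$ with $\lambda\in\{0,1\}$. Then I would compute the scalars: $h\circledast w$ counts (mod $2$) the $J$-disks of Maslov index $2$ through the maximum $w$ of $f$ whose interior passes through a generic point of a projective line $\mathbb{C}P^1$ representing $h$. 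Any one of the disks $\Delta_i(w)$ lies in a $\mathbb{C}P^1\subset\C P^2$ (it is the disk $z\mapsto[x_0:z:x_2]$-type map, contained in a coordinate line), so for a generic representative of $h$ one checks there is exactly one such incidence among the three disks through $w$; hence $\lambda=1$, i.e. $h\circledast w=wt$. The same kind of count, now with the Morse cycles representing $a,b,m$ in place of $w$ at the appropriate marked point, gives $h\circledast a=at$, $h\circledast b=bt$, $h\circledast m=mt$ — the key input being that each generic projective line meets the relevant holomorphic disk transversally in the expected number of points. Alternatively, once $h\circledast w=wt$ is known, since $w$ is the fundamental class $[L]$ (the unit of $QH(L)$), one gets $h\circledast(-)=h\circledast([L]\circ(-))=(h\circledast[L])\circ(-)=(wt)\circ(-)=t\cdot(-)$ using that $w=[L]$ acts as identity for $\circ$; this immediately yields $h\circledast\alpha=\alpha t$ for all $\alpha\in QH(\clift^2)$, which is the cleanest route and I would use it.

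For point~ii, I would use the defining relation~\eqref{eq:PD-i_L}, namely $\langle PD(c),i_L(\alpha)\rangle=\epsilon_L(c\circledast\alpha)$ for $c\in H_*(\C P^2;\Z_2)$. Since $a,b,w$ all have positive degree and $\epsilon_L$ vanishes on $QH_{>0}(L)$ while being $1$ on degree-$0$ classes of the form... — more precisely, $\epsilon_L:QH_*(L;\La)\to\La$ is the $\La$-linear extension of the augmentation sending $[\text{min}]\mapsto 1$ and higher-index generators to $0$ — I would compute $\epsilon_L(c\circledast a)$ for $c$ ranging over $[pt],h,[\C P^2]$. Using part~i, $h\circledast a=at$ has $\epsilon_L(at)=\epsilon_L(a)t=0$ since $a\in QH_1$; and $[pt]\circledast a=h^{*2}\circledast a=h\circledast(h\circledast a)=at^2$, again with zero augmentation; and $[\C P^2]\circledast a=a$ with $\epsilon_L(a)=0$. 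Running over all $c$ gives $i_L(a)=0$, and identically $i_L(b)=i_L(w)=0$. (Here $i_L(w)=i_L([L])=\mathrm{inc}_*[L]$ which is zero in $H_*(\C P^2;\Z_2)$ by dimension/degree reasons anyway, a useful consistency check.) For $i_L(m)$, the degree of $m$ is $0$, so $i_L(m)\in QH_0(\C P^2;\La)=(H(\C P^2;\Z_2)\otimes\La)_0$ which, with $\deg s=-6$ identified with $t^3$, consists of $\Z_2$-combinations of $[pt]$ (degree $0$), $ht$ (degree $2-2=0$), and $[\C P^2]t^2$ (degree $4-4=0$): so $i_L(m)=\epsilon_0[pt]+\epsilon_1 ht+\epsilon_2[\C P^2]t^2$ with $\epsilon_j\in\Z_2$. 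I then pair against $PD([pt])=[\C P^2]$, $PD(h)=h$ (self-dual up to the standard identification), $PD([\C P^2])=[pt]$ and compute $\epsilon_L([\C P^2]\circledast m)=\epsilon_L(m)=1$ giving $\epsilon_0=1$; $\epsilon_L(h\circledast m)=\epsilon_L(mt)=t$, which matches the $ht$ term and gives $\epsilon_1=1$; $\epsilon_L([pt]\circledast m)=\epsilon_L(h\circledast(h\circledast m))=\epsilon_L(mt^2)=t^2$, matching $[\C P^2]t^2$ and giving $\epsilon_2=1$. Hence $i_L(m)=[pt]+ht+[\C P^2]t^2$.

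The main obstacle will be the geometric disk count establishing $h\circledast w=wt$ (equivalently, verifying the scalar is $1$ rather than $0$): one must be sure that a generic projective line $\mathbb{C}P^1\subset\C P^2$ in the class $h$ meets the holomorphic disks through $w$ in an odd number of interior points, after accounting for how the marked-point/incidence conditions in the moduli space $\mathcal{P}_{\mathcal{T}}$ of~\S\ref{subsubsec:defin_alg_str}~c constrain the configuration. This requires either an explicit local intersection computation in homogeneous coordinates near $\clift^2$ — using that each $\Delta_i$ is contained in a coordinate line and that a generic $h$-cycle is a line in general position — or an appeal to the fact that $h\circledast[L]$ must equal a nonzero multiple of $t$ since $h$ is invertible and $[L]\neq 0$ (wideness), which forces the multiple to be $1$ over $\Z_2$ without any disk count at all. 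I would present the latter argument as the primary one: invertibility of $h$ in $QH(\C P^2;\La)$ and non-narrowness of $\clift^2$ immediately give $h\circledast[L]\neq 0$, degree reasons force $h\circledast[L]=wt$, and then the algebra-over-$QH(M)$ structure propagates this to $h\circledast\alpha=t\alpha$ for all $\alpha$, sidestepping the delicate enumeration entirely. The remaining computations are then pure bookkeeping with $\epsilon_L$ and the pairing~\eqref{eq:PD-i_L}. \Qed
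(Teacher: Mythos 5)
Your route for point~i differs genuinely from the paper's. The paper invokes Proposition~\ref{cor:symm} applied to an ambient symplectomorphism isotopic to the identity that swaps the two circle factors of $\clift^2$, which induces an algebra automorphism $\tilde\phi$ of $QH(\clift^2)$ with $\tilde\phi(a)=b$, $\tilde\phi(b)=a$; writing $h\circledast a=(u_1a+u_2b)t$ (unambiguous since $QH_{-1}(\clift^2)\cong H_1(\clift^2;\Z_2)t$ canonically) and using that $h\circledast(-)$ commutes with $\tilde\phi$ and is an isomorphism, it pins down $(u_1,u_2)=(1,0)$, and then derives $h\circledast w$, $h\circledast m$ from the algebra structure. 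You instead start from $h\circledast[L]$ and propagate by the two-sided algebra identity $h\circledast\alpha=(h\circledast[L])\circ\alpha$; this sidesteps the symmetry entirely and, once $h\circledast[L]=wt$ is known, is indeed cleaner. Your treatment of point~ii (via~\eqref{eq:inclusion_mod}) matches the paper's.

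There is, however, a genuine gap in the step you rely on. You assert that ``degree reasons force $h\circledast[L]=wt$,'' but $QH_0(\clift^2)$ is two-dimensional over $\Z_2$: besides the canonical line $H_2(\clift^2;\Z_2)t=\Z_2\,wt$ (Proposition~\ref{p:wide-can-iso}), it contains a summand isomorphic to $H_0(\clift^2;\Z_2)$, i.e. the class of the minimum $m$. Invertibility of $h$ only rules out $h\circledast[L]=0$, leaving $m$, $wt$, or $m+wt$ as possibilities. Your earlier claim that $h\circledast w$ ``lies in the image of $H_2\otimes\La$ under the canonical embedding'' is exactly what needs to be proved, not a consequence of grading. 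The missing input is that the classical (i.e.\ $t^0$) component of $h\circledast[L]$ is the classical intersection product $h\cap_{L}[L]\in H_0(\clift^2;\Z_2)$, and that this vanishes because $[\clift^2]=0$ in $H_2(\C P^2;\Z_2)$ (a Lagrangian has zero symplectic area, and $H_2(\C P^2;\Z)$ is torsion-free, so $\textnormal{inc}_*[\clift^2]=0$); equivalently, $\epsilon_L(h\circledast[L])=\langle PD(h),\textnormal{inc}_*[\clift^2]\rangle=0$. Once this is inserted, your argument goes through. Note also that the parenthetical ``$i_L(w)=\textnormal{inc}_*[L]$ which is zero by dimension/degree reasons'' is not right as stated: $i_L$ may carry quantum corrections beyond $\textnormal{inc}_*$, and the vanishing of $\textnormal{inc}_*[\clift^2]$ is a homological fact about Lagrangians, not a degree count.
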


  \begin{proof} We will make use of a second geometric fact concerning
     the Clifford torus: there is a symplectomorphism homotopic to the
     identity, $\bar{\phi}: \C P^2\to \C P^2$, whose restriction to
     $\mathbb{T}^2_{\textnormal{clif}}$ is the permutation of the two
     factors in $\mathbb{T}^2_{\textnormal{clif}} \approx S^{1}\times
     S^{1}$.  We now determine what is the map
     $$\tilde{\phi}:QH_{\ast}(\mathbb{T}^2_{\textnormal{clif}})\to
     QH_{\ast}(\mathbb{T}^2_{\textnormal{clif}})$$ which is induced by
     $\bar{\phi}$. For degree reasons we have $\tilde{\phi}(w)=w$,
     $\tilde{\phi}(a)=b$, $\tilde{\phi}(b)=a$ and, by
     Proposition~\ref{cor:symm}, we know that $\tilde{\phi}$ is a
     morphism of algebras (from this it also follows immediately that
     $\tilde{\phi}(m)=m+wt$).

     We now compute $h\ast a$ and $h\ast b$. We have, $h\ast a=h\ast
     \tilde{\phi}(b)=\tilde{\phi}(h\ast b)$. Now $h\ast a= (u_{1}a +
     u_{2}b)t$ with $u_{1},u_{2}\in \mathbb{Z}_2$ which implies that
     $h\ast b= (u_{1}b + u_{2} a)t$. As in Corollary \ref{T:Lag-CPn}
     we also have that $h \ast (-):
     H_{1}(\mathbb{T}^2_{\textnormal{clif}};\mathbb{Z}_2)\to
     H_{1}(\mathbb{T}^2_{\textnormal{clif}};\mathbb{Z}_2)t$ is an
     isomorphism.  This implies that precisely one of $u_{1},u_{2}$ is
     non zero.  Assume first that $u_{1}=0$ and $u_{2}=1$.  Then
     $h\ast a= bt$, $h\ast(h\ast a)=at^{2}$ and $h\ast(h\ast(h\ast
     a))=bt^{3}$ which is not possible because $h^{\ast 3}=[\C P^{2}]
     t^{3}$ (where, $[\C P^{2}]$ denotes the fundamental class of $\C
     P^{2}$) and $[\C P^{2}]\ast a=a$. Thus we are left with
     $u_{1}=1$, $u_2=0$ as claimed.

     To compute $h*w$ write $h\ast w t=h\ast( a\ast a)= (h\ast a)\ast
     a=(a\ast a)t=wt^{2}$.  Similarly $h\ast m=h\ast (b\ast a)=(h\ast
     b)\ast a= mt$.

     Finally, point ii is an immediate consequence of the first point
     and of formula (\ref{eq:inclusion_mod}) in
     Theorem~\ref{thm:alg_main} iii.
  \end{proof}

  Finally, we need to justify the uniruling properties of the Clifford
  torus as described in Corollary \ref{cor:clifford}.

\begin{lem}
   For $n\geq 2$, $(\C P^{n},\clift^{n})$ is $(1,0)$-uniruled of order
   $2n$ and $\clift^{n}$ is uniruled of order $2$. For $n=2$, $(\C
   P^{2},\clift^{2})$ is $(1,1)$-uniruled of order $4$.
\end{lem}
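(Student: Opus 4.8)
The plan is to extract uniruling statements directly from the algebraic relations already established for $\clift^{n}$, exactly as was done in the $2H_1(L;\mathbb{Z})=0$ case (Lemma~\ref{l:uniruled-L-CPn}). The mechanism is always the same: an identity in $QH$ of the form $[pt]\ast(\textnormal{something})$ or $(\textnormal{something})\circ(\textnormal{something})$ equal to $[L]t^{s}$ (or $mt^{s}$, or $yt^{s}$) must hold at the chain level for generic auxiliary data, and by the definition of the moduli spaces in \S\ref{subsubsec:defin_alg_str} b,c this forces a non-constant $J$-disk of controlled Maslov index through the relevant point(s). So first I would fix a perfect Morse function $f$ on $\clift^{n}$ (which exists, $\clift^{n}$ being a torus; by Lemma~\ref{l:T_clif-wide} the Lagrangian is wide so the pearl differential vanishes and all critical points are cycles) with a single minimum $m$ and single maximum $w=[\clift^{n}]$, together with a perfect Morse function $g$ on $\C P^{n}$ with a single minimum $p$, arranging that the relevant pearl complexes and the module/quantum products are all defined.

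For the $(1,0)$-uniruling of order $2n$: I would use the point-invertibility of $\C P^{n}$, namely $[pt]=h^{\ast n}$ and the relation $h\circledast(-)$ acting on $QH(\clift^{n})$, which shifts degree by $-2$ (this follows from Lemma~\ref{T:Lag-CPn}, or directly from Lemma~\ref{lem:clift_module}(i) when $n=2$; for general $n$ one uses that $h$ is invertible in $QH(\C P^n)$ together with the module structure of Theorem~\ref{thm:alg_main}). Then $[pt]\circledast [\clift^{n}]=h^{\ast n}\circledast w$ lands in $QH_{-n}(\clift^{n})$, and since $|t|=-2$ this equals $\alpha t^{n}$ for some class $\alpha$ of degree $n-2n=-n$... more carefully: degree $-n$ means $\alpha\in H_{n-2\lceil n/2\rceil}$ tensored with an appropriate power of $t$; the point is simply that the product is non-zero (because $[pt]$ is invertible in $QH(\C P^n)$, $[\clift^n]\ne 0$ as $\clift^n$ is non-narrow, and $h\circledast(-)$ is an isomorphism) and, being of negative degree, it must be a nonzero multiple of $t$ to a positive power at least $n/2$... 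Here I need to be a bit careful about exactly which power of $t$ appears; the cleanest route is to iterate $h\circledast(-)$ exactly $n$ times starting from $w\in QH_n$, getting a nonzero element of $QH_{-n}$, and then argue as in the proof of point~i of Lemma~\ref{l:uniruled-L-CPn} that at the chain level $p\ast w\ne 0$ with a summand $yt^{n}$, $y\in\Crit(f)$, forcing a $J$-disk of Maslov index $2n$ through $p$; since $p$ can be placed anywhere in $\C P^{n}\setminus\clift^{n}$ this gives $(1,0)$-uniruling of order $2n$.

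For uniruling of $\clift^{n}$ itself of order $2$: the relation to use is the one from Remark~\ref{rem:higher_cliff_prod}, $t_{i}\circ t_{j}+t_{j}\circ t_{i}=wt$ for $i\ne j$ where $t_i\in H_{n-1}$, $w=[\clift^n]$; alternatively, in the $n=2$ notation of Theorem~\ref{T:clif2-qstruct}, $a\circ a=wt$. Either way, writing this relation at the chain level for suitable Morse representatives and a generic $J$, the $wt$ term forces a non-constant $J$-disk of Maslov index $2$ whose boundary passes through a prescribed generic point of $\clift^{n}$ (the point being the maximum $w$ of $f$, which can be taken anywhere); since $N_{\clift^{n}}=2$ this is uniruling of order $2$ — in fact this much is already immediate from the general $N_L=2$ picture and the existence of Maslov-$2$ disks described in Lemma~\ref{l:T_clif-wide}, so the real content here is just the order-$2$ bound, which is automatic. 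For the last assertion, $(\C P^{2},\clift^{2})$ is $(1,1)$-uniruled of order $4$: here I would use $[pt]\circledast m = mt+ht^2+\ldots$... actually the right relation is $[pt]\circledast\alpha_{0}=\alpha_{2}t^{2}$-type, concretely from Lemma~\ref{lem:clift_module}(i), $h\circledast m=mt$ and $h\circledast w=wt$, so $[pt]\circledast m=h\ast h\circledast m=wt^{2}$ in degree... let me recompute: $m\in QH_0$, $h\circledast(-)$ shifts by $-2$, so $h\circledast m\in QH_{-2}$, $h^{\ast 2}\circledast m\in QH_{-4}$; and $[pt]=h^{\ast 2}$ in $\C P^2$, giving $[pt]\circledast m\in QH_{-4}$. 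Since $wt^{2}\in QH_{2-4}=QH_{-2}$... the degrees don't match, so I should instead track $[pt]\circledast w$: $w\in QH_2$, $[pt]=h^{\ast 2}$ shifts by $-4$, landing in $QH_{-2}$, and $mt^{2}\in QH_{-4}$, still off — the correct target is something in $QH_{-2}$, e.g. $at^{2}$ or similar. I would pin this down by direct degree bookkeeping using Lemma~\ref{lem:clift_module}, then argue at the chain level that $p\ast m=(\textnormal{something})t^{2}+\ldots$ with the leading term a cycle representing the fundamental class, which by the moduli-space description forces either a Maslov-$4$ disk through $p$ with boundary through a given point of $\clift^{2}$, or a degenerate chain-of-pearls configuration; the latter is ruled out for generic $J$ by a dimension count (exactly as in the proof of part~iii of Lemma~\ref{l:uniruled-L-CPn}: a Maslov-$2$ disk through a point in the complement and a point on $L$ has moduli space of virtual dimension $-1$), leaving the genuine $(1,1)$-uniruling of order $4$. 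The stated width consequence $w(\C P^{2},\clift^{2}:(r,\rho))\le 2/3$ then follows from Lemma~\ref{lem:area_estimate} with $\eta=1/6$ and $k=4$.

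The main obstacle I expect is the degree bookkeeping for the general-$n$ $(1,0)$-uniruling: one must verify that $[pt]\circledast[\clift^{n}]$ is not merely nonzero but has a nonzero component divisible by precisely $t^{n}$ (not a higher power, which would be impossible for degree reasons anyway since $H_{>n}(\clift^n)=0$, but also not trivially a lower power that would weaken the bound), and that this component survives at the chain level to produce a disk of Maslov index exactly $2n$ rather than something smaller — the argument of Lemma~\ref{l:uniruled-L-CPn}~i handles this but requires care with the minimal-pearl-model reduction of \S\ref{subsec:minimal} when $f$ is not already perfect (here it is, since $\clift^n$ is a torus, which simplifies matters considerably — $\phi$ and $\psi$ are identities up to quantum corrections and $\delta=0$). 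A secondary subtlety is that for general $n$ one does not have the explicit module formula of Lemma~\ref{lem:clift_module}, only the abstract fact that $h\circledast(-)$ is an isomorphism of period $2$; this is enough, but one should state it as a lemma (the analogue of Lemma~\ref{T:Lag-CPn}) before running the argument.
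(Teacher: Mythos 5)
Your strategy is essentially the paper's: the order-$2$ uniruling via the relation $t_i\circ t_j+t_j\circ t_i=wt$, and the $n=2$ statement via a chain-level module identity followed by the dichotomy (one $\mu=4$ disk versus two $\mu=2$ disks joined by a flow line) and a generic dimension count ruling out the broken case, are exactly the arguments given there. For the $(1,0)$-uniruling the paper simply quotes Theorem~\ref{theo:geom_rig} (wide $\Rightarrow$ non-narrow, and $\C P^{n}$ is point invertible of order $2n+2$, so the order is $<2n+2$, hence $\leq 2n$), whereas you rederive the same bound directly from $p\ast w\neq 0$ at the chain level; both routes rest on the same mechanism, and in your version any nonzero summand $y\,t^{j}$ of $p\ast w$ has $j\leq n$ for degree reasons, which is all you need.

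The one step you left unresolved is the $n=2$ bookkeeping, and the resolution is the relation the paper actually uses: $[pt]\circledast m=mt^{2}$. Your apparent degree mismatch came from miscomputing $h\circledast(h\circledast m)$ as $wt^{2}$: since $h\circledast m=mt$ by Lemma~\ref{lem:clift_module}, iterating gives $mt^{2}\in QH_{-4}(\clift^{2})$, exactly the degree of $[pt]\circledast m$, and the coefficient is the point class $m$, not the fundamental class. When you pass to the chain level, the prescribed point of $\clift^{2}$ is forced through the \emph{entry} $m$: the incidence condition at an entry is that the boundary marked point lie in the unstable manifold of $m$ for $-\nabla f$, which for the minimum is $\{m\}$ itself, whereas the output coefficient $m$ imposes no point constraint (its stable manifold is dense). (Alternatively one can use $[pt]\circledast w=wt^{2}$, with the point constraint then coming from the exit $w$, whose stable manifold is $\{w\}$.) With that fixed, your dichotomy and the generic count --- the locus swept by Maslov-$2$ disks in $\C P^{2}$ is only $3$-dimensional, hence misses $p$ --- complete the argument exactly as in the paper, and the packing estimate follows from Lemma~\ref{lem:area_estimate} as you say.
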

\begin{proof}
   As $\clift^{n}$ is wide of minimal Maslov number $2$ and $\C P^{n}$
   is point invertible of order $2n+2$ we deduce from Theorem
   \ref{theo:geom_rig} that $(\C P^{n},\clift^{n})$ is uniruled of
   order (at most) $2n$. The fact that $\clift^{n}$ is uniruled of
   order $2$ follows immediately from the relation $t_{i}\ast
   t_{j}+t_{j}\ast t_{i}=wt$ from Remark \ref{rem:higher_cliff_prod}.
   Indeed, this relation implies the existence of a disk of Maslov
   index $2$ through $w$ (for generic $J$). There is also a direct
   proof of this, based on the fact that the families of $J$-disks
   $\gamma_{i}$ are regular and thus, being of minimal possible area,
   they persist under generic deformations of $J$. Finally, for $n=2$,
   with the notation in Lemma \ref{lem:clift_module} we have the
   relation $[pt]\ast m=mt^{2}$ where $[pt]=h^{* 2}$. We consider a
   Morse function $g:\C P^{2}\to \R$ which is perfect and we denote
   its minimum by $p$.  The previous relation gives (at the chain
   level): $p\ast m=mt^{2}$ where $m$ is the minimum of a perfect
   Morse function $f:\clift^{2}\to \R$ (so that the respective pearl
   complex and all the relevant operations are defined).  This means
   that there is a configuration consisting of one of the following:
   \begin{enumerate}[a.]
     \item one $J$-disk with $\mu=4$ through $p$, whose boundary is on
      $\mathbb{T}^2_{\textnormal{clif}}$ and contains $m$.
     \item two $J$-disks, each with $\mu=2$, related by a negative
      gradient flow line of $f$ so that one of these two disks goes
      through $p$ and the boundary of the other contains $m$.
   \end{enumerate}
   To prove our claim we only have to notice that possibility b can
   not arise for a generic $J$.  Indeed, generically, the set of
   points in $\C P^{2}$ which lie in the image of some $J$-disk of
   Maslov index $2$ is only $3$-dimensional and so, generically, these
   disks avoid $p$.
\end{proof}

\subsection{Lagrangians in the quadric} Here we prove
Theorem~\ref{T:quadric-qstruct} and Corollary~\ref{cor:quadric}.

Let $Q \subset {\mathbb{C}}P^{n+1}$ be a smooth complex
$n$-dimensional quadric, where $n \geq 2$. More specifically we can
write $Q$ as the zero locus $Q= \{ z \in {\mathbb{C}}P^{n+1} \mid
q(z)=0\}$ of a homogeneous quadratic polynomial $q$ in the variables
$[z_0: \cdots: z_{n+1}] \in {\mathbb{C}}P^{n+1}$, where $q$ defines a
quadratic form of maximal rank. We endow $Q$ with the symplectic
structure induced from ${\mathbb{C}}P^{n+1}$. (Recall that we use the
normalization that the symplectic structure $\omega_{\textnormal{FS}}$
of ${\mathbb{C}}P^{n+1}$ satisfies $\int_{\mathbb{C}P^1}
\omega_{\textnormal{FS}} = 1$.) When $n \geq 3$ we have by Lefschetz
theorem $H^2(Q;\mathbb{R}) \cong \mathbb{R}$, therefore by Moser
argument all K\"{a}hler forms on $Q$ are symplectically equivalent up
to a constant factor. When $n=2$, $Q \subset {\mathbb{C}}P^3$ is
symplectomorphic to $(\mathbb{C}P^1 \times \mathbb{C}P^1, \omega_{FS}
\oplus \omega_{FS})$. Also note that the symplectic structure on $Q$
(in any dimension) does not depend (up to symplectomorphism) on the
specific choice of the defining polynomial $q$ (this follows from
Moser argument too since the space of smooth quadrics is connected).

\subsubsection{Topology of the quadric} \label{Sb:top-quadric} The
quadric has the following homology:
\begin{equation*}
   H_i(Q;\mathbb{Z}) \cong
   \begin{cases}
      0 & \textnormal{if }i=\textnormal{odd} \\
      \mathbb{Z} & \textnormal{if } i=\textnormal{even} \neq n
   \end{cases}
\end{equation*}
Moreover, when $n=$\,even, $H_n(Q;\mathbb{Z})\cong \mathbb{Z} \oplus
\mathbb{Z}$. To see the generators of $H_n(Q;\mathbb{Z})$, write
$n=2k$. There exist two families $\mathcal{F}, \mathcal{F}'$ of
complex $k$-dimensional planes lying on $Q$
(see~\cite{Gr-Ha:alg-geom}). Let $P \in \mathcal{F}$, $P' \in
\mathcal{F}'$ be two such planes belonging to different families. Put
$a=[P]$, $b=[P']$. Then $H_n(Q;\mathbb{Z}) = \mathbb{Z}a \oplus
\mathbb{Z} b$ and $h^{\bullet k}=a+b$. Moreover, we have:
\begin{equation} \label{Eq:cap-quad}
   \begin{aligned}
      & \textnormal{for } k=\textnormal{odd}: & a \cdot b = [pt],\; a
      \cdot a =
      b \cdot b =0, \\
      & \textnormal{for } k=\textnormal{even}: & a \cdot b = 0, \; a
      \cdot a = b \cdot b = [pt].
   \end{aligned}
\end{equation}
Here and in what follows we have denoted by $\cdot$ the intersection
product in singular homology.

\subsubsection{Quantum homology of the quadric}
\label{Sb:qhom-quadric} Let $h \in H_{2n-2}(Q;\mathbb{Z})$ be the
class of a hyperplane section (coming from the embedding $Q \subset
{\mathbb{C}}P^{n+1}$), $p \in H_0(Q;\mathbb{Z})$ the class of a point
and $u \in H_{2n}(Q;\mathbb{Z})$ the fundamental class. We will first
describe the quantum cohomology over $\mathbb{Z}$. Define
$\Lambda^{\mathbb{Z}} = \mathbb{Z}[t, t^{-1}]$ where $\deg t = -N_L$.
Here $N_L$ is the minimal Maslov number of a Lagrangian submanifold
that will appear later on. Note that that $c_1(Q) = nPD(h)$, hence
$N_L | 2n$. Let $QH(Q;\Lambda^{\mathbb{Z}})=H(Q;\mathbb{Z}) \otimes
\Lambda^{\mathbb{Z}}$ be the quantum homology endowed with the quantum
product $*$.

\begin{prop}[See~\cite{Beau:quant}] \label{P:qhom-quadric}
   The quantum product satisfies the following identities:
   \begin{align*}
      & h^{*j}=h^{\bullet j} \; \forall \, 0 \leq j \leq n-1, \quad
      h^{*n}=2p+2u  t^{2n/N_L}, \\
      & h^{*(n+1)}=4h t^{2n/N_L}, \quad p*p=u t^{4n/N_L}.
   \end{align*}
   When $n=$\,even we have the following additional identities:
   \begin{enumerate}[i.]
     \item $h*a=h*b$.
     \item If $n/2=$\,odd then $a*b=p$, $a*a=b*b=u t^{2n/N_L}$.
     \item If $n/2=$\,even then $a*a=b*b=p$, $a*b=u t^{2n/N_L}$.
   \end{enumerate}
\end{prop}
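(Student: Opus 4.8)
\textbf{Proof proposal for Proposition~\ref{P:qhom-quadric}.}
The plan is to verify each of the listed identities by a combination of classical intersection theory on $Q$, dimension counts for the relevant genus-zero Gromov--Witten invariants, and the fact that there is essentially only one homology class of rational curves of low degree on $Q$ to keep track of. First I would normalize conventions: the line class $A_0 \in H_2^S(Q;\mathbb{Z})$ generates $H_2(Q;\mathbb{Z})$, one has $c_1(A_0) = n$ (since $c_1(Q) = n\,PD(h)$), and the quantum product on $QH(Q;\Lambda^{\mathbb{Z}})$ only receives corrections from nonnegative multiples of $A_0$; by monotonicity the degree-zero part of $a*b$ is just the classical intersection product $a\bullet b$ recorded in~\S\ref{Sb:top-quadric} and equation~\eqref{Eq:cap-quad}, and the first possible quantum correction sits in the variable $t^{2n/N_L}$ because $2 c_1(A_0) = 2n$. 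This already pins down the shape (which power of $t$) of every correction term, so the content is to compute the leading scalar coefficients.

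Next I would compute the coefficients. The classical relations $h^{\bullet j}$ for $0 \le j \le n-1$ are immediate from the cohomology ring of the smooth quadric (Lefschetz, plus~\S\ref{Sb:top-quadric}), and they agree with the quantum product in that range because for degree reasons a quantum correction to $h^{*j}$ with $j \le n-1$ would have to land in $H_{2j - 2(n-1) - 2n + 2\cdot\text{something}}$, i.e.\ in negative degree, hence vanishes. For $h^{*n}$, $h^{*(n+1)}$, and $p*p$ the dimension count allows exactly one correction term, and the scalar in front is a $3$-point genus-zero Gromov--Witten invariant of $Q$ in the class $A_0$ (for $h^{*n}$ and $h^{*(n+1)}$) or $2A_0$ (for $p*p$); these numbers — which come out to be $2$, $4$, and $1$ respectively — are classical and I would simply cite the computation of the quantum cohomology of quadrics in~\cite{Beau:quant}, checking only that Beauville's normalization of the quantum parameter matches ours (his $q$ corresponds to our $t^{2n/N_L}$ after identifying $c_1(A_0)=n$). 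For the $n=$\,even case, identity~(i), $h*a = h*b$, follows from the fact that $h\bullet a = h\bullet b$ (both equal the class of a $(k-1)$-plane, by~\S\ref{Sb:top-quadric}) together with the observation that any quantum correction to $h*a$ or $h*b$ lies in a one-dimensional piece and the two corrections are interchanged by the monodromy/deck symmetry of $Q$ that swaps the two rulings $\mathcal{F},\mathcal{F}'$; since this symmetry fixes $h$ it must send $h*a$ to $h*b$, forcing equality. Identities~(ii) and~(iii) then follow the same pattern: the classical parts $a\bullet b$, $a\bullet a$, $b\bullet b$ are given by~\eqref{Eq:cap-quad} (dichotomy according to the parity of $k=n/2$), and the single admissible quantum correction term is again read off from~\cite{Beau:quant}, the upshot being that in each case exactly one of $\{a*b\}$, $\{a*a=b*b\}$ picks up the $u\,t^{2n/N_L}$ term and the other equals $p$.

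The main obstacle I anticipate is purely bookkeeping rather than conceptual: matching the various normalizations. There are three places where a factor can slip — the symplectic normalization of $\omega_{\textnormal{FS}}$ on ${\mathbb{C}}P^{n+1}$ restricted to $Q$ (and hence the area of $A_0$), the definition of the Novikov variable $t$ versus Beauville's $q$, and the precise meaning of $N_L$ (which at this point in the paper is the minimal Maslov number of the Lagrangian to be considered, a divisor of $2n$, so $2n/N_L$ is an integer but one must carry it symbolically throughout). I would therefore spend care writing $h^{*n} = h^{\bullet n} + 2u\,t^{2n/N_L}$ with the correct exponent and double-check against the degree bookkeeping: $|h^{*n}| = 2n(n-1) - 2n(n-1) \pmod{2n}$ indeed matches $|p| = 0$ and $|u\,t^{2n/N_L}| = 2n - 2n = 0$. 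Apart from this, one should also note that the claimed formulas are consistent with associativity and with $h^{\bullet k} = a+b$ — e.g.\ $(a+b)^{*2} = h^{*2k} = h^{*n}$ should reproduce, via~(ii)/(iii), the value $2p + 2u\,t^{2n/N_L}$ — and I would use this internal consistency check as a sanity test rather than as part of the proof. With the normalizations fixed, the proposition is then a direct consequence of the known quantum cohomology ring of the quadric together with the classical intersection data already recorded in~\S\ref{Sb:top-quadric}.
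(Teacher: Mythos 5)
Most of your plan coincides with what the paper actually does: the identities for $h^{*j}$ ($j\le n-1$), $h^{*n}$, $h^{*(n+1)}$, $p*p$, as well as $h*a=h*b$, are simply quoted from~\cite{Beau:quant}, and your degree bookkeeping explaining why no quantum correction can occur for $j\le n-1$ and why only one correction term (in $t^{2n/N_L}$, resp.\ $t^{4n/N_L}$) is admissible elsewhere is correct (phrased cohomologically; homologically the excluded corrections would have degree above $2n$ rather than negative, but the conclusion is the same). Your symmetry argument for $h*a=h*b$ — a real orthogonal reflection preserves $(Q,\omega)$, fixes $h,p,u$ and the line class, and swaps the two rulings — is a legitimate alternative to the citation; for $n>2$ the identity even follows from degree reasons plus $h\bullet a=h\bullet b$ alone.

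The gap is in items~ii and~iii. You propose to "read off'' the individual products $a*a$, $b*b$, $a*b$ from~\cite{Beau:quant}, but that reference does not state them; what it provides (and what the paper quotes) is only the quantum square of the primitive middle-dimensional class, namely $(a-b)*(a-b)=\#\bigl((a-b)\cdot(a-b)\bigr)\,\tfrac12\bigl(h^{*n}-4u\,t^{2n/N_L}\bigr)$, whose coefficient $(-1)^k2$ (via~\eqref{Eq:cap-quad}, $k=n/2$) is precisely where the parity dichotomy enters. Extracting the individual products requires a further step, which is the only genuine work in the paper's proof: one notes $a*a-b*b=(a+b)*(a-b)=h^{*k}*(a-b)=0$ because $h*(a-b)=0$, combines this with $(a+b)*(a+b)=h^{*n}=2p+2u\,t^{2n/N_L}$ and the primitive-square formula above, and solves the resulting linear system for $a*a=b*b$ and $a*b$. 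You in fact have all the ingredients at hand — you record $(a+b)^{*2}=h^{*n}$ but demote it to a "sanity check'', and your ruling-swap symmetry independently yields $a*a=b*b$ — so the repair is to promote these to the proof and add the one missing input (Beauville's formula for $(a-b)^{*2}$, or equivalently the single three-point invariant in the line class with two middle-dimensional insertions); without it, your argument determines $a*a+a*b$ but not how the terms $p$ and $u\,t^{2n/N_L}$ split between them, which is exactly the content of ii versus iii.
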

\begin{proof}
   The first three identities and the fact that $h*a=h*b$ are proved
   in~\cite{Beau:quant}. To prove the remaining two identities write
   $n=2k$. Recall from~\cite{Beau:quant} that
   $$(a-b)*(a-b) = \bigl(\#(a-b) \cdot (a-b)\bigr) 
   \frac{1}{2}(h^{*n}-4u t^{2n/NL}) =
   \bigl(\#(a-b)\cdot(a-b)\bigr)(p-u t^{2n/N_L}).$$
   Substituting~\eqref{Eq:cap-quad} in this we obtain:
   \begin{equation} \label{Eq:qprod-quad-1}
      (a-b)*(a-b) = (-1)^k 2 (p-u t^{2n/N_L}).
   \end{equation}
   On the other hand we have $h^{*k} = h^{\bullet k} = a+b$, hence
   \begin{equation} \label{Eq:qprod-quad-2}
      (a+b)*(a+b) = h^{*n} = 2p + 2u  t^{2n/N_L}.
   \end{equation}
   Next we claim that $a*a = b*b$. Indeed $a*a-b*b = (a+b)*(a-b) =
   h^{*k}*(a-b)=0$. The desired identities follow from this together
   with~\eqref{Eq:qprod-quad-1},~\eqref{Eq:qprod-quad-2}.
\end{proof}

\subsubsection{Quantum structures for Lagrangian submanifolds of the quadric.}
\label{Sb:quad-lag}
The quadric $Q$ has Lagrangian spheres. To see this write $Q$ as $Q =
\{z_0^2 + \cdots + z_n^2 = z_{n+1}^2\} \subset {\mathbb{C}}P^{n+1}$.
Then $L = \{ [z_0: \cdots : z_{n+1}] \in Q \mid z_i \in \mathbb{R},
\forall\, i \}$ is a Lagrangian sphere. We assume from now on that
$n\geq 2$.

\begin{lem} \label{l:quad-quant-L} Let $L \subset Q$ be a Lagrangian
   submanifold with $H_{1}(L;\Z)=0$.  Then, $N_{L}=2n$, $L$ is wide
   and there is a canonical isomorphism $QH(L) \cong H(L;\mathbb{Z}_2)
   \otimes \Lambda$. Moreover, if we denote by $\alpha_0 \in QH_0(L)$
   the class of a point, by $\alpha_n \in QH_n(L)$ the fundamental
   class and similarly by $p \in QH_0(Q)$ the class of the point and
   by $u \in QH_{2n}(Q)$ the fundamental class, then we have:
   \begin{enumerate}[i.]
     \item $p*\alpha_0 = \alpha_0 t$, $p*\alpha_n = \alpha_n t$.
     \item $i_L(\alpha_0) = p + u t$.
     \item If $n$ is even then $\alpha_0 * \alpha_0 = \alpha_n t$.
   \end{enumerate}
\end{lem}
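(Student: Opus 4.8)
\textbf{Proof plan for Lemma~\ref{l:quad-quant-L}.}
The plan is to exploit, exactly as in the $\C P^n$ case treated above, the fact that $QH(Q;\Lambda)$ contains the invertible element $p$ (the point class), together with the minimal model machinery of~\S\ref{subsec:minimal} and the canonical identifications of Proposition~\ref{p:wide-can-iso}. First I would verify the numerology: since $H_1(L;\Z)=0$, $L$ is orientable and monotone, and a short computation with the Maslov/Chern relation $c_1(Q)=nPD(h)$ together with $\partial:H_2(M,L)\to H_1(L)$ shows $N_L\mid 2n$; the vanishing of $H_1(L;\Z)$ rules out all proper divisors, so $N_L=2n$. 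Then I would apply Theorem~\ref{thm:floer=0-or-all} with $l=n$ (here $H_*(L;\Z_2)$ is trivially generated by $H_{\geq 0}(L;\Z_2)$, and $N_L=2n>n+1=l+1$ for $n\geq 2$): this gives that $L$ is wide. Finally, since $N_L=2n\geq n+1$, Proposition~\ref{p:wide-can-iso}~\ref{I:can-iso-3} provides the canonical isomorphism $QH_*(L)\cong (H(L;\Z_2)\otimes\Lambda)_*$. In particular $QH_0(L)\cong H_0(L;\Z_2)=\Z_2\langle\alpha_0\rangle$ and $QH_n(L)\cong H_n(L;\Z_2)=\Z_2\langle\alpha_n\rangle$ canonically (the only degree-zero and degree-$n$ contributions, as $|t|=-2n$).

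For point~i, I would use that $p\in QH_0(Q;\Lambda)$ is invertible (this follows from $p*p=u\,t^{4n/N_L}=u\,t^2$ in Proposition~\ref{P:qhom-quadric}, so $p^{-1}=u^{-1}p\,t^{-2}$, where $u=[Q]$ is the unit). Hence the module map $p\circledast(-):QH_i(L)\to QH_{i-2n}(L)=QH_{i}(L)\,t^{-1}$ is an isomorphism for every $i$. Applying this with $i=0$: $p\circledast\alpha_0$ lies in $QH_{-2n}(L)=\Z_2\langle\alpha_0 t\rangle$ (canonically), and since $p\circledast(-)$ is an isomorphism it cannot vanish, so $p\circledast\alpha_0=\alpha_0 t$. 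The same argument with $i=n$ gives $p\circledast\alpha_n=\alpha_n t$, using that $\alpha_n=[L]$ is the unit of $QH(L)$ and thus nonzero.

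For point~ii, I would invoke the defining relation~\eqref{eq:inclusion_mod} of the quantum inclusion: $\langle PD(c),i_L(\alpha_0)\rangle=\epsilon_L(c\circledast\alpha_0)$ for all $c\in H_*(Q;\Z_2)$. For degree reasons $i_L(\alpha_0)\in QH_0(Q;\Lambda)=(H(Q;\Z_2)\otimes\Lambda)_0$, which by the homology of $Q$ (all odd homology vanishes, $|t|=-2n$) equals $\Z_2 p\oplus\Z_2[Q]t$; write $i_L(\alpha_0)=\varepsilon_1 p+\varepsilon_2[Q]t$. Pairing against $PD(p)=u=[Q]$: $\langle u,i_L(\alpha_0)\rangle=\varepsilon_1$; on the other hand $u\circledast\alpha_0=[Q]\circledast\alpha_0=\alpha_0$ (the ambient unit acts as identity), whose augmentation is $1$, so $\varepsilon_1=1$. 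Pairing against $PD([Q])=p$: $\langle p,i_L(\alpha_0)\rangle=\varepsilon_2\,t$, while $p\circledast\alpha_0=\alpha_0 t$ by point~i, with $\epsilon_L(\alpha_0 t)=t$, so $\varepsilon_2=1$. This gives $i_L(\alpha_0)=p+[Q]t=p+ut$. For point~iii, when $n$ is even $L$ is a rational homology sphere (this is where the hypothesis $H_1(L;\Z)=0$ and the dimension parity combine; for odd-dimensional Lagrangians one expects $H_*(L;\Z_2)\cong H_*(S^n;\Z_2)$, which the even-$n$ clause of Corollary~\ref{cor:quadric} records), so $QH(L)\cong\Z_2\alpha_0\oplus\Z_2\alpha_n$ as a $\Lambda$-module with $\alpha_n$ the unit; by degree, $\alpha_0\circ\alpha_0\in QH_{-n}(L)$, which is spanned by $\alpha_n t$. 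It remains to show $\alpha_0\circ\alpha_0\neq 0$; I would deduce this from the non-degeneracy of the Frobenius pairing $H(\tilde\eta)^0$ in Proposition~\ref{p:duality} (the pairing $QH_0(L)\otimes QH_n(L)\to\Z_2$ is non-degenerate, i.e. $\epsilon_L(\alpha_0\circ\alpha_n)=\epsilon_L(\alpha_0)=1$, forcing $\alpha_0$ to be invertible in the two-dimensional Frobenius algebra, whence $\alpha_0\circ\alpha_0$ generates $QH_{-n}(L)$).

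The main obstacle I anticipate is \emph{not} any of the algebraic manipulations above — those are routine once the canonical identifications are in place — but rather pinning down the precise topological facts: that $N_L=2n$ exactly (ruling out proper divisors of $2n$ requires knowing $H_1(L;\Z)=0$ forces the Maslov class of every disk boundary class to be divisible by $2n$, via the long exact sequence of the pair and the evenness of $c_1(Q)$ on spherical classes), and, for point~iii, establishing that even-dimensional such $L$ is a $\Z_2$-homology sphere so that $QH(L)$ is genuinely two-dimensional over $\Lambda$. Both of these are presumably handled by the arguments surrounding Corollary~\ref{cor:quadric} and Theorem~\ref{T:quadric-qstruct}; modulo them, the lemma follows by transporting the quadric's quantum cohomology relations (Proposition~\ref{P:qhom-quadric}) through the module structure and the quantum inclusion, exactly as the $\R P^n\subset\C P^n$ computation was carried out in the previous subsection.
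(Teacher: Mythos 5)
Your reduction of the preliminaries ($N_L=2n$ via $H_1(L;\Z)=0$ and $C_Q=n$; wideness via Theorem~\ref{thm:floer=0-or-all}; the canonical isomorphism via Proposition~\ref{p:wide-can-iso}) and your proofs of points~i and~ii are correct and essentially the paper's argument over $\Z_2$: invertibility of $p$ pins down $p\circledast\alpha_0$ and $p\circledast\alpha_n$ by the degree count $QH_{-2n}(L)=\Z_2\alpha_0 t$, $QH_{-n}(L)=\Z_2\alpha_n t$, and the two evaluations of formula~\eqref{eq:inclusion_mod} against $h=[Q]$ and $h=p$ give $i_L(\alpha_0)=p+ut$ (your ``$PD(p)=u$'' labels are garbled -- $PD(p)$ is the generator of $H^{2n}(Q)$ -- but the two instances of the formula you actually use are the right ones). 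The paper proves~ii first via $h\circledast\alpha_0=0$ and then~i, with a separate $n=2$ case; that ordering and case split are only needed to fix signs over $\Z$ (Remark~\ref{r:quad-z2-z}), so over $\Z_2$ your route is fine and even slightly cleaner.

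Point~iii, however, has a genuine gap. Non-degeneracy of the Frobenius pairing $\epsilon_L(x\circ y)$ from Proposition~\ref{p:duality} does \emph{not} force $\alpha_0$ to be invertible: the Frobenius algebra $\Z_2[x]/(x^2)$ with $\epsilon(x)=1$, $\epsilon(1)=0$ has a non-degenerate pairing while $x\circ x=0$, which is exactly the kind of degeneration your argument must rule out. The identity $\epsilon_L(\alpha_0\circ\alpha_n)=1$ only restates that $\alpha_0$ pairs non-trivially with the unit; it says nothing about $\alpha_0\circ\alpha_0$. A symptom of the problem is that your argument never uses that $n$ is even, whereas the statement is restricted to even $n$ precisely because the proof needs structure available only then: by Proposition~\ref{P:qhom-quadric} (reduced mod $2$) the middle-dimensional class $a\in QH_n(Q;\La)$ of a half-dimensional plane is invertible when $n$ is even (since $a*a$ or $a*b$ equals $p$ or $ut^{2n/N_L}$, both invertible), hence $a\circledast(-):QH_i(L)\to QH_{i-n}(L)$ is an isomorphism, giving $a\circledast\alpha_n=\alpha_0$ and $a\circledast\alpha_0=\alpha_n t$ by the degree counts above; then the two-sided algebra axiom of Theorem~\ref{thm:alg_main} yields $\alpha_0\circ\alpha_0=(a\circledast\alpha_n)\circ\alpha_0=a\circledast(\alpha_n\circ\alpha_0)=a\circledast\alpha_0=\alpha_n t$. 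Note also that your appeal to $L$ being a $\Z_2$-homology sphere is both unnecessary (the degree count already gives $QH_0(L)=\Z_2\alpha_0$ and $QH_{-n}(L)=\Z_2\alpha_n t$ for any wide $L$ with $N_L=2n$) and backwards relative to the paper, where that statement (Theorem~\ref{T:quad-L}) is deduced \emph{afterwards} from the same invertibility of $a$ acting on $QH(L)$ -- so deferring to it does not fill the hole.
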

\begin{rem} \label{r:quad-z2-z} Suppose that $L$ is a monotone
   Lagrangian which is orientable and relative spin (see~\cite{FO3}
   for the definition). In that case, it is possible to coherently
   orient the moduli spaces of pseudo-holomorphic disks with boundary
   on $L$ using the theory of~\cite{FO3}. It seems very likely that
   these orientations are compatible with the quantum operations based
   on our pearly moduli spaces, hence we expect our theory to work
   over $\mathbb{Z}$. Assuming this, let $L$ be a Lagrangian as in
   Lemma~\ref{l:quad-quant-L} and suppose in addition that $L$ is
   relative spin ($H_1(L;\mathbb{Z})=0$ automatically implies
   orientability). Then we expect the formulae in~i and~ii to become:
   \begin{enumerate}[i'.]
      \item $p*\alpha_0 = -\alpha_0 t$, $p*\alpha_n = -\alpha_n t$.
      \item $i_L(\alpha_0) = p - u t$.
   \end{enumerate}
\end{rem}

\begin{proof}[Proofs of Lemma~\ref{l:quad-quant-L} and
   Remark~\ref{r:quad-z2-z}]

   Following Remark~\ref{r:quad-z2-z} we will carry out the proof over
   the ring $\mathbb{K}$ which is either $\mathbb{Z}_2$ or
   $\mathbb{Z}$. In the latter case the proof is not 100\% rigurous in
   the sense that it depends on the verification that our theory
   indeed works over $\mathbb{Z}$. We remark that for $\mathbb{K} =
   \mathbb{Z}_2$ the proof below is completely rigorous (and in this
   case we may also drop the assumptions that $L$ is orientable and
   relative spin). We will use the ring $\Lambda = \mathbb{K}[t^{-1},
   t]$ with the same grading as before, i.e.  $\deg t = -N_L$.

   Due to $H_{1}(L;\Z)=0$ and $C_Q=n$ we see that $N_{L}=2n$. By
   Theorem \ref{thm:floer=0-or-all} we deduce that $L$ is wide.
   Moreover, by Proposition~\ref{p:wide-can-iso} there is a canonical
   isomorphism $QH_*(L) \cong (H(L;\mathbb{K}) \otimes \Lambda)_*$.

   We first prove the lemma and the remark under the additional
   assumption that $n = \dim L \geq 3$. The case $n=2$ will be treated
   separately at the end of the proof.

   We start with the statement at point ii'. It easily follows from
   the definition of the quantum inclusion map that $i_L(\alpha_0) = p
   + e u t$, for some $e \in \mathbb{K}$.  Clearly $h * \alpha_0 = 0$
   since $h * \alpha_0$ belongs to $QH_{-2}(L) \cong QH_{2n-2}(L)=0$
   (since $2n-2 > n$). Therefore we have
   $$0=i_L(h*\alpha_0) = h*(p+e u t) = h*p + eh t.$$
   On the other hand a simple computation based on the identities of
   Proposition~\ref{P:qhom-quadric} gives $h*p=h t$. It follows that
   $e=-1$. This proves point ii'.

   We turn to proving point i'. By Proposition~\ref{P:qhom-quadric} $p
   \in QH_0(Q;\Lambda)$ is an invertible element, hence $p *
   (-):QH_i(L) \to QH_{i-2n}(L)$ is an isomorphism for every $i$. But
   $QH_0(L) \cong \mathbb{K} \alpha_0$ and $QH_{-2n}(L) \cong
   \mathbb{K} \alpha_0 t$.  Therefore $p*\alpha_0 = \epsilon \alpha_0
   t$, where $\epsilon = \pm 1$. It remains to determine the precise
   sign of $\epsilon$. Using the formula in ii' we obtain
   \begin{equation} \label{eq:i_L(p*alpha_0)}
      i_L(p*\alpha_0) = i_L(\epsilon \alpha_0 t) = \epsilon (pt -ut^2).
   \end{equation}
   On the other hand we have $$i_L(p * \alpha_0) = p*i_L(\alpha_0) =
   p*(p-ut) = ut^2 - pt.$$ Comparing this to~\eqref{eq:i_L(p*alpha_0)}
   immediately shows that $\epsilon = -1$. The proof of the identity
   $p* \alpha_n = -\alpha_n t$ is similar. This concludes the proof of
   point i'.

   We now turn to the proof in case $n=2$.  In this case $Q \approx
   S^2 \times S^2$ endowed with the split symplectic form $\omega
   \oplus \omega$ with both $S^2$ factors having the same area. Put $a
   = [S^2 \times \textnormal{pt}]$, $b=[\textnormal{pt} \times S^2]$
   $\in H_2(Q; \mathbb{Z})$ and denote by $\textnormal{inc}_*:
   H_*(L;\mathbb{Z}) \to H_*(Q; \mathbb{Z})$ the (classical) map
   induced by the inclusion $L \subset Q$. Note that $L$ must be a
   Lagrangian sphere, hence $\int_L \omega = 0$ and
   $\textnormal{inc}_*([L]) \cdot \textnormal{inc}_*([L]) = -2$. It
   follows that $\textnormal{inc}_*([L]) = \pm (a-b)$. Finally, in
   this dimension the hyperplane class $h$ satisfies $h = a+b$.

   As $n=2$ we have $N_L = 4$ and so $\deg t = -4$.  As before, since
   $p$ is invertible we can write $p*\alpha_0 = \epsilon \alpha_0 t$,
   where $\epsilon = \pm 1$, and $i_L(\alpha_0) = p + e ut$ with $e
   \in \mathbb{Z}$. It follows that
   $$i_L(p * \alpha_0) = p*(p+e u t) = ut^2 + e p t.$$ On the other
   hand we also have:
   $$i_L(p*\alpha_0) = i_L(\epsilon \alpha_0 t) = \epsilon t(p+eut) =
   \epsilon e ut^2 + \epsilon p t.$$ It follows that $\epsilon e = 1$,
   hence $e = \epsilon = \pm 1$. This proves formulas i and ii over
   $\mathbb{Z}_2$ (that $p*\alpha_2 = \pm \alpha_2 t$ follows
   immediately from the fact that $p$ is invertible).

   It remains to determine the sign of $e$ and $\epsilon$, so we now
   work over $\mathbb{Z}$. For this end write $h*\alpha_0 = r
   \alpha_2$ with $r \in \mathbb{Z}$.  Note that $\alpha_2 = [L]$ so
   $$i_L(h*\alpha_0) = i_L(r \alpha_2 t) = r \textnormal{inc}_*([L]) t
   = \pm r (a-b) t.$$ (Here we have used the fact that for the
   fundamental class $[L]$ we have $i_L([L]) =
   \textnormal{inc}_*([L])$.) On the other hand $$i_L(h*\alpha_0) = h*
   i_L(\alpha_0) = h*(p+e u t) = ht + e ht = (1+e)ht = (1+e)(a+b)t.$$
   It follows that $(1+e)(a+b)t = \pm r (a-b)t$. This implies $r = 1+e
   = 0$, hence $e = -1$. The proof of formulae i, i', ii, ii' is now
   complete for every $n \geq 2$.

   Finally, we prove iii (only over $\mathbb{Z}_2$). By
   Proposition~\ref{P:qhom-quadric} when $n=$\,even the element $a \in
   QH_n(Q;\Lambda)$ is invertible (even if we work with coefficients
   in $\mathbb{Z}_2$). Therefore $a*\alpha_n =\alpha_0$ and
   $a*\alpha_0 = \alpha_n t$. It follows that
   $$\alpha_0* \alpha_0 = (a*\alpha_n)*\alpha_0 = a*
   (\alpha_n*\alpha_0) = a*\alpha_0 = \alpha_n t.$$
\end{proof}

The following result shows that for $n=$\,even, at least
homologically, spheres are the only type of Lagrangian in $Q$ with
$H_1(L;\mathbb{Z})=0$.
\begin{thm} \label{T:quad-L} Assume $n = \dim_{\mathbb{C}}Q=$\,even.
   Let $L \subset Q$ be a Lagrangian submanifold with
   $H_1(L;\mathbb{Z})=0$. Then $H_*(L;\mathbb{Z}_2) \cong
   H_*(S^n;\mathbb{Z}_2)$.
\end{thm}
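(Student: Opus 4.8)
The plan is to combine the quantum-homological input from Lemma~\ref{l:quad-quant-L} with the algebraic structure of $QH_*(L)$ over $QH_*(Q;\Lambda)$ and the duality pairing from Proposition~\ref{p:duality}. Since $H_1(L;\mathbb{Z})=0$ we also have $H^1(L;\mathbb{Z}_2)=0$, hence $H_1(L;\mathbb{Z}_2)=0$ and $H_{n-1}(L;\mathbb{Z}_2)=0$ by Poincar\'e duality; so $L$ is automatically simply connected (over $\mathbb{Z}_2$) and the main task is to show that \emph{all} the intermediate Betti numbers vanish. By Lemma~\ref{l:quad-quant-L}, $L$ is wide, so there is a canonical isomorphism $QH_*(L)\cong (H_*(L;\mathbb{Z}_2)\otimes\Lambda)_*$, and since $\deg t=-N_L=-2n$ and $H_j(L;\mathbb{Z}_2)=0$ for $j>n$, the homology of $L$ in a fixed degree $0\le k\le n$ is read off from a single graded piece: $QH_k(L)\cong H_k(L;\mathbb{Z}_2)$ and $QH_{k-2n}(L)\cong H_k(L;\mathbb{Z}_2)\,t$. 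This turns the problem into computing the ranks of the graded pieces of $QH_*(L)$.

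The key step is to exploit that $n$ is even: by Proposition~\ref{P:qhom-quadric} (with $n/2=k$ and $n$ even, so $k$ even), the classes $a,b\in QH_n(Q;\Lambda)$ satisfy $a*a=b*b=p$ (or $a*b=u t^{\cdots}$), and in particular $a$ and $p$ are \emph{invertible} in $QH_*(Q;\Lambda)$ already over $\mathbb{Z}_2$, with $\deg a = n$. Therefore the module map $a\circledast(-):QH_i(L)\to QH_{i+n-2n}(L)=QH_{i-n}(L)$ is an isomorphism for every $i\in\mathbb{Z}$. Combined with the $2$-periodicity-type shift $p\circledast(-):QH_i(L)\xrightarrow{\ \cong\ }QH_{i-2n}(L)$, one obtains isomorphisms $QH_i(L)\cong QH_{i-n}(L)$ for all $i$. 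Reading this through the identification $QH_*(L)\cong (H_*(L;\mathbb{Z}_2)\otimes\Lambda)_*$ and noting that $n$ divides $N_L=2n$, this forces $H_k(L;\mathbb{Z}_2)\cong H_{k-n}(L;\mathbb{Z}_2)$ within the range where both sides are concentrated in a single degree. Since $H_j(L;\mathbb{Z}_2)=0$ for $j<0$ and for $j>n$, this immediately yields $H_k(L;\mathbb{Z}_2)=0$ for $0<k<n$: any such $k$ maps under repeated shifts by $n$ to a degree outside $[0,n]$. Finally $H_0(L;\mathbb{Z}_2)=H_n(L;\mathbb{Z}_2)=\mathbb{Z}_2$ by connectedness and Poincar\'e duality (or again by the isomorphism $QH_0(L)\cong QH_{-n}(L)\cong H_n(L;\mathbb{Z}_2)\,t$), so $H_*(L;\mathbb{Z}_2)\cong H_*(S^n;\mathbb{Z}_2)$.

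I would organize the write-up as: (1) record $H_1(L;\mathbb{Z}_2)=H_{n-1}(L;\mathbb{Z}_2)=0$ from the hypothesis plus Poincar\'e duality; (2) invoke Lemma~\ref{l:quad-quant-L} for wideness and the canonical identification $QH_*(L)\cong (H_*(L;\mathbb{Z}_2)\otimes\Lambda)_*$, and translate ``$H_k(L;\mathbb{Z}_2)$'' into ``the rank of $QH_k(L)$ for $0\le k\le n$''; (3) use Proposition~\ref{P:qhom-quadric} for $n$ even to produce the invertible degree-$n$ class $a\in QH_n(Q;\Lambda)$ and the invertible degree-$0$ class $p$, and apply $a\circledast(-)$ and $p\circledast(-)$ to get $QH_i(L)\cong QH_{i-n}(L)$ for all $i$; (4) conclude by a degree-bookkeeping argument that $H_k(L;\mathbb{Z}_2)=0$ for $0<k<n$ and $=\mathbb{Z}_2$ for $k=0,n$. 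The main obstacle I anticipate is step (3)--(4): one must be careful that the graded shift induced by multiplication by the degree-$n$ element $a$ interacts correctly with the periodicity induced by $t$ (which has degree $-2n=-N_L$), so that the composite really produces an honest isomorphism $H_k(L;\mathbb{Z}_2)\cong H_{k-n}(L;\mathbb{Z}_2)$ of \emph{singular} homology groups and not merely of $\Lambda$-modules; checking that $a\circledast(-)$ restricted to the canonical summand lands again in the canonical summand (using the naturality of the module structure under the coefficient extension $\Lambda^+\to\Lambda$ and Proposition~\ref{p:wide-can-iso}) is the delicate point. A secondary subtlety is ensuring that, in the borderline case $n=2$ (where $Q\cong\mathbb{C}P^1\times\mathbb{C}P^1$), the same argument still applies; here $a=[S^2\times\mathrm{pt}]$ is invertible over $\mathbb{Z}_2$ and the conclusion $H_*(L;\mathbb{Z}_2)\cong H_*(S^2;\mathbb{Z}_2)$ is consistent with $L$ being a Lagrangian sphere, so no separate treatment should be needed.
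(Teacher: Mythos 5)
Your proposal is correct and follows essentially the same route as the paper: wideness plus the identification $QH_*(L)\cong (H(L;\mathbb{Z}_2)\otimes\Lambda)_*$, invertibility of the degree-$n$ class $a\in QH_n(Q;\Lambda)$ (mod $2$ reduction of Proposition~\ref{P:qhom-quadric}), and the resulting isomorphisms $a\circledast(-):QH_i(L)\to QH_{i-n}(L)$ together with degree bookkeeping. The extra ingredients you invoke ($p\circledast(-)$, Poincar\'e duality, and the worry about landing in the canonical summand) are harmless but unnecessary, since the argument only needs the ranks of the graded pieces, exactly as in the paper's proof; also note that $n$ even does not force $n/2$ even, but $a$ is invertible over $\mathbb{Z}_2$ in either case, as you hedge.
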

\begin{proof}
   In view of the isomorphism $QH_*(L)\cong (H(L;\mathbb{Z}_2)\otimes
   \Lambda)_*$, for every $q \in \mathbb{Z}$, $0 \leq r <2n$ we have:
   \begin{equation} \label{Eq:QH-quad} QH_{2nq+r}(L) \cong
      \begin{cases}
         H_r(L;\mathbb{Z}_2) & \textnormal{if } 0 \leq r \leq n\\
         0 & \textnormal{if } n+1 \leq r \leq 2n-1
      \end{cases}
   \end{equation}
   Reducing modulo $2$ the identities from
   Proposition~\ref{P:qhom-quadric} it follows that $a\in QH_n(Q;\La)$
   is an invertible element. Therefore $a * (-):QH_i(L) \to
   QH_{i-n}(L)$ is an isomorphism for every $i \in \mathbb{Z}$.  It
   now easily follows from~\eqref{Eq:QH-quad} that
   $H_i(L;\mathbb{Z}_2)=0$ for every $0<i<n$.
\end{proof}

We are not aware of the existence of any Lagrangian submanifolds in $Q$
with $H_1(L;\mathbb{Z})=0$ which are not diffeomorphic to a sphere,
and it is tempting to conjecture that spheres are indeed the only
examples.

\begin{rem} Theorem~\ref{T:quad-L} can be also proved by Seidel's
   method of graded Lagrangian submanifolds~\cite{Se:graded}. Indeed
   for $n=$\,even the quadric has a Hamiltonian $S^1$-action which
   induces a shift by $n$ on $QH_*(L)$. To see this write $n=2k$ and
   write $Q$ as $Q=\{ \sum_{j=0}^k z_jz_{j+1+k}=0\}$. Then $S^1$ acts
   by $t\cdot [z_0: \cdots : z_{2k+1}] = [tz_0: \cdots: tz_k:z_{k+1}:
   \cdots: z_{2k+1}]$.  A simple computation of the weights of the
   action at a fixed point gives a shift of $n$ on graded Lagrangian
   submanifolds in the sense of~\cite{Se:graded}.

   When $n=$\,odd our methods (as well as those of~\cite{Se:graded})
   do not seem to yield a  result similar to Theorem~\ref{T:quad-L}.
   However the works of Buhovsky~\cite{Bu:TSn} and of
   Seidel~\cite{Se:TSn} may provide evidence that such a result should
   hold.
\end{rem}

Denote $\mathcal{J}$ the space of almost complex structures compatible
with the symplectic structure of $Q$. The next result is a
straightforward consequence of Lemma~\ref{l:quad-quant-L} and it
concludes the proofs of the properties claimed in Theorem
\ref{T:quadric-qstruct} and Corollary \ref{cor:quadric}.
\begin{lem} \label{C:quad-disks} Let $L \subset Q$ be a Lagrangian
   submanifold with $H_1(L;\mathbb{Z})=0$. Assume
   $n=\dim_{\mathbb{C}}Q \geq 2$. Then the following holds:
   \begin{enumerate}
     \item[i.] Let $x \in L$ and $z \in Q \setminus L$. Then for every
      $J \in \mathcal{J}$ there exists a $J$-holomorphic disk $u:(D,
      \partial D) \to (Q,L)$ with $u(-1)=x$, $u(0)=z$ and
      $\mu([u])=2n$.
     \item[ii.] Assume that $n=$\,even. Let $x', x'', x'' \in L$. Then
      for every $J \in \mathcal{J}$ there exists a $J$-holomorphic
      disk $u:(D, \partial D) \to (Q, L)$ with $u(e^{2\pi i/3})=x'$,
      $u(1)=x''$, $u(e^{4 \pi i/3})=x'''$ and $\mu([u])=2n$.
   \end{enumerate}
\end{lem}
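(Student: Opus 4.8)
\textbf{Proof proposal for Lemma~\ref{C:quad-disks}.}

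The plan is to deduce the existence of the required $J$-holomorphic disks directly from the non-vanishing of certain structure coefficients computed in Lemma~\ref{l:quad-quant-L}, by reading them at the chain level in the pearl complex. Since $L$ is wide and has minimal Maslov number $N_L = 2n$, the relevant quantum operations degenerate into particularly simple shapes, which is what makes everything work. Concretely, I would fix a Morse function $f:L \to \mathbb{R}$ with a single minimum $e$ (representing $\alpha_0$) and a single maximum $P$ (representing $\alpha_n = [L]$), a generic Riemannian metric on $L$, and a generic almost complex structure $J$ so that the pearl complex $\mathcal{C}(L;f,\rho,J)$ and all the operations in Theorem~\ref{thm:alg_main} are defined. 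For part~i I also fix a perfect Morse function $g:Q \to \mathbb{R}$ with a single minimum $m_g$ (representing the point class $p \in QH_0(Q)$) and set up the module operation $C(g) \otimes \mathcal{C}(L;f,\rho,J) \to \mathcal{C}(L;f,\rho,J)$ at the chain level.

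For part~i I would use the relation $p * \alpha_0 = \pm\alpha_0 t$ from Lemma~\ref{l:quad-quant-L}~i (over $\mathbb{Z}_2$ this is $p*\alpha_0 = \alpha_0 t \neq 0$). At the chain level this says $m_g \circledast e$ is a cycle whose homology class is $e\, t$, hence $m_g \circledast e \neq 0$ in $\mathcal{C}_{-n}(L;f,\rho,J)$; for degree reasons (as $|t| = -2n$ and $e$ is the unique generator in degree $0$) the only elements of degree $-n$ are $\mathbb{Z}_2\langle \mathrm{Crit}_n(f)\rangle t = \mathbb{Z}_2 P\, t$, so $m_g \circledast e = P\, t + (\text{terms with higher powers of } t$, which vanish for degree reasons). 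Thus $m_g \circledast e = Pt \neq 0$. Unwinding the definition of the module operation in~\S\ref{subsubsec:defin_alg_str}~c, a nonzero coefficient of $Pt$ forces the existence of a pearly trajectory of symbol $(m_g, e : P)$ carrying total Maslov index exactly $N_L = 2n$; since $2n$ is the \emph{minimal} Maslov number, this configuration consists of a \emph{single} $J$-holomorphic disk $u:(D,\partial D) \to (Q,L)$ with $u(0) = m_g$ (the interior marked point for the edge of type $M$), $u(-1)$ lying on the descending flow line into $e$, $u(+1)$ on the ascending flow line out of $P$, and $\mu([u]) = 2n$ (there are no flow segments of positive length possible for dimension reasons, and no non-constant disk of Maslov $< 2n$ exists). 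Since $g$ can be chosen with its minimum at an arbitrary point $z \in Q \setminus L$ and $f$ with its minimum at an arbitrary $x \in L$, after adjusting parametrization so the boundary incidence point is $-1$ we obtain the disk with $u(-1)=x$, $u(0)=z$, as claimed.

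For part~ii I would argue identically but using the product relation $\alpha_0 \circ \alpha_0 = \alpha_n t$ from Lemma~\ref{l:quad-quant-L}~iii (valid when $n$ is even, over $\mathbb{Z}_2$). Taking two extra Morse functions $f_1, f_2$ on $L$, each with a unique minimum, and the chain-level product $\mathcal{C}(f_1) \otimes \mathcal{C}(f_2) \to \mathcal{C}(f_3)$, the relation says that the product of the two minima equals $P\, t$ plus higher-order terms in $t$, which vanish for degree reasons; hence the coefficient of $P\,t$ is nonzero. By the description of the quantum product moduli spaces in~\S\ref{subsubsec:defin_alg_str}~b, and again because $2n$ is the minimal Maslov index so no degeneration into several disks or nontrivial flow segments is possible, there must be a single $J$-holomorphic disk $u:(D,\partial D) \to (Q,L)$ of Maslov index $2n$ whose boundary passes through the three points in the prescribed cyclic (clockwise) order, with the three boundary marked points at the cube roots of unity $e^{2\pi i/3}, 1, e^{4\pi i/3}$. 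Placing the minima of $f_1, f_2, f_3$ at $x', x''', x''$ respectively (choosing the correspondence so that the clockwise order on $\partial D$ matches the order of the incident edges), we get the desired disk. The main subtlety — and the only place requiring genuine care rather than bookkeeping — is to justify rigorously that for these specific relations the minimal-Maslov constraint rules out \emph{all} broken or bubbled configurations, so that a single smooth disk is produced for \emph{every} $J \in \mathcal{J}$ and not merely for generic $J$; this is exactly the point where one invokes that disks of Maslov index $2n$ are automatically somewhere injective and that $\mathcal{M}(A,J)$ for $\mu(A)=2n$ behaves well (the $N_L = 2n$ case of the compactness and regularity discussion of~\S\ref{subsec:moduli_def}), together with a cobordism/evaluation-degree argument of the type in~\S\ref{Sb:criteria-QH} to pass from generic $J$ to arbitrary $J$.
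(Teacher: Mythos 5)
Your overall strategy is the same as the paper's: read the relations of Lemma~\ref{l:quad-quant-L} at the chain level in a pearl complex built from a Morse function with a single minimum and a single maximum, and use the minimality of $N_L=2n$ to force the resulting rigid pearly configuration to be a single $J$-disk carrying the incidence conditions. However, the chain-level identity you extract in part~i is wrong, and that step fails as written. The module action has degree $-2n$ (here $2n=\dim_{\R}Q$), so $p\circledast\alpha_0$ lies in degree $-2n$, not $-n$; since $|[L]t|=n-2n=-n$, the element $m_g\circledast e$ cannot contain $P\,t$ at all. Concretely, the moduli space of symbol $(m_g,e:P)$ with $\mu=2n$ that you invoke has virtual dimension $0+0-n-2n+2n=-n<0$ (the exit condition at the maximum means $u(+1)\in W^s(P)=\{P\}$, a codimension-$n$ constraint), so it is generically empty. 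The correct chain identity --- the one the paper uses --- is $p\ast m=m\,t$, with the \emph{minimum} as output. This still gives exactly what you want: the entry conditions force $u(-1)\in W^u(e)=\{x\}$ and $u(0)=z$ (unstable manifolds of minima are points), the exit condition $u(+1)\in W^s(e)$ is open-dense and costs nothing, and the dimension count is $0$; minimality of $2n$ then rules out broken or multi-disk configurations. So the conclusion of part~i stands, but only after replacing your identity by $p\ast m=mt$.

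The same confusion about exit incidence conditions surfaces in part~ii. There your identity is essentially right ($m\ast m'=w\,t$, the output being the \emph{maximum} $w$ of the exit function), but the reason the third boundary point gets pinned is that for a maximum the exit condition reads $u(1)\in W^s(w)=\{w\}$, a point constraint. Hence you must place the maximum of the exit function at $x''$ and the minima of the two entry functions at $x'$, $x'''$; ``placing the minima of $f_1,f_2,f_3$'' leaves $u(1)$ unconstrained, and the coefficient of (minimum of $f_3$)$\cdot t$ vanishes for degree reasons anyway. Finally, your closing remark about upgrading from generic $J$ to all $J\in\mathcal{J}$ is a fair point --- the paper's proof is silent on it --- and simplicity of minimal-Maslov disks plus Gromov compactness is the right kind of argument, though one should also rule out limits in which the point constraints get distributed between a ghost disk and a Chern-$n$ sphere bubble.
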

\begin{proof}
   The first point follows as usual by considering a Morse function
   $f:L\to \R$ with a single maximum and a single minimum as well as a
   perfect Morse function $h:Q\to \R$. We let the minimum of $h$ be
   denoted by $p$ (by a slight abuse in notation we identify the
   critical points of $h$ and the corresponding singular homology
   classes) and we denote the minimum of $f$ by $m$ and its maximum by
   $w$. As $L$ is wide both $m$ and $w$ are cycles in the associated
   pearl complex.

   Point~i in Lemma~\ref{l:quad-quant-L} gives, at the chain level,
   $p\ast m=m t$ which implies the first point of our lemma.  The
   second point is proved by considering a second Morse function $f':L
   \to \mathbb{R}$ with a unique minimum $m'$. Relation~iii in
   Lemma~\ref{l:quad-quant-L} now gives (on the chain level) $m*m' =
   wt$, which proves the needed statement.
\end{proof}

\subsection{Narrow Lagrangians in $\C P^{n}$}
\label{sb:narrow}

The purpose of this section is to construct the monotone narrow
Lagrangians mentioned in Example \ref{ex:narrow}. The construction is
based on the decomposition technique developed in~\cite{Bi:Barriers}
and on the Lagrangian circle bundle construction
from~\cite{Bi:Nonintersections}.

Let $(M^{2n}, \omega)$ be a symplectic manifold for which $[\omega]
\in H^2(M; \mathbb{R})$ admits an integral lift in $H^2(M;
\mathbb{Z})$. Fix such a lift $a_{\omega}$. Let $\Sigma^{2n-2} \subset
M^{2n}$ be a symplectic hyperplane section in the sense that $\Sigma$
is a symplectic submanifold whose homology class is dual to a positive
multiple of $a_{\omega}$, i.e.  $PD [\Sigma] = k a_{\omega} \in
H^2(M;\mathbb{Z})$ for some integer $k > 0$. By rescaling $\omega$ we
will assume from now on, without loss of generality, that $k=1$.

Assume further that $M$ is a complex manifold, that $\omega$ is a
K\"{a}hler form and that $\Sigma \subset M$ is a complex submanifold
(so that $\Sigma \subset M$ is a smooth ample divisor).  Put
$\omega_{_{\Sigma}} = \omega|_{\Sigma}$ and $a_{\Sigma} =
a_{\omega}|_{\Sigma} \in H^2(\Sigma;\mathbb{Z})$. Let $\pi: P \to
\Sigma$ be a circle bundle with Euler class $a_{\Sigma}$ and $\alpha$
a connection $1$-form on $P$ normalized so that $d\alpha = -
\pi^*\alpha$. Denote by $E_{\Sigma} \to \Sigma$ the associated unit
disk bundle, $E_{\Sigma} = \bigl(P \times [0,1)\bigr) / \sim$, where
$(p', 0) \sim (p'', 0)$ iff $\pi(p') = \pi(p'')$. We endow
$E_{\Sigma}$ with the following symplectic structure:
$\omega_{\textnormal{can}} = \pi^*\omega_{\Sigma} + d(r^2 \alpha)$,
where $r$ is the second coordinate on $P \times [0,1)$. Note that with
our normalization $\omega_{\textnormal{can}}|_{\Sigma} =
\omega_{\Sigma}$ and the area of each fibre of $E_{\Sigma}$ with
respect to $\omega_{\textnormal{can}}$ is $1$.

By the results of~\cite{Bi:Barriers} there exists a compact isotropic
CW-complex $\Delta \subset M \setminus \Sigma$ and a symplectomorphism
$F: (E_{\Sigma}, \omega_{\textnormal{can}}) \longrightarrow (M
\setminus \Delta, \omega)$. Moreover, for every $x \in \Sigma \subset
E_{\Sigma}$ we have $F(x) = x$. In most cases $\Delta$ is a Lagrangian
CW-complex, i.e. $\dim \Delta = \frac{1}{2} \dim M$ -- this is called
the {\em critical} case. In special situations it may happen that
$\dim \Delta < \frac{1}{2} \dim M$, which we call the {\em
  subcritical} case. The dimension of $\Delta$ is in fact determined
by the critical points of a plurisubharmonic function $\varphi: M
\setminus \Sigma \to \mathbb{R}$ canonically determined by $\Sigma$
and the complex structure of $M$. The CW-complex $\Delta$ is called
the isotropic (or sometimes Lagrangian) skeleton. We refer the reader
to~\cite{Bi:Barriers} for more details on this type of decompositions.
See also~\cite{El-Gr:convex, El:Stein} for the foundations of
symplectic geometry of Stein manifolds, as well as~\cite{Bi-Ci:Stein,
  Bi-Ci:closed, Bi:Nonintersections} for applications of these
concepts to questions on Lagrangian submanifolds.  We will identify
from now on $(M \setminus \Delta, \omega)$ with $(E_{\Sigma},
\omega_{\textnormal{can}})$ via the map $F$.

Let $L\subset (\Sigma, \omega_{\Sigma})$ be a Lagrangian submanifold.
Fix $0< r_0 < 1$. Put $$\Gamma_L = \pi^{-1}(L) \times \{ r_0 \}
\subset E_{\Sigma} \approx M \setminus \Delta.$$ Note that $\pi:
\Gamma_L \to L$ is a circle bundle isomorphic to the restriction of $P
\to \Sigma$ to $L$. A simple computation shows that $\Gamma_L$ is
Lagrangian with respect to $\omega$. We will view $\Gamma_L$ as a
Lagrangian submanifold of $M$, but it is important to note that {\em
  $\Gamma_L$ is disjoint from $\Delta$}. We remark also that
$\Gamma_L$ depends on the value of $r_0$.  In fact, different values
of $r_0$ give rise to Lagrangians $\Gamma_L$ with different area
classes. Below we will make a specific choice of $r_0$ and call
$\Gamma_L$ the {\em Lagrangian circle bundle over $L$}. We refer the
reader to~\cite{Bi:Nonintersections} for more details on the subject.

Suppose now that $L \subset \Sigma$ is monotone with proportionality
constant $\eta = \omega / \mu$.
\begin{prop} \label{p:gamma_L} Assume that $\dim M \geq 6$, or that
   $\dim M = 4$ and $\Delta$ is subcritical. Let $r_0^2 =
   \frac{2\eta}{2\eta+1}$.  Then the Lagrangian $\Gamma_L \subset M$
   is monotone. It has minimal Maslov number $N_{\Gamma_L} = 2$ and
   proportionality constant $\widehat{\eta} = \frac{\eta}{2\eta + 1}$.
\end{prop}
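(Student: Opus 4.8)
\textbf{Proof proposal for Proposition~\ref{p:gamma_L}.}

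The plan is to understand $\pi_2(M, \Gamma_L)$ and how the two morphisms $\omega$ and $\mu$ behave on it, via the fibration structure $\pi: \Gamma_L \to L$ and the decomposition $M \setminus \Delta \approx E_{\Sigma}$. The key geometric input is that $\Gamma_L$ sits inside $E_{\Sigma}$ and is disjoint from the skeleton $\Delta$, so discs with boundary on $\Gamma_L$ can be studied inside the disk bundle $E_{\Sigma}$, at least as far as their area and Maslov classes are concerned (the skeleton $\Delta$ has real codimension $\geq 2$ under our dimension hypotheses, which will be exactly where $\dim M \geq 6$ or ``$\dim M = 4$ and $\Delta$ subcritical'' is used — it guarantees $\pi_2(M \setminus \Delta) \to \pi_2(M)$ is surjective and controls $\pi_1$). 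First I would set up the long exact sequence of the pair and the fibration: since $\Gamma_L \to L$ is an $S^1$-bundle, the fibre class $f \in \pi_2(M, \Gamma_L)$ represented by a meridian disc of the disk bundle $E_{\Sigma}$ (a disc of the form $\{$point in $P\} \times \mathbb{D}$, capped off at $r=0$ along $\Sigma$) plays a distinguished role. One computes directly from $\omega_{\textnormal{can}} = \pi^*\omega_{\Sigma} + d(r^2\alpha)$ that this meridian disc $D_f$, truncated at radius $r_0$ and with boundary on $\Gamma_L = \pi^{-1}(L) \times \{r_0\}$, has area $\omega(D_f) = r_0^2$, while its Maslov index is $\mu(D_f) = 2$ (this is the standard computation for a disc fibre in a disk bundle meeting the zero section transversally once; the boundary winds once around the $S^1$-fibre).

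Next I would show that $\pi_2(M, \Gamma_L)$ is generated, modulo classes with boundary mapping trivially, by the fibre class $f$ together with lifts of classes in $\pi_2(\Sigma, L)$ and $\pi_2(M)$ (equivalently $\pi_2(E_\Sigma) = \pi_2(\Sigma)$, which via the hyperplane section relation contributes the class $A_0$ with $\omega(A_0) = 1$, $c_1(A_0) = $ appropriate). The monotonicity of $L$ in $\Sigma$ controls the contribution of $\pi_2(\Sigma, L)$: a class $\beta \in \pi_2(\Sigma, L)$ has $\omega_{\Sigma}(\beta) = \eta \, \mu_{\Sigma}(\beta)$, and I would track how $\omega$ and $\mu$ of its lift to $\pi_2(M, \Gamma_L)$ differ from $\omega_{\Sigma}(\beta)$, $\mu_{\Sigma}(\beta)$ — the Maslov index picks up a correction term coming from the winding of $\partial\beta$ against the Euler class $a_\Sigma$ (equivalently, how the boundary loop in $\Gamma_L$ projects and how it pairs with the connection). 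The relation $d\alpha = -\pi^*\omega_\Sigma$ together with the monotonicity of $(M,\Sigma)$ — that $\Sigma$ is an ample divisor with $PD[\Sigma] = a_\omega$, so $c_1(M) = c_1(\Sigma) + PD[\Sigma]$-type adjunction — is what ties everything together numerically. The upshot should be a presentation in which every class's $\omega$-value is $\widehat\eta$ times its $\mu$-value for a single constant $\widehat\eta$, and the minimal positive $\mu$ is achieved by $f$ with $\mu(f) = 2$, giving $N_{\Gamma_L} = 2$.

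Finally, to pin down $\widehat\eta = \frac{\eta}{2\eta+1}$ and to justify the specific choice $r_0^2 = \frac{2\eta}{2\eta+1}$, I would impose the monotonicity equation on two independent generators. On the fibre class: $\omega(f) = r_0^2$ and $\mu(f) = 2$, forcing $\widehat\eta = r_0^2/2$. On a class built from a monotone disc $\beta$ in $(\Sigma, L)$ of minimal Maslov $N_L$ together with enough fibre classes to close it up in $\Gamma_L$ (its boundary $\partial\beta$ lifts to a loop in $\Gamma_L$ only after adding an integral multiple of the fibre, the multiple being determined by $\langle a_\Sigma, \partial\beta\rangle$): here $\omega = \eta N_L + (\text{integer})\, r_0^2$ and $\mu = N_L + 2(\text{integer}) \pm (\text{adjunction correction})$, and demanding proportionality with the same $\widehat\eta$ yields a linear equation in $r_0^2$. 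Solving it gives $r_0^2 = \frac{2\eta}{2\eta+1}$ and hence $\widehat\eta = \frac{\eta}{2\eta+1}$; one then checks $\widehat\eta > 0$ and that $N_{\Gamma_L} \geq 2$ is genuinely realized. I expect the main obstacle to be the bookkeeping of the Maslov index under the circle-bundle and disk-bundle constructions — correctly accounting for the trivialization of the canonical bundle of $E_\Sigma$ relative to that of $M$, the winding correction from the connection $\alpha$, and the contribution of the meridian disc — rather than anything involving $\Delta$, which only enters to ensure that homotopy classes of discs in $M$ with boundary on $\Gamma_L$ are already visible inside $E_\Sigma = M \setminus \Delta$.
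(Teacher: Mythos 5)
Your overall strategy is essentially the paper's: work inside the disk-bundle model $E_\Sigma$, use the dimension hypothesis to push discs with boundary on $\Gamma_L$ off the skeleton $\Delta$, split a class into fibre-disc contributions plus a part lying over $P\times\{r_0\}$ that projects to (the image of) $\pi_2(\Sigma,L)$, and solve the proportionality equation for $r_0$; the paper simply does this for an arbitrary representative disc put into standard position rather than on a chosen generating set. But your key numerical step is wrong as written. The restriction of $\omega_{\textnormal{can}}=\pi^*\omega_\Sigma+d(r^2\alpha)$ to $P\times\{r_0\}$ is $(1-r_0^2)\,\pi^*\omega_\Sigma$, so the part of the class lying over $P\times\{r_0\}$ contributes area $(1-r_0^2)\,\omega_\Sigma(\beta)=(1-r_0^2)\,\eta\,\mu_L(\beta)$, not $\eta\,\mu_L(\beta)$. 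Equivalently, if you keep $\beta$ in the zero section and join its boundary radially out to $\Gamma_L$, the connecting annulus contributes $r_0^2\int_{\tilde\gamma}\alpha=-r_0^2\,\omega_\Sigma(\beta)$ by Stokes (using $d\alpha=-\pi^*\omega_\Sigma$), which is precisely this missing factor and is not an integer multiple of $r_0^2$. With your equations ``$\omega=\eta N_L+k r_0^2$, $\mu=N_L+2k$'' and $\widehat\eta=r_0^2/2$ from the fibre class, solving gives $r_0^2=2\eta$, not $\frac{2\eta}{2\eta+1}$; the correct second equation $(1-r_0^2)\eta\,\mu_L+kr_0^2=\frac{r_0^2}{2}(\mu_L+2k)$ collapses to $\frac{r_0^2}{2}=(1-r_0^2)\eta$, which is the equation the paper solves. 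Relatedly, there is no ``adjunction correction'' on the Maslov side: the Maslov index of the part over $P\times\{r_0\}$ equals $\mu_L$ of its projection (Proposition~4.1.A of~\cite{Bi:Nonintersections}); the entire correction sits in the area term.

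Two smaller points. First, the condition you actually need in order to push discs off the skeleton is $\dim\Delta+2<\dim M$, i.e.\ real codimension at least $3$ — codimension $2$, which is what you wrote, would not do, since a generic disc then still meets $\Delta$ in isolated points; the hypotheses ``$\dim M\geq 6$'' or ``$\dim M=4$ and $\Delta$ subcritical'' give codimension $\geq 3$ because $\Delta$ is isotropic. Second, if you argue via a generating set of $\pi_2(M,\Gamma_L)$ you still owe a proof that every class decomposes into fibre classes plus a class over $P\times\{r_0\}$; the paper's homotopy of an arbitrary representative — making it coincide with $\pm$ a fibre disc near each intersection point with $\Sigma$ and pushing the rest into $P\times\{r_0\}$ — is exactly that justification and avoids computing $\pi_2(M,\Gamma_L)$ altogether.
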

\begin{proof}
   Fix $A \in \pi_2(M, \Gamma_L)$ and let $u: (D, \partial D) \to (M,
   \Gamma_L)$ be a representative of $A$. As $\dim \Delta + 2 < \dim
   M$ we may assume by transversality that the image of $u$ is
   disjoint from $\Delta$, hence lies in $E_{\Sigma}$. Denote by $x_1,
   \ldots, x_k$ the intersection points of $u$ with $\Sigma$ and
   assume that they are all transverse. Moreover, we may assume that
   each $x_i$ corresponds to a single interior point $z_i \in D$, so
   that $u^{-1}(x_i) = \{z_i\}$. After a suitable homotopy of $u$ (rel
   $\partial D$) we may assume that the points $x_i$ all lie in $L$.
   Denote by $D_{x_i} \subset E_{\Sigma}$ the disk of radius $r_0$
   lying in the fibre over $x_i$ (i.e.  $D_{x_i} = (\pi^{-1}(x_i)
   \times [0, r_0]) / \sim$). Note that the boundary of $D_{x_i}$ lies
   in $\Gamma_L$. After a further homotopy of $u$ we may assume that
   there exists small disks $B_i \subset D$ around each $z_i$ such
   that $u$ maps each $B_i$ to $\pm D_{x_i}$. Here, $\pm$ stands for
   the two possible orientations on $D_{x_i}$, according to whether
   $u|_{B_i}: B_i \to D_{x_i}$ preserves or reverses orientation. Put
   $S = D \setminus (\cup_{i=1}^r \textnormal{Int\,} B_i)$. Put $v =
   u|_{S}$. Clearly the image of $v$ is disjoint from $\Sigma$ and
   moreover $v$ maps the boundary of $S$ to $\Gamma_L$. After another
   homotopy of $v$, rel $\partial S$ we may also assume that the image
   of $v$ lies in $P \times \{r_0\} \subset E_{\Sigma}$. Note that
   $$\omega_{\textnormal{can}}|_{P
     \times \{r_0\}} = (\pi^* \omega_{\Sigma} + 2r dr \wedge \alpha +
   r^2 d\alpha)|_{P \times \{r_0\} } = (1-r_0^2)\pi^*
   \omega_{\Sigma},$$ hence we have:
   $$\int_{S} v^* \omega =
   (1-r_0^2) \int_{S} (\pi\circ v)^*\omega_{\Sigma}.$$ Denote by
   $\epsilon_i \in \{ -1, 1\}$ the intersection index of $u|_{B_i}$
   with $\Sigma$. We have:
   \begin{equation} \label{eq:area-mu}
      \begin{aligned}
         & \omega(A) = \int_{D} u^* \omega = \sum_{i=1}^k
         \int_{B_i}u^* \omega + \int_{S}v^*\omega = \bigl
         (\sum_{i=1}^k \epsilon_i \bigr) r_0^2 + (1-r_0^2) \int_{S}
         (\pi\circ v)^*\omega_{\Sigma}, \\
         & \mu(A) = \sum_{i=1}^k \mu([u|_{B_i}]) + \mu([v]) =
         2(\sum_{i=1}^k \epsilon_i) + \mu([v]).
      \end{aligned}
   \end{equation}
   Denote by $\mu_L: H_2(\Sigma, L) \to \mathbb{Z}$ the Maslov index
   of $L \subset \Sigma$. A simple computation shows that $\mu([v]) =
   \mu_{L}([\pi \circ v])$ (see Proposition~4.1.A
   in~\cite{Bi:Nonintersections} and its proof.) Next, note that $[\pi
   \circ v]$ in fact lies in the image of $\pi_2(\Sigma, L) \to
   H_2(\Sigma, L)$. By the monotonicity of $L$ we now get: $\int_{S}
   (\pi\circ v)^*\omega_{\Sigma} = \eta \mu_{L}([\pi \circ v])$. Using
   this and~\eqref{eq:area-mu} we deduce that $\Gamma_L \subset M$
   will be monotone if $\frac{r_0^2}{2} = (1-r_0^2) \eta$. Solving
   this equation gives $r_0^2 = \frac{2\eta}{2\eta+1}$.
\end{proof}

\begin{remnonum}
   The Lagrangian $\Gamma_L$, when viewed as a submanifold of $M
   \setminus \Sigma$, is obviously monotone too (in fact, for every
   value of $r_0$). Its minimal Maslov number (as a Lagrangian in $M
   \setminus \Sigma$), $N'_{\Gamma_L}$, satisfies $N'_{\Gamma_L} =
   N_{L}$. See~\cite{Bi:Nonintersections} for more details.
\end{remnonum}
Based on the above we can construct examples of narrow Lagrangians in
${\mathbb{C}}P^n$.

\subsubsection{Narrow Lagrangians in ${\mathbb{C}}P^n$}
\label{sbsb:narrow-cpn}
Consider $M = {\mathbb{C}}P^n$, $n \geq 3$, endowed with the following
normalization of the standard symplectic structure
$\omega'_{\textnormal{FS}} = 2 \omega_{\textnormal{FS}}$.  (The
normalization here is made so that $[\omega'_{\textnormal{FS}}] \in
H^2({\mathbb{C}}P^n;\mathbb{Z})$ is $2$ times the generator.)  Let
$\Sigma = Q^{2n-2} \subset {\mathbb{C}}P^n$ be the smooth complex
quadric hypersurface, given for example by $Q = \{ z_0^2 + \cdots +
z_n^2 = 0\}$. The Lagrangian skeleton in this case is $\Delta =
{\mathbb{R}}P^n = \{[z_0: \cdots: z_n] \mid z_i \in \mathbb{R}, \,
\forall\, i\}$.  See~\cite{Bi:Barriers} for the computation.

Let $L \subset Q^{2n-2}$ be any monotone Lagrangian (e.g. a Lagrangian
sphere), and consider $\Gamma_L \subset {\mathbb{C}}P^n$ constructed
as above. By construction, $\Gamma_L \cap {\mathbb{R}}P^n =
\emptyset$. By Corollary~\ref{cor:RP} ${\mathbb{R}}P^n$ is wide. It
follows from Corollary~\ref{cor:inters} that $\Gamma_L$ is narrow.

The same construction actually works also for $M = {\mathbb{C}}P^2$,
although $\Delta$ is not subcritical. In this case $Q \approx S^2$ and
we can take $L \subset S^2$ to be a circle which divides $S^2$ into
two disks of equal areas. The corresponding Lagrangian circle bundle
$\Gamma_L$ is a $2$-dimensional torus in ${\mathbb{C}}P^2$. The fact
that $\Gamma_L$ is monotone follows from a direct computation of
Maslov indices and areas for each of the three generators of
$\pi_2({\mathbb{C}}P^2, L) \cong \mathbb{Z}^{\oplus 3}$.  Thus we
obtain a narrow Lagrangian torus $\Gamma_L \subset {\mathbb{C}}P^2$.
We remark that $\Gamma_L$ is not symplectically equivalent to the
Clifford torus $\mathbb{T}^2_{\textnormal{clif}} \subset
{\mathbb{C}}P^2$ since the latter is wide. On the other hand, these
two tori, $\mathbb{T}_{\textnormal{clif}}$ and $\Gamma_L$ turn out to
be Lagrangian isotopic one to the other. It would be interesting to
understand the relation of this example with Chekanov's exotic
torus~\cite{Chekanov:exotic-torus} as well as with the
works~\cite{El-Po:lag-knots, Ble-Po:tori}.

\subsubsection{More examples} \label{sbsb:more-narrow} One can iterate
the Lagrangian circle bundle construction by looking at hyperplane
sections of hyperplane sections $\Sigma' \subset \Sigma \subset M$
etc. (with different choices of $\Sigma$'s as well as different
choices of $L$'s) and obtain many examples of narrow monotone tori in
${\mathbb{C}}P^n$.  It would be interesting to figure out how many of
them are symplectically non-equivalent. It would also be interesting
to understand the relation of these tori to the recent series of
pairwise non-equivalent Lagrangian tori constructed
in~\cite{Che-Schl:tori}.

\section{Open questions.}

Traditionally, the class of monotone Lagrangians has been of interest
because it provides a context in which Floer homology remains
reasonably simple to define and, simultaneously, is sufficiently rich
so as to provide a wide variety of examples.  However, the structural
rigidity properties discussed in this paper indicate that this class
is also interesting in itself.  We remark that wide monotone
Lagrangians also satisfy a form of numerical (or arithmetic) rigidity
(some results on this can be found in \cite{Bi-Co:qrel-long}).

Of course, many questions remain open at this time. An obvious issue
is whether higher order operations - beyond the module and product
structures, in particular - can be used to produce further extensions
of the results proved here.  A considerable amount of additional
technical complications are involved in setting up the machinery
needed to deal with that degree of generality so we have not pursued
this avenue here. In a different direction, it is clearly possible to
further pursue relative packing computations as well as various Gromov
radius estimates.
 
Another obvious problem is to establish the theory described here with
coefficients in $\Z$. As already mentioned in~\S\ref{Sb:quad-lag}
Remark~\ref{r:quad-z2-z}, we expect our theory to work over
$\mathbb{Z}$ however we have not rigorously checked the needed
compatibility with orientations.  Still, it is instructive to see an
example showing that this issue is important for certain applications.

Let $Q \subset {\mathbb{C}}P^{n+1}$ be a smooth complex quadric
hypersurface endowed with the symplectic structure induced from
${\mathbb{C}}P^{n+1}$. The following corollary shows that the
composition $j_{L_1} \circ i_{L_0}$ introduced in
\S\ref{sec:Lagr_int_str} does not vanish for a class of Lagrangians in
the quadric, provided that we work with $\mathbb{Z}$ (rather than
$\mathbb{Z}_2$) as the ground ring of coefficients and so - by
Theorem~\ref{cor:inters_i_j} (again with $\Z$-coefficients) - any two
Lagrangians in this class intersect.  We mark the Corollary with a
$^*$ to indicate that its proof is not 100\% rigorous.
\begin{corspec} \label{c:jcirci-quad} Let $L_0, L_1 \subset Q$ be two
   Lagrangians with $H_1(L_i; \mathbb{Z}) = 0$, $i=0,1$ and assume in
   addition that $L_0, L_1$ are relative spin (see~\cite{FO3} for the
   definition). (e.g. $L_0$ and $L_1$ are two Lagrangian spheres).
   Then, over $\mathbb{Z}$, the composition $j_{L_1} \circ i_{L_0}$
   does not vanish. In particular $L_0 \cap L_1 \neq \emptyset$.
\end{corspec}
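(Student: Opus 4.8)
\textbf{Proof proposal for Corollary* \ref{c:jcirci-quad}.}

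The plan is to apply Theorem \ref{cor:inters_i_j} with $\mathbb{Z}$-coefficients, which means I need to show that $j_{L_1}\circ i_{L_0}\neq 0$ as a map $QH_*(L_0;\Lambda_{0,1})\to QH_{*-n}(L_1;\Lambda_{0,1})$, where now $\Lambda_{0,1}$ is built over $\mathbb{Z}$ rather than $\mathbb{Z}_2$. The key input is the computation in Lemma \ref{l:quad-quant-L} together with Remark \ref{r:quad-z2-z}: for a relative spin Lagrangian $L\subset Q$ with $H_1(L;\mathbb{Z})=0$ we have $N_L=2n$, $L$ is wide, $QH_*(L)\cong (H(L;\mathbb{Z})\otimes\Lambda)_*$ canonically, and the $\mathbb{Z}$-coefficient formulae $i_L(\alpha_0)=p-u\,t$ and $p*\alpha_0=-\alpha_0 t$ hold (with signs). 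First I would set up notation: since $N_{L_0}=N_{L_1}=2n=2C_Q$, the ring $\Lambda_{0,1}=\Lambda_0\otimes_\Gamma\Lambda_1$ is simply $\mathbb{Z}[t^{-1},t]$ with $t_0=t_1=t=s$ (all minimal numbers coincide with $2C_Q$), so the bookkeeping is as simple as it can be.

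Next I would do the actual computation of the composition on the generator $\alpha_0\in QH_0(L_0;\Lambda_{0,1})$, the point class. By Theorem \ref{thm:alg_main}(iii) and the $\mathbb{Z}$-version of Lemma \ref{l:quad-quant-L}(ii) we have $i_{L_0}(\alpha_0)=p-u\,t\in QH_0(Q;\Lambda_{0,1})$. Then $j_{L_1}(i_{L_0}(\alpha_0))=(p-u\,t)*[L_1]=p*[L_1]-[L_1]\,t$, using that $u=[Q]$ is the unit of $QH(Q)$. Now $[L_1]=\alpha_n^{(1)}\in QH_n(L_1;\Lambda_{0,1})$, and by Lemma \ref{l:quad-quant-L}(i') (the $\mathbb{Z}$-version) $p*\alpha_n^{(1)}=-\alpha_n^{(1)}t$. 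Hence
$$j_{L_1}\circ i_{L_0}(\alpha_0)=-\alpha_n^{(1)}t-\alpha_n^{(1)}t=-2\,\alpha_n^{(1)}t.$$
Since we are working over $\mathbb{Z}$ and $\alpha_n^{(1)}=[L_1]$ generates $QH_n(L_1;\Lambda_{0,1})\otimes 1\cong\mathbb{Z}$, the element $-2\alpha_n^{(1)}t$ is nonzero, so $j_{L_1}\circ i_{L_0}\neq 0$. The intersection conclusion $L_0\cap L_1\neq\emptyset$ then follows from Theorem \ref{cor:inters_i_j} applied over $\mathbb{Z}$ (the proof of that theorem goes through verbatim with $\mathbb{Z}$ in place of $\mathbb{Z}_2$, since it only uses the spectral invariant machinery and the module operation, both of which make sense over $\mathbb{Z}$ once orientations are fixed). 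It is precisely here that the factor $2$ is essential: over $\mathbb{Z}_2$ one would get $-2\alpha_n^{(1)}t=0$, which is exactly why the $\mathbb{Z}_2$-criterion of Corollary \ref{cor:inter_Mas_Chern} fails for the quadric (as noted in the remark after that corollary).

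The main obstacle — and the reason for the $^*$ — is not the algebra above, which is a two-line computation, but the foundational point that the entire pearl-complex machinery of Theorem \ref{thm:alg_main}, and in particular the sign conventions entering $i_L$, the module action $\circledast$, and the quantum product on $QH(Q)$, can be coherently defined over $\mathbb{Z}$ using coherent orientations on the moduli spaces of holomorphic disks with boundary on a relative spin Lagrangian. This is the content of the discussion in Remark \ref{r:quad-z2-z}, and establishing it rigorously would require adapting the orientation theory of \cite{FO3} to our pearly moduli spaces and checking compatibility with all the gluing formulae used in \S\ref{sec:main_quant_proof}. Granting this (which is why the statement is marked with $^*$), the signs in Lemma \ref{l:quad-quant-L}(i'),(ii') are exactly the ones that make the computation produce $-2\alpha_n^{(1)}t$ rather than $0$, and the proof is complete. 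A secondary, purely bookkeeping point to verify is that the canonical isomorphism $QH(L_i;\Lambda_{0,1})\cong H(L_i;\mathbb{Z})\otimes\Lambda_{0,1}$ from Proposition \ref{p:wide-can-iso} is compatible with extension of coefficients from $\Lambda_i$ to $\Lambda_{0,1}$, so that $[L_i]$ and the point class really do generate the relevant $\mathbb{Z}$-summands; this follows from point \ref{I:extension} of Theorem \ref{thm:alg_main}.
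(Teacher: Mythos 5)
Your computation is exactly the paper's: you use $i_{L_0}(\alpha_0)=p-ut$ and $p*[L_1]=-[L_1]t$ from Lemma~\ref{l:quad-quant-q} (i.e.\ Lemma~\ref{l:quad-quant-L} with the signs of Remark~\ref{r:quad-z2-z}), obtain $j_{L_1}\circ i_{L_0}(\alpha_0)=-2[L_1]t\neq 0$ over $\mathbb{Z}$ since $t_0=t_1$ in $\Lambda_{0,1}$, and then invoke Theorem~\ref{cor:inters_i_j}, with the same caveat about $\mathbb{Z}$-orientations explaining the $^*$. This is correct and essentially identical to the paper's proof.
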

\begin{proof}[Proof $^*$]
   As $H_1(L_i;\mathbb{Z}) = 0$, the Lagrangians $L_0, L_1$ are
   orientable, hence in view of the relative spin condition we can
   orient all the moduli spaces of disks following~\cite{FO3}.

   The condition $H_1(L_i;\mathbb{Z}) = 0$ implies that $N_{L_0} =
   N_{L_1} = 2C_Q = 2n$. Therefore in the ring $\Lambda_{0,1}$ (from
   \S\ref{sec:Lagr_int_str} ) we have $t_0=t_1$ or in other words
   $\Lambda_{0,1} \cong \Lambda_{0} \cong \Lambda_1 \cong
   \mathbb{Z}[t^{-1}, t]$, with $\deg t = -2n$. (Note again, we are
   using $\mathbb{Z}$ as the ground ring.)

   We will now use the notation from~\S\ref{Sb:quad-lag},
   Lemma~\ref{l:quad-quant-L} and Remark~\ref{r:quad-z2-z}. Recall
   that by this Lemma and this Remark we have $i_{L_0}(\alpha_0) = p - ut$,
   where $\alpha_0 \in QH_0(L_0)$ is the generator, $p \in QH_0(Q)$ is
   the class of a point and $u \in QH_{2n}(Q)$ is the fundamental
   class. Denoting by $\alpha'_n \in QH_n(L_1)$ the fundamental class
   we now have by the same lemma and remark (now applied to $L_1$):
   $$j_{L_1} \circ i_{L_0} (\alpha_0) = (p-ut)*\alpha'_n =
   -\alpha'_nt - \alpha'_n t = -2\alpha'_n t \neq 0.$$ By
  Theorem~\ref{cor:inters_i_j}, $L_0 \cap L_1 \neq \emptyset$.
\end{proof}

\

We conclude with two conjectures which, we believe, have a significant
structural significance for the understanding of the subject so that
we want to make them explicit here. We recall that here, as all along
the paper, we include in the definition of a monotone Lagrangian
submanifold the condition $N_{L}\geq 2$.

\begin{cnj}\label{cnj:dichotomy}
   Any monotone Lagrangian submanifold is either narrow or wide.
\end{cnj}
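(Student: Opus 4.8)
The plan is to reduce the conjecture to a purely algebraic statement about the minimal pearl model and then to attack that statement using the multiplicative structure together with duality, while being candid that the last step is precisely where genuinely new input is required. First I would pass to the minimal pearl complex $\mathcal{C}_{min}(L)=(H_*(L;\mathbb{Z}_2)\otimes\Lambda,\delta)$ of Corollary~\ref{cor:min_pearls}, which has $\delta_0=0$. By Remark~\ref{rem:product_min}(b)--(c), the Lagrangian $L$ is narrow if and only if $[L]$ is a $\delta$-boundary, and wide if and only if $\delta\equiv 0$. Hence the conjecture is equivalent to the implication: \emph{if $\delta\neq 0$ then $[L]$ is a $\delta$-boundary} (equivalently $QH(L)=0$). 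So from now on assume $\delta\neq 0$ and try to force narrowness.

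Next I would exploit the two extra structures carried by $\mathcal{C}_{min}(L)$: the chain-level quantum product $\circ$ of Remark~\ref{rem:product_min}(a), which makes $(\mathcal{C}_{min}(L),\delta)$ a differential ring with strict unit $[L]$ and $\delta$ a derivation of $\circ$; and the Frobenius pairing $\langle x,y\rangle=\epsilon_L(x\circ y)$ of Proposition~\ref{p:duality} and Remark~\ref{rem:Pduality_more}(b), non-degenerate on $QH(L)$ and, via Theorem~\ref{thm:alg_main}(iv), compatible in a multiplicative way with the degree (Oh) spectral sequence of $\mathcal{C}^{+}_{min}(L)$. Let $r_0\geq 1$ be minimal with the operator $\partial_{r_0}$ (the $t^{r_0}$-component of $\delta$) nonzero; then the first non-trivial differential $d_{r_0}$ of the spectral sequence equals $\partial_{r_0}$, it is a derivation of the \emph{classical} intersection ring $E_1=H_*(L;\mathbb{Z}_2)\otimes\Lambda$, and it is self-adjoint for the (non-degenerate) Poincar\'e pairing. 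Base-changing to the field $\mathbb{Z}_2(t)$ turns $E_1$ into a finite-dimensional, $\mathbb{Z}/N_L$-graded commutative Frobenius algebra over a field carrying a self-adjoint derivation $d_{r_0}$ with $d_{r_0}^2=0$. The statement one would \emph{like} to prove is: for such data, $d_{r_0}\neq 0$ forces the class of the unit $[L]$ to vanish on the next page; since $r_0$ was chosen minimal, a dead unit on page $r_0+1$ collapses the whole spectral sequence, giving $QH(L)=0$.

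The hard part --- and the reason this remains a conjecture --- is exactly this last implication, and it is \emph{not} formal. One can write down a commutative Frobenius algebra over a field of characteristic $2$ (for instance $\mathbb{Z}_2[x,y]/(x^2,y^2)$, the intersection ring of $S^a\times S^b$) carrying a nonzero self-adjoint derivation $d$ with $d^2=0$ whose homology is proper and \emph{still contains the unit}, and this example is even compatible with positivity of the pearl differential (only $t^{r_0}$ with $r_0\geq 1$ appears). Thus the multiplicative spectral sequence together with Poincar\'e duality and positivity over $\widetilde{\Lambda}^{+}$ cannot, by themselves, yield the dichotomy; some geometric constraint on $\delta$ beyond ``derivation with $\delta_0=0$'' must intervene. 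The natural candidates are: the full $A_\infty$/$L_\infty$ refinement of the pearl complex, whose higher operations obstruct $\delta$ being an ``independent'' derivation; the module structure of $QH(L)$ over $QH(M)$ exploited in Theorems~\ref{thm:floer=0-or-all} and~\ref{theo:geom_rig}; or sharper moduli-theoretic input in the spirit of Proposition~\ref{P:criterion-QH=0-1}. A concrete intermediate target, which would already be substantial progress, is to upgrade Theorem~\ref{thm:floer=0-or-all} by removing the ring-generation hypothesis: one would run an induction on homological degree, using $\circ$ to multiply the support of a nonvanishing $\delta$ up into $H_{\geq n-l}(L;\mathbb{Z}_2)$ and Poincar\'e duality to transport it back down, so as to produce $y,z$ with $y\circ z$ containing $[L]t^{q}$ exactly as in Lemma~\ref{lem:bdry_order}. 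Making such an induction unconditional (i.e. valid for every $N_L\geq 2$ and every monotone $L$, not just those generated in high degrees) is, at present, the main open obstacle.
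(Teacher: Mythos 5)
You were asked to prove Conjecture~\ref{cnj:dichotomy}, but the paper itself does not prove this statement: it is posed as an open problem, with only partial results available, namely Theorem~\ref{thm:floer=0-or-all} (which needs the hypothesis that $H_*(L;\mathbb{Z}_2)$ is generated as a ring by $H_{\geq n-l}(L;\mathbb{Z}_2)$ with $N_L>l$) and the low-dimensional cases mentioned in the paper's concluding remark. So there is no proof in the paper to compare yours against, and your text, as you yourself acknowledge, is not a proof either.

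What you do prove is the reduction: via Corollary~\ref{cor:min_pearls} and Remark~\ref{rem:product_min}~(b),(c), $L$ is narrow iff $[L]$ is a $\delta$-boundary in $\mathcal{C}_{min}(L)$ and wide iff $\delta\equiv 0$, so the dichotomy is equivalent to the implication ``$\delta\neq 0\Rightarrow [L]$ bounds.'' This is correct, and it is essentially the same reformulation the authors record in their closing remark in terms of the torsion ideal $T^{+}(L)\subset Q^{+}H(L)$ (wide iff $T^{+}(L)=0$, narrow iff $T^{+}(L)=Q^{+}H(L)$). But that is exactly where both you and the paper stop in general. The decisive step --- that a non-vanishing $\delta$ forces the unit to die --- is the entire content of the conjecture, and your own algebraic example (a unital Frobenius algebra over a field of characteristic $2$ with a nonzero square-zero derivation whose homology is proper yet still unital) shows that the structures you propose to use (chain-level product with strict unit $[L]$, duality, positivity, the multiplicative degree spectral sequence) cannot by themselves close it; some genuinely new geometric input is needed, as you say. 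In particular your ``intermediate target'' of removing the ring-generation hypothesis from Theorem~\ref{thm:floer=0-or-all} (i.e.\ making the mechanism of Lemma~\ref{lem:bdry_order} unconditional) is precisely the missing ingredient, not a step you supply. So the proposal should be read as a correct restatement of the problem together with an informed discussion of obstacles, not as a proof; the gap is the conjecture itself.
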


\begin{cnj}\label{cnj:intersection}
   In a point invertible manifold, if two monotone Lagrangian
   submanifolds do not intersect, then at least one of them is narrow.
\end{cnj}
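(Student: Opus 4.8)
The plan is to attack Conjecture~\ref{cnj:intersection} by contraposition through the intersection criterion of Theorem~\ref{cor:inters_i_j}: assume $M$ is point invertible of order $k$, that $L_0,L_1\subset M$ are monotone Lagrangians both of which are non-narrow, and that $L_0\cap L_1=\emptyset$; the goal is to contradict this by showing $j_{L_1}\circ i_{L_0}\neq 0$ on $QH_*(L_0;\Lambda_{0,1})$. The first step is exactly the construction already used in the proof of Corollary~\ref{cor:inter_Mas_Chern}: fix a Morse function on $L_0$ with a unique minimum, use the non-degeneracy of the Frobenius pairing $H(\tilde\eta)^{0}$ from Proposition~\ref{p:duality} to produce a non-zero class $\alpha\in QH_0(L_0;\Lambda_{0,1})$ lying in $IQ^{+}(L_0;\Lambda_{0,1})$ whose quantum inclusion has leading term the point class, $i_{L_0}(\alpha)=[pt]+\sum_{j>0}a_j t_0^{j}$ with $a_j\in H_*(M;\mathbb{Z}_2)$ homogeneous of degree $jN_{L_0}$ (hence $a_j=0$ once $jN_{L_0}>\dim_{\mathbb{R}}M$, so this is a finite sum). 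Since $[L_1]$ is the unit of the ring $QH(L_1;\Lambda_{0,1})$ it is non-zero, so everything reduces to showing $j_{L_1}(i_{L_0}(\alpha))=i_{L_0}(\alpha)*[L_1]\neq 0$.

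The heart of the matter is this last non-vanishing. The natural hope is that $i_{L_0}(\alpha)$ is an \emph{invertible} element of $QH(M;\Lambda_{0,1})$; then $i_{L_0}(\alpha)*(-)$ is an isomorphism of $QH(M;\Lambda_{0,1})$-modules and the conclusion is immediate. Point invertibility gives us that the leading term $[pt]$ is a unit in $QH(M;\Lambda)$, but this does \emph{not} formally propagate: over the Laurent ring $\Lambda_{0,1}$ the correction $\sum_{j>0}a_j t_0^{j}$, although built from positive-degree homology classes, is not a "small" perturbation — there is no completeness to run a geometric series. I expect this to be the main obstacle, and it is precisely why the statement is only a conjecture: for $\mathbb{C}P^n$ and for the quadric one circumvents it because the extra input $\sigma([pt],\phi^{-1})=-\sigma([M],\phi)$ (valid when $QH_{\dim_{\mathbb{R}}M}(M)=\mathbb{Z}_2[M]$) is available, but for a general point invertible manifold — e.g.\ $\mathbb{C}P^n\times\mathbb{C}P^m$, where $QH_{\dim_{\mathbb{R}}M}(M)$ is strictly larger than $\mathbb{Z}_2[M]$ — one needs a genuinely new argument controlling the unit $i_{L_0}(\alpha)$ (or, failing invertibility, directly ruling out that the correction terms $\sum_j (a_j*[L_1])t_0^j$ cancel the leading term $[pt]*[L_1]$, which is non-zero since $[pt]$ acts invertibly on $QH(L_1;\Lambda)$).

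In parallel I would pursue the equivalent spectral formulation, which makes the missing ingredient transparent. By Theorem~\ref{thm:action}(i) every non-narrow monotone $L$ satisfies $\sigma([M],\phi)\ge \mathrm{depth}_{L}(\phi)$ for all $\phi\in\widetilde{Ham}(M)$, i.e.\ $L$ is \emph{heavy}; the conjecture would follow if one could upgrade this to the \emph{super-heaviness} bound $\sigma([M],\phi)\le \mathrm{height}_{L}(\phi)+k\eta$ for all $\phi$. Indeed, applying the first inequality to $L_1$ and the second to $L_0$ with a normalized Hamiltonian $H$ equal to constants $C_0$ on $L_0$ and $C_1>C_0+k\eta$ on $L_1$ yields $C_1\le\sigma([M],\phi^{H})\le C_0+k\eta$, a contradiction. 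Via the $\widetilde{PSS}$-isomorphism the super-heaviness bound is again equivalent to expressing $[M]$ (up to a power of $t$) in the image of $i_{L_0}$ using point invertibility and then invoking Lemma~\ref{lem:action}(ii), so one faces the same difficulty. A sensible staged programme is therefore: (a) prove the conjecture unconditionally for all $M$ with $QH_{\dim_{\mathbb{R}}M}(M)=\mathbb{Z}_2[M]$ (imitating the $\mathbb{C}P^n$ argument, and, over $\mathbb{Z}$, the quadric argument of Corollary$^{*}$~\ref{c:jcirci-quad}); (b) prove it for \emph{wide} Lagrangians, where the canonical identifications of Proposition~\ref{p:wide-can-iso} give extra control on $i_{L_0}(\alpha)$ and $[L_1]$; and (c) analyse the first genuinely new case, $\mathbb{C}P^n\times\mathbb{C}P^m$, by hand, seeking either a direct proof that $i_{L_0}(\alpha)$ is a unit or a replacement for the Poincaré-type duality of spectral invariants.
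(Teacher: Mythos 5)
You have not proved the statement, and indeed you could not have compared favourably or unfavourably with ``the paper's own proof'': Conjecture~\ref{cnj:intersection} is left open in the paper, which offers no proof, only a closing remark explaining why the natural attack does not (yet) close. Your proposal is a research programme, not a proof, and the gap is exactly the one you yourself flag: after producing $\alpha\in IQ^{+}(L_0;\Lambda_{0,1})$ with $i_{L_0}(\alpha)=[pt]+\sum_{j>0}a_jt_0^{j}$ (this part is sound and is lifted from the proof of Corollary~\ref{cor:inter_Mas_Chern}), you have no argument that $i_{L_0}(\alpha)\ast[L_1]\neq 0$ for a general point-invertible $M$ -- invertibility of the leading term $[pt]$ does not propagate over the Laurent ring, and nothing rules out cancellation of $[pt]\ast[L_1]$ by the correction terms $\sum_j(a_j\ast[L_1])t_0^{j}$. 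Equivalently, in the spectral formulation, Theorem~\ref{thm:action}(i) only gives heaviness of a non-narrow $L$, and the missing ingredient is the upper bound $\sigma([M],\phi)\leq\mathrm{height}_{L}(\phi)+C$, i.e.\ super-heaviness, for which one would need $[M]$ (up to a power of $t$ and a bounded error) in the image of $i_{L}$, or the duality $\sigma([pt],\phi^{-1})=-\sigma([M],\phi)$ which is only known when $QH_{2n}(M)=\mathbb{Z}_2[M]$.

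It is worth saying that your diagnosis coincides almost verbatim with the authors' own: the remark following the conjectures states that the result would follow from finding a class $\alpha$ and a constant $C$ with $\mathrm{depth}_{L}(\phi)-C\leq\sigma(\alpha,\phi)\leq\mathrm{height}_{L}(\phi)+C$ for every non-narrow $L$, that the left inequality comes from Lemma~\ref{lem:action}(i) for invertible $\alpha$ (e.g.\ $\alpha=[pt]$), that the right one would follow from Lemma~\ref{lem:action}(ii) if $\alpha\in\mathrm{Im}(i_{L})$, and that producing a class with both properties is precisely the difficulty (the quadric already shows $[pt]$ need not lie in $\mathrm{Im}(i_L)$, cf.\ Remark~\ref{r:lem-action}). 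So your staged programme (the case $QH_{2n}(M)=\mathbb{Z}_2[M]$, the wide case via Proposition~\ref{p:wide-can-iso}, and $\mathbb{C}P^n\times\mathbb{C}P^m$ as the first genuinely new test case) is a reasonable way to organize work on the problem, but as it stands the central step remains open, and no part of your text supplies it.
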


\begin{remnonum}\label{rem:conj}
   a. As shown in Theorem \ref{thm:floer=0-or-all} the dichotomy {\em
     narrow - wide} can be established in many relevant cases and we
   can prove it in a few more. It is true, for example, for $n=\dim L
   \leq 3$ (at least when $L$ admits a perfect Morse function).

   There is an equivalent statement of the conjecture which is worth
   indicating here.  Recall the map $p_{\ast}: Q^{+}H(L)\to QH(L)$
   induced by the change of coefficients $\La^{+}\to \La$ and that we
   denote by $IQ^{+}(L)$ its image. It is easy to see that the kernel
   of $p_{\ast}$ consists precisely of the torsion ideal $T^{+}(L)$ of
   $Q^{+}H(L)$,
   $$T^{+}(L)=\{z\in Q^{+}H(L) : \ \exists\ m\in\N ,\ t^{m}z=0\}~.~$$
   It is a simple exercise to see that $L$ is wide iff $T^{+}(L)=0$
   and $L$ is narrow iff $T^{+}(L)=Q^{+}H(L)$. Thus the {\em wide -
     narrow} conjecture is equivalent to showing that the torsion
   ideal of any monotone Lagrangian can only be $0$ or coincide with
   the entire ring.

   b. The difficulty in proving the second conjecture is caused by the
   following phenomenon (see also Theorem \ref{cor:inters_i_j}).
   First, notice that the result immediately follows if one can show
   that there is a constant $C$ and a class $\alpha\in QH(M;\Lambda)$
   (with $M$ the ambient symplectic manifold) so that for any
   monotone, non-narrow Lagrangian $L\subset M$ and any
   $\phi\in\widetilde{Ham}(M)$ one has:
   \begin{equation}\label{eq:cj_first}
      {\rm depth}_{L}(\phi)-C \leq \sigma(\alpha,\phi) 
      \leq {\rm height}_{L}(\phi)+C.
   \end{equation}
   By Lemma \ref{lem:action} i, if $\alpha$ is invertible (for
   example, $\alpha=[pt]$ for a point invertible manifold) the left
   inequality~\eqref{eq:cj_first} follows because $\alpha$ acts
   non-trivially on $QH(L)$. The second inequality is implied by the
   second point of the same Lemma if one can show $\alpha\in
   Im(i_{L})$.  Finding a class $\alpha$ which satisfies both
   properties is however quite non-trivial.  Notice that in a
   point-invertible manifold not only is the left inequality
   in~\eqref{eq:cj_first} satisfied for $\alpha=pt$ but we can also
   deduce the estimate
   $\sigma^{\ast}([\omega^{n}],\phi):=\inf\{\sigma([M]+s^{-1}x,\phi)\
   | \ x\in QH(M;\Lambda)\}\leq {\rm height}_{L}(\phi)+k$ where $x\in
   Q^{+}H(M)$, $s$ is the Novikov variable in
   $\Gamma=\Z_{2}[s^{-1},s]$, and $\sigma^{\ast}(\omega^{n},\phi)$ is
   by definition the infimum given above (this notation is justified
   because it coincides with the {\em co-homological} spectral
   invariant of the class $[\omega^{n}]$).  It is clear from the
   ``triangle inequality'' that $\sigma([M],\phi)\geq
   \sigma([pt],\phi)$ but it is in general not easy to show that
   $\sigma^{\ast}([\omega^{n}],\phi)\geq \sigma([pt],\phi)$.

\end{remnonum}

\end{document}